\documentclass[12pt]{article}
\usepackage{amsmath}
\usepackage{amssymb}
\usepackage{amsthm}
\usepackage{natbib}
\usepackage{ragged2e}
\usepackage{geometry}
\usepackage{hyperref}
\usepackage{algorithm}
\usepackage{graphicx} 
\usepackage{algpseudocode}
\usepackage{authblk}
\usepackage[capitalise,nameinlink
    ,noabbrev]{cleveref}
  \hypersetup{
    colorlinks=true,
    linkcolor=blue,
    citecolor=blue
}
\usepackage{autonum}
\usepackage{xcolor}
\usepackage{multirow}  
\usepackage{setspace}
\usepackage{pifont}

\newcommand{\R}{\mathbb{R}}

\newcommand{\mf}{\mathbf}
\newcommand{\bs}{\boldsymbol}

\newcommand{\bb}{\overline{b}}

\newcommand{\h}{\mathbf{h}}
\newcommand{\kk}{\mathbf{k}}
\newcommand{\G}{\mathcal G}
\newcommand{\tr}{\mathrm{tr}}
\newcommand{\s}{\mathbf{s}}
\renewcommand{\u}{\mathbf{u}}

\newcommand{\X}{\widetilde{X}}
\newcommand{\bS}{\mathbf{S}}
\newcommand{\w}{\mathbf{w}}

\newtheorem{theorem}{Theorem}[section]
\newtheorem{assumption}{Assumption}[section]

\newtheorem{proposition}{Proposition}[section]

\newtheorem{remark}{Remark}[section]
\newtheorem{lemma}{Lemma}[section]
\newtheorem{corollary}{Corollary}[section]
\definecolor{darkgreen}{rgb}{0.0, 0.4, 0.1} 

\newcommand{\x}{\mathbf{x}}

\title{Validating spatial-temporal separability for stationary  processes}
\author[1]{Lujia Bai}
\author[1]{Holger Dette}
\author[1]{Zihao Yuan}
\affil[1]{Department of Mathematics, Ruhr University Bochum}
\begin{document}

\maketitle

\begin{abstract}
A crucial assumption to reduce computational complexity  in spatial-temporal data  analysis is separability, which factors the covariance structure  into a purely spatial and a purely temporal component. In this paper, we develop statistical inference tools for validating this assumption for a second-order stationary process  under  both domain-expanding-infill  asymptotics
and domain-expanding  asymptotics. In contrast to previous work on this subject, the methodology neither requires the assumption of normally distributed data, nor uses spectral methods. Our approach is based on nonparametric estimates of measures for the  deviation between  the covariance matrix  and separable approximations, which vanish if and only 
if the  assumption of separability is satisfied. We derive the asymptotic distributions of appropriate estimators for these measures with non-standard limiting distributions and use these results to develop inference tools for validating the assumption of separability. More specifically, we derive confidence intervals for the deviation measures, tests  for the  hypothesis of exact separability, and  for the hypothesis that the deviation from separability is smaller than a prespecified threshold.
\end{abstract}

{\it Keywords and phrases:} best rank-one approximation,  central limit theorem for dependent data, nonparametrics, spatial-temporal data,  $U$-statistics

{\it AMS Classification:} 62G10, 62G20, 62M30

\section{Introduction}
\label{sec introduction}
   \def\theequation{1.\arabic{equation}}	
   \setcounter{equation}{0}

Space--time data arise ubiquitously in environmental science, climatology, epidemiology, and econometrics, where accurate modeling of dependence structures is essential for inference and prediction. A central object in such analyses is the space-time covariance function, which governs both spatial dependence and temporal evolution.  Nowadays,  technological advances often  yield  ultra-high-dimensional   space-time data  and
 a completely nonparametric estimation of the covariance  operator is computationally very expensive (if not impossible). For example, common  prediction methods require precise estimation and inversion  of the 
covariance matrix. If the temporal dimension is  $500$ and the spatial dimension is $20$, one has to determine the inverse of a $10000 \times 10000$ matrix. A common approach to deal with such difficulties is  the assumption of a separable covariance structure  where the covariance factors into purely spatial and purely temporal components \citep[see]{KyrJour-review-1999,GneiGentGut2007,MarcGenton2007,Sher2011,CresWik-book-2011,Cres-book-2015}. This yields, in the above-mentioned example, an inversion of a $500 \times 500$ and a $20 \times 20 $ matrix.

Despite its importance, separability is rarely directly observed, and falsely imposing such an  assumption may cause a  substantial bias in estimation, and prediction and as a consequence misleading  conclusions. Therefore, the  assumption of separability must be  carefully  justified in applications, and several    statistical tests
for this hypothesis  have been developed in the literature.  For example,
\cite{MatYaj2004,ScaMar2005,Fuentes2006,CruFer-CasGon-Man2010} proposed tests based on spectral methods for stationarity, axial symmetry, and separability. In    \citet{MitGentGump2005,MitGentGump2006},  likelihood ratio tests under the assumption of a normal distribution were  investigated.
  Finite-dimensional nonparametric tests for separability were considered in \cite{LiGentSher2007}, and \cite{BevilacquaMateuPorcuZhangZini2010} considered the problem of testing the assumption of separability in parametric models.
  More recently, a considerable amount of literature considers the problem    in a functional and nonparametric framework, where  independent data $X_1(s,t), \ldots , X_n(s,t) $  are observed and one is interested in testing the  separability of the covariance structure    Cov($X_1(s,t),X_1(s',t')$) \citep[see][]{ConstKokReim2017,AstPigTav17,constantinou2018testing,pramita2020,DetDieKut21,DetteDierickxSkutta2025}.

In this paper, we develop novel nonparametric inference for validating the assumption of   separability in the  space-time covariance structure 
 \begin{align}
\label{d2x}
    C(\h, v) = \mathrm{Cov}\{X_{(\mf s, t)}, X_{(\mf s + \mf h   , t+v)}\}
\end{align}
of a zero-mean, second-order stationary spatial-temporal process  $\{X_{(\mf s, t)}: \mf s \in \R^2, t \in \mathbb{N} \}$ from  observations sampled at   irregular spatial and regular temporal locations. Our approach is based on two measures for the deviation between the  covariance matrix of the process and a separable approximation. The measures  which will vanish if and only if the covariance structure is  separable.  We construct consistent estimators of these  deviation measures with   tractable limiting distributions under weak dependence and irregular sampling designs.
We then use these results to  construct several statistical inference tools. First, we develop  tests for exact separability, which are consistent against a broad class of non-separable alternatives. In contrast to the existing methods on testing this assumption, our tests work in the spatial-temporal domain and do not require the assumption of a Gaussian process.
Second, the assumption  of exact separability is rarely  made because one believes in it, but because of the enormous computational savings if it is at least approximately satisfied. To address this issue, we develop asymptotically valid confidence intervals for the measures of deviation from separability as an alternative to testing for exact separability.
Third, as a further alternative,  we construct novel testing procedures for the hypothesis  that {\it the deviation  from separability is small.} More recently, hypotheses of this type have attracted considerable interest within the framework of {\it tolerant testing}, where one assesses whether the data are consistent with any distribution lying within a prescribed neighborhood of the candidate \citep[see, for example,][and the references therein]{canonne22a, kania2026testingimprecisehypotheses}. 

Section~\ref{sec2} introduces the modeling framework, formalizes the  separability considered in this paper, and defines two measures of deviation from separability as well as a nonparametric  estimator for the covariance function.  In Section~\ref{sec3}, we investigate  the  asymptotic properties of this estimator under 
 both \textit{domain-expanding-infill (DEI) asymptotics} and \textit{domain-expanding (DE) asymptotics} \citep[see][]{lu2014nonparametric}. These results are used in Section \ref{sec4} for the development of the statistical methodology for validating the separability  assumption, whose properties are examined by means of a simulation study  in Section \ref{sec5}. Finally, all proofs of the theoretical results are deferred to the Appendix.

\section{Two measures for deviations from separability}
\label{sec2}
 \def\theequation{2.\arabic{equation}}	
   \setcounter{equation}{0}

In this section, we introduce two measures for deviations from separability and a nonparametric estimator for the covariance matrix. The first measure is related to the concept of partial trace \citep[see][among others]{bhatia2003partial, constantinou2018testing, pramita2020, masak2023separable}  and the second is based on the concept of best rank-one approximation. Both measures are functions of the covariance matrix $\mf C$ of the observations at predefined  spatial-temporal locations (see equation  \eqref{d1} below for a precise definition), and  vanish if and only if the hypothesis of separability is satisfied. Estimates for these measures can then be obtained by replacing the matrix $\mf C$ by a nonparametric estimator introduced later in this section

We consider a second-order stationary spatial-temporal process $\{X_{(\s,t)}:(\s,t)\in \mathbb{R}^2\times \mathbb{N}\}$ and assume that  
at $n$ spatial locations $\{\s_{1n}, \ldots , \s_{nn}\}$,  we observe the time series $\{X_{(\s_{in},t)}:t=1,...,T\}$. 
Thus, the full data set   is given by 
\begin{align}
    \label{eq definition of data set} \textbf{Data}_n:= 
  \big \{ X_{(\s_{in},t)}   \big |~ i=1, \ldots , n; ~t=1, \ldots , T \big \}. 
\end{align}

Suppose that there exists a smooth function 
$\mathcal{C}: (\mathbb{R}^2\times \mathbb{N})^2 \to \mathbb{R}$  such that 
\begin{align}
\label{covariance}
    \text{Cov}(X_{(\s,t)},X_{(\s',t')}) =\mathcal{C}\big (\s,\s',t,t'\big)
\end{align}
holds for any given $n$ and $(\s,t)$, $ (\s',t')\in \mathbb{R}^2\times \mathbb{N}$. We are interested in checking   the assumption of separability at  a group of spatial and temporal locations,  say $\{(\s^*_i,t^*_j)\}_{1\leq i\leq M';1\leq j\leq N'}$, for some positive integers $M'$ and $N'$. Then, we have a well-defined $M'N'\times M'N'$-dimensional covariance matrix of these evaluation locations.  

Furthermore, we assume second-order stationarity of the  spatial-temporal process, that is 
\begin{align}
  C(\h, v):=  \mathcal{C}\big (\s,\s + \h ,t,t+ v \big)= \mathrm{Cov}(  X_{(\mf s, t)}, X_{(\mf s + \mf h   , t+v)}) ,
\end{align}
where the function $C$ is defined on  $\mathbb{R}^2\times \mathbb{R}_0^+$, and satisfies
\begin{align}
    \label{dx3}
\lim_{(\h,v)\to \infty}|\text{Cov}(X_{(\mf \s ,t )}, X_{(\s +\h,t+ v)})| = \lim_{(\h,v)\to \infty}| C (\h , v )| =0.
\end{align}
Thus, instead of the aforementioned $M'N'\times M'N'$ covariance matrix, we only need to focus on the following matrix,
\begin{align}\label{d1} 
    \mathbf C = \begin{pmatrix}
        C(\h_1, v_1),& \cdots,& C(\h_1, v_N)\\
        \cdots, &\cdots, &\cdots\\
         C(\h_M, v_1), &\cdots,& C(\h_M, v_N)
    \end{pmatrix}, 
\end{align}
where $\h_i$'s and $v_j$'s are \textit{evaluation distances} generated by the differences $\s^*_{k}-\s^*_{k'}$ and $t_l^*-t_{l'}^*$, i.e.,
\begin{align}
\begin{split}
\label{eq evaluation distance}
\{ \h_1 , \ldots \h_M\} &= \{ \s_k^* - \s_{k'}^* ~|~1 \leq k \leq k' \leq M' \} ~,\\
\{ v_1 , \ldots v_N\} & = \{ t_l^* - t_{l'}^* ~|~1 \leq l \leq l' \leq N' \}
.
\end{split}
\end{align}
 Note that we do not reflect the  dependence on $M$ and $N$ in the notation of the matrix $\mf C$, for example in \eqref{d1}, whenever this is clear from the  context.  If the (stationary) process is separable, the covariance satisfies $C(\h , v) =C_S(\h) C_T(v)$, where the functions 
 $C_S: \mathbb{R}^2\to \mathbb  R$ 
and $C_T:  \mathbb{R}
\to \mathbb  R$ 
depend only on  spatial and temporal lags, respectively. In this case, the matrix $\mathbf C$ can be rewritten as 
   \begin{align}
    \mathbf C = (C_S(\h_1), \ldots, C_S(\h_M))^{\top} (C_T(v_1),\ldots, C_T(v_N)),\label{eq:separable}
\end{align}
and  we call  the  matrix $\mathbf  C$ \textit{(grid) separable}.

\subsection{Measuring the deviation by a  partial trace approach}
\label{sec21}
We define a measure for the deviations of the matrix $\mf C$ in \eqref{d1} from separability in two steps, which is related to the concept of the  partial trace operator as considered in 
\cite{AstPigTav17,pramita2020} and \cite{DetDieKut21}
for the analysis of covariance operators of random surfaces. 
First, we are interested in measuring this deviation  for a fixed vector $\mf c_1^* \in \mathbb{R}^M$, that is 
\begin{align}
\label{d3}
    D^*(\mf c_1^*) 
    = \min_{\mf c_2 \in \mathbb R^N} \| \mathbf C -  {\mf c}_1^*   {\mf c}_2^{\top}\|_F^2 = \|\mf C\|_F^2 - \frac{\| (\mf c_1^*)^{\top} \mf C\|_F^2}{\|\mf c_1^*\|_F^2},
\end{align}
where $\| \mathbf{A} \|_F$ denotes the Frobenius norm of a matrix $ \mathbf{A}$ (for a vector, it is the Euclidean norm), and 
the second equality follows by the Cauchy-Schwarz inequality, which is achieved for $$(\mf c_2^*)^\top  = {(\mf c_1^*)^{\top} \mf C \over \|\mf c_1^*  \|_F^2 } .$$
In order to minimize \eqref{d3},  we note that in the case where $\mf C$  is separable, that is  $\mf C= \mf  a_1 \mf a_2^\top$,   an application of the Cauchy-Schwarz inequality gives 
$$
D^*(\mf c_1^*) = \|\mf C\|_F^2 - \frac{\| (\mf c_1^*)^{\top} \mf a_1 \mf a^\top_2 \|_F^2}{\|\mf c_1^*\|_F^2} \geq 
0 ~,
$$
where there is equality for any vector $\mf c_1^* = \lambda \mf a_1 $ with  $\lambda \ne 0$.
In the following, we  choose $\lambda = \mf a^\top_2 \bs \psi $  for some vector $ \bs \psi $ (such that $\bs a_2^\top \bs \psi \ne 0$), which gives 
$\mf c_1^* = \mf a_1  \mf a^\top_2   \bs \psi = \mf C   \bs  \psi $. Using the choice $\mf c_1^* = \mf C   \bs  \psi$ in the distance \eqref{d3} also in the case when $\mf C$ is not necessarily separable, we  obtain 
\begin{align}
\label{d7}
      D_{\bs  \psi} (\mf C) := 
        D^* ( \mf C  \bs  \psi ) =
       \|\mf C\|_F^2 - \frac{\| \bs  \psi^{\top}\mf C^{\top} \mf C\|_F^2}{\|\mf C \bs  \psi\|_F^2} \geq 0 . 
\end{align}
Note that this distance depends on the covariance matrix $\mf C$, which can be estimated (see Section \ref{sec3}), and the vector $\bs \psi$, which can be specified. Moreover, by the discussion in the previous paragraph $\mf C$ is (grid) separable if and only if  $ D_{ \bs \psi} = 0$ (independently of the vector $\bs \psi $).


\subsection{Measuring the deviation by the best rank-one approximation}

\label{sec22}

Another measure for the deviation from separability is obtained by  computing the norm of the difference between $\mf C$ and its best rank-one approximation, that is  
\begin{align}
    D(\mf C)  :=\min_{\bs \eta_1\in\mathbb{R}^M, \bs \eta_2\in \mathbb{R}^N}\|\mf C-\bs \eta_1\bs \eta_2^{\top}\|_F^2 ,
    \label{d8}
\end{align}
 The Eckart–Young–Mirsky theorem \citep[see, for example,][]{golub2013matrix} for the Frobenius norm gives 
 \begin{align}
    D(\mf C) = ||\mf C||_{F}^2-\sigma_{sv,1}^2(\mf C),
 \end{align}
where $\sigma_{sv,1}$ is the largest singular value of the matrix $\mf C$. 
Moreover, by definition, we have
\begin{align}
    \label{d4}
    D_{\bs \psi} (\mf C)   \geq D(\mf C)  
\end{align}
for any vector $\bs \psi \in \mathbb{R}^N$.

\begin{remark}
{\rm 
    If the matrix $\mf C =\bs a_1 \bs a_2^\top $  is of rank one and separable, we obtain from the discussion in Section \ref{sec22}  the representation 
    $$
    \mf C = \bs c_1^*(\bs c_2^*)^\top
    = {\mf C \bs \psi \over \| \mf C  \bs \psi \|_F} {(\mf C^\top \mf C \bs \psi )^\top \over \| \mf C^\top \mf C \bs \psi \|_F} {\| \mf C^\top \mf C \bs \psi \|_F \over \| \mf C  \bs \psi \|_F} . 
    $$
 As the SVD  of $\mf C = \sigma_{sv,1} \bs u \bs v^\top $ is unique, we have  
 $$\sigma_{sv,1} ={\| \mf C^\top \mf C \bs \psi \|_F \over \| \mf C  \bs \psi \|_F}~,~~ \bs u={\mf C \bs \psi \over \| \mf C  \bs \psi \|_F}~\text{ and } ~~ \bs v={\mf C^\top \mf C \bs \psi \over \| \mf C^\top \mf C \bs \psi \|_F}
 $$
 for any vector $\bs \psi $ with $\mf C^\top  \mf C \bs \psi \ne 0 $.
Note that this representation does not depend on the choice of the vector $\bs \psi $ (up to a multiplication of the vectors $\bs u $ and $\bs v$ with $-1$). Moreover,  we can choose $\bs \psi $ as the $j$th unit vector and obtain 
\begin{align}
    \label{det12a}
\bs c_1^* = \big  ( C(\h_1, v_j) ,  \ldots , C(\h_M, v_j) \big  )^\top ~.
\end{align}
If additionally the $i$th element of this vector does not vanish, this yields 
\begin{align}
    \label{det12b}
\bs c_2^*= {1 \over C(\mf h_i,v_j)}  \big  ( C(\h_i, v_1) ,  \ldots , C(\h_i, v_N) \big  )^\top ~.
\end{align}
}
\end{remark}

\subsection{Estimation of Covariance} \label{sec23} 
We introduce the following statistic  based on a rescaling procedure   \citep[see][for a similar approach for the estimation of the  mean function]{kurisu2022nonparametric}, 
\begin{align}
    &\hat{C}(\mathbf{h}_0,v_0)= \frac{\sum_{1\leq i\neq i'\leq n}\sum_{1\leq t\neq t'\leq T} K_{0b}(\s_{in}-\s_{i'n},t-t')X_{(\s_{in},t)}X_{(\s_{i'n},t')}}{\sum_{1\leq i\neq i'\leq n} \sum_{1\leq t\neq t'\leq T}K_{0b}(\s_{in}-\s_{i'n},t-t')}, \label{eq covariance estimator}
    \end{align} 
   as an estimator of the covariance function \eqref{covariance} at the point  $(\h_0,v_0)=\big ( (h_{0,1},h_{0,2})^{\top}, v_0 \big )  \in \mathbb{R}^2 \times \mathbb{R}_0^+$ from the data \eqref{eq definition of data set}, where we use the notation 
  
    \begin{align}
  K_{0b}(\s_{in}-\s_{i'n},t-t') & := K_{0ii'}K_{0tt'}, \label{eq K0b}
\end{align} 
with 
\begin{align}
\label{eq K0b1}
 K_{0ii'} & := K \Big (\frac{\lambda_n^{-1}(\s_{in,1}-\s_{i'n,1}-h_{0,1})}{b} \Big  )K \Big  (\frac{(\lambda_n\bb_n)^{-1}(\s_{in,2}-\s_{i'n,2}-h_{0,2})}{b} \Big  ) ,~~~~~~~ \\
K_{0tt'}  &:=   
  K\Big (\frac{|t-t'|-v_0}{ Tb}\Big ), \label{eq K0b2}
\end{align} 
 $b$ is a bandwidth, $\lambda_n$ is an increasing sequence converging to $\infty$ and $\bb_n $ is a non-increasing sequence converging to a non-negative  constant. Here $K: \mathbb{R} \to [0 , \infty)  $ is a non-negative, symmetric kernel with compact support, say   $[-1,1]$,  which  is non-increasing on the interval  $[0,1]$. 
 
 Furthermore, for the grid $\{(\h_{i},v_{j})\}_{1\leq i\leq M,1 \leq  j\leq N}$ introduced in Section \ref{sec2}, we define
\begin{align}
    \widehat{\mf C}=(\hat{C}(\h_{i},v_j))_{1 \leq i\leq M,1 \leq j\leq N}\in \mathbb{R}^{M\times N}, \label{eq covariance matrix estimator}
\end{align}
as the estimator of the 
 covariance matrix  \eqref{d1}.  In Section \ref{sec3}, we  study the asymptotic properties of the estimator $\widehat{\mf C}$ in  \eqref{eq covariance matrix estimator}. These results   will be used in Section \ref{sec4} to determine 
 the asymptotic distribution 
  of   the estimates $D_{\bs \psi } (\widehat{\mf C} )$ and  $D (\widehat{\mf C} )$
   for the measures $D_{\bs \psi } (\mf C) $ and $D(\mf C) $ of the deviation from separability      defined in \eqref{d7} and \eqref{d8}, respectively, and  to  develop statistical inference for validating the assumption of  separability.

  \begin{remark}
      {\rm  Note that we do not adapt a triangular-array setting in our model as it is often done in modeling the mean function $m(r) = E (X_{  Tr ,T} ) $ of a sequence of stationary stochastic processes $ \{\{X_{Tr,T}\}_{r\in [0,1]}:T \in \mathbb{N} \}$ such that
      $\{ \{ X_{t,  T} \} _{t=1, \ldots , T} : T \in \mathbb{N} \} $ is a triangular array with mean 
      $m({t \over T} )  = E [X_{t,  T}] $ 
      \citep[see, for example,][]{ROBINSON20124}.  In fact, this modeling approach is hard to extend to covariance functions and is not reasonable in the present context. More specifically, assume that we would define in the same spirit  a spatial-temporal   sequence of stochastic processes  
 $\{\{X_{(\Lambda_n \kk, Tr)}\}_{(\kk,r)\in \mf D\times [0,1]}:n,T \in \mathbb{N} \}$, where $\mf D \subset  [0,1]^2$ is a compact spatial domain,
 such that for $\h \in 
 [-\Lambda_n, \Lambda_n] \subset \mathbb{R}^2$, $v \in \{ 1,2 \ldots, T-1 \}$
  \begin{align}
 \label{eq Cr=C}    C_{\text{r}}\Big (\Lambda_n^{-1}\h,\frac{v}{T}\Big )= \text{Cov} \big ( X_{(\h, v)}, X_{(0, 0)}\big )
 \end{align}
 for some  covariance function $C_{\text{r}}:\mf D\times [0,1]\to\mathbb{R}$, which does not depend on $(n,T)$.   This  representation \eqref{eq Cr=C} implies
for   any point $(\kk_0,r_0)  \in \mf D\times [0,1]$,   that
\begin{align}
C_{\text{r}}  (\kk_0,r_0 )= \text{Cov} \big ( X_{(\Lambda_n\kk_0, Tr_0)}, X_{(0, 0)}\big )
  \end{align}
  holds for all  $n,T \in \mathbb{N}$. Together with \eqref{dx3}, we therefore obtain 
\begin{align}    C_{\text{r}} (\kk_0,r_0  )=\lim_{n,T\to\infty} \text{Cov} \big ( X_{(\Lambda_n\kk_0, Tr_0)}, X_{(0, 0)}\big )=0, 
  \end{align}
  which  yields  $ C_{\text{r}}\equiv0$ on $\mf D\times [0,1]$ and  makes the problem of testing for separability trivial.
  }
  \end{remark}

\section{Asymptotic properties of covariance matrix estimator}
\label{sec3}

 \def\theequation{3.\arabic{equation}}	
   \setcounter{equation}{0}
   
\subsection{Technical assumptions}
\label{sec 3.1}

 To state our results rigorously, we first introduce several assumptions.
Recall the definition of the set $\Gamma_{n}:= \{\s_{1n},\ldots ,\s_{nn}\} $ of $n$ spatial locations and that, at each location, the spatial-temporal process  is observed at time points $t= 1, \ldots , T$. We assume that $\s_{in}=\Lambda_n\x_{in}$, where $\{\x_{in}:1\leq i\leq n\}$ are independent identically distributed random variables with a density  $f$. We further assume that the    support of $f$, say $\mathbf{D}\subset [-1,1]^2$, is compact with positive Lebesgue measure, and define
$$\Lambda_n=\text{diag}(\lambda_n,\lambda_n\bb_n),$$  where $\lambda_n$ is a diverging sequence and $\bb_n$ is a non-increasing sequences converging to some non-negative constant or $ 0$.  We  also define  $\mathbf{R}_n=\lambda_n \mathbf{D}$  as the {\it sampling region}. Furthermore, we impose the following assumptions about the density  $f$  of spatial locations and its support $\mathbf{D}$.

\begin{assumption}
    \label{as spatial locations 1}
   The support of $f$ is given by   $\mathbf{D}=\big  \{(x,y)\in[-1,1]^2: y\in [l(x)-\bb,l(x)+\bb] \big \}$, where  $l:[-1,1]\rightarrow[-1,1]$ is a continuous function and  $\bb>0$ a  constant. Moreover, we also assume that there  exist   positive constants $\underline{f}, \overline{f} $ such that $0<\underline{f} \leq f \leq   \overline{f} $.
\end{assumption}

Assumption \ref{as spatial locations 1}  is widely used in spatial data modeling with random sampling \citep[see, for example][]{lahiri2003central,kurisu2022nonparametric}.  
By  this assumption, the set $\mathbf{D}$ is actually assumed to be a “band”  centered at the function $l$ with width $\bb$, which contains \cite{lahiri2003central}'s setting as a special case (take $l(x)=0$ and  $\bb=1$). This is more appropriate for  many examples, where  the pattern of the  feasible sampling region 
$\mathbf{R}_n$ is complex, like rivers or seashores. It can be easily extended to the setting where $\mathbf{D}$ is the union of multiple bands sharing a common width.

Next, we specify the dependence structure for the sequence of the random field
 $\{X_{(\s,t)}:(\s,t)\in \mathbb{R}^2\times \mathbb{N}\}$. For this purpose, let  $|A|$ denote the cardinality of the set $A$, and for two subsets $A,B$ of the Euclidean space (its dimension will always be clear from the context),  we denote by 
 $\rho(A,B)=\inf_{a\in A,\ b\in B}||a-b||_{F}$ the {\it distance between $A$ and $B$}.
\begin{assumption}
\label{dependence}
There exist a non-increasing function $\beta:\mathbb{R}^+\rightarrow\mathbb{R}^+$ and a coordinate-wise non-decreasing function $\Psi: \mathbb{N} \times \mathbb{N} \to \mathbb{R}^+$, such that 
    \begin{align}
    \label{d1a}
        E\Big |\sup_{B\in\sigma(V)}(\mathbb{P}(B|\sigma (U)) -\mathbb{P}(B))\Big |\leq \Psi(|U|,|V|)\beta(\rho (U,V)),
    \end{align}
for any  disjoint and finite sets $U,\ V\subset \mathbb{R}^2\times \mathbb{N}$, where, for a set  $ A \subset \mathbb{R}^2  \times \mathbb{N}$,  
 $\sigma(A)$ denotes the sigma field  generated by $\{ X_{(\s,t)} :  (\s,t) \in A \} $.  
\end{assumption}
 The assumption of finite sets $U$ or $V$ \eqref{d1a} is crucial for $\beta$-mixing-type processes. Otherwise, as pointed out by Theorem 1 in  \cite{BRADLEY1989489}, \cref{dependence} would imply  $m$-dependence, which would be a very restrictive assumption for asymptotic analysis with an expanding spatial domain.

\begin{assumption}
    \label{moment conditions}
  ~~
  
  \begin{itemize}
        \item [(M1)] 
       $\{X_{(\s,t)}: (\s,t)\in \mathbb{R}^2\times \mathbb{N}\}$ is a marginally stationary stochastic process. If  $X_0$ denotes a random variable with this distribution, we further assume that  $E[X_0]=0$ and  that $E|X_0|^{4(1+\delta)}<\infty$  for some $\delta>0$.
        \item [(M2)] For the constant $\delta$  from condition  (M1) and  $k= 4(1+\delta) $, we have 
    $$
    M_{k}=\max_{(\s,t)\neq (\s',t')}\{E|X_{(\s,t)}X_{(\s',t')}|^k\lor E|X_{(\s,t)}|^kE|X_{(\s',t')}|^k \} < \infty .$$

        \item[(M3)] For $k= 4(1+\delta)$,
          we have   \begin{align}
                \widetilde{M}_{k}& =\max_{(\s,t)\neq (\s',t'), (\mf u,v),(\mf u',v')}\{E|X_{(\s,t)}X_{(\s',t')}X_{(\u,v)}X_{(\u',v')}|^k\} < \infty ,    \\   \widehat{M}_{k}& =\max_{(\s,t)\neq (\s',t')\neq (\u,v)}\{E|X_{(\s,t)}X_{(\s',t')}X_{(\u,v)}|^k\}<\infty.
             \end{align}
    \end{itemize}
\end{assumption}

To investigate the asymptotic properties of the estimator $\widehat{\mf  C}$, we consider both \textit{domain-expanding-infill (DEI) asymptotics} and \textit{domain-expanding (DE) asymptotics}, i.e., $n/\lambda_n^2 \bb_n\to\infty$ and $n/\lambda_n^2 \bb_n\to c$, for some constant $c>0$. As pointed out  by \cite{lu2014nonparametric}, DEI asymptotics is sometimes more appropriate  than the less complex DE asymptotics for modeling data in 
real life, such as  real-state price data and  data used in environmental science.

\begin{assumption}
\label{technical assumptions}
The bandwidth satisfies  $b= c n^{-\eta}$ for constants $\eta>0$, $c> 0$, such that (with the constant
$\delta > 0$  from  Assumption \ref{moment conditions}) 
    \begin{itemize}
        \item [(T1)]  For some constant $0<\xi\leq \frac{1}{3}$,
        $$
        \frac{n}{\lambda_n^2\bb_n}b^{\xi}=o(1)~,~~
        nTb^{2+2\xi}\nearrow\infty .
        $$ 
        \item [(T2)] (i) The mixing coefficients  $\beta(x)$ are  non-increasing  (as a function of $x$) such that  $\beta(0)=1$, and for any $ x,y\geq 0$:  $\beta(x+y)\leq C_{\beta}\beta^{\frac{1}{2}}(x)\beta^{\frac{1}{2}}(y)$, for some  constant  $C_{\beta}>0$. \\ (ii) For any $a,r,s>0$, there exists some constant $C'_{\beta}$ 
        such that $\beta^s(ar)\leq C'_{\beta}\beta^s(a)\beta^{s}(r)$.
         \item [(T3)] $\sum_{l=1}^{\infty}\beta^{\frac{\delta}{2(1+\delta)}}(l)<\infty$, $\sum_{l=1}^{\infty}l^2\beta^{\frac{\delta}{(1+\delta)}}(l)<\infty$, where $\delta $ is the constant from Assumption \ref{moment conditions}.
          \item [(T4)] There exists a diverging  sequence $(q_n)_{n \in \mathbb{N}}$ in $\mathbb{N}$  such that 
        \begin{itemize}
            \item [(1)] $nT\beta^{\frac{\delta}{1+\delta}}(q_n)=o(1)$,  ~~~~~~~~ (2) $(\lambda_n\bb_n)^{-1}q_n=o(n^{-\theta})$ for some $\theta>0$, 
            \item[(3)]  $b^{0.5}q_n^8(\log n)^2=o(n^3)$ and $(q_n\log n)^6b=o(1)$, \item[(4)] $\frac{q_n^4\log T}{n^3b^{1.5}}\lor\frac{q_n^3\log T}{n^2b^{0.5}}=o(1)$, ~~~  (5) 
           $q_n=o(\sqrt{n}\land b^{-\frac{1}{2}})$,
           \item[(6)] $\left( nT(q_n\log n)^3\right)^{4(1+\delta)}b^{3-2(1+\delta)}\beta^{\frac{\delta}{1+\delta}}(q_n)=o((nT)^4b^6)$.
          \end{itemize}
    \end{itemize}
\end{assumption}

Assumption \ref{technical assumptions} can be generally understood as the “spatial version” of the technical assumptions used in \cite{fan1999central}.    Similar assumptions have been made in  previous research on the analysis of spatial data with randomly sampled locations  \citep[see, for example][and the references therein]{kurisu2022nonparametric,hu2025self}. More specifically, condition (T1) is used to describe the degree of “infill asymptotics” and excludes the case of pure infill asymptotics, which often leads to inconsistency \citep[see][]{lahiri1996inconsistency}.
Condition (T3) is a standard assumption on the  $\beta$-mixing coefficients. Condition (T2) can be easily satisfied when $\beta$-mixing coefficients decay at a  polynomial rate.  Condition (T4) is a technical condition regarding a sequence $(q_n)_{n\in \mathbb{N}}$ which is introduced in the proofs of the main results. We emphasize that this sequence does not have to be chosen in practice and does not appear in the definition of the statistic \eqref{eq covariance estimator}. Moreover, Condition (T4) simplifies substantially if the decay rate of the mixing coefficients has been specified more concretely. For example, if the mixing coefficients decay at a  geometric rate, i.e. $\beta (l) \leq c' \theta^{-l} $  for constants $ c'>0, \theta >1$ and $T/n \to c $ for a constant $c>0$,   we can choose $q_n = C_q\log n $, where $C_q>0$ is independent of sample size. If $C_q$ is chosen sufficiently large, condition (T4) is {always} satisfied.
\medskip

We conclude the technical assumptions by introducing  regularity conditions on the  covariance function and the selected evaluation distances defined in \eqref{eq evaluation distance}.

\begin{assumption}
    \label{smoothness}
  The covariance function $C$   defined in \eqref{covariance} is twice  continuously differentiable  with the
  first (and second) partial derivatives $\partial_k C$ (and $\partial_{k,\ell} C$) with respect to its $k$-th (and  $k$ and $l$-th) coordinate(s).
\end{assumption}

Our final assumption refers to the chosen evaluation distances  $\{\h_i\}_{1 \leq i\leq M}$ and $\{v_j\}_{1 \leq j\leq N}$ in \eqref{d1}. As we consider a sampling  region $\mf R_n $ expanding as the sample size grows, we  make a similar assumption for the evaluation locations as well.

\begin{assumption}
    \label{as regularity}
There  exists a diverging sequence  $(e_n)_{n\in \mathbb{N}}$ such that
\begin{align}
    \label{eq en}
    ce_n\leq \min_{1\leq i\leq M\atop 1\leq  j\leq N}\{||\h_i||,v_j\}\leq \max_{1\leq i\leq M \atop
    1 \leq j\leq N}\{||\h_i||,v_j\}\leq Ce_n,
\end{align}
for some constants $c,C>0$. Moreover, there exist 
 non-negative functions $\widetilde{\gamma}_0$, $\widetilde{\gamma}_1$ and $\widetilde{\gamma}_2$  (which might depend on $\lambda_n$) such that
    \begin{itemize}
         \item [(E1)] $\min_{1\leq i\leq M , 1\leq j\leq N} |C(\h_i,v_j) | \geq \widetilde{\gamma}_0(e_n)$ and  $\liminf_{n\to \infty}  nTb^{1.5}\widetilde{\gamma}_0(e_n)>0$.
        \item [(E2)] For any  
        $(\h, v)$ with  $\min\{||\h||, v\}\geq e_n$, we have  
        $|\partial_k C(\h,v)|\lesssim \widetilde{\gamma}_1(e_n)$ and $|\partial_{k,l} C(\h,v)|\lesssim \widetilde{\gamma}_2(e_n)$ for $k,l=1,2,3$.  
        \item [(E3)] For any $m=1,2$, we have  $(\lambda_n \lor T)^m\widetilde{\gamma}_m(e_n)=O(1)$.
    \end{itemize}
\end{assumption}
Condition (E1) controls the properties of the covariance for large lags, because in  regions where the covariance is very close to $0$,  deviations from separability are  very hard to detect.
Condition (E2)  is an assumption on the growth of  the partial derivatives of the covariance function for large lags, and condition (E3) specifies their rates. These assumptions are  required for our inference framework and  are satisfied by many of the common covariance models. A prominent example is the covariance  kernel
\begin{align}
    C(\h;v)=\frac{\sigma^2}{(|v|^{2\alpha}+1)}\exp\left(-\frac{||\h||^{2\eta}}{(|v|^{2\alpha}+1)^{\eta}}\right),~~\sigma^2 >0,\ \ \eta, \alpha>0 , \label{eq:Gneiting}
\end{align}
which was introduced in equation (4) of \cite{Gneiting01062002}. For example, if  $0.5<\alpha<1$ and $\eta=\frac{1}{2(1-\alpha)}$,  \cref{smoothness} is satisfied. Furthermore, by choosing $e_n=\log n$, \cref{as regularity}  holds as well with $\widetilde{\gamma}_m(e_n)=\widetilde{\gamma}_m(e_n, \lambda_n)=(\lambda_n)^m\exp(-e_n)$, $m=0,1,2$.

\subsection{Weak convergence of the covariance estimator} \label{sec32}

To present a mathematically precise  statement about  the weak convergence of the estimator \eqref{eq covariance matrix estimator}, let  
$$\mathbf{X}_{(\s_{in},T)}:=(X_{(\s_{in},1)},...,X_{(\s_{in},T)})^\top \in\mathbb{R}^T,$$ 
 denote the vector of observations at the (random) spatial location  $\s_{in}$ ($i=1, \ldots , n$), and define 
$\mathbf{X}_{\mf S_n,T}:= (\mathbf{X}_{(\s_{1n},T)}^\top ,...,\mathbf{X}_{(\s_{nn},T)}^\top)^\top  $
as the $nT$-dimensional vector of all observations (the  vectorized version of the data set $\textbf{Data}_n$ in  \eqref{eq definition of data set}).

Note that, due to the  Gluing lemma in optimal transport theory  \citep[see][]{berti2015gluing},  the joint distribution of the vector $\mathbf{X}_{\mf S_n,T}$ and the vector $\mathbf S_n=(\s_{1n},...,\s_{nn})^{\top}$ of spatial locations exists, such that the  random vector 
\begin{align}
    (\mf X^{\top}_{\mf S_n,T},\mf S_n^{\top})=(\mf X^{\top}_{(\s_{1n},T)},...\mf X_{(\s_{nn},T)}^{\top},\s_{1n},...,\s_{nn})
\end{align}
is well defined. 
By the  disintegration theorem, the  regular conditional distribution $\mathbb{P}^{\mf X_{(\mf S_{n},T)}|\mf S_n=S_n}$ of $\mf X_{\mf S_n,T}$ given  $\mf S_n=S_n\in \mathbf{R}_n^n \subset \mathbb{R}^{2n}$ is also well defined. We define  the set of conditional distributions 
\begin{align}
    \mathcal{P}_{n,T}=\{\mathbb{P}^{\mathbf{X}_{(\mf S_{n},T)} | \mf S_n=S_n }:  S_n\in\mathbf{R}_n^n\}, \quad n,T \in\mathbb{N}. 
    \label{det2}
\end{align}

\par Suppose, for each $n$ and $T$, $H_{n,T}:\mathbb{R}^{nT}\to \mathbb{R}^d$ is a vector-valued measurable mapping (for some positive integer $d$).  The conditional distribution of the random variable $H_{n,T}(\mf X_{(\mf S_n,T)})$ given  $\mf S_n=S_n$ is denoted by
\begin{align}
 \mathbb{P}
 ^{H_{n,T}|\mf S_n=S_n}((\infty ,\mf t])=   \mathbb{P}
 (H_{n,T}(\mf X_{\mf S_n,T})\in (\infty ,\mf t] | \mf S_n=S_n),\ \forall\ \mf t\in \mathbb{R}^d.
\end{align}

Furthermore, the  stochastic process,
$ 
\mathbb{S}:=    (\mf S_{n})_{n\geq 1}
$
 is also well defined,  and we denote its distribution by  $\mathbb{Q} = \mathbb{P}^{\mathbb{S}}$. Based on the aforementioned notation, we can discuss the weak convergence of the statistic $H_{n,T}(\mathbf{X}_{\mf S,T})$ in an almost sure sense, that is:  there exists a distribution function $G$ defined on $\mathbb{R}^d$, such that  for any continuity point   $\mf t$  of  $G$, 
\begin{align}
   \lim_{n\to \infty, T \to \infty }\mathbb{P}^{H_{n,T}|\mf S_n=S_n}((-\infty,\mf t])= \lim_{n\to\infty, T \to \infty}\mathbb{P}(H_{n,T}(\mathbf{X}_{\mf S_n,T})\in (\infty, \mf t] |\mf S_n=S_n)=G(\mf t)
\end{align}
holds almost surely with respect to $\mathbb{Q}$. We write for this statement $H_{n,T} \xrightarrow[]{d} G $  almost surely with respect to $\mathbb{Q}$.
Similar concepts of weak convergence have been discussed in \cite{lahiri2003central} and \cite{kurisu2022nonparametric}.

With these preparations, we are able to establish the weak convergence of the estimator $\hat{C}(\h_0,v_0)$ of the covariance at any point  $(\h_0,v_0)\in \{(\h_i,v_j) \}_{1 \leq i\leq M,1 \leq j\leq N}$.

\begin{theorem}
    \label{Theorem CLT Covariance Estimator}
    Suppose \cref{as spatial locations 1} - \ref{as regularity} hold,  $T/n\rightarrow c \in (0, \infty) $ and $nTb^{3.5}\rightarrow 0$, and  
    let   $(\h_0,v_0)$ be a  point from $\{(\h_i,v_j)\}_{1 \leq i\leq M;1 \leq j\leq N}$. Then, we have
    \begin{align}   &{nTb^{1.5}} \Big (\hat{C}(\h_0,v_0)-C(\h_0,v_0)-\frac{\textbf{Bias}(\h_0,v_0)}{2 \mf  A_1}\Big  )\xrightarrow[]{d} N\Big (0, \frac{\sigma^2}{4\mf A_1^2}\Big ) 
            \end{align} 
            almost surely with respect to $\mathbb{Q}$, where
\begin{align}
      \label{d7b}
      \sigma^2 & = (E[X_{(0,0)}^2])^2\mathbf{A}_2\mathbf{B}_2,
\\
         \label{d7c}
         \mathbf{A}_k & =
\int_{\mathbf{D}}f(\x)^2d\x ~\Big ( \int_{[-1,1]}K^k(u)du\Big )^2,  
           \\
             \label{d7d}
        \mathbf{B}_k & =2\int_{-1}^{1}K^k(u)du,  \\
        \label{nhd1}
        \textbf{Bias}(\h_0,v_0)
        & = \Big( \sum_{k=1}^2\partial_{k}C(\h_0,v_0)   V_T^{(0)} W_n^{(1) } (k)   \Big ) 
+\partial_{3}C(\h_0,v_0) {W_n^{(0)} V_T^{(1)}},
    \end{align}
     and for $\ell=0,1$, 
 \begin{align}
  V_T^{(\ell)}&= \frac{1}{T^2b}\sum_{1\leq t\neq t'\leq T}K\Big (\frac{|t-t'|-v_0}{ Tb}\Big ) \Big(|t-t'|-v_0\Big)^{\ell }, \\
 W_n^{(\ell ) } (k)   &= \frac{1}{n^2b^2}\sum_{1\leq i\neq i'\leq n} K \Big  (\frac{\s_{in,1}-\s_{i'n,1}-h_{0,1}}{\lambda_nb} \Big  )K \Big  (\frac{\s_{in'2}-\s_{i'n,2}-h_{0,2}}{\lambda_n\bb_nb} \Big  ) \big (\s_{in,k}-\s_{i'n,k}-h_{0k}\big )^\ell.
 \end{align}
\end{theorem}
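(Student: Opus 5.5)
We write $\hat C(\h_0,v_0)=\hat N/\hat D$, where $\hat N$ and $\hat D$ are the numerator and denominator in \eqref{eq covariance estimator}. Conditionally on $\mf S_n=S_n$, we decompose
\begin{align}
\hat C(\h_0,v_0)-C(\h_0,v_0)=\frac{1}{\hat D}\sum_{i\neq i'}\sum_{t\neq t'}K_{0b}\big(C(\s_{in}-\s_{i'n},|t-t'|)-C(\h_0,v_0)\big)+\frac{1}{\hat D}\,\mathcal M_{n,T},
\end{align}
where $\mathcal M_{n,T}=\sum K_{0b}\,\{X_{(\s_{in},t)}X_{(\s_{i'n},t')}-C(\s_{in}-\s_{i'n},|t-t'|)\}$ is a centred, $U$-statistic-like sum.

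The first step handles the denominator. After rescaling by $n^2T^2 b^{3}$ (more precisely by the normalisation implicit in $W_n^{(0)}V_T^{(0)}$), $\hat D$ equals $W_n^{(0)}V_T^{(0)}$ up to constants. Using Hoeffding's decomposition for the degenerate $U$-statistic in the i.i.d.\ locations $\x_{in}$, together with a Borel--Cantelli argument, I would show $W_n^{(0)}\to \int_{\mathbf D} f^2\,(\int K)^2$ almost surely. This uses Assumption \ref{as spatial locations 1} and the change of variables $\x-\x'=\Lambda_n^{-1}\h_0+b\,\mathbf u$, with the band structure making $\int f(\x)f(\x+\cdot)$ continuous. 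A Riemann-sum argument gives $V_T^{(0)}\to 2\int K$. Together these yield the factor $2\mf A_1$, interpreting the constants as in \eqref{d7c}--\eqref{d7d}.

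The second step is a second-order Taylor expansion of $C$ around $(\h_0,v_0)$ under Assumption \ref{smoothness}. The first-order terms reproduce exactly $\textbf{Bias}(\h_0,v_0)$, through $W_n^{(1)}(k)V_T^{(0)}$ and $W_n^{(0)}V_T^{(1)}$. The second-order remainder is bounded by $\widetilde\gamma_2(e_n)\big((\lambda_n b)^2+(Tb)^2\big)$. By (E2)--(E3) this is $O(b^2)$, and multiplied by $nTb^{1.5}$ it is $O(nTb^{3.5})=o(1)$. A localisation point needs care here: every lag in the kernel support satisfies $\min\{\|\h\|,v\}\gtrsim e_n$, so (E2) applies.

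The main step, and the main obstacle, is the CLT for $nTb^{1.5}\mathcal M_{n,T}/\hat D$, which is conditional on $S_n$ and holds $\mathbb Q$-a.s. I would use a spatio-temporal large-block/small-block (Bernstein) decomposition, as in \cite{lahiri2003central,kurisu2022nonparametric}. The rescaled region $\mathbf R_n\times\{1,\dots,T\}$ is partitioned into big blocks of side $\asymp q_n$-multiples separated by corridors of width $q_n$. Pairs $(i,i')$ and $(t,t')$ are assigned to blocks according to the location of $(\s_{in},t)$.

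Within this decomposition there are four tasks:
\begin{itemize}
\item[(a)] Show that small blocks and boundary terms are negligible in $L^2$, using Davydov-type covariance inequalities from Assumption \ref{dependence}, the moment bounds (M1)--(M3), and the summability (T3).
\item[(b)] Replace the big blocks by independent copies, with total-variation error bounded by (number of blocks)$\times\Psi\,\beta(q_n)$; this error vanishes by (T4)(1),(6).
\item[(c)] Compute the variance. The dominant contribution comes from terms where the pairs coincide, $(i,t,i',t')$ equal to $(j,s,j',s')$ or swapped. The fourth-moment products, after decorrelation of far-apart points via (E1)--(E3) and mixing, reduce to $(E X_0^2)^2$, which produces $\sigma^2=(EX_{(0,0)}^2)^2\mf A_2\mf B_2$ after the same kernel integral limits as in the first step, now with $K^2$. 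All other index configurations are shown to be of smaller order using (T1) and $T/n\to c$.
\item[(d)] Verify a Lyapunov condition with the $4(1+\delta)$ moments and (T4)(3)--(5).
\end{itemize}
Throughout, the statements hold for $\mathbb Q$-a.e.\ sequence $S_n$ because the location-dependent quantities (counts of pairs in blocks, $W_n$-type sums) converge a.s. by exponential inequalities plus Borel--Cantelli.

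Finally, Slutsky's lemma combines the three steps: $\hat D$ normalised converges to $2\mf A_1$, so the limit has variance $\sigma^2/(4\mf A_1^2)$ and the centring carries $\textbf{Bias}/(2\mf A_1)$.

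I expect the variance computation in (c) together with the a.s.-in-$S_n$ control of the index counts to be the hardest part. Bounding the many cross terms of this fourth-order dependent $U$-statistic by mixing alone requires delicate counting of index configurations, split by whether indices coincide and by their relative distances, with the threshold $q_n$ separating the two regimes.
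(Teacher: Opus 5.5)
Your outer structure matches the paper's: you split off the centred numerator from the bias part, take the a.s.\ limit of the normalised denominator, Taylor-expand to get $\textbf{Bias}(\h_0,v_0)+O(b^2)$, and finish with Slutsky. The CLT step (b)--(d), however, does not go through as described. Each summand $H_n(Z_{it},Z_{i't'})=K_{0b}(\s_{in}-\s_{i'n},t-t')\{X_{it}X_{i't'}-E_{|S_n}[X_{it}X_{i't'}]\}$ links two observations at the diverging lag $(\h_0,v_0)$, spread over windows of width $\lambda_n b$ and $Tb$. Your big blocks have sides that are multiples of $q_n$, which is tied to the mixing rate and not to these scales. In the paper's regime the temporal lag exceeds the block length ($v_0-Tb>2p_T$, which is exactly what Proposition~\ref{prop 12} uses to show that within-block pairs vanish). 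So essentially every pair with non-zero weight has its two ends in different blocks. The block sum $\sum_{(i,t)\in B_m}\sum_{(i',t')}H_n(Z_{it},Z_{i't'})$ is then not a function of the data in $B_m$. Replacing the data blocks by independent copies therefore does not make the block sums independent. What you obtain is $\sum_{m\neq m'}\sum_{a\in B_m,\,b\in B_{m'}}H_n(a,b)$, a second-order $U$-statistic in independent block variables. Because $EX_0=0$ and the two ends are far apart, $E[H_n(a,b)\mid X_a]\approx 0$. This $U$-statistic is thus asymptotically degenerate, which is why the rate is $nTb^{1.5}$, the square root of the number of active pairs. A Lyapunov condition for independent summands yields no CLT for it.

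The missing ingredient is a CLT mechanism for degenerate statistics of this kind. The paper orders the interior blocks linearly and keeps only pairs between distinct interior blocks. It writes the leading term as $\sum_m\widetilde T_{m,n}$, where $\widetilde T_{m,n}$ collects the pairs between block $m$ and the earlier blocks $m'<m$. This is nearly a martingale-difference array, and the paper applies the conditional, a.s.-$\mathbb Q$ martingale CLT of Theorem~\ref{theorem CLT 2}. That requires three things:
\begin{itemize}
\item (D1) negligible conditional means, obtained via Lemma~\ref{lemma 2};
\item (D2) $\sum_m E_{|S_n}[\widetilde T_{m,n}^2\mid\mathcal F_{m-1}]\to 1$ in probability, i.e.\ concentration of the random quadratic form $\sum_a\big(\sum_{b\ \mathrm{earlier}}K_{ab}X_b\big)^2$ around its mean ($K_{ab}$ the pair's kernel weight). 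This is a fourth-order moment and counting argument (the paper's Step~5.2) that your variance computation in (c) does not supply;
\item (D3) a conditional Lindeberg condition, via fourth moments of $\widetilde T_{m,n}$.
\end{itemize}
The unconditional variance calculation you single out as the hardest part is in fact the routine part (the paper's Step~2). If you want to keep independent-block coupling, you have two options. One is a CLT for degenerate quadratic forms of independent blocks (de Jong's theorem, with its fourth-moment condition). The other is blocks much larger than lag plus window. That needs $e_n+Tb=o(T)$ and $e_n+\lambda_n b=o(\lambda_n\overline{b}_n)$, plus a new set of coupling and rate conditions not implied by (T4).
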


Note  that \cref{Theorem CLT Covariance Estimator} cannot be directly used  for statistical inference. On the one hand, the term $\mf A_1$ contains the density $f$, which might be unknown in specific applications. On the other hand, the bias in \eqref{nhd1} is difficult to estimate.
In our next theorem, we present a result based on  undersmoothing, which can be used for statistical inference.

\begin{theorem}
\label{Theorem Feasible Inference}
Let the assumptions of  Theorem \ref{Theorem CLT Covariance Estimator} be  satisfied and assume that $nTb^{2.5}\to 0$. For    any  point $(\h_0,v_0)$ from $\{(\h_i,v_j)\}_{1 \leq i\leq M, 1 \leq j\leq N}$, we have 
\begin{align}
\label{nhd2}
    &nTb^{1.5} \big (  \hat{C}(\h_0,v_0)-C(\h_0,v_0)\big)\xrightarrow[]{d} 
    N (0, \tau^2)
\end{align}
 almost surely with respect to $\mathbb{Q}$, where 
 \begin{align}
       \label{limvar}
     \tau^2 :=  {\sigma^2 \over 4 \mf A_1^2 } = {(E [X_{(0,0)}^2])^2 \mf B_2^3 \over  16~ \mf I },
   \end{align}
   $\sigma^2$ and $\mf A_1$
 are defined in \eqref{d7b} and \eqref{nhd1}, respectively. 
 Moreover, the asymptotic variance 
  can be consistently estimated by 
  \begin{align}
 \hat \tau^2  :=  { \widehat{(E[X_{(0,0)}^2])^2}\mf B_2^3  \over  16~ \hat{ \mf I}} ,
    \label{nhd4}
    \end{align} 
 where     
\begin{align}    
    \label{det10}
    \widehat{E[X_{(0,0)}^2]}& :=\frac{1}{nT}\sum_{i=1}^{n}\sum_{t=1}^{T}X^2_{(\s_{in},t)},\\
    \label{det10a}
    \hat{\mf I}&:=\frac{1}{n^2b^2}\sum_{i\neq i'}K_{0ii'},
\end{align}
and  $K_{0ii'}$ is defined in \eqref{eq K0b1}.
\end{theorem}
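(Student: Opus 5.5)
We derive \eqref{nhd2} directly from Theorem \ref{Theorem CLT Covariance Estimator} by showing that, under undersmoothing, the bias is negligible, i.e.
\[
nTb^{1.5}\,\frac{\textbf{Bias}(\h_0,v_0)}{2\mf A_1}\to 0
\]
almost surely with respect to $\mathbb{Q}$. Slutsky's lemma, applied conditionally on $\mf S_n=S_n$, then transfers the limit. After that we identify the constants in $\tau^2$ and prove consistency of $\hat\tau^2$.

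\textbf{Step 1 (bias).} On the support of the kernel we have $||t-t'|-v_0|\le Tb$, so $|V_T^{(1)}|\le Tb\,|V_T^{(0)}|$. Similarly, $|\s_{in,k}-\s_{i'n,k}-h_{0k}|\le \lambda_n b$ (or $\le \lambda_n\bb_n b$), so $|W_n^{(1)}(k)|\lesssim \lambda_n b\, W_n^{(0)}$. The quantity $V_T^{(0)}$ is a Riemann sum converging to $\int K$, hence it is $O(1)$. The quantity $W_n^{(0)}$ is a U-statistic in the i.i.d.\ locations $\x_{in}$, with expectation of order $\mf A_1$ (a change of variables using Assumption \ref{as spatial locations 1} and $f\le\overline f$). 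A Bernstein/Hoeffding bound for U-statistics combined with Borel--Cantelli gives $W_n^{(0)}=O(1)$ $\mathbb{Q}$-a.s. By (E2) and (E3), $|\partial_kC|\lesssim\widetilde\gamma_1(e_n)=O((\lambda_n\lor T)^{-1})$. Therefore
\[
|\textbf{Bias}|\lesssim (\lambda_n\lor T)^{-1}(\lambda_n\lor T)\,b=O(b),
\]
and
\[
nTb^{1.5}\,|\textbf{Bias}|=O(nTb^{2.5})=o(1).
\]
Here $\mf A_1$ is bounded away from zero since $f\ge \underline f$ on a set of positive measure.

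\textbf{Step 2 (variance form).} We rewrite $\sigma^2/(4\mf A_1^2)$. Using $\mf A_2=\mf I\,(\int K^2)^2$, $\mf B_2=2\int K^2$ and $\mf A_1=\mf I(\int K)^2$, with $\mf I$ identified as the limit of $\hat{\mf I}$ (essentially $\int f^2$ times the kernel normalisation), we check that the expression reduces to $(E X_0^2)^2\mf B_2^3/(16\mf I)$. This is bookkeeping of how $\mf I$ is normalised, and we follow the paper's definition so that it matches \eqref{det10a}.

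\textbf{Step 3 (consistency).} For $\widehat{E[X_{(0,0)}^2]}$, we bound its conditional variance by a double sum of covariances of $X^2$ at distinct space-time points. The $\beta$-mixing bound of Assumption \ref{dependence}, combined with Davydov's inequality and the moment conditions (M1)--(M2), controls these covariances. The summability in (T3), together with the fact that locations spread over the region $\mathbf R_n$ of area $\asymp\lambda_n^2\bb_n$, gives variance $o(1)$. Hence the estimator converges in conditional probability, and by the continuous mapping theorem so does its square. For $\hat{\mf I}$ the argument is the same as for $W_n^{(0)}$ in Step 1: it converges almost surely with respect to $\mathbb{Q}$ to $\mf I>0$, by the U-statistic law of large numbers with a Hoeffding decomposition and a change of variables. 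The continuous mapping theorem then gives $\hat\tau^2\to\tau^2$.

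\textbf{Main obstacle.} The delicate point is the almost sure (in $\mathbb{Q}$) control of the location U-statistics $W_n^{(0)}$ and $\hat{\mf I}$. The kernel has shrinking bandwidth and anisotropic scaling $\lambda_n\bb_n$, so the expected number of contributing pairs is of order $n^2b^2$ rather than $n^2$. We would try an exponential inequality for degenerate U-statistics with bounded kernels, with variance $\asymp n^{-2}b^{-2}$. Summability over $n$ is then needed, which should follow from $b=cn^{-\eta}$ and (T1). Compared with this, the moment estimates under mixing are routine.
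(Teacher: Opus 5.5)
Your proposal is correct and follows the paper's route. The paper's proof consists exactly of your Step 1: it bounds the two bias pieces by $O(nTb^{2.5})$ using the kernel support and (E3), then applies Slutsky. Your Steps 2--3 supply what the paper leaves to Remark~\ref{rm:sigma} and to the law of large numbers for $\widetilde K_n$. One caveat in Step 2: $\hat{\mf I}\to(\int K)^2\int_{\mathbf D}f^2$, so the identity $\sigma^2/(4\mf A_1^2)=(E[X_{(0,0)}^2])^2\mf B_2^3/(16\,\mf I)$ and the consistency of $\hat\tau^2$ hold only under the normalisation $\int K=1$ (together with $\Lambda_n^{-1}\h_0\to 0$), and you should state this explicitly rather than call it bookkeeping.
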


\begin{remark}\label{rm:sigma}
   {\rm 
  Consistency of $\widehat{E[X_{(0,0)}^2]}$ can be easily obtained by repeating the Step 1 of the proof of \cref{Theorem CLT Covariance Estimator}. 
   Furthermore, following  \cite{lahiri2003central} and  \cite{kurisu2022nonparametric}, we may assume that $f$ and $\lambda_n$ ($\lambda_n\bb_n$) can be obtained from prior information.  Since $\mf I $ and $\mf B_2$ are known constants   if the density $f$ is known,  the asymptotic variance 
    in \eqref{nhd2} can in this case be consistently estimated by 
    \begin{align}
\hat \tau^2=   { \widehat{(E[X_{(0,0)}^2])^2}\mf B_2^3  \over  16~ { \mf I}} .
    \label{nhd5}
    \end{align}}
\end{remark}

Recalling the definition of the estimator of the covariance matrix in \eqref{eq covariance matrix estimator} and denoting  by  
 $\text{vec}(\widehat{\mf C})$  its  $MN$-dimensional vectorized version, which is obtained by  stacking the columns of $\widehat{\mf C}$ in one vector. We now give a central limit theorem for the  vector
\begin{align}
    \text{vec}(\widehat{\mf C})-\text{vec}(\mf C). 
\end{align}

\begin{theorem}
    \label{Theorem MCLT Covariance Estimator} 
    If the assumptions of Theorem \ref{Theorem Feasible Inference} are satisfied, 
   we have 
 \begin{align}
      nTb^{1.5}(\text{vec}(\widehat{\mf C})-\text{vec}(\mathbf{C})) \overset{d}{\to} N \big ( \mf  0, \tau^2 I_{MN} \big ) 
      \label{nhd3}
 \end{align}  
almost surely with respect to $\mathbb{Q}$, where $\tau^2$ is defined in \eqref{limvar}. Moreover, the asymptotic variance in \eqref{nhd3} can be consistently estimated by \eqref{nhd4} (or by  \eqref{nhd5} if the density $f$ is known).
\end{theorem}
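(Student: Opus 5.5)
The plan is to reduce the multivariate statement to the univariate results via the Cramér--Wold device. Fix $\bs a=(a_{ij})\in\mathbb{R}^{MN}$ and write
\[
nTb^{1.5}\sum_{i,j}a_{ij}\big(\hat C(\h_i,v_j)-C(\h_i,v_j)\big).
\]
Each $\hat C(\h_i,v_j)$ is a ratio whose denominator, normalised by $n^2T^2b^{3}$ (equivalently $\hat{\mf I}\cdot V_T^{(0)}$), converges almost surely with respect to $\mathbb{Q}$ to $\mf A_1$-type constants. This was established in the proof of Theorem \ref{Theorem CLT Covariance Estimator}. I would therefore linearise each coordinate as
\[
\hat C(\h_i,v_j)-C(\h_i,v_j)=\frac{1}{2\mf A_1}\,\big(Z_{n}(\h_i,v_j)+\text{bias}_{ij}\big)+o_P\big((nTb^{1.5})^{-1}\big),
\]
where $Z_n(\h_i,v_j)$ is the centred, kernel-weighted $U$-statistic-type sum $\frac{1}{n^2T^2b^{3}}\sum K_{0b}^{(ij)}\big(X X'-C(\s_{in}-\s_{i'n},t-t')\big)$. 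Under $nTb^{2.5}\to0$ the bias terms vanish after scaling, exactly as in Theorem \ref{Theorem Feasible Inference}. It then suffices to show that $\sum a_{ij}\,nTb^{1.5}Z_n(\h_i,v_j)$ is asymptotically $N\big(0,\sigma^2\sum a_{ij}^2\big)$.

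For the asymptotic normality I would rerun the blocking argument from the proof of Theorem \ref{Theorem CLT Covariance Estimator}, now applied to the linear combination. The summands of $\sum_{ij}a_{ij}Z_n(\h_i,v_j)$ are sums over pairs with combined kernel $\sum_{ij}a_{ij}K_{0b}^{(ij)}$. This combined kernel is still bounded and compactly supported around each lag, so the big-block/small-block decomposition with sequence $q_n$ from (T4), the mixing coupling (Assumption \ref{dependence}, (T2)--(T3)), and the Lyapunov-type moment bounds from Assumption \ref{moment conditions} all carry over verbatim, with constants multiplied by $\max|a_{ij}|$. The marginal variance of each coordinate is already known from Theorem \ref{Theorem Feasible Inference}. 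What remains is the cross-covariances.

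The main step is to show that, for $(i,j)\neq(i',j')$,
\[
(nTb^{1.5})^2\,\mathrm{Cov}\big(Z_n(\h_i,v_j),Z_n(\h_{i'},v_{j'})\big)\to0 .
\]
Expanding the covariance produces fourth-order moment terms over quadruples of indices. The leading variance contribution in the diagonal case comes from "matched" pairs, namely the same location pair $(\s_{in},\s_{i'n})$ and the same time pair $(t,t')$ (or their swaps). For such a pair to contribute to both coordinates, the kernels $K_{0b}^{(ij)}$ and $K_{0b}^{(i'j')}$ must both be nonzero. That requires $|\,|t-t'|-v_j|\le Tb$ and $|\,|t-t'|-v_{j'}|\le Tb$, together with the analogous spatial conditions at scale $\lambda_n b$ around $\h_i,\h_{i'}$. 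Since the evaluation distances are distinct and fixed while $b\to0$, the supports are eventually disjoint: $|v_j-v_{j'}|>2Tb$ or $\|\h_i-\h_{i'}\|$ exceeds the spatial window. Hence the diagonal contribution is exactly zero for large $n$.

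The non-matched quadruples are controlled by the same covariance inequalities (the $\beta$-mixing bound with $\Psi$ and the summability in (T3)) that showed their negligibility in the univariate proof. Those bounds did not use the particular lag, so they give $o(1)$ after scaling. The delicate point, and the one I expect to be hardest, is checking that the spatial separation condition holds on the rescaled kernel. The spatial kernel argument is $(\s-\s'-\h)/(\lambda_n b)$, so $\h_i\neq\h_{i'}$ gives disjointness only if $\|\h_i-\h_{i'}\|\gg\lambda_n b$. With $\h_i$ of order $e_n$ (Assumption \ref{as regularity}), this must be argued through the time component whenever $v_j\neq v_{j'}$. When $v_j=v_{j'}$ but $\h_i\neq\h_{i'}$, I would instead bound the overlap measure of the two spatial windows under the density $f$, showing it is $o(1)$ relative to $\mf A_2$. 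If exact disjointness fails, this overlap bound is the fallback.

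Combining the steps, the limiting covariance is $\tau^2 I_{MN}$, and Cramér--Wold gives \eqref{nhd3} almost surely with respect to $\mathbb{Q}$. Consistency of $\hat\tau^2$ is inherited from Theorem \ref{Theorem Feasible Inference}, or from Remark \ref{rm:sigma} when $f$ is known, because $\tau^2$ is common to all coordinates.
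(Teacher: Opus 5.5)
Your route is the paper's: Cram\'er--Wold, linearisation through the a.s.\ limit $2\mf A_1$ of the normalised denominator, removal of the bias via $nTb^{2.5}\to0$, a rerun of the blocking CLT for the combined kernel, and the key claim that off-diagonal covariances vanish because a matched pair cannot lie in the supports of two different kernels. The gap is in how you justify that claim.

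\textbf{Disjointness does not follow from the hypotheses.} The grid is not ``fixed while $b\to0$''. By Assumption~\ref{as regularity} the lags are of order $e_n$. In raw units, however, the temporal window $\{|t-t'| : \, |\,|t-t'|-v_j|\le Tb\}$ has half-width $Tb$, and (T1) together with $T/n\to c$ forces $Tb\to\infty$. The spatial windows likewise have half-widths $\lambda_n b$ and $\lambda_n\bb_n b$. Disjointness for $v_j\neq v_{j'}$ is therefore the condition $|v_j-v_{j'}|>2Tb$. This is a relation between $e_n$ and $Tb$ that the hypotheses do not contain. With $e_n=\log n$ (the paper's Gneiting example) it fails: $Tb$ grows polynomially, so all temporal windows asymptotically coincide. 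Routing the argument ``through the time component'' therefore has the same defect you worried about on the spatial side.

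\textbf{The overlap fallback cannot close the gap.} Suppose $v_j=v_{j'}$ and $\Lambda_n^{-1}(\h_i-\h_{i'})/b\to\bs\Delta=(\Delta_1,\Delta_2)$. Then the matched-pair part of $(nTb^{1.5})^2\,\mathrm{Cov}(Z_n(\h_i,v_j),Z_n(\h_{i'},v_{j'}))$ converges to a constant proportional to $\prod_{k=1}^2\int K(u)K(u+\Delta_k)\,du$. This is generally nonzero when the windows overlap, and equals the diagonal value when $\bs\Delta=\mf 0$. The same happens in time when $|v_j-v_{j'}|/(Tb)$ stays below $2$. So an overlap bound of order $o(1)$ relative to $\mf A_2$ is \emph{equivalent} to separation, not a substitute for it. Without separation the limit covariance is simply not $\tau^2 I_{MN}$.

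\textbf{What is needed.} You need an explicit hypothesis that distinct grid points are separated on the kernel scale, for example $|v_j-v_{j'}|>2Tb+q_n$, or $v_j=v_{j'}$ together with $|h_{i,1}-h_{i',1}|>2\lambda_nb$ or $|h_{i,2}-h_{i',2}|>2\lambda_n\bb_nb$.

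The paper imposes this tacitly. Its Step~1 takes $v_1<v_2$ and asserts $v_2-v_1-2Tb>q_n$, which is what makes the fully matched term and $\triangle_{11}$ vanish identically; its auxiliary lemmas effectively treat the lag as $u_0T$ with $u_0\in(0,1)$. Its ``without loss of generality'' also silently skips the case $v_1=v_2$, $\h_1\neq\h_2$, which you correctly flag.

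Under such a separation hypothesis your argument goes through. Note that the hypothesis is also needed in the (D2) step for the combined kernel, since $(\sum a_{ij}K^{(ij)}_{0b})^2$ contains the same cross products $K^{(ij)}_{0b}K^{(i'j')}_{0b}$.
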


\section{Statistical applications}
\label{sec4}
\def\theequation{4.\arabic{equation}}
\setcounter{equation}{0}

In this section, the asymptotic results derived in Section \ref{sec3} are used to develop several tools for statistical inference on the separability of the matrix $\mf C$ defined in \eqref{d1}.

\subsection{Exact hypotheses for spatial-temporal separability}
\label{sec41}

By the discussion in Sections \ref{sec21} and \ref{sec22},   exact (grid) separability 
 is equivalent  to 
 \begin{align}
     \label{d6}
  H_0:~   D_{\bs \psi} = D = 0,
 \end{align}
 where the measures $ D_{\bs \psi} $ and $  D$ are defined in \eqref{d7} and \eqref{d8}, respectively. Note that a non-separable covariance matrix $\mf C$   yields values  $D_{\bs \psi} > 0 $ and $D>0$. In this section, we construct test statistics based on estimators of $D_{\bs \psi} $ and $D$
 and investigate their asymptotic properties under the null hypothesis of exact (grid) separability and the alternative hypothesis of non-separability. 

 \subsubsection{Partial trace approach}
 \label{sec411}

 The estimator of the  partial-trace  distance  in \eqref{d7} is defined as  
\begin{align}
\label{d9}
\widehat{D}_{\psi} = \|\widehat {\mf C}\|_F^2 - \frac{\|\bs  \psi^{\top}\widehat {\mf C}^{\top} \widehat {\mf C}\|_F^2}{\|\widehat {\mf C} \bs  \psi\|_F^2} ,  \end{align}
 where  
\begin{align}
  \label{d12}  
\widehat{\mathbf C} =( \hat{C} (\h_i, v_j) )_{i=1, \ldots, M}^{j=1, \ldots N},
\end{align}
is the (undersmoothed)  estimator defined in \eqref{eq covariance matrix estimator}. 
    The following result establishes asymptotic properties of $\widehat{D}_{\psi}$ under the null hypothesis \eqref{d6}.

 \begin{theorem}\label{thm:partialexact}
      If the assumptions of Theorem \ref{Theorem Feasible Inference}  and the  null hypothesis  \eqref{d6} are satisfied, we have 
       \begin{align}
       \label{det11}
           & (nT)^2b^{3}\widehat{D}_{\bs \psi}\overset{d}{\to} 
           {\cal L} 
           \end{align}
       almost surely with respect to $\mathbb{Q}$,   where
        \begin{align}
        {\cal L}&:= \left\|\tau \G- \frac{\tau  \G \bs  \psi \bs   \psi^{\top} \mf C^{\top} \mf C}{\|\mf C \bs \psi \|_F^2 } \right\|_F^2 - \left\| \frac{\tau \G^{\top} \mf C \bs \psi}{\|\mf C \bs \psi\|_F}-\frac{\tau \mf C^{\top} \G \bs  \psi}{\|\mf C \bs \psi\|_F}\right\|_F^2
       \end{align}
       and 
 $\mathcal{G}$ is an $M\times N$  random matrix with independent standard normal distributed entries,  $\tau^2$ is defined  in
    \eqref{limvar} and can be consistently estimated by  \eqref{nhd4}.
   \end{theorem}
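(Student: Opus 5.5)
The plan is a second-order delta method. Under $H_0$, $D_{\bs\psi}(\mf C)=0$ and the map $\mf A\mapsto D_{\bs\psi}(\mf A)$ attains its minimum at $\mf C$. Hence its gradient at $\mf C$ vanishes, and the leading term of $\widehat D_{\bs\psi}-D_{\bs\psi}(\mf C)$ is quadratic in $\Delta:=\widehat{\mf C}-\mf C$.

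Write $\widehat{\mf C}=\mf C+\Delta$ with $\mf C=\mf a_1\mf a_2^\top$. Set $\mf c:=\mf C\bs\psi$, so that $\mf c=(\mf a_2^\top\bs\psi)\mf a_1\neq 0$ by the choice of $\bs\psi$.

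\textbf{Step 1 (rate of $\Delta$).} Theorem \ref{Theorem MCLT Covariance Estimator} gives $r_n\Delta\overset{d}{\to}\tau\G$ almost surely with respect to $\mathbb Q$, where $r_n:=nTb^{1.5}$. In particular $\Delta=O_P(r_n^{-1})$ conditionally on $\mf S_n$.

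\textbf{Step 2 (algebraic identity).} By \eqref{d3}, $\widehat D_{\bs\psi}=\min_{\mf c_2}\|\widehat{\mf C}-\hat{\mf c}\mf c_2^\top\|_F^2$ with $\hat{\mf c}:=\widehat{\mf C}\bs\psi=\mf c+\Delta\bs\psi$. Equivalently,
\[
\widehat D_{\bs\psi}=\big\|(I-P_{\hat{\mf c}})\widehat{\mf C}\big\|_F^2,\qquad P_{\mf x}:=\mf x\mf x^\top/\|\mf x\|_F^2 .
\]
Since $(I-P_{\mf c})\mf C=0$, we get
\[
(I-P_{\hat{\mf c}})\widehat{\mf C}=(I-P_{\hat{\mf c}})\Delta-(P_{\hat{\mf c}}-P_{\mf c})\mf C .
\]
A first-order expansion of the projection gives
\[
P_{\hat{\mf c}}-P_{\mf c}=\frac{(I-P_{\mf c})\Delta\bs\psi\,\mf c^\top+\mf c\,\bs\psi^\top\Delta^\top(I-P_{\mf c})}{\|\mf c\|_F^2}+O(\|\Delta\|^2).
\]
Multiplying by $\mf C$ and using $(I-P_{\mf c})\mf C=0$ kills the second summand. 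Hence
\[
(I-P_{\hat{\mf c}})\widehat{\mf C}=(I-P_{\mf c})\Big(\Delta-\frac{\Delta\bs\psi\bs\psi^\top\mf C^\top\mf C}{\|\mf c\|_F^2}\Big)+O(\|\Delta\|^2).
\]
Squaring, the remainder contributes $O_P(r_n^{-3})$. Therefore
\[
r_n^2\widehat D_{\bs\psi}=\Phi(r_n\Delta)+o_P(1),
\]
where $\Phi$ is a continuous (quadratic) map.

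\textbf{Step 3 (identify $\Phi$ with ${\cal L}$).} Put $E:=\Delta-\Delta\bs\psi\bs\psi^\top\mf C^\top\mf C/\|\mf c\|_F^2$. Then
\[
\|(I-P_{\mf c})E\|_F^2=\|E\|_F^2-\|\mf c^\top E\|_F^2/\|\mf c\|_F^2 .
\]
For the second term, note $\mf c^\top\mf C\bs\psi\bs\psi^\top\mf C^\top\mf C/\|\mf c\|_F^2=\mf c^\top\mf C$, so
\[
\mf c^\top E=\mf c^\top\Delta-(\mf c^\top\Delta\bs\psi)\,\mf c^\top\mf C/\|\mf c\|_F^2 .
\]
I then need to show this equals $(\Delta^\top\mf c-\mf C^\top\Delta\bs\psi)^\top$, possibly up to a sign that does not matter after taking norms. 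This uses the rank-one structure: $\mf C^\top\Delta\bs\psi=\mf a_2\mf a_1^\top\Delta\bs\psi$, and $\mf c^\top\mf C/\|\mf c\|_F^2=\mf a_2^\top/(\mf a_2^\top\bs\psi)$. Both sides then reduce to $\Delta^\top\mf c-\mf a_2\,\mf a_1^\top\Delta\bs\psi$, which gives exactly the form of ${\cal L}$ with $\G$ in place of $r_n\Delta/\tau$.

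\textbf{Step 4 (conclude).} Apply the continuous mapping theorem, conditionally on $\mf S_n$ and for $\mathbb Q$-almost every realization, to $\Phi(r_n\Delta)$ using Step 1. Slutsky's lemma handles the $o_P(1)$ remainder. This yields \eqref{det11}. Consistency of $\hat\tau^2$ is already part of Theorems \ref{Theorem Feasible Inference} and \ref{Theorem MCLT Covariance Estimator}.

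The main obstacle is Step 2. The expansion of $P_{\hat{\mf c}}$ needs $\|\hat{\mf c}\|$ bounded away from zero with probability tending to one. This holds because $\hat{\mf c}\to\mf c\neq0$, but possibly only after handling the scaling of $\mf C$: by Assumption \ref{as regularity} its entries may tend to zero. In that case I would normalize by writing $\mf C=\|\mf C\|_F\bar{\mf C}$ and check that the remainder stays of order $\|\Delta\|^3/\|\mf C\|_F$. Using (E1), $\|\mf C\|_F r_n\gtrsim r_n\widetilde\gamma_0(e_n)$, which is bounded below, so I expect the remainder to remain negligible.
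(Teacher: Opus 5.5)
Your proposal is correct, but it follows a different route from the paper. The paper never uses the projection form. It sets $\phi(\mathbf C)=D_{\boldsymbol\psi}(\mathbf C)\|\mathbf C\boldsymbol\psi\|_F^2=\|\mathbf C\|_F^2\|\mathbf C\boldsymbol\psi\|_F^2-\|\boldsymbol\psi^\top\mathbf C^\top\mathbf C\|_F^2$, a polynomial of degree four, so $\phi(\mathbf C+t\mathbf G)$ can be expanded exactly in powers of $t$. It then checks by trace identities that the linear coefficient vanishes under $H_0$. It obtains the $t^2$ coefficient by direct computation and divides by $\|\widehat{\mathbf C}\boldsymbol\psi\|_F^2$ only at the end, via Slutsky.

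You instead write $\widehat D_{\boldsymbol\psi}=\|(I-P_{\hat{\mathbf c}})\widehat{\mathbf C}\|_F^2$ and perturb the rank-one projection. In your version the linear term vanishes for a transparent reason, namely $(I-P_{\mathbf c})\mathbf C=0$. The limit also appears directly as the squared norm of a linear image of $r_n\Delta$. Your Step 3 identity checks out: under $\mathbf C=\mathbf a_1\mathbf a_2^\top$, both $(\mathbf c^\top E)^\top$ and $\Delta^\top\mathbf c-\mathbf C^\top\Delta\boldsymbol\psi$ equal $\Delta^\top\mathbf c-\mathbf a_2\mathbf a_1^\top\Delta\boldsymbol\psi$. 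So you recover exactly the paper's second-order coefficient divided by $\|\mathbf C\boldsymbol\psi\|_F^2$.

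Your route buys structure. It shows ${\cal L}\ge 0$ and writes ${\cal L}=\tau^2\mathrm{vec}(\mathcal G)^\top\mathbf L^\top\mathbf L\,\mathrm{vec}(\mathcal G)$ for an explicit linear map $\mathbf L$, i.e.\ a weighted $\chi^2$ law with nonnegative weights. Neither fact is visible from the paper's difference of two squared norms. The paper's polynomial route, in turn, needs no remainder bound for a nonlinear map before the final Slutsky step.

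One correction to your last paragraph. The remainder you carry is $O(\|\Delta\|^3/\|\mathbf C\|_F)$, so after multiplying by $r_n^2$ it is $O_P\big((r_n\|\mathbf C\|_F)^{-1}\big)$. Knowing only that $r_n\|\mathbf C\|_F$ is bounded below, which is all (E1) gives, does not make this $o_P(1)$. Indeed, if $r_n\mathbf C$ stays bounded, homogeneity gives $r_n^2\widehat D_{\boldsymbol\psi}=D_{\boldsymbol\psi}(r_n\mathbf C+r_n\Delta)$, whose limit is not ${\cal L}$.

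What you actually need is $r_n\|\mathbf C\boldsymbol\psi\|_F\to\infty$, together with convergence of $\mathbf C/\|\mathbf C\|_F$, since ${\cal L}$ depends on $\mathbf C$ only through this direction. The paper's proof tacitly treats $\mathbf C$ as fixed. The same issue sits in its $t^3,t^4$ terms and in its use of $\|\widehat{\mathbf C}\boldsymbol\psi\|_F\to\|\mathbf C\boldsymbol\psi\|_F$. So this is not a gap relative to the paper, but it should be stated as an assumption rather than derived from (E1).
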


\begin{remark}
    {\rm 
    Let  $q_{1-\alpha}$ denote the $(1-\alpha)$ quantile of the distribution of the random variable ${\cal L}$ in \eqref{det11}.
We can estimate the quantile of the distribution of  $\mathcal L$ 
replacing 
    the matrix $\mf C$ by its estimate  $\widehat{\mf C}$ and  simulating the distribution of  $\mathcal G $.
    For this purpose we generate an $M \times N$ random matrix  $\mathcal{G}$ with independent standard normal distributed entries,
calculate the variance estimator  $ \hat{\tau }^2$ in \eqref{nhd4}  and define
  $$
\hat{\mathcal L} :=\left\| \hat{\tau }\mathcal{G}- \frac{ \hat{\tau }\mathcal{G} \bs  \psi \bs   \psi^{\top} \widehat{\mf C}^{\top} \widehat {\mf C}}{\|\widehat {\mf C} \bs \psi \|_F^2 } \right\|_F^2 - \left\| \frac{\hat{\tau }\mathcal{G}^{\top} \widehat {\mf C} \bs \psi}{\|\widehat {\mf C} \bs \psi\|}-\frac{\hat{\tau }\widehat {\mf C}^{\top}  \mathcal{G}  \bs  \psi}{\|\widehat {\mf C} \bs \psi\|}\right\|_F^2.
$$
Then, it is easy to see that this  estimator satisfies that  
$
\hat{\mathcal L} 
\xrightarrow[]{d} 
{\cal L} 
$
holds almost surely with respect to $\mathbb Q$. Therefore, we use the $(1-\alpha)$-quantile $\hat q_{1-\alpha}
$ of the distribution of $\hat{\mathcal L} $ as approximation for the corresponding quantile $q_{1-\alpha}$ of the distribution of ${\mathcal L} $. The quantile $\hat q_{1-\alpha}$   can be estimated with arbitrary precision by the empirical $(1-\alpha)$ quantile of $B$  simulations from  $\hat{\mathcal L}$, if $B$ is chosen sufficiently large.
Finally, we propose to reject the hypothesis of  exact spatial-temporal separability in \eqref{d6}  at the significance level $\alpha$, whenever
 \begin{align}
 \label{eq:reject} 
          \widehat D_{\bs \psi} > {\hat q_{1-\alpha} \over (nT)^2b^3}.
\end{align}
}
\end{remark}

Our next result establishes the asymptotic properties of the estimate $\widehat D_{\bs \psi}$  in the case $ D_{\bs \psi} \ne 0$ (or equivalently $ D \ne 0$). On the one hand, it yields the consistency of the test \eqref{eq:reject} for the hypotheses \eqref{d6}. On the other hand, it can be used for the construction of confidence intervals for the measure $D_{\bs \psi}$ of deviation from separability.  
Moreover, it is a basic tool for constructing a test for relevant hypotheses, which will be discussed in Section \ref{sec412} below.

  \begin{theorem} \label{thm42} If the assumptions of Theorem \ref{Theorem Feasible Inference} are satisfied  and  
  $\mf C$ is not (grid) separable (that is $D_{\bs \psi} > 0$),  we have 
       \begin{align}
         nTb^{1.5}  (\widehat  D_{\bs \psi} - D_{\bs \psi})\overset{d}{\to} N \big (0, 4 \tau ^2 \| \mf W\|^2_F \big ) 
       \end{align}
       almost surely with respect to $\mathbb{Q}$, where  $\tau^2$ is defined in \eqref{limvar} and 
       \begin{align}
        \label{d11}
           \mf W = 
           \mf C
           -  \mf C \bs \psi \bs \psi^{\top}\mf  C^{\top} \mf C  +  \|\mf C\|_F^2  \mf C \bs \psi \bs \psi^{\top}  -  \mf C \mf C^{\top} \mf C \bs \psi \bs \psi^{\top}.
\end{align}\label{thm:alterexact}
   \end{theorem}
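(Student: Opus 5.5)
The plan is a first-order delta method applied to the smooth map $\Phi_{\bs\psi}:\mathbb{R}^{M\times N}\to\mathbb{R}$, $\Phi_{\bs\psi}(\mf A)=\|\mf A\|_F^2-\|\bs\psi^\top\mf A^\top\mf A\|_F^2/\|\mf A\bs\psi\|_F^2$. We have $\widehat D_{\bs\psi}=\Phi_{\bs\psi}(\widehat{\mf C})$ and $D_{\bs\psi}=\Phi_{\bs\psi}(\mf C)$, and the joint limit of $\widehat{\mf C}$ is supplied by Theorem \ref{Theorem MCLT Covariance Estimator}. The map $\Phi_{\bs\psi}$ is a rational function of the entries. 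It is $C^\infty$ on the open set $\{\mf A:\mf A\bs\psi\neq 0\}$, and $\mf C$ lies in this set because $D_{\bs\psi}$ is well defined; in particular (E1) ensures $\mf C\ne 0$. Set $r_n:=nTb^{1.5}$ and $\mf E_n:=\widehat{\mf C}-\mf C$. Conditionally on $\mf S_n=S_n$, Theorem \ref{Theorem MCLT Covariance Estimator} gives $r_n\,\text{vec}(\mf E_n)\to N(\mf 0,\tau^2I_{MN})$ almost surely with respect to $\mathbb Q$, and hence $\mf E_n=O_P(r_n^{-1})$ conditionally.

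The first step is to compute the derivative of $\Phi_{\bs\psi}$ at $\mf C$ in a direction $\mf E$. Write $\bs a=\mf C\bs\psi$ and $\bs g=\mf C^\top\bs a$. Then
\begin{align}
d\|\mf C\|_F^2[\mf E]&=2\langle\mf E,\mf C\rangle,\\
d\|\bs g\|^2[\mf E]&=2\langle \mf E,\bs a\bs g^\top+\mf C\bs g\bs\psi^\top\rangle,\\
d\|\bs a\|^2[\mf E]&=2\langle\mf E,\bs a\bs\psi^\top\rangle.
\end{align}
By the quotient rule,
$$d\Phi_{\bs\psi}(\mf C)[\mf E]=2\Big\langle \mf E,\ \mf C-\frac{\bs a\bs g^\top+\mf C\bs g\bs\psi^\top}{\|\bs a\|^2}+\frac{\|\bs g\|^2}{\|\bs a\|^4}\bs a\bs\psi^\top\Big\rangle .$$
Substituting $\bs g=\mf C^\top\mf C\bs\psi$ and collecting terms, I expect this gradient to reduce to $2\mf W$ with $\mf W$ as in \eqref{d11}; the stated form corresponds to the normalization $\|\mf C\bs\psi\|_F=1$, which is harmless since $\Phi_{\bs\psi}$ is invariant under rescaling $\bs\psi$. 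I will verify this identification carefully, including the $\|\mf C\|_F^2$ term, because the constant in the limiting variance depends on it.

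The second step is a Taylor expansion with second-order remainder:
$$\widehat D_{\bs\psi}-D_{\bs\psi}=2\langle\mf E_n,\mf W\rangle+R_n,\qquad |R_n|\le \sup_{\|\mf A-\mf C\|_F\le\|\mf E_n\|_F}\|\nabla^2\Phi_{\bs\psi}(\mf A)\|\,\|\mf E_n\|_F^2.$$
On the event $\|\mf E_n\|_F\le\|\mf C\bs\psi\|_F/(2\|\bs\psi\|)$, whose conditional probability tends to one, the Hessian is bounded. Hence $r_nR_n=O_P(r_n^{-1})=o_P(1)$ conditionally on $\mf S_n=S_n$, $\mathbb Q$-a.s. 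This uses $r_n\to\infty$, which follows from (T1) together with $T/n\to c$. For the linear term, the continuous mapping theorem applied to the conditional limit gives $r_n\,2\langle\mf E_n,\mf W\rangle\to 2\tau\langle\G,\mf W\rangle$, which is distributed as $N(0,4\tau^2\|\mf W\|_F^2)$. Slutsky's lemma then yields the claim, and each step is carried out pointwise in $S_n$ on the $\mathbb Q$-full-measure set where Theorem \ref{Theorem MCLT Covariance Estimator} holds.

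The main obstacle is rigor rather than difficulty. One issue is that the grid quantities $\mf C$ and $\mf W$ may depend on $n$ through $e_n$ in Assumption \ref{as regularity}. In that case the Hessian bound must be uniform: I would control $\|\mf C\bs\psi\|_F$ from below using (E1), $\widetilde\gamma_0(e_n)\gtrsim (nTb^{1.5})^{-1}$, and check that the remainder remains $o_P(1)$ relative to $\|\mf W\|_F$. If needed, I would state the result in the self-normalized form $r_n(\widehat D_{\bs\psi}-D_{\bs\psi})/(2\tau\|\mf W\|_F)\to N(0,1)$. The other issue is that the normal limit is non-degenerate only when $\mf W\neq 0$. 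I would argue this follows from non-separability, since $\mf W=0$ would make $\mf C$ a stationary point whose structure forces $\mf C=\mf C\bs\psi\bs c_2^\top$, that is rank one. Otherwise I would note that the limit is interpreted as a point mass when $\|\mf W\|_F=0$.
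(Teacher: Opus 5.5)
Your route (a first-order delta method applied directly to $D_{\bs\psi}$, using Theorem~\ref{Theorem MCLT Covariance Estimator}) is sound, and your quotient-rule gradient is correct. The gap is the step you deferred. That gradient does \emph{not} reduce to $2\mf W$ with $\mf W$ from \eqref{d11}, even after normalizing $\|\mf C\bs\psi\|_F=1$. Under that normalization your half-gradient is
\[
\widetilde{\mf W}=\mf C-\mf C\bs\psi\bs\psi^\top\mf C^\top\mf C-\mf C\mf C^\top\mf C\bs\psi\bs\psi^\top+\|\mf C^\top\mf C\bs\psi\|_F^2\,\mf C\bs\psi\bs\psi^\top .
\]
Hence $\mf W-\widetilde{\mf W}=(\|\mf C\|_F^2-\|\mf C^\top\mf C\bs\psi\|_F^2)\,\mf C\bs\psi\bs\psi^\top=D_{\bs\psi}\,\mf C\bs\psi\bs\psi^\top$, which is nonzero precisely under the alternative. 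Using $\bs\psi^\top\mf C^\top\mf C\bs\psi=\|\mf C\bs\psi\|_F^2$ and $\bs\psi^\top\mf C^\top\mf C\mf C^\top\mf C\bs\psi=\|\mf C^\top\mf C\bs\psi\|_F^2$, one checks $\langle\widetilde{\mf W},\mf C\bs\psi\bs\psi^\top\rangle_F=0$. Therefore $\|\mf W\|_F^2=\|\widetilde{\mf W}\|_F^2+D_{\bs\psi}^2\|\bs\psi\|^2$, which is strictly larger. A concrete check: take $\mf C=\mathrm{diag}(1,\varepsilon)$ and $\bs\psi=e_1$. Then $D_{\bs\psi}(\mathrm{diag}(1+t,\varepsilon))=\varepsilon^2$ for all $t$ near $0$, so the derivative in direction $e_1e_1^\top$ vanishes, whereas $\langle e_1e_1^\top,\mf W\rangle_F=\varepsilon^2$. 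Indeed $D_{\bs\psi}(\mf C+\mf E)=\varepsilon^2+2\varepsilon E_{22}+O(\|\mf E\|_F^2)$ gives limiting variance $4\tau^2\varepsilon^2$, not $4\tau^2\|\mf W\|_F^2=4\tau^2(\varepsilon^2+\varepsilon^4)$. So the verification you promised will fail, and your argument cannot reach the stated variance.

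The discrepancy comes from the paper's proof, not from your calculus. The paper linearizes the polynomial $\phi(\mf C)=D_{\bs\psi}(\mf C)\|\mf C\bs\psi\|_F^2$; $\mf W$ is half its gradient at $\|\mf C\bs\psi\|_F=1$. It then divides by $\|\widehat{\mf C}\bs\psi\|_F^2$ and invokes Slutsky. However,
\[
\widehat D_{\bs\psi}-D_{\bs\psi}=\frac{\phi(\widehat{\mf C})-\phi(\mf C)}{\|\widehat{\mf C}\bs\psi\|_F^2}+\phi(\mf C)\Big(\frac{1}{\|\widehat{\mf C}\bs\psi\|_F^2}-\frac{1}{\|\mf C\bs\psi\|_F^2}\Big).
\]
When $\phi(\mf C)\neq0$ the last term is of exact order $r_n^{-1}$. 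It contributes $-2D_{\bs\psi}\langle\mf E_n,\mf C\bs\psi\bs\psi^\top\rangle_F=-2\langle\mf E_n,\mf W-\widetilde{\mf W}\rangle_F$ to the linearization. Under the null $\phi(\mf C)=0$, which is why Theorem~\ref{thm:partialexact} is unaffected. So do not force your gradient into the form \eqref{d11}. Executed as planned, your proof gives $r_n(\widehat D_{\bs\psi}-D_{\bs\psi})\to N(0,4\tau^2\|\widetilde{\mf W}\|_F^2)$, where without normalization $\widetilde{\mf W}=\mf C-(\mf C\bs\psi\bs\psi^\top\mf C^\top\mf C+\mf C\mf C^\top\mf C\bs\psi\bs\psi^\top)/\|\mf C\bs\psi\|_F^2+\|\mf C^\top\mf C\bs\psi\|_F^2\,\mf C\bs\psi\bs\psi^\top/\|\mf C\bs\psi\|_F^4$. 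That is the correct conclusion.

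Your nondegeneracy argument does work for $\widetilde{\mf W}$. For $x\perp\bs\psi$, $\widetilde{\mf W}x=\mf Cx-\mf C\bs\psi\,(\mf C^\top\mf C\bs\psi)^\top x/\|\mf C\bs\psi\|_F^2$, so $\widetilde{\mf W}=0$ forces $\mathrm{rank}(\mf C)=1$.

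Finally, (E1) cannot rescue an $n$-dependent $\mf C$. It only guarantees entries of order at least $r_n^{-1}$, the same order as $\|\mf E_n\|_F$. So, like the paper, you should treat $\mf C$ as fixed, or impose a stronger lower bound than (E1).
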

 It follows from \cref{thm:partialexact} and \ref{thm:alterexact} that the decision rule \eqref{eq:reject}  defines a consistent and asymptotic level $\alpha$ test for the hypotheses in  \eqref{d6}. 

\begin{remark} \label{rem42}
    {\rm We can use Theorem \ref{thm42} to construct an asymptotic  confidence interval for the measure $D_{\bs \psi }$ of deviation from separability. More precisely, if $\widehat{\mf W}$ denotes the estimator of the matrix $\mf W$ in \eqref{d11}, which is obtained by replacing $\mf C$ by the estimator $\widehat {\mf C}$ defined in \eqref{eq covariance matrix estimator},  we propose to use
    \begin{align}
        \label{d13}
        \hat \vartheta^2_{\bs \psi } := 4 \hat \tau ^2 
   \| \widehat{\mf W} \|^2_F 
   \end{align}
   as an estimator for the asymptotic variance in Theorem \ref{thm42},  where the  $\hat{\tau}^2$ is defined in \eqref{nhd4}. It is then easy to see that the interval
   \begin{align}
   \label{conf1}
   \Big [\widehat  D_{\bs \psi} - {\hat \vartheta_{\bs \psi } \over  nTb^{1.5}} u_{1-\alpha /2 } ~,~
   \widehat  D_{\bs \psi} + {\hat \vartheta_{\bs \psi } \over  nTb^{1.5}} u_{1-\alpha /2} \Big ]
   \end{align}
   defines an asymptotic $(1-\alpha)$-confidence interval  for the measure $D_{\bs \psi}$, where $u_{1-\alpha}$ denotes the $(1-\alpha)$ quantile of the standard normal distribution.
    }
\end{remark}
\subsubsection{SVD approach}
\label{sec4 2}
For estimating the minimum distance  $D$ in \eqref{d8} for the best rank-one approximation,  we consider the statistic
\begin{align}
\label{d10}
    \widehat{D}=||\widehat {\mf C}||_F^2-\sigma_{sv,1}^2(\widehat {\mf C}),
\end{align}
where $\widehat {\mf C}$ is the undersmoothed estimator of matrix $\mf C$ defined in \eqref{d12}. 
Our first result provides the asymptotic distribution of the statistic \eqref{d10} under the null hypothesis \eqref{d6} of exact (grid)-separability. 

\begin{theorem}
\label{th exact test SVD}
Let $\mf C \not = \mf 0 $. If the assumptions of  Theorem \ref{Theorem Feasible Inference} and the  null hypothesis  \eqref{d6} are satisfied,
we have
\begin{align}
\label{det13}
(nT)^2b^3 (\widehat{D}-D)\overset{d}{\to}  {\cal L}_{sv}:=\tau^2 {\mathbf G}^{\top}  {\mf P}  {\mathbf G}   
\end{align}
almost surely with respect to $\mathbb{Q}$,
where $ {\mathbf G} $  is an $MN$-dimensional vector of independent standard normal  distributed random variables, $\tau^2$ is defined in \eqref{limvar}  and 
$$
{\mf P}  = \Big (\mf I_{M}-\frac{\mf c_1^* (\mf c_1^*)^\top }{||{\mf c}_1^*||^2_F}
\Big) 
\otimes
 \Big ( \mf I_{N}-\frac{\mf c_2^* (\mf c_2^*)^\top}{||{\mf c}_2^*||_F^2}  \Big ),
$$
and $\mf c_1^*$
 and $\mf c_2^*$ are defined in \eqref{det12a} and \eqref{det12b}, respectively.
\end{theorem}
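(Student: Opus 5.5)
Under the null hypothesis we have $D=0$ and $\mf C=\sigma_{sv,1}\bs u\bs v^\top$ with $\sigma_{sv,1}>0$ because $\mf C\neq\mf 0$. The plan is a second-order perturbation expansion of the function $\mf A\mapsto \|\mf A\|_F^2-\sigma_{sv,1}^2(\mf A)$ around $\mf C$. We then apply the continuous mapping theorem together with Theorem \ref{Theorem MCLT Covariance Estimator}, which gives $\bs\Delta_n:=nTb^{1.5}(\widehat{\mf C}-\mf C)\overset{d}{\to}\tau\G$, with $\G$ an $M\times N$ matrix of i.i.d.\ standard normals, almost surely with respect to $\mathbb{Q}$.

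\textbf{Step 1 (exact algebraic identity).} For any matrix $\mf A$, write $\mf A=\sigma_1(\mf A)\,\bs u_{\mf A}\bs v_{\mf A}^\top+\text{rest}$. Then $\|\mf A\|_F^2-\sigma_1^2(\mf A)=\sum_{k\ge2}\sigma_k^2(\mf A)$. Set $\mf P_1=\mf I_M-\bs u\bs u^\top$ and $\mf P_2=\mf I_N-\bs v\bs v^\top$. Since $\bs u\propto\mf c_1^*$ and $\bs v\propto\mf c_2^*$ by the Remark in Section \ref{sec22}, these are exactly the two factors of $\mf P$.

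\textbf{Step 2 (deterministic expansion).} Put $\mf E=\widehat{\mf C}-\mf C$. We show
$$\sum_{k\ge2}\sigma_k^2(\mf C+\mf E)=\|\mf P_1\mf E\mf P_2\|_F^2+O(\|\mf E\|_F^3)\qquad\text{as }\|\mf E\|_F\to0.$$
The proof uses the variational characterization $\sum_{k\ge2}\sigma_k^2(\mf A)=\min_{\mathrm{rank}\,\mf B\le1}\|\mf A-\mf B\|_F^2$.

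For the upper bound, take the rank-one competitor
$$\mf B=(\sigma_1\bs u+\mf P_1\mf E\bs v)\Big(\bs v+\tfrac{1}{\sigma_1}\mf P_2\mf E^\top\bs u\Big)^\top+(\bs u^\top\mf E\bs v)\,\bs u\bs v^\top .$$
Then $\mf C+\mf E-\mf B=\mf P_1\mf E\mf P_2+O(\|\mf E\|^2)$. Because the $O(\|\mf E\|^2)$ correction is orthogonal to $\mf P_1\mf E\mf P_2$ up to cubic order, the squared norm equals $\|\mf P_1\mf E\mf P_2\|_F^2+O(\|\mf E\|^3)$.

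For the lower bound, use that the smooth simple top singular vectors satisfy $\bs u_{\widehat{\mf C}}=\bs u+O(\|\mf E\|)$ and $\bs v_{\widehat{\mf C}}=\bs v+O(\|\mf E\|)$; this is standard since $\sigma_1>0=\sigma_2$ gives a spectral gap. Then
$$\sum_{k\ge2}\sigma_k^2(\widehat{\mf C})\ge\|(\mf I-\bs u_{\widehat{\mf C}}\bs u_{\widehat{\mf C}}^\top)\,\widehat{\mf C}\,(\mf I-\bs v_{\widehat{\mf C}}\bs v_{\widehat{\mf C}}^\top)\|_F^2,$$
and in fact equality holds. Expanding the projectors to first order, and using $\mf P_1\mf C=0=\mf C\mf P_2$, gives the same leading term.

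Equivalently, one may quote the second-order perturbation formula for $\sigma_1^2(\mf C+\mf E)$:
$$\sigma_1^2+2\sigma_1\bs u^\top\mf E\bs v+(\bs u^\top\mf E\bs v)^2+\|\mf P_1\mf E\bs v\|^2+\|\bs u^\top\mf E\mf P_2\|^2+O(\|\mf E\|^3).$$
Subtracting this from $\|\mf C+\mf E\|_F^2=\sigma_1^2+2\sigma_1\bs u^\top\mf E\bs v+\|\mf E\|_F^2$ yields $\|\mf P_1\mf E\mf P_2\|_F^2$, because $\|\mf E\|_F^2$ decomposes orthogonally into the four blocks. I expect this step to be the main obstacle. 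The delicate point is showing that the cross-terms cancel exactly at second order and that the remainder is uniformly cubic. I would first verify the perturbation formula via eigenvalue perturbation of $\mf C^\top\mf C$ with a simple top eigenvalue, so that analytic perturbation theory applies.

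\textbf{Step 3 (limit).} Multiply by $(nT)^2b^3$ to get
$$(nT)^2b^3\widehat D=\|\mf P_1\bs\Delta_n\mf P_2\|_F^2+O_p\big(\|\bs\Delta_n\|^3/(nTb^{1.5})\big).$$
The remainder is $o_p(1)$ because $nTb^{1.5}\to\infty$ by (T1). The continuous mapping theorem then gives the limit $\tau^2\|\mf P_1\G\mf P_2\|_F^2$. Since $\mathrm{vec}(\mf P_1\G\mf P_2)=(\mf P_2\otimes\mf P_1)\,\mathrm{vec}(\G)$ and the projector is idempotent and symmetric, this equals $\tau^2\mf G^\top\mf P\mf G$. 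The Kronecker ordering is immaterial up to the vectorization convention, and it matches the stated $\mf P$ under row-stacking, giving the same distribution. Finally, the conditional (almost sure in $\mathbb{Q}$) mode of convergence carries through, since all steps are deterministic maps applied pathwise.
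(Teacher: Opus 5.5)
Your argument is correct, apart from one dispensable step discussed below. It has the same overall architecture as the paper's proof: a deterministic expansion $\widehat D=\|\mf P_1\mf E\mf P_2\|_F^2+O(\|\mf E\|_F^3)$, followed by the multivariate CLT for $\widehat{\mf C}$, the vec/Kronecker identity and the continuous mapping theorem. The difference lies in how you obtain the key expansion.

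The paper splits $\mf E$ into the four orthogonal blocks $\alpha\bs u\bs v^\top$, $(\mf P_1\mf E\bs v)\bs v^\top$, $\bs u(\mf P_2\mf E^\top\bs u)^\top$ and $\mf P_1\mf E\mf P_2$. It then derives the second-order expansion of $\sigma_{sv,1}(\mf C+\mf E)$ by maximizing $\mf x^\top(\mf C+\mf E)\mf y$ over perturbed unit vectors and solving the stationarity equations. Your ``equivalently'' route, eigenvalue perturbation of $\mf C^\top\mf C$ (simple top eigenvalue with gap $\sigma_{sv,1}^2$), does the same bookkeeping by other means.

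Your projector route is genuinely leaner. You have the exact identity $\widehat D=\|\hat{\mf P}_1\widehat{\mf C}\hat{\mf P}_2\|_F^2$, the first-order Wedin bound $\hat{\mf P}_i-\mf P_i=O(\|\mf E\|)$ (valid since $\sigma_{sv,2}(\mf C)=0$), and $\mf P_1\mf C=\mf C\mf P_2=0$. Together these give $\hat{\mf P}_1\widehat{\mf C}\hat{\mf P}_2=\mf P_1\mf E\mf P_2+O(\|\mf E\|^2)$. Cauchy--Schwarz then yields the full expansion with cubic remainder directly. This needs no second-order expansion of $\sigma_{sv,1}$ and no control of the stationary point.

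You should therefore drop your upper-bound step, which is wrong as written. Your $\mf B$ is a sum of two rank-one matrices and in general has rank two, so it is not an admissible competitor. The rank-one matrix $((\sigma_1+\alpha)\bs u+\mf P_1\mf E\bs v)(\bs v+\mf P_2\mf E^\top\bs u/(\sigma_1+\alpha))^\top$, with $\alpha=\bs u^\top\mf E\bs v$, would work, but the projector identity already makes it unnecessary.

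Your remark on the Kronecker order is a fair correction. With column stacking, $\mathrm{vec}(\mf P_1\mf E\mf P_2)=(\mf P_2\otimes\mf P_1)\mathrm{vec}(\mf E)$, not $P_{\mf u}^{\perp}\otimes P_{\mf v}^{\perp}$ as the paper writes. Since $\mf G$ is isotropic, both quadratic forms are distributed as $\tau^2\chi^2_{(M-1)(N-1)}$, so the stated limit is unaffected.
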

 
\begin{remark} {\rm 
    In practice, the quantiles of the distribution of the random variable on the right-hand side of \eqref{det13} have to be estimated. For this purpose, we recall the definition of  the estimate $\hat{\tau}^2$  in \eqref{nhd4}, introduce the estimators 
\begin{align}
\hat{ \bs c}_1^*  & = \big  ( \hat C(\h_1, v_j) ,  \ldots , \hat C(\h_M, v_j) \big  )^\top ~, \\ 
\hat{ \bs c}_2^* & = {1 \over \hat C(\h_i,v_j)}  \big  ( \hat C(\h_i, v_1) ,  \ldots , \hat C(\h_i, v_N) \big  )^\top ~,
\end{align}
and define 
$$
\hat{\mf P}  = \Big (\mf I_{M}-\frac{ \hat{ \mf c}_1^* (\hat{ \mf c}_1^*)^\top }{||{\hat{ \mf c}}_1^*||^2_F}
\Big) 
\otimes
 \Big ( \mf I_{N}-\frac{\hat{ \mf c} _2^* (\hat{ \mf c}_2^*)^\top}{||{\hat{  \mf c}}_2^*||_F^2}  \Big ).
$$
Then it is easy to see that 
$$
\hat {\cal L}_{sv} :=
\hat{ \tau}^2 \mf G^{\top} \widehat{\mf P} \mf G
\xrightarrow[]{d} 
{\cal L}_{sv} ~
$$
holds almost surely with respect to $\mathbb{Q}$,
where  the random variable ${\cal L}_{sv} $ is defined  in \eqref{det13}. Consequently, we can use the quantiles of the distribution of  $\hat {\cal L}_{sv}$
to approximate the quantiles of the distribution of  ${\cal L}_{sv} $. If $\hat v_{1-\alpha}$ denotes the  $(1-\alpha)$-quantile of the distribution  of  $\hat {\cal L}_{sv}$, we propose to reject the null hypothesis in \eqref{d6}, whenever 
\begin{align}
    \label{testsv}
     \widehat{D} > {\hat v_{1-\alpha} \over (nT)^2b^3}.
\end{align}
By Theorem \ref{th exact test SVD} this test has asymptotic level $\alpha$.
}
\end{remark}

Our next result specifies the asymptotic distribution of $\widehat{D}$ under the alternative. In particular, it shows that the test \eqref{testsv} for the hypotheses \eqref{d6} is consistent. 

\begin{theorem} 
\label{thm44}
    \label{th relevant test SVD}
    If the assumptions of Theorem \ref{th exact test SVD} are satisfied and $D>0$, then we have  
    \begin{align}
        nTb^{1.5}(\widehat{D}-D)\overset{d}{\to}N({0},\vartheta_{sv}^2)
    \end{align}
    almost surely with respect to $\mathbb{Q}$, 
  where 
 \begin{align}
     \label{nhd6}
  \vartheta_{sv}^2= 4 \tau^2 \|\mf Q\|^2_F,
 \end{align}
$\tau^2$ is defined in \eqref{limvar}  and  $\mf Q=\mf C-\mf c_1^*(\mf c_2^*)^{\top}$. 
\end{theorem}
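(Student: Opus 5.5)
The plan is a delta method for the map $F(\mf C)=\|\mf C\|_F^2-\sigma_{sv,1}^2(\mf C)$ at a matrix $\mf C$ with $D>0$, combined with the joint central limit theorem of Theorem \ref{Theorem MCLT Covariance Estimator}. The first step is to check that $F$ is differentiable at $\mf C$. This needs the largest singular value to be simple, $\sigma_{sv,1}(\mf C)>\sigma_{sv,2}(\mf C)$; I would impose this implicitly, as the statement's use of a unique rank-one approximation $\mf c_1^*(\mf c_2^*)^\top$ suggests. Write $\mf C=\sigma_{sv,1}\bs u\bs v^\top+\mf R$, so that $\mf c_1^*(\mf c_2^*)^\top=\sigma_{sv,1}\bs u\bs v^\top$ is the best rank-one approximation and $\mf Q=\mf R=\mf C-\mf c_1^*(\mf c_2^*)^\top$. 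Standard perturbation theory for simple singular values gives the differential $d\sigma_{sv,1}^2(\mf C)[\mf H]=2\sigma_{sv,1}\bs u^\top \mf H\bs v=2\langle \mf c_1^*(\mf c_2^*)^\top,\mf H\rangle_F$. Since $d\|\mf C\|_F^2[\mf H]=2\langle \mf C,\mf H\rangle_F$, it follows that
\begin{align}
dF(\mf C)[\mf H]=2\langle \mf C-\mf c_1^*(\mf c_2^*)^\top,\mf H\rangle_F=2\langle \mf Q,\mf H\rangle_F .
\end{align}
Moreover $F$ is $C^2$ in a neighbourhood of $\mf C$, because a simple singular value is an analytic function of the matrix entries.

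The second step is the expansion. Put $\mf H_n=\widehat{\mf C}-\mf C$. Taylor expansion gives $\widehat D-D=2\langle\mf Q,\mf H_n\rangle_F+O(\|\mf H_n\|_F^2)$ on the event that $\|\mf H_n\|_F$ is smaller than the gap-dependent radius of analyticity. By Theorem \ref{Theorem MCLT Covariance Estimator}, conditionally on $\mf S_n$ and almost surely with respect to $\mathbb Q$, we have $nTb^{1.5}\,\mathrm{vec}(\mf H_n)\overset{d}{\to}N(\mf 0,\tau^2 I_{MN})$. Hence $\|\mf H_n\|_F=O_P((nTb^{1.5})^{-1})$, the probability of the good event tends to one, and the remainder is $o_P((nTb^{1.5})^{-1})$. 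The continuous mapping theorem then gives
\begin{align}
nTb^{1.5}(\widehat D-D)\overset{d}{\to} 2\tau\,\mathrm{vec}(\mf Q)^\top \mathbf G\sim N(0,4\tau^2\|\mf Q\|_F^2),
\end{align}
which is exactly $\vartheta_{sv}^2$ in \eqref{nhd6}. The almost sure mode of convergence carries over because every step is applied pathwise, that is, for a fixed realisation of $\mathbb S$ in the probability-one set on which Theorem \ref{Theorem MCLT Covariance Estimator} holds.

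The main obstacle is the differentiability step, in particular justifying the simple top singular value and writing the derivative in the paper's notation. The representation of $\mf c_1^*,\mf c_2^*$ in the Remark of Section \ref{sec22} via \eqref{det12a} and \eqref{det12b} is only valid in the separable case. Under the alternative I would therefore read $\mf c_1^*(\mf c_2^*)^\top$ as the leading SVD term $\sigma_{sv,1}\bs u\bs v^\top$, and state the spectral gap as the needed condition. If the gap fails, $\sigma_{sv,1}^2$ is only directionally differentiable and the limit is no longer normal, so the assumption is necessary.
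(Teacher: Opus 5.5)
Your proposal is correct and follows essentially the same route as the paper. The paper also does a first-order Fréchet (delta-method) expansion of $\|\mf C\|_F^2-\sigma_{sv,1}^2(\mf C)$, invoking the spectral gap $\sigma_{sv,1}(\mf C)>\sigma_{sv,2}(\mf C)$ in the proof although it is not stated in the theorem. It obtains the linear term $2\langle \mf C-\sigma_{sv,1}(\mf C)\bs u\bs v^\top,\widehat{\mf C}-\mf C\rangle_F$ and concludes via Theorem \ref{Theorem MCLT Covariance Estimator}. Your reading of $\mf c_1^*(\mf c_2^*)^\top$ as the leading SVD term $\sigma_{sv,1}(\mf C)\bs u\bs v^\top$ is exactly how the paper's proof defines $\mf Q$.
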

Similarly, as in Remark \ref{rem42} we can derive an asymptotic confidence interval for the measure $D$ of the deviation from separability based on the best rank-one approximation, that is

   \begin{align}
   \label{conf2}
   \Big [ \widehat{D} - {\hat \vartheta_{sv} \over  nTb^{1.5}} u_{1-\alpha /2 } ~,~
   \widehat{D} - {\hat \vartheta_{sv} \over  nTb^{1.5}} u_{1-\alpha /2} \Big ], 
   \end{align}
   where $\widehat{D} $ is defined in \eqref{d10} and (recall that the statistic $\hat \tau ^2$ in \eqref{nhd4} is an estimator of $\tau^2$)
   \begin{align}
   \label{d14}
    \hat \vartheta_{sv}^2 :=  4\hat \tau^2  \| \widehat{ \mf Q}\|_F^2,\ 
   \end{align}
  and  $\widehat{\mf Q}=\widehat {\mf C}-\hat{\mf c}_1^*(\hat{\mf c}_2^*)^\top$ 
   is an estimator of the matrix $\mf Q $ in \cref{th relevant test SVD}.

\begin{remark}
{\rm 
If a rank-one approximation is not reasonable to reduce the computational complexity of spatial-temporal data analysis, 
one could try to approximate the matrix $\mf C$ by a matrix of rank $k\geq 2$,  and the methodology developed in this section can be extended to this situation. More specifically, we consider the deviation 
   \begin{align}
       D_k(\mf C):=\min_{\text{rank}(\mf B)\leq k} ||\mf C-\mf B||_F^2=||\mf C||_F^2-\sum_{j=1}^k\sigma^2_{sv,j}(\mf C)
   \end{align}
   between the covariance matrix $\mf C $ and its best rank-$k$ approximation, where 
   the equality is again a consequence of  the Eckart–Young–Mirsky theorem \citep[see][]{golub2013matrix} and  $\sigma_{sv,1}(\mf C) \geq  \sigma_{sv,2}(\mf C) \geq  \ldots  \geq\sigma_{sv,k}(\mf C)$  are the ordered singular values of the matrix $\mf C $. 
   Then, similar statements   as 
  given in \cref{th exact test SVD} and \ref{th relevant test SVD}  can be derived for the estimator
  $D_k(\widehat{\mf C})$, where $\widehat{\mf C}$ is the undersmoothed estimator of the matrix ${\mf C}$ in \eqref{eq covariance matrix estimator}.
  For example, if $\text{rank}(\mf C)=k$, that is   $ D_k(\mf C)=0$, one can show that, under the assumptions of Theorem \ref{th exact test SVD}, we have 
    \begin{align}
    \label{eq MCLT rank k approximation}
   (nT)^2b^3 (D_k(\widehat{\mf C})-D_k(\mf C))\overset{d}{\to} 
   \tau^2 \mathbf{G}^{\top}\mf P^*_k \mathbf{G}  
    \end{align}
almost surely with respect to $\mathbb{Q},$
    where $ {\mathbf G} $  is an $MN$-dimensional vector of independent standard normal  distributed random variables, $\tau^2$ is defined in \eqref{limvar}, 
    $$
    \mf P_k^*= (I_M-\mf U_1\mf U_1^{\top})\otimes(I_{N}-\mf U_2\mf U_2^{\top})
    $$
    and $\mf U_1 \in\mathbb{R}^{M\times k}$, $\mf U_2 \in \mathbb{R}^{N\times k}$ 
    are the orthogonal matrices in the singular value decomposition of the matrix  $\mf C=\mf U_1\mf \Sigma \mf U_2$ and  $\mf \Sigma=\text{diag}(\sigma_{sv,j}(\mf C))_{j=1}^k$.
    Quantiles of the limiting distribution can be obtained in a similar way as described before. More precisely, estimates 
 of  $\mf U_1$ and $\mf U_2$ are directly obtained from the  SVD of the matrix   $\widehat {\mf C}$ and the estimate of the factor  $\tau^2 $    is defined in \eqref{nhd4}. 
 
If  $D_k(\mf C)>0$,  that is  $\text{rank}(\mf C)>k$, one can show by similar arguments as given in the proof  of \cref{th relevant test SVD} that 
 \begin{align}
     nTb^{1.5}\big (D_k(\widehat{\mf C})-D_k(\mf C) \big )\xrightarrow[]{d} N\Big (\mf 0,4 \tau ^2||\mf C-\mf C_k||^2_F \Big ) 
 \end{align}
     almost surely with respect to $\mathbb{Q}$, 
 where $\mf C_k$ denotes the best rank-$k$ approximation of the matrix $\mf C$ with respect to the Frobenius norm.
 }
\end{remark}

\subsection{Relevant  hypotheses for spatial-temporal separability}
\label{sec412} 

As mentioned in \cref{sec introduction}, the assumption of separability is typically imposed not because exact separability is believed to hold, but to simplify the computational complexity of spatial-temporal data analysis \citep{AstPigTav17}, with the expectation that valid inference with affordable computational
costs remains feasible even when the assumption is only approximately satisfied. Under this paradigm, testing the hypothesis of exact separability is not reasonable, and the confidence intervals defined by \eqref{conf1} and \eqref{conf2} can be used to investigate if the deviation from separability is not too large. Alternatively, we can also use Theorem \ref{thm42} and \ref{thm44} to develop a consistent asymptotic level $\alpha$ test for the hypothesis that the deviation from separability (measured by $D_{\bs \psi}$ or $D$) does not exceed a given threshold, say $\Delta$. 

For the sake of brevity, we discuss such a test for the measure $D$ based on the best rank-one approximation 
(Tests for the measure $D_{\bs  \psi}$ based on the partial trace approach can be obtained in exactly the same way). More specifically, we consider two types of hypothesis pairs, that is
\begin{align}
    \label{det111}
    H_0^{\rm rel}:  D  \leq \Delta ~~~~{ \rm versus }  ~~~~ H_1^{\rm rel}:  D  > \Delta ~, \\
      \label{det112}
    H_0^{\rm eq}: D   \geq   \Delta ~~~~{ \rm versus }  ~~~~ H_1^{\rm eq}:  D  <  \Delta ~, 
\end{align}
where $\Delta>0$ is a prespecified threshold (not necessarily the same in \eqref{det111} and \eqref{det112}). Hypotheses of this form are called {\it relevant or precise hypotheses} and reflect the fact  that, as pointed out by \cite{berger1987}, in many instances it is {\it ``rare, and perhaps impossible, to have a null hypothesis that can be exactly modeled by a parameter being precisely $0$'' (in our case $D=D_{\bs \psi}=0$)}.
Note that \eqref{det111} contains the hypothesis of exact separability as a special case $\Delta=0$.  The hypotheses \eqref{det112} seem to be more useful in the present context, because by rejecting $H_0$ it is possible to decide at a controlled type I error that the deviation from separability (measured by the SVD-based approach) is smaller than the threshold $\Delta$. Such hypotheses are quite popular in biostatistics, where one is interested in bioequivalence testing \citep{wellek2010testing}. For this reason, we use the index ``eq'' to distinguish them from \eqref{det11}.  Note also 
that more recently hypotheses of the form \eqref{det111}  have attracted considerable interest within the framework of 
{\it  tolerant testing}, where one assesses whether the data is consistent with any distribution that lies within a given neighborhood of the candidate \citep[see, for example,][and the references therein]{canonne22a, kania2026testingimprecisehypotheses}.

We propose to  reject the null hypothesis in \eqref{det111}  if 
\begin{align}
    \label{test1}
 \widehat{D} > \Delta + u_{1-\alpha} {\hat \vartheta_{sv} \over nTb^{1.5} } ,
\end{align}
where the variance estimator $\hat \vartheta_{sv}^2$ is defined in \eqref{d14}.
Similarly, we propose to reject the null hypothesis in \eqref{det112}, whenever
\begin{align}
    \label{test2}
  \widehat{D} <  \Delta +  u_{\alpha} {\hat \vartheta_{sv} \over nTb^{1.5} } .
\end{align}
Our next result shows that both decision rules define consistent and asymptotic level $\alpha$ tests for the hypotheses \eqref{det111} and \eqref{det112}.

\begin{corollary}
    \label{cor2}
     ~~
    Let the conditions  of Theorem \ref{thm44} be satisfied.
    \begin{itemize}
        \item [(a)] For the  test \eqref{test1}, we have 
\begin{align}
\lim_{n\rightarrow\infty}
\mathbb{P }\Big(\widehat{D} >  \Delta + u_{1- \alpha} {\hat \vartheta_{sv} \over nTb^{1.5} }  \Big) =     
\begin{cases}
1,      & \text{if }  {D}  >  \Delta,\\ 
\alpha, & \text{if }  {D}  = \Delta  ,\\
0,      & \text{if }  {D}  < \Delta.
\end{cases}
\end{align}
        \item[(b)] For the test \eqref{test2}, we have 
\begin{align}
\lim_{n\rightarrow\infty}
\mathbb{P }\Big(\widehat{D} <  \Delta + u_{\alpha} {\hat \vartheta_{sv} \over nTb^{1.5}  }  \Big) =     
\begin{cases}
1,      & \text{if }   {D} <  \Delta,\\ 
\alpha, & \text{if }   {D} = \Delta  ,\\
0,      & \text{if }   {D} > \Delta.
\end{cases}
\end{align}
           \end{itemize}
\end{corollary}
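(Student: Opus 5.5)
The proof combines Theorem \ref{thm44} with the consistency of the variance estimator $\hat\vartheta^2_{sv}$ in \eqref{d14}, via a standard Slutsky argument. Write $a_n := nTb^{1.5}$. I treat part (a); part (b) is symmetric, with $u_{1-\alpha}$ replaced by $u_\alpha$ and the inequality reversed. Throughout, all probabilities are conditional on $\mf S_n = S_n$, and the statements hold for $\mathbb Q$-almost every sequence $S_n$, in the sense of Section \ref{sec32}. I assume $D>0$ in every case, since Theorem \ref{thm44} is stated under $D>0$. The case $D<\Delta$ with $D=0$ needs a separate remark: there Theorem \ref{th exact test SVD} gives $\widehat D = O_P(a_n^{-2})$, and the argument below goes through unchanged.

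\textbf{Step 1 (consistency of $\hat\vartheta_{sv}$).} By Theorem \ref{Theorem MCLT Covariance Estimator}, $\widehat{\mf C} - \mf C = O_P(a_n^{-1}) = o_P(1)$. Under (E1) every entry $C(\h_i,v_j)$ is nonzero, so the map $\mf C \mapsto \mf c_1^*(\mf c_2^*)^\top$ built from \eqref{det12a}--\eqref{det12b} is continuous at $\mf C$. By the continuous mapping theorem, $\widehat{\mf Q} \to \mf Q$ in probability. Theorem \ref{Theorem Feasible Inference} gives $\hat\tau^2 \to \tau^2$, hence $\hat\vartheta_{sv}^2 \to \vartheta_{sv}^2$.

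This step carries the one genuine subtlety. The formula $\mf c_1^*(\mf c_2^*)^\top$ equals the best rank-one approximation only when $\mf C$ is separable. For a general $\mf C$ with $D>0$, the matrix $\mf Q$ in Theorem \ref{thm44} has to be read as $\mf C$ minus its best rank-one approximation, i.e.\ $\mf C - \sigma_{sv,1}\bs u\bs v^\top$. The estimator should then be built from the leading singular triple of $\widehat{\mf C}$. To get its consistency I would argue that $\sigma_{sv,1}\bs u\bs v^\top$ is a continuous function of the matrix whenever $\sigma_{sv,1}(\mf C) > \sigma_{sv,2}(\mf C)$; this follows from Weyl's inequality and the Davis--Kahan theorem. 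This spectral-gap condition is needed anyway for the differentiability argument behind Theorem \ref{thm44}. The only other point to verify is $\vartheta_{sv}>0$. This holds because $\|\mf Q\|_F^2 = D > 0$, and $\tau^2>0$ follows from \eqref{limvar}.

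\textbf{Step 2 (decomposition).} Rewrite the rejection event as
\begin{align}
\Big\{ a_n(\widehat D - D)/\hat\vartheta_{sv} > u_{1-\alpha} + a_n(\Delta - D)/\hat\vartheta_{sv} \Big\}.
\end{align}
By Theorem \ref{thm44} and Step 1, Slutsky's lemma gives $Z_n := a_n(\widehat D - D)/\hat\vartheta_{sv} \to N(0,1)$ in distribution, almost surely with respect to $\mathbb Q$.

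\textbf{Step 3 (three cases).} If $D=\Delta$, the rejection probability is $P(Z_n > u_{1-\alpha}) \to \alpha$, since the normal law is continuous. If $D>\Delta$, the threshold $u_{1-\alpha} + a_n(\Delta-D)/\hat\vartheta_{sv}$ tends to $-\infty$ in probability, because $a_n\to\infty$ and $\hat\vartheta_{sv}\to\vartheta_{sv}\in(0,\infty)$. As $Z_n$ is tight, the probability tends to $1$. If $D<\Delta$, the threshold tends to $+\infty$ in probability, and the probability tends to $0$. The same three computations give part (b).

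Two things remain to check. First, $a_n\to\infty$: this follows from (E1), which gives $\liminf nTb^{1.5}\widetilde\gamma_0(e_n) > 0$ together with boundedness of the covariance. Second, the main obstacle is really the identification and consistent estimation of $\mf Q$ in Step 1; the rest is routine.
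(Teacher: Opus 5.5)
Your argument is the intended one: the paper does not write out a proof of Corollary \ref{cor2}, which is meant to follow from Theorem \ref{thm44} by exactly your Slutsky and three-case reasoning. Your Step~1 observation is correct, and it is genuinely needed for the boundary case.

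The proof of Theorem \ref{thm44} produces the variance $4\tau^2\|\mf C-\sigma_{sv,1}(\mf C)\bs u\bs v^\top\|_F^2=4\tau^2 D$. This is because the derivative of $\mf C\mapsto\|\mf C\|_F^2-\sigma_{sv,1}^2(\mf C)$ is $2(\mf C-\sigma_{sv,1}\bs u\bs v^\top)$. By contrast, \eqref{d14} is built from the one-row/one-column skeleton $\hat{\mf c}_1^*(\hat{\mf c}_2^*)^\top$. It therefore converges to $\tilde\vartheta^2:=4\tau^2\|\mf C-\mf c_1^*(\mf c_2^*)^\top\|_F^2$, which by Eckart--Young is $\ge\vartheta_{sv}^2$ and generically strictly larger when $D>0$. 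With \eqref{d14} as written, the limits at $D=\Delta$ would be $1-\Phi(u_{1-\alpha}\tilde\vartheta/\vartheta_{sv})$ and $\Phi(u_{\alpha}\tilde\vartheta/\vartheta_{sv})$. These are at most $\alpha$ (for $\alpha<1/2$) but not equal to it. The cases $D\neq\Delta$ only require $\hat\vartheta_{sv}=O_P(1)$, so they hold with either estimator.

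Your fix is also simpler than you make it. With $\widehat{\mf Q}=\widehat{\mf C}-\sigma_{sv,1}(\widehat{\mf C})\hat{\bs u}\hat{\bs v}^\top$ one has $\|\widehat{\mf Q}\|_F^2=\widehat D$ identically, so $\hat\vartheta_{sv}^2=4\hat\tau^2\widehat D$. Consistency then follows from Weyl's inequality $|\sigma_{sv,1}(\widehat{\mf C})-\sigma_{sv,1}(\mf C)|\le\|\widehat{\mf C}-\mf C\|_F$ alone. No Davis--Kahan or Wedin bound and no spectral gap are needed at this step; the gap is needed only inside Theorem \ref{thm44}.

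Three side points need repair.
\begin{itemize}
\item (E1) together with boundedness of $C$ does not give $a_n\to\infty$. From $\widetilde\gamma_0(e_n)\le\min_{i,j}|C(\h_i,v_j)|\le E[X_0^2]$ you only get that $a_n$ is bounded away from zero. Use (T1) instead: since $b\to0$ and $nTb^{2+2\xi}\to\infty$, eventually $nTb^{1.5}\ge nTb^{2+2\xi}\to\infty$.
\item The case $D=0$ is excluded by the hypotheses of Theorem \ref{thm44}. Your claim that the argument ``goes through unchanged'' there is also not right, because $\vartheta_{sv}=0$ and $Z_n$ is no longer asymptotically $N(0,1)$. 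If you want that case, argue directly: $\widehat D\to0<\Delta$ and $\hat\vartheta_{sv}/a_n\to0$ in probability, which gives the limits $0$ in (a) and $1$ in (b).
\item The corollary is stated for the unconditional probability. You should note that the $\mathbb Q$-almost sure conditional limits carry over to it by dominated convergence.
\end{itemize}
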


\begin{remark}\label{choiceofdelta} 
{\rm An important ingredient in the hypotheses \eqref{det111} or \eqref{det112}  is the threshold $\Delta$, which has to be carefully discussed in each application. We emphasize that   
$\Delta$ can also be chosen in a data-driven way. To be precise, we consider the hypotheses in \eqref{det112}, which are nested:  for different values of $\Delta$, rejection of the null hypothesis by the test \eqref{test2} at $\Delta=\Delta_0$ also implies rejection for all $\Delta \geq \Delta_0$. By the sequential rejection principle, the hypotheses in \eqref{det112} can thus be tested simultaneously  for different values of $\Delta$ to determine the minimal threshold  say
\begin{align}
    \widehat \Delta_\alpha:=  \min   
    \Big \{
    \big \{ 0 \big \} \cup \Big \{\Delta \ge 0 \,\Big| \,  \widehat{D} <  \Delta +  u_{\alpha} {\hat \vartheta_{sv} \over nTb^{1.5} } \Big \} \Big\},  
\end{align}
such that the null hypothesis in \eqref{det112} is rejected. Note that the null hypothesis is not rejected for all thresholds $\Delta \leq  \widehat\Delta_\alpha$ and rejected for $\Delta >  \widehat \Delta_\alpha$. Therefore, the quantity $\widehat \Delta_\alpha$ may be interpreted as a measure of evidence against the null hypothesis, with smaller values indicating stronger support for the assumption of separability.
}
\end{remark}

  \section{Finite sample properties}
  \label{sec5}
  \def\theequation{5.\arabic{equation}}
\setcounter{equation}{0}

In this section, we investigate the finite sample properties of the proposed methodology by means of a simulation study.  
All results presented here  are based on $1000$ simulation runs. 
For the sake of brevity, we concentrate on the hypothesis of exact separability in \eqref{d6} and investigate the  performance of the test \eqref{eq:reject} based on the partial trace approach  and the test \eqref{testsv} based on the best rank-one approximation.

We consider a centered, second-order and stationary real-valued Gaussian process
$$
 \big \{X(\mathbf{s},t) : \mathbf{s}\in [0,\sqrt{n}]\times  [0, \sqrt{n}],\, t\in \{1,\ldots T\} \big \}.
 $$
 and denote  by \( \mathbf{h}=\mathbf{s}-\mathbf{s}' \) the spatial lag and \( \tau=t-t' \) the temporal lag. 
 For the spatial locations, we choose $n$ 
 uniformly distributed  points $\mf s_1, \ldots, \mf s_{n}$ from the square $\big [0, \lfloor \sqrt{n} \rfloor \big ]\times  \big [0, \lfloor  \sqrt{n}\rfloor \big ]$.  For the evaluation distances in the matrix \eqref{d1}, we consider two scenarios. In the first one, we choose $M=3$ spatial lags and $N=3$ temporal lags  
 \begin{align}
\label{scenario1}
 \begin{pmatrix}  \mf h_1 &  \mf h_2 &  \mf h_3 
 \end {pmatrix}= \begin{pmatrix}
         1 & 2 & 3 \\
         1.5 &  1.75& 1 
     \end{pmatrix} 
     \times
   {\log(n) \over  \log(50)}~,~~ \bs  \tau = (2,3,0.5)^{\top} \times  {\log(n) \over  \log(50)}.
 \end{align}
 In  the second one, we choose $M=5$ spatial lags and $N=5$ temporal lags 
 \begin{align}
 \label{scenario2} 
   \begin{pmatrix}  \mf h_1 &  \mf h_2 &  \mf h_3 
 \end {pmatrix}= \begin{pmatrix}
     1 & 2 & 3& 3.5& 2.7\\ 
     1.5 &  1.75 & 1 & 0.5 & 1.3
     \end{pmatrix} 
    \times  {\log(n) \over  \log(50)}, ~~\bs  \tau  = (2,3,0.5, 1, 3.5)^{\top} \times  {\log(n) \over  \log(50)}.
 \end{align}
 The choice of the bandwidth $b$ in the procedures is a very  difficult problem, as, due to undersmoothing, it is not possible to balance bias and variance in the estimates.
Based on an intensive numerical study, we  propose the following rule,  which worked well in our simulation studies.
Noting that the bias of $\hat{\mf C}$ depends on the partial derivatives of  $ {\mf C}$, we introduce the quantity  
 \begin{align}
f_{MN} = \|C_{MN}\|_F /(\|\partial_1C_{MN}\|_F + \|\partial_2C_{MN}\|_F+\|\partial_3 C_{MN}\|_F)
 \end{align} 
 (in practice, this quantity has to be estimated), and use   
 \begin{align}
     b_{\rm PT} &=(E[X_{(0,0)}^2])^{3}T^{-0.75+0.01N} n^{-0.01-0.01M}(MN)^{0.2} f_{MN}^{0.2}  ,\label{eq:rule1} \\
   b_{\rm SVD}  &=  (E[X_{(0,0)}^2])^{2}  T^{-0.4+0.01N} n^{-0.15-0.01M}(MN)^{0.3}f_{MN}^{0.15}. \label{eq:rule2}
 \end{align}
 for the partial trace 
 and  the SVD approach, respectively.  The partial trace approach additionally requires the choice of a vector $\bs \psi $, and we used  $\bs \psi = (1,0,0)^{\top}$ in the case $M=N=3$ and $\bs \psi = (1,0,0,0,0)^{\top}$ in the case $M=N=5$ .

\smallskip

 \begin{table}[h]
     \centering
     \begin{tabular}{c|c|c c||c c|}
              & & \multicolumn{2}{c}{$M= N =3$}& \multicolumn{2}{c}{$M= N =5$}\\
         $n$  & $T$ & \eqref{eq:reject} & \eqref{testsv} & \eqref{eq:reject} & \eqref{testsv} \\
        \hline 
  75 & 100 &  4.20 (0.20) &  4.40 (0.21) &  5.60 (0.23) &  3.40 (0.18) \\ 
  100 & 100 &  4.60 (0.21) &  4.50 (0.21) &  4.20 (0.20) &  3.90 (0.19) \\ 
  150 & 100 &  5.10 (0.22) &  3.60 (0.19) &  2.40 (0.15) &  3.70 (0.19) \\ 
  100 & 200 &  3.40 (0.18) &  2.30 (0.15) &  2.90 (0.17) &  4.20 (0.20) \\ 
  150 & 200 &  3.90 (0.19) &  3.80 (0.19) &  5.10 (0.22) &  5.20 (0.22) \\ 
  200 & 200 &  6.30 (0.24) &  4.50 (0.21) &  4.50 (0.21) &  5.70 (0.23) \\ 
  200 & 250 &  4.10 (0.20) &  3.30 (0.18) &  4.90 (0.22) &  5.00 (0.22) \\ 
  250 & 250 &  5.10 (0.22) &  4.20 (0.20) &  5.00 (0.22) &  5.90 (0.24) \\ 
     \end{tabular}
     \caption{\it  Simulated rejection probabilities of
     the tests \eqref{eq:reject}  and the test \eqref{testsv}   under the null hypothesis of a separable  model  defined in   \eqref{dsim1}. The nominal level is $5\%$, and the 
     numbers in brackets denote the  standard deviation.  
       }
     \label{tab:null1}
 \end{table}
 
\medskip

To study the approximation of the nominal level under the null hypothesis, we consider the separable covariance structure  
\begin{align}
    \label{dsim1}
C_0(\mathbf{h},\tau)=2\exp\!\Big(-\|\mathbf{h}\|-|\tau|\Big).
\end{align}
The empirical rejected probabilities of both tests are displayed  in Table \ref{tab:null1}, where the nominal level is chosen as $5\%$. We observe a reasonable approximation of the nominal level in most cases.

To investigate the power, we consider  
  three alternatives. The first one is a model from the extended Gneiting class \citep[see][]{allard2020simulating}
\begin{align} \label{model1}
C_1(\h,\tau)=\frac{2}{|v|^{3/2}+1}\exp\left(\frac{-0.01||\h||^{2}}{|v|^{3/2}+1}\right).
\end{align}
Since  the generation of data with the  non-separable 
covariance structure \eqref{model1} is very memory consuming using the Cholesky decomposition, we adopt the spectral method proposed in  Algorithm 4 of \cite{allard2020simulating}.  The second model in our study is the product-sum model considered in  \cite{DEIACO20132002}, that is 
\begin{align}
\label{model2}
    C_2(\h,\tau) = 0.5 \times \{C_0(\h,\tau) +  C_0(\h/2,\tau/5) + C_s(\h) + C_t(\tau)\},
\end{align}
where $C_0$ is defined in \eqref{dsim1} and 
\[
C_s(\mathbf{h})= \exp\!\big(-\|\mathbf{h}\|\big),\qquad
C_t(\tau)=\exp\!\big(-|\tau|\big).
\]
The third model is given by  
\begin{align}
\label{model3}
C_3(\h,\tau)=\frac{\sigma^2}{(a|v|^{2\alpha}+1)^{\beta/2}}\exp\left(-\frac{r||\h||}{(a|v|^{2\alpha}+1)^{\beta/2}}\right),\label{model3}
\end{align}
 
\citep[see equation (4)  in][]{Gneiting01062002},  
where $\sigma^2 = 1.8$, $2\alpha = 1.544$, $a = 0.901$, $\beta = 0.61$, $r = 0.00134$, similar to the setting for the modeling of the Irish wind data in \cite{Gneiting01062002}. For its implementation, we implement Algorithm 1 and 3 in \cite{allard2020simulating}.

The rejection probabilities of both tests are displayed in Table \ref{tab:alt41} - \ref{tab:alt43}. We observe that both have reasonable power, and there is no clear winner between the two methods. Note that the power in Table \ref{tab:alt2} is substantially lower compared to Table \ref{tab:alt41} and  Table \ref{tab:alt43}.


 \begin{table}[!h]
     \centering
     \begin{tabular}{c|c|c c||c c|}
     & & \multicolumn{2}{c}{$M= N =3$}& \multicolumn{2}{c}{$M= N =5$}\\
        $n$  & $T$ & \eqref{eq:reject} & \eqref{testsv}& \eqref{eq:reject} & \eqref{testsv} \\
        \hline 
      75 & 100 &  47.10 (0.50) &  58.90 (0.49) &  45.80 (0.50) &  67.40 (0.47) \\ 
  100 & 100 &  77.80 (0.42) &  70.40 (0.46) &  58.70 (0.49) &  62.20 (0.48) \\ 
  150 & 100 &  77.70 (0.42) &  66.10 (0.47) &  77.60 (0.42) &  77.00 (0.42) \\ 
  100 & 200 &  81.80 (0.39) &  68.80 (0.46) &  69.30 (0.46) &  70.90 (0.45) \\ 
  150 & 200 &  85.30 (0.35) &  88.20 (0.32) &  75.70 (0.43) &  80.90 (0.39) \\ 
  200 & 200 &  99.20 (0.09) &  77.40 (0.42) &  92.40 (0.26) &  84.80 (0.36) \\ 
  200 & 250 &  92.70 (0.26) &  85.10 (0.36) &  88.90 (0.31) &  84.60 (0.36) \\ 
  250 & 250 &  80.40 (0.40) &  88.00 (0.32) &  99.60 (0.06) &  92.20 (0.27) \\ 
     \end{tabular}
     \caption{\it Simulated rejection probabilities of
     the tests \eqref{eq:reject}  and  \eqref{testsv}  under the alternative  \eqref{model1} of a  non-separable spatio-temporal model. The nominal level is $5\%$, and the numbers in brackets denote the  standard deviation. }
     \label{tab:alt41}
 \end{table}

  \begin{table}[!h]
     \centering
     \begin{tabular}{c|c|c c||c c|}
        & & \multicolumn{2}{c}{$M= N =3$}& \multicolumn{2}{c}{$M= N =5$}\\
        $n$  & $T$ & \eqref{eq:reject} & \eqref{testsv}& \eqref{eq:reject} & \eqref{testsv} \\
        \hline 
   75 & 100 &  62.40 (0.48) &  41.50 (0.49) &  43.60 (0.50) &  53.40 (0.50) \\ 
  100 & 100 &  53.20 (0.50) &  48.60 (0.50) &  22.40 (0.42) &  48.30 (0.50) \\ 
  150 & 100 &  39.50 (0.49) &  37.60 (0.48) &  25.70 (0.44) &  43.90 (0.50) \\ 
  100 & 200 &  50.60 (0.50) &  55.30 (0.50) &  58.20 (0.49) &  40.30 (0.49) \\ 
  150 & 200 &  52.80 (0.50) &  65.90 (0.47) &  44.40 (0.50) &  63.70 (0.48) \\ 
  200 & 200 &  50.40 (0.50) &  66.50 (0.47) &  46.70 (0.50) &  66.30 (0.47) \\ 
  200 & 250 &  48.00 (0.50) &  51.80 (0.50) &  59.90 (0.49) &  72.10 (0.45) \\ 
  250 & 250 &  51.60 (0.50) &  44.30 (0.50) &  54.50 (0.50) &  47.40 (0.50) \\ 
     \end{tabular}
     \caption{\it Simulated rejection probabilities of
     the tests \eqref{eq:reject}  and  \eqref{testsv}  under the alternative  \eqref{model2} of a  non-separable spatio-temporal model. The nominal level is $5\%$, and the numbers in brackets denote the  standard deviation. }
     \label{tab:alt2}
 \end{table}
 \begin{table}[!h]
     \centering
     \begin{tabular}{c|c|c c||c c|}
        & & \multicolumn{2}{c}{$M= N =3$}& \multicolumn{2}{c}{$M= N =5$}\\
        $n$  & $T$ & \eqref{eq:reject} & \eqref{testsv}& \eqref{eq:reject} & \eqref{testsv} \\
        \hline 
   75 & 100 &  89.90 (0.30) &  75.10 (0.43) &  100.00 (0.00) &  7.60 (0.26) \\ 
  100 & 100 &  85.10 (0.36) &  87.40 (0.33) &  100.00 (0.00) &  76.60 (0.42) \\ 
  150 & 100 &  99.40 (0.08) &  89.40 (0.31) &  96.50 (0.18) &  85.10 (0.36) \\ 
  100 & 200 &  96.10 (0.19) &  87.60 (0.33) &  100.00 (0.00) &  92.00 (0.27) \\ 
  150 & 200 &  100.00 (0.00) &  100.00 (0.00) &  96.00 (0.20) &  96.00 (0.20) \\ 
  200 & 200 &  100.00 (0.00) &  96.00 (0.20) &  100.00 (0.00) &  94.80 (0.22) \\ 
  200 & 250 &  100.00 (0.00) &  96.10 (0.19) &  100.00 (0.00) &  92.00 (0.27) \\ 
  250 & 250 &  100.00 (0.00) &  94.30 (0.23) &  93.60 (0.24) &  99.30 (0.08) \\ 
     \end{tabular}
     \caption{\it  Simulated rejection probabilities of
     the tests \eqref{eq:reject}  and  \eqref{testsv}  under the alternative  \eqref{model3} of a  non-separable spatio-temporal model. The nominal level is $5\%$ and the numbers in brackets denote the  standard deviation. }
     \label{tab:alt43}
 \end{table}

\newpage

 {\bf Acknowledgments. } 
This work  has been supported
by the Deutsche Forschungsgemeinschaft (DFG)  TRR
391 {\it Spatio-temporal Statistics for the Transition of Energy and Transport}, project number
520388526 (DFG).

\bibliographystyle{apalike}
\bibliography{main}

\newpage
\section*{Appendix}
In Appendix \ref{seca}, we provide the detailed proofs of all the main results in the main paper. In Appendix \ref{secb}, we prove almost surely central limit theorems for triangular arrays, which are useful for the proofs in Appendix \ref{seca}. The technical results required for the proofs of the main results are documented in Appendix \ref{secc}.

\begin{appendix}

\section{Proofs of main results}  \label{seca}

\def\theequation{A.\arabic{equation}}
\setcounter{equation}{0}


\subsection{\texorpdfstring{Proof of Theorem \ref{Theorem CLT Covariance Estimator}}{Proof of Theorem 3.1}} \label{seca11}

 The proof is decomposed into two steps. In the first step, we prove the weak convergence of the numerator of the statistic  \eqref{eq covariance estimator} (see Section \ref{seca13}). The numerator is a $U$-statistic with kernel depending on $n$ for dependent data in the time space domain, the proof techniques presented in this section might  be of its own interest. Next, in the second step, we use this result to establish the weak convergence of the estimator of the covariance (see Section \ref{seca12}). Meanwhile, we introduce the following notation which is important in Section \ref{seca12} 
 \begin{align}
 \label{eq abbreviation}
     \h_{0n}=(h_{0,1n},h_{0,2n}):=\Lambda_n^{-1}\h_0=\text{diag}(\lambda_n^{-1}h_{01}, (\lambda_n\bb_n)^{-1}h_{02}),\ v_{0T}=v_0/T.
 \end{align}

\subsubsection{Weak convergence of the numerator of the statistic  \eqref{eq covariance estimator}} \label{seca13}

Before we start the proof, we introduce  the following  notation. Given $(\h_0,v_0)\in\{(\h_i,v_j)\}_{1 \leq i \leq M, 1 \leq j \leq N}$, for any given $\bS_n= S_n$, we define $X_{it}=X_{(s_{in},t)}$ and 
\begin{align}
    \label{eq definition of K0tt'}
  K_{0tt'} & =K\Big (\frac{|t-t'|-v_0}{ Tb}\Big )=K\Big(\frac{|t-t'|/T-v_{0T}}{b}\Big),\\
  K_{0ii'} &=K \Big (\frac{\lambda_n^{-1}(s_{in,1}-s_{i'n,1}-h_{0,1})}{b} \Big  )K \Big  (\frac{(\lambda_n\bb_n)^{-1}(s_{in,2}-s_{i'n,2}-h_{0,2})}{b} \Big  )\\
      \label{eq definition of K0ii'}
  &\ \ \ \ \ \ \ \ =K \Big  (\frac{x_{in,1}-x_{i'n,1}-h_{0,1n}}{b} \Big  )K \Big  (\frac{x_{in'2}-x_{i'n,2}-h_{0,2n}}{b} \Big  ).
  \end{align}
With this notation, the following identity holds for any given $\bS_n=S_n$, 
\begin{align}
    \label{d18}
K_{0b}(s_{in}-s_{i'n},t-t') = K_{0tt'}  K_{0ii'}, 
\end{align}
and we will use  the notation on the left and right hand side of \eqref{d18} simultaneously in the proof. For example, on condition of $\bS_n=S_n$, 
the  covariance estimator in  \eqref{eq covariance estimator}  can be rewritten as follow, 
\begin{align}
      \hat{C}(\mathbf{h}_0,v_0)=\frac{\sum_{1\leq t\neq t'\leq T}\sum_{1\leq i\neq i'\leq n} K_{0ii'}K_{0tt'}X_{it}X_{i't'}}{\sum_{1\leq t\neq t'\leq T}\sum_{1\leq i\neq i'\leq n} K_{0ii'}K_{0tt'}},
\end{align}
where $b$ is the chosen bandwidth.  Additionally, we introduce the following notation
\begin{align} 
\label{d15a} 
U_n & =\sum_{1\leq t\neq t'\leq T}\sum_{1\leq i\neq i'\leq n}K_{0ii'}K_{0tt'}(X_{it}X_{i't'}-E_{|S_n}[X_{it}X_{i't'}]); \\
\label{d15b} 
Z_{it}& =(s_{in},t/T, X_{(s_{in},t)})=(s_{in},t/T, X_{it});  \\
\label{d15c} 
g_{n}(Z_{it},Z_{i't'}) & = K_{0b}(s_{in}-s_{i'n},t-t')X_{it}X_{i't'};  \\
\label{d15d} 
H_{n}(Z_{it},Z_{i't'})& =g_{n}(Z_{it},Z_{i't'})-E_{|S_n}[g(Z_{it},Z_{i't'})];
\end{align}
where  $E_{|S_n}[\cdot] = E[\cdot {|\bS_n }=S_n]$ is the conditional expectation of given value, $\bS_n=S_n$. Note that \eqref{d15a}  is a centered version of the numerator in \eqref{eq covariance estimator}.
With the notation    $m(x,y)=xy-E_{|S_n}[xy]$, we  have  
\begin{align}
    \label{d15e}
m(X_{it},X_{i't'})=X_{it}X_{i't'}-E_{|S_n}[X_{it}X_{i't'}],
\end{align}
and  finally obtain for the the random variable  $H_n$ in \eqref{d15d} the representation
$$
H_{n}(Z_{it},Z_{i't'})=K_{0b}(s_{in}-s_{i'n},t-t')m(X_{it},X_{i't'})=K_{0ii'}K_{0tt'}m(X_{it},X_{i't'}).
$$

\par

\begin{theorem}
    \label{Theorem CLT U-Statistic}
Under Assumption \ref{as spatial locations 1}-\ref{technical assumptions}, for any given $(\h_0,v_0)\in\{(\h_i,v_j)\}_{1\leq i\leq M;1\leq j\leq N}$, 
\begin{align}
    &U_n/\sigma_n \xrightarrow[]{d} N(0,1)\ \ \ a.s.-[\mathbb{Q}],
    \end{align}
    where
    \begin{align}\sigma^2_n&=(nT)^2b^3(E[X^2_0])^2\mathbf{A}_2\mathbf{B}_2,
\end{align} 
and $\mathbf{A}_2$ and $\mathbf{B}_2$ are defined in \eqref{d7c} and \eqref{d7d}, respectively.
\end{theorem}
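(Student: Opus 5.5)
Conditionally on $\mathbf S_n=S_n$, the plan is to prove the CLT for the degenerate-type $U$-statistic $U_n$ by a Bernstein big-block/small-block argument in the combined space--time index set. We then verify that the resulting conditions hold for $\mathbb Q$-almost every sequence of locations. The appendix announces that Appendix \ref{secb} contains an almost sure CLT for triangular arrays, and we reduce $U_n$ to a triangular array of that form.

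\textbf{Step 1: decomposition.} Write $U_n=\sum_{(i,t)\neq(i',t')}H_n(Z_{it},Z_{i't'})$, restricted to $i\neq i'$ and $t\neq t'$. Since $X$ is centered and weakly dependent while the lag $(\mathbf h_0,v_0)$ is large, no Hoeffding linear term survives at leading order. Concretely, $E_{|S_n}[H_n(z,Z_{i't'})]$ involves $E[X_{it}X_{i't'}\mid X_{it}]$, whose contribution is controlled by $\beta(e_n)$ and the moment bounds (M1)--(M3). We therefore treat $U_n$ as essentially degenerate.

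\textbf{Step 2: blocking and martingale approximation.} Partition the sampling region $\mathbf R_n\times\{1,\dots,T\}$ into large blocks separated by corridors of width $q_n$, the sequence from (T4). Pairs $(it),(i't')$ lying in separated blocks are then almost independent. By Berbee's coupling under Assumption \ref{dependence} (with $\Psi$ controlling set sizes), the total coupling error is bounded by $nT\beta^{\delta/(1+\delta)}(q_n)=o(1)$, using (T4)(1) and (T4)(6). The contributions of the corridors (small blocks) are negligible in $L^2$, which follows from (T4)(2)--(5). The remaining sum is recast as a sum of martingale differences $\sum_k Y_k$ over ordered blocks, where $Y_k$ collects the pairs with the later index in block $k$.

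\textbf{Step 3: variance.} Compute
\[
\operatorname{Var}_{|S_n}(U_n)=\sum\sum K_{0ii'}^2K_{0tt'}^2\operatorname{Var}(X_{it}X_{i't'})(1+o(1))+\text{cross terms}.
\]
Because the lag is large, $E[X_{it}^2X_{i't'}^2]\approx (E X_0^2)^2$ by mixing and (T3). The factor $2$ in $\mathbf B_2$ arises from the symmetric pairs $t-t'=\pm v$. For the spatial kernel we use $\frac{1}{n^2b^2}\sum_{i\neq i'}K_{0ii'}^2\to\int f^2\,(\int K^2)^2=\mathbf A_2$ almost surely, via an a.s. law of large numbers for second-order $U$-statistics of the i.i.d. $\mathbf x_{in}$ (variance $O(1/(n^2b^2))$ plus Borel--Cantelli along the polynomial rate $b=cn^{-\eta}$). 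Temporally, $\frac1{Tb}\sum_{t\neq t'}K_{0tt'}^2\to T\mathbf B_2$, which gives $\sigma_n^2$. Cross terms with $\{i,t\}$ and $\{i',t'\}$ not coinciding are controlled by Davydov-type covariance inequalities with $\beta^{\delta/(1+\delta)}$ together with the summability in (T3) and (T2).

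\textbf{Step 4: Lindeberg/Lyapunov and conditional variance.} Verify $\sum_k E|Y_k|^{2+\delta'}/\sigma_n^{2+\delta'}\to0$ using (M2)--(M3) and block sizes of order $(q_n\log n)^3$. Also verify $\sum_k E[Y_k^2\mid\mathcal F_{k-1}]/\sigma_n^2\to1$ in probability, using fourth-order moment bounds on products (M3). This last point is the main obstacle: the conditional variance of a degenerate $U$-statistic martingale involves fourth-order sums $\sum K K K K\, X X X X$ whose fluctuations must be shown to be $o(\sigma_n^4)$. I would bound them by splitting into configurations by the minimal distance among the four index points, applying the mixing inequality to the configurations with maximal gap, and counting pairs with $K_{0ii'}\neq0$ almost surely (there are $O(n^2b^2)$ of them, uniformly, by a Bernstein inequality on the $\mathbf x_{in}$). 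This is where (T1) and (T4)(3)--(4) enter. Finally, the almost sure (in $\mathbb Q$) statement follows because every location-dependent quantity used (kernel sums, pair counts) converges $\mathbb Q$-a.s., so the conditional CLT holds for $\mathbb Q$-a.e. $S_n$.
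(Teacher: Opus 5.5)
Your proposal follows the paper's proof essentially step by step, and it is correct in outline. The shared steps are: the $q_n$-separated space--time Bernstein blocks with $L^2$-negligible remainder terms; the linear ordering of the block interiors into the martingale-type array $\widetilde T_{m,n}$; the variance via squared-kernel sums with $E[X_{it}^2X_{i't'}^2]\approx (E X_0^2)^2$; the verification of (D1)--(D3) of Theorem \ref{theorem CLT 2}, with Lindeberg obtained through fourth moments; and the $\mathbb Q$-a.s.\ statement obtained from a.s.\ control of the location counts. The one point to adjust is your Step 1: the approximate degeneracy you actually need (condition (D1), Step 5.1 of the paper) comes from the $q_n$-gap between distinct blocks together with $E X_0=0$, Lemma \ref{lemma 2} and (T4)(1), not from $\beta(e_n)$, on which no rate is assumed.
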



\begin{proof}
We first introduce the  spatial-temporal blocks and derive a decomposition of the statistic $U_n$, which turns out to be  crucial for proving the central limit theorem.  The construction of the spatial-temporal is valid for sufficiently large $n$ and $T$  and illustrated in Figure \ref{fig:block}. 
\begin{figure}
    \centering
    \includegraphics[width=0.6\linewidth]{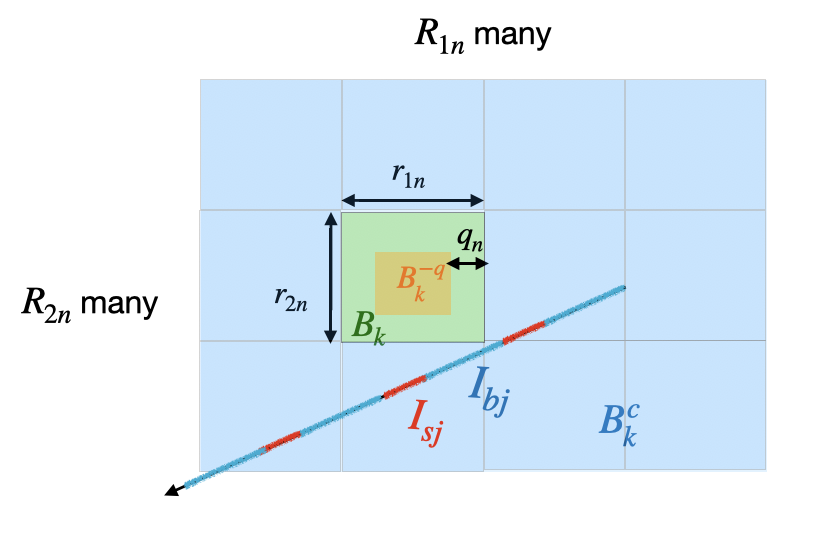}
    \caption{Illustration of blocking in the spatial and temporal domain.}
    \label{fig:block}
\end{figure}
\medskip

\par We start defining \textbf{spatial blocks} partitioning  $\mathbf{R}_n=\lambda_n\mathbf{D}\subset [-\lambda_n,\lambda_n]^2$. For a given $0<r_{1n}\leq \lambda_n/2$, parallel to y-axis, we first decompose $[-\lambda_n,\lambda_n]^2$ into $R_{1n}=[2\lambda_n/r_{1n}]+1$ smaller rectangles whose longer edge is of length $2\lambda_n$ and shorter edge is of length $r_{1n}$. 
This also gives a vertical partition of the set $\mathbf{R}_n$, which is now refined in horizontal direction.  For this purpose we chose  $0<r_{2n}\leq \frac{\lambda_n\bb_n}{2}$, define $R_{2n}=[2\lambda_n\bb_n/r_{2n}]+1$, we can partition $\mathbf{R}_n$ into $R_{2n}$ sets of the form  
\begin{align}
    \{(x,y)\in \mathbf{R}_n: \lambda_nl(x)-2\lambda_n\bb_n+(k-1)r_{2n}\leq y\leq \lambda_nl(x)-2\lambda_n\bb_n+kr_{2n} \},
\end{align}
($k=1,2,...,R_{2n}$). By combining these two partitions together, we obtain a  partition of $\mathbf{R}_n$ into $R_n=R_{1n}R_{2n}$ smaller disjoint segments
$B_1 , \ldots , B_{R_n}$
and the area of each segment is at most  $r_n:= r_{1n}r_{2n}$. For a constant  $q_n>0$, we define $B^{-q_n}_{k}=B_{k}\backslash (B^{c}_{k})^{+q_n}$, where, for  $A\subset \mathbb{R}^2$   the  set   $A^{+q_n}=\{x\in\mathbb{R}^2:\inf_{y\in A}||x-y||<q_n\}$ denotes  its $q_n$-blow-up. We also let 
\begin{align}
   r_{kn}=q_n\log n, \ k=1,2, \label{eq:r1r2}
\end{align}
which is valid according to point (2) in (T4) of Assumption \ref{technical assumptions}.
\medskip 

 We now construct   corresponding \textbf{time blocks} setting  
$q_T=q_n$, where $q_n$ is introduced in spatial blocking and choosing $p_{T}$ such that   $0<r_{3T}=p_{T}+q_{T}\leq T/2$, $q_T/p_T=\frac{1}{\log T}$, $p_{T}/T=o(1)$. Then, we can partition $\mathcal{T}:=\{1,\ldots  ,T\}$ into $J_n=[T/r_{3T}]$   blocks. More specifically, we decompose $\mathcal{T}$  into consecutive  ``small'' blocks   $I_{sj}=\{b_{j},...,b_{j}+q_{T}-1\}$ of size $q_T$ and ``big'' blocks $I_{bj}=\{a_{j-1},...,a_{j-1}+p_T-1\}$ of size $p_T$ and a remaining block $I_{r}=\{1,2,...,T\}\backslash\bigcup_{j=1}^{J_n}(I_{bj}\cup I_{sj})$, where we start with a big block, that is $a_0=1$.

\par Based on these  blocks we define,  for any $k=1,2,..,R_n$, $j=1,2,...,J_n$, the sets 
\begin{align}
    &\tilde{B}^-_{k,j}=B^{-q_n}_{k}\times I_{bj},\\
\label{d21}    &\tilde{B}_{k,j}=B_{k}\times (I_{bj}\cup I_{sj}\cup I_{s(j+1)})=:B_{k}\times I_j,\\
 &I_{s(J_n+1)}=I_r.
\end{align}
(see also Figure \ref{fig:block}) and  obtain the following decomposition  of the statistic $U_n$ in \eqref{d15a}. 
\begin{align}
\label{d16} 
    U_n&=U_{1n}+U_{2n}+U_{3n}+U_{4n}+U_{5n},\\
\end{align} 
where 
\begin{align}
\label{d16a}
U_{1n}&= \sum_{k=1}^{R_n}\sum_{j=1}^{J_n}\sum_{(i,t)\in\tilde{B}^-_{k,j}}\sum_{(i',t')\in \tilde{B}^c_{k,j}}H_n(Z_{it},Z_{i't'}), \\
\label{d16b}
    U_{2n}&=\sum_{k=1}^{R_n}\sum_{j=1}^{J_n}\sum_{(i,t)\in\tilde{B}^-_{k,j}}\sum_{(i',t')\in \tilde{B}_{k,j}}H_n(Z_{it},Z_{i't'}),\\
    \label{d16c}
    U_{3n}&= \sum_{k=1}^{R_n}\sum_{j=1}^{J_n}\sum_{(i,t)\in \tilde{B}_{k,j}\backslash\tilde{B}^-_{k,j}}\sum_{(i',t')\in \tilde{B}_{k,j}}H_n(Z_{it},Z_{i't'}),\\
    \label{d16d}
    U_{4n}&=\sum_{k=1}^{R_n}\sum_{j=1}^{J_n}\sum_{(i,t)\in \tilde{B}_{k,j}\backslash\tilde{B}^-_{k,j}}\sum_{(i',t')\in \tilde{B}_{k,j}^c}H_n(Z_{it},Z_{i't'}),\\
    \label{d16e}
    U_{5n}&= U_{n}-(U_{1n}+U_{2n}+U_{3n}+U_{4n}).
\end{align}

The main strategy of this proof is to prove that the asymptotic normality of $(nT)b^3U_{1n}$ and $(nT)b^3U_{kn}=o_{\mathbb{P}_{|S_n}}(1)$, $2\leq k\leq 5$, hold almost surely with respect to $\mathbb{Q}$.

\bigskip

 \textbf{Step 1} (\textbf{Shadow Sample}) We introduce a  shadow sample of random variables  $\{\X_{it}:1\leq i\leq n, 1\leq t\leq T\}$ such that 



\begin{itemize}
    \item [(P1)] $\{\X_{it}:1\leq i\leq n, 1\leq t\leq T\}$ is a group of mutually independent random variables.
    \item [(P2)] For every $i$ and $t$, $\X_{it}$ is identically distributed as $X_{it}$.
    \item [(P3)] For each $n$ and $T$, the sigma algebras generated by $\{\X_{it}:1\leq i\leq n, 1\leq t\leq T\}$ and $\{X_{n}(\s_{in},t),\s_{in}:1\leq i\leq n, 1\leq t\leq T\}$ are independent from each other.
\end{itemize}
In the following discussion, we focus on presenting  technical details mainly for the  terms $U_{1n}$ and $U_{2n}$ since the other terms can be treated in the same way. More specifically, it will be shown that $U_{1n}$ is the leading term.\\
\par \textbf{Step 2 (calculation of the  conditional variance  $\text{Var}_{|S_n}(U_{1n})$ given  $(s_{in}:1\leq i\leq n)$).}
 Recall the definition of $U_{1n}$ in \eqref{d16a}, define  $Y_{k,j}(Z_{it}):=\sum_{(i',t')\in \tilde{B}^c_{k,j}}H_n(Z_{it},Z_{i't'})$  and  $T_{k,j} := \sum_{(i,t)\in\tilde{B}^-_{k,j}}Y_{k,j}(Z_{it})$, $k=1,\ldots, R_n$, $j = 1, \ldots, J_n$, then we obtain the representation
\begin{align}
     U_{1n}=:\sum_{(k,j)}\sum_{(i,t)\in\tilde{B}^-_{k,j}}Y_{k,j}(Z_{it})=\sum_{(k,j)}T_{k,j}.
\end{align}
Since $E_{|S_n}[H_n(Z_{it},Z_{i't'})]=0$ holds for any $1\leq i,i'\leq n$ and $1\leq t,t'\leq T$, we have
\begin{align}
    \text{Var}_{|S_n}[U_{1n}]=V_1+V_2, \label{eq:V1V2}
\end{align}
where
\begin{align}
    V_1& =E_{|S_n}[U^2_{1n}]= \sum_{(k,j)}E_{|S_n}[T^2_{k,j}],
    \\
    V_2 & =\sum_{(k,j)\neq (k',j')}E_{|S_n}[T_{k,j}T_{k',j'}].
\end{align}
\textbf{Step 2.1  (calculation of $V_1$ of \eqref{eq:V1V2}).} For each term  $E_{|S_n}[T^2_{k,j}]$ we have 
\begin{align}
\label{d17}
    E_{|S_n}[T^2_{k,j}]=\sum_{(i,t)\in \tilde{B}_{k,j}^-}E_{|S_n}[Y_{k,j}^2(Z_{it})]+\sum_{(i,t)\neq (i',t')}E_{|S_n}[Y_{k,j}(Z_{it})Y_{k,j}(Z_{i't'})]=:I_{k,j}+II_{k,j}.
\end{align}
Therefore, we can investigate $V_1$ by studying  $I_{k,j}$, $II_{k,j}$.

\textbf{Step 2.1.1(calculation of $I_{k,j}$ in \eqref{d17}).}
Simple algebra gives
\begin{align}
    I_{k,j}&=\sum_{(i,t)\in\tilde{B}^-_{k,j}}\sum_{(i',t')\in \tilde{B}^c_{k,j}}E_{|S_n}[H_{n}^2(Z_{it},Z_{i't'})] \\
    &+\sum_{(i,t)\in\tilde{B}^-_{k,j}}\sum_{
    (i',t'),(i'',t'')\in\tilde{B}^c_{k,j}}E_{|S_n}[H_{n}(Z_{it},Z_{i't'})H_{n}(Z_{it},Z_{i''t''})]=:I_{k,j,1}+I_{k,j,2} \label{eq:IKj}
\end{align}
%
%
\textbf{Step 2.1.1.a (calculation of $I_{k,j,1}$ in  \eqref{eq:IKj}).}
Recall that $ m(X_{it},X_{i't'})=X_{it}X_{i't'}-E_{|S_n}[X_{it}X_{i't'}]$ and $H_{n}(Z_{it},Z_{i't'})=K_{0b}(s_{in}-s_{i'n},t-t')m(X_{it},X_{i't'})$.   Based on the shadow sample $\X_{it}$ introduced in Step 1, together with the fact that $||(i,t)-(i',t')||_{F}>q_n$, we have 
\begin{align}
    E_{|S_n}[m^2(X_{it},X_{i't'})]=(E_{|S_n}[m^{2}(X_{it},X_{i't'})]-E_{|S_n}[m^{2}(\X_{it},\X_{i't'})])+E_{|S_n}[m^{2}(\X_{it},\X_{i't'})].
\end{align}
Lemma \ref{lemma 3} and Assumption \ref{dependence} yield
\begin{align}
    &|E_{|S_n}[m^{2}(X_{it},X_{i't'})]-E_{|S_n}[m^{2}(\X_{it},\X_{i't'})]|\\
    =&|E_{|S_{n}}(X_{it}X_{i't'}-E_{|S_n}[X_{it}X_{i't'}])^2- E_{|S_{n}}(\X_{it}\X_{i't'}-E_{|S_n}[\X_{it}\X_{i't'}])^2|                                                             \\
    \leq &|E_{|S_{n}}(X_{it}X_{i't'})^2- E_{|S_{n}}(\X_{it}\X_{i't'})^2|+(E_{|S_n}[X_{it}X_{i't'}])^2 \\
    \leq& 4\Psi(1,1)M_{2(1+\delta)}\beta^{\frac{\delta}{1+\delta}}(q_n)+16 \Psi^2(1,1)M_{1+\delta}^{2}\beta^{\frac{2\delta}{1+\delta}}(q_n)
\end{align}
where $M_{1}$ and $M_2$ are defined in condition (M2) of Assumption \ref{moment conditions}. Together with condition (M1) of Assumption \ref{moment conditions} and the fact that $E_{|S_n}[m^2(\X_{it},\X_{i't'})]=E[m^2(\X_{it},\X_{i't'})]=E[\X^2_{it}]E[\X^2_{i't'}]=(E[X^2_0])^2$, we obtain
\begin{align}
    I_{k,j,1}=& \sum_{(i,t)\in\tilde{B}^-_{k,j}}\sum_{(i',t')\in \tilde{B}^c_{k,j}}K^2_{0b}(s_{in}-s_{i'n},t-t')(E_{|S_n}[m^{2}(X_{it},X_{i't'})]-E_{|S_n}[m^{2}(\X_{it},\X_{i't'})])\\
    &+ \sum_{(i,t)\in\tilde{B}^-_{k,j}}\sum_{(i',t')\in \tilde{B}^c_{k,j}}K^2_{0b}(s_{in}-s_{i'n},t-t')E_{|S_n}[m^{2}(\X_{it},\X_{i't'})]=:H_1(k,j)+H_2(k,j),
\end{align}
where $H_1$ and $H_2$ satisfy
\begin{align}    
   | H_1(k,j)|\leq& 4\Psi(1,1)(E|X_{0}|^{^{2(1+\delta)}})^{\frac{1}{1+\delta}}\beta^{\frac{\delta}{1+\delta}}(q_n) \sum_{(i,t)\in\tilde{B}^-_{k,j}}\sum_{(i',t')\in \tilde{B}^c_{k,j}}K^2_{0b}(s_{in}-s_{i'n},t-t'),\\
    H_2(k,j)=&\sum_{(i,t)\in\tilde{B}^-_{k,j}}\sum_{(i',t')\in \tilde{B}^c_{k,j}}K^2_{0b}(s_{in}-s_{i'n},t-t')(E[X^2_{0}])^2.
\end{align}
\par Then, for each $(k,j)$,
\begin{align}
\label{proof 2}
    I_{k,j,1}=H_{2}(k,j)(1+o(1))
\end{align}
holds almost surely with respect to $\mathbb{Q}$.
%
%

\textbf{Step 2.1.1.b  (calculation of  $I_{k,j,2}$  in \eqref{eq:IKj}).} 
Recalling the notation  $g_{n}(Z_{it},Z_{i't'})=K_{0b}(s_{in}-s_{i'n},t-t')X_{it}X_{i't'}$ and $H_{n}(Z_{it},Z_{i't'})=g_{n}(Z_{it},Z_{i't'})-E_{|S_n}[g(Z_{it},Z_{i't'})]$ we have 
\begin{align}
   I_{k,j,2}=&\sum_{(i,t)\in\tilde{B}^-_{k,j}}\sum_{(i',t'),(i'',t'')\in \tilde{B}^c_{k,j} }\text{Cov}_{|S_n}(g_n(Z_{it},Z_{i't'}),g_n(Z_{it}Z_{i''t''}))\\
   =&\sum_{(i,t)\in\tilde{B}^-_{k,j}}\sum_{(i',t'),(i'',t'')\in \tilde{B}^c_{k,j} }K_{0b}(s_{in}-s_{i'n}, t-t')K_{0b}(s_{in}-s_{i''n},t-t'')\text{Cov}_{|S_n}(X_{it}X_{i't'},X_{it}X_{i''t''}), \label{eq:Ikj2}
\end{align}  
where $\text{Cov}_{|S_n}$ is the conditional covariance  given  $S_n$.   The Cauchy Schwartz inequality yields
\begin{align}
    |\text{Cov}_{|S_n}(X_{it}X_{i't'},X_{it}X_{i''t''})|\leq |E_{|S_n}[(X_{it}X_{i't'})(X_{it}X_{i''t''})]|+|E_{|S_n}[X_{it}X_{i't'}]||E_{|S_n}[X_{it}X_{i''t''}]|, \label{eq:cov12}
\end{align}
and we derive an  upper bound for  $I_{k,j,2}$  by investigating the first and second terms of \eqref{eq:cov12} separately.
 
\textbf{The first term in  \eqref{eq:cov12}.}
 Observing the definition of  
$\{\X_{it}\}$ in Step 1 we have 
\begin{align}
    &|E_{|S_n}[(X_{it}X_{i't'})(X_{it}X_{i''t''})]|\leq \sum_{k=1}^4E_{k}, \label{eq:E14}        
    \end{align}  
{where}
  \begin{align}
& E_{1}=|E_{|S_n}[(X_{it}X_{i't'}-\X_{it}\X_{i't'})(X_{it}X_{i''t''}-\X_{it}\X_{i''t''})]|  \\
&E_2=|E_{|S_n}[(X_{it}X_{i't'}-\X_{it}\X_{i't'})\X_{it}\X_{i''t''}]| \\
&E_3=|E_{|S_n}[\X_{it}\X_{i't'}(X_{it}X_{i''t''}-\X_{it}\X_{i''t''})]| \\
&E_4=|E_{|S_n}[\X^2_{it}\X_{i't'}\X_{i''t''}]|. 
\end{align}
Since $\{\X_{it}:1\leq i\leq n, 1\leq t\leq T\}$ is mutually independent and $\X_{it}$ is identically distributed as $X_{it}$, we immediately obtain 
\begin{align}
    E_4 = 0. \label{eq:E4}
\end{align}
Furthermore, due to the independence between $\{\X_{it}\}$ and $\{X_{it}\}$, we also have
\begin{align}
    E_2&=|E_{|S_n}[X_{it}X_{i't'}]E_{|S_n}[\X_{it}]E_{|S_n}[\X_{i''t''}]-E_{|S_n}[\X^2_{it}]E_{|S_n}[\X_{i't'}]E_{|S_n}[\X_{i''t''}]|=0 \label{eq:E2}\\
    E_3&=|E_{|S_n}[X_{it}X_{i't'}]E_{|S_n}[\X_{it}]E_{|S_n}[\X_{i't'}]-E_{|S_n}[\X^2_{it}]E_{|S_n}[\X_{i't'}]E_{|S_n}[\X_{i''t''}]|=0 \label{eq:E3}.
\end{align}
Thus, we only need to focus on $E_1$. It is obvious that 
\begin{align}
    E_1=&\Big|E_{|S_n}[X^2_{it}X_{i't'}X_{i''t''}]-E_{|S_n}[\X_{it}]E_{|S_n}[\X_{i't'}]E_{|S_n}[X_{it}X_{i''t''}]\\
    &-E_{|S_n}[X_{it}X_{i't'}]E_{|S_n}[\X_{it}]E_{|S_n}[\X_{i''t''}]+E_{|S_n}[\X^2_{it}\X_{i't'}\X_{i''t''}]\Big|\\
    =&|E_{|S_n}[X^2_{it}X_{i't'}X_{i''t''}]-E_{|S_n}[\X^2_{it}\X_{i't'}\X_{i''t''}]|.
\end{align}
Here the second equality follows from  the following three facts: (1)$ \X_{it}$'s are mutually independent and $\{\X_{it}\}$ is independent of $\{X_{it}\}$ and $\{\s_{in}\}$; (2) $E_{|S_n}[\X_{it}]=E_{|S_n}[X_{it}]=0$; (3) $a+E_{|S_n}[\X^2_{it}\X_{i't'}\X_{i''t''}]=a-E_{|S_n}[\X^2_{it}\X_{i't'}\X_{i''t''}]$ holds for any $a\in\mathbb{R}$ since $E_{|S_n}[\X^2_{it}\X_{i't'}\X_{i''t''}]=0$. Then, the following result holds for every $(i,t)$, $(i',t')$ and $(i'',t'')$,
\begin{align}
   E_1 &\leq |E_{|S_n}[X^2_{it}X_{i't'}X_{i''t''}]-E_{|S_n}[\X^2_{it}X_{i't'}X_{i''t''}]|+|E_{|S_n}[\X^2_{it}X_{i't'}X_{i''t''}]-E_{|S_n}[\X^2_{it}\X_{i't'}\X_{i''t''}]|\\
   &=\left|\int_{\mathbb{R}^3}x^2(yz)d\mathbb{P}_{X_{it},(X_{i't'},X_{i''t''})|S_n}-\int_{\mathbb{R}^3}x^2(yz)d\mathbb{P}_{X_{it}|S_n}\otimes\mathbb{P}_{(X_{i't'},X_{i''t''})|S_n}\right|\\
   &+E[\X^2_{it}]\left|\int_{\mathbb{R}^2}xyd\mathbb{P}_{(X_{i't'},X_{i''t''})|S_n}-\int_{\mathbb{R}^2}xyd\mathbb{P}_{X_{i't'}|S_n}\otimes\mathbb{P}_{X_{i''t''}|S_n}\right|,
\end{align}
where $\mathbb{P}_{W|S_n}\otimes \mathbb{P}_{V|S_n}$ denotes the product-type joint distribution of random vector $(W,V)$ on condition of $S_n$. Then, by applying Lemma \ref{lemma 3}, we have
\begin{align}
    E_1\leq& 4\Psi(1,2)(E|X_{0}|^{2(1+\delta)})^{\frac{1}{1+\delta}}||X_{i't'}X_{i''t''}||_{1+\delta}|\beta^{\frac{\delta}{1+\delta}}(q_n)  \\
    &+4\Psi(1,1)E[X_0^2]||X_0||_{1+\delta}^2\beta^{\frac{\delta}{1+\delta}}(||(i',t')-(i'',t'')||_{F}) \label{eq:E1}.
\end{align}
Combining \eqref{eq:E14}, \eqref{eq:E1}, \eqref{eq:E2}, \eqref{eq:E3} and \eqref{eq:E4}, we conclude that
\begin{align}
\label{proof 3}
    |E_{|S_n}[X_{it}X_{i't'}(X_{it}X_{i''t''})]|\leq& 4\beta^{\frac{\delta}{1+\delta}}(q_n)\Psi(1,2)(E|X_{0}|^{2(1+\delta)})^{\frac{2}{1+\delta}} \\&+4\Psi(1,1)E[X_0^2]||X_0||_{1+\delta}^2\beta^{\frac{\delta}{1+\delta}}(||(i',t')-(i'',t'')||_{F}).
\end{align}
 \textbf{The second term of \eqref{eq:cov12}.}
By similar arguments for the first term we have   for any $(i,t)\neq (j,s)$  
\begin{align}
    |E_{|S_n}[X_{it}X_{js}]|=|\text{Cov}_{|S_n}(X_{it},X_{js})|\leq 4||X_0||^2_{1+\delta}\Psi(1,1)\beta^{\frac{\delta}{1+\delta}}(\min\{||s_{in}-s_{jn}||_{F}, |t-s|\}),
\end{align}
which asserts
\begin{align}
    \label{proof 4}
   | E[X_{it}X_{i't'}]||E[X_{it}X_{i''t''}]|\leq 16||X_0||^4_{1+\delta}\Psi^2(1,1)\beta^{\frac{2\delta}{1+\delta}}(q_n).
\end{align}
Hence, according to \eqref{eq:Ikj2},  \eqref{eq:cov12}, \eqref{proof 3} and \eqref{proof 4}, there exist  positive constants $C_{X_0}$ and $C'_{X_0}$ associated with the moment conditions of $X_0$ (see Assumption \ref{moment conditions}) but independent of $n$ such that
\begin{align}
\label{proof 5}
    |I_{k,j,2}|\leq &C_{X_0}\sum_{(i,t)\in\tilde{B}^-_{k,j}}\sum_{(i',t'),(i'',t'')\in \tilde{B}^c_{k,j} \atop ||(i',t')-(i'',t'')||_{F}\leq q_n}K_{0b}(s_{in}-s_{i'n},t-t')K_{0b}(s_{in}-s_{i''n},t-t'') \\
    & ~~~~~~~~~~~~~~~~~~~~~~~~ \times  \beta^{\frac{\delta}{1+\delta}}(||(i',t')-(i'',t'')||_{F}) \\
    &+C'_{X_0}\sum_{(i,t)\in\tilde{B}^-_{k,j}}\sum_{(i',t'),(i'',t'')\in \tilde{B}^c_{k,j} \atop ||(i',t')-(i'',t'')||_{F}> q_n}K_{0b}(s_{in}-s_{i'n},t-t')K_{0b}(s_{in}-s_{i''n},t-t'')\beta^{\frac{\delta}{1+\delta}}(q_n) \\
    =& I_{k,j,21}+I_{k,j,22}
\end{align}
holds almost surely with respect to $\mathbb{Q}$. Proposition \ref{prop 6}   and \ref{prop 8}  yield
\begin{align}
    \lim_{n\to\infty}\frac{|I_{k,j,21}|}{n\textbf{Card}(\widetilde{B}^-_{k,j}\cap \Gamma_n)Tp_Tb^4}=0, \\
      \lim_{n\to\infty}\frac{|I_{k,j,22}|}{n\textbf{Card}(\widetilde{B}^-_{k,j}\cap \Gamma_n)Tp_Tb^3}=0
\end{align}
almost surely with respect to $\mathbb{Q}$, respectively, which gives,  for every $k,j$,
\begin{align}
    \label{proof 6}
   \lim_{n\to\infty}\frac{|I_{k,j,2}|}{n\textbf{Card}(\widetilde{B}^-_{k,j}\cap \Gamma_n)Tp_Tb^3}=0
\end{align}
holds almost surely with respect to $\mathbb{Q}$, uniformly for $k=1,\ldots, R_n$, $j = 1,\ldots, J_n$.

Summarizing these calculations, we obtain for the first term in \eqref{d17} that 
\begin{align}
\label{proof 7}
    I_{k,j}= H_{2}(k,j)(1+o(1))+o(n\textbf{Card}(\widetilde{B}^-_{k,j}\cap \Gamma_n)Tp_Tb^3)
\end{align}
holds almost surely with respect to $\mathbb{Q}$ for each $k,j$, where the small $o$'s do not depend on $k$ and $j$.

\textbf{Step 2.1.2 (calculation of $II_{k,j}$ in \eqref{d17}).}   For each $\bS_n=S_n$, simple algebra yields
\begin{align}
    II_{k,j}=&\sum_{(i,t)\neq (i',t')\in \widetilde{B}^-_{k,j}}\sum_{(r,s)\in \widetilde{B}^c_{k,j}}\sum_{(r',s')\in \widetilde{B}^c_{k,j}}E_{|S_{n}}[H_n(Z_{it},Z_{rs})H_{n}(Z_{i't'},Z_{r's'})]\\
    =&\sum_{(i,t)\neq (i',t')\in \widetilde{B}^-_{k,j}}\sum_{(r,s)\in \widetilde{B}^c_{k,j}}\sum_{(r',s')\in \widetilde{B}^c_{k,j}}K_{0b}(s_{in}-s_{rn},t-s)K_{0b}(s_{i'n}-s_{r'n},t'-s') \\
    & ~~~~~~~~~~~~~~~~~\times  E_{|S_{n}}[m(X_{it},X_{rs})m(X_{i't'},X_{r's'})]\\
    =&\sum_{A_{k,j} }K_{0b}(s_{in}-s_{rn},t-s)K_{0b}(s_{i'n}-s_{r'n},t'-s')E_{|S_{n}}[m(X_{it},X_{rs})m(X_{i't'},X_{r's'})]\\
    &+\sum_{A_{k,j}^c }K_{0b}(s_{in}-s_{rn},t-s)K_{0b}(s_{i'n}-s_{r'n},t'-s')E_{|S_{n}}[m(X_{it},X_{rs})m(X_{i't'},X_{r's'})]\\
    =&:II_{k,j,1}+II_{k,j,2},
\end{align}
where 
\begin{align}
    A_{k,j}&=\{(i,t)\neq (i',t')\in \widetilde{B}^{-}_{k,j}; (r,s),(r',s')\in\widetilde{B}^{c}_{k,j}:\max\{ ||(i,t)-(i',t')||_{F},||(r,s)-(r',s')||_{F}\}>q_n\}\},\\
    A^c_{k,j}&=\{(i,t)\neq (i',t')\in \widetilde{B}^{-}_{k,j}; (r,s),(r',s')\in\widetilde{B}^{c}_{k,j}:\max\{||(i,t)-(i',t')||_{F},||(r,s)-(r',s')||_{F}\}\leq q_n \}\}.
\end{align}
Note that 
\begin{align}
\label{proof 8}
    &|E_{|S_n}[m(X_{it},X_{rs})m(X_{i't'},X_{r's'})]| \\
    =&|E_{|S_n}[X_{it}X_{i't'}X_{rs}X_{r's'}]-E_{|S_n}[X_{it}X_{rs}]E_{|S_n}[X_{i't'}X_{r's'}]| \\
    \leq& |E_{|S_n}[X_{it}X_{i't'}]E_{|S_n}[X_{rs}X_{r's'}]|+|E_{|S_n}[X_{it}X_{rs}]E_{|S_n}[X_{i't'}X_{r's'}]|+|Cov_{|S_n}(X_{it}X_{i't'},X_{rs}X_{r's'})| \\
    \leq &16M_{1+\delta}^2\beta^{\frac{\delta}{1+\delta}}(||(i,t)-(i',t')||_{F})\beta^{\frac{\delta}{1+\delta}}(||(r,s)-(r',s')||_{F})+16M_{1+\delta}^2\beta^{\frac{2\delta}{1+\delta}}(q_n)+4(\widetilde{M}_{1+\delta}\lor M_{1+\delta})\beta^{\frac{\delta}{1+\delta}}(q_n).
\end{align}

Thus,  with condition (T2)(i) of Assumption \ref{technical assumptions}, \eqref{proof 8} we obtain 
\begin{align}
    |II_{k,j,1}|&\leq C_M\sum_{A_{k,j} }K_{0b}(s_{in}-s_{rn},t-s)K_{0b}(s_{i'n}-s_{r'n},t'-s') \beta^{\frac{\delta}{1+\delta}}(q_n),\\
    |II_{k,j,2}|&\leq C_M \sum_{A_{k,j}^c }K_{0b}(s_{in}-s_{rn},t-s)K_{0b}(s_{i'n}-s_{r'n},t'-s') \beta^{\frac{\delta}{1+\delta}}(||(i,t)-(i',t')||_{F})\beta^{\frac{\delta}{1+\delta}}(||(j,s)-(j',s')||_{F}),
\end{align}
where $C_M>0$ is a constant only determined by $M_{1+\delta}$ and $\widetilde{M}_{1+\delta}$. Proposition \ref{prop 7} implies that
\begin{align}
    \label{proof 9}
    \lim_{n \to \infty } \frac{|II_{k,j,2}|}{n\textbf{Card}(\widetilde{B}^-_{k,j}\cap \Gamma_n)Tp_Tb^4}=0,\ \ a.s.-[\mathbb{Q}].
\end{align}
holds almost surely with respect to $\mathbb{Q}$ (uniformly with respect to ($k,j)$). As for $II_{k,j,1}$, (E2) of Proposition \ref{prop 7} yields that
(uniformly with respect to ($k,j)$) 
\begin{align}
   \lim_{n\to\infty}\frac{|II_{k,j,1}|}{n\textbf{Card}(\widetilde{B}^-_{k,j}\cap \Gamma_n)Tp_Tb^3}=0\ \ a.s.-[\mathbb{Q}].
\end{align}

\par Combining these estimates  with \eqref{proof 5} gives a corresponding bound for  $II_{k,j}$,  and we obtain for term $V_1$ in of \eqref{proof 2}. 
\begin{align}   
    V_1=\sum_{k,j} I_{k,j}+\sum_{k,j}II_{k,j}= \sum_{k,j}H_{2}(k,j)(1+o(1))+o(n^2T^2b^3 )
\end{align}
 almost surely with respect to $\mathbb{Q}$.
 
Observing the definition of $H_{2}(k,j)$ we obtain  with Proposition \ref{prop 9} that
\begin{align}
\label{proof 10}
    \lim_{n\to\infty}\frac{V_1}{\sigma_n^2}=1
\end{align}
holds almost sure with respect to $\mathbb{Q}$.

\noindent\textbf{Step 2.2 (calculation of $V_2$ of \eqref{eq:V1V2}).} Note that 
\begin{align}
    &V_2=\sum_{(k,j)\neq (k',j')}\sum_{(i,t)\in\widetilde{B}_{k,j}^{-}}\sum_{(i',t')\in\widetilde{B}_{k',j'}^{-}}E_{|S_n}[Y_{k,j}(Z_{it})Y_{k',j'}(Z_{i't'})]\\
    =&\sum_{(k,j)\neq (k',j')}\sum_{(i,t)\in\widetilde{B}_{k,j}^{-}}\sum_{(i',t')\in\widetilde{B}_{k',j'}^{-}}\sum_{(r,u)\in \widetilde{B}_{k,j}^{c}}\sum_{(r',u')\in \widetilde{B}_{k',j'}^{c}}E_{|S_n}[H_{n}(Z_{it},Z_{ru})H_{n}(Z_{i't'},Z_{r'u'})]\\
    =&\sum_{(k,j)\neq (k',j')}V_{2}(k,j,k',j')
\end{align}
and 
\begin{align}
    E_{|S_n}[H_{n}(Z_{it},Z_{ru})H_{n}(Z_{i't'},Z_{r'u'})]&=K_{0b}(s_{in}-s_{rn},t-u)K_{0b}(s_{i'n}-s_{r'n},t'-u')\text{Cov}(X_{it}X_{ru},X_{i't'}X_{r'u'})\\
    &=:K_{irtu}K_{i'r't'u'}\text{Cov}(X_{it}X_{ru},X_{i't'}X_{r'u'}),
\end{align}
the  simple algebra yields that
\begin{align}
    V_{2}(k,j,k',j')=&\sum_{(i,t)\in\widetilde{B}^{-}_{k,j}}\sum_{(i',t')\in\widetilde{B}^-_{k',j'}}\sum_{(r,u),(r',u')\in C^c_{(i,t)(i',t')}}K_{irtu}K_{i'r't'u'}\text{Cov}(X_{it}X_{ru},X_{i't'}X_{r'u'})\\
  &+\sum_{(i,t)\in\widetilde{B}^{-}_{k,j}}\sum_{(i',t')\in\widetilde{B}^-_{k',j'}}\sum_{(r,u), (r',u')\in C_{(i,t)(i',t')} }K_{irtu}K_{i'r't'u'}\text{Cov}(X_{it}X_{ru},X_{i't'}X_{r'u'})\\
  =&: V_{21}(k,j,k',j')+V_{22}(k,j,k',j'),\label{eq:V2}
    \end{align}
where
\begin{align}
     & C_{(i,t)(i',t')}=\{(r,u)\in \widetilde{B}_{k,j}^c, (r',u')\in\widetilde{B}_{k',j'}^c:||(r,u)-(i',t')||_{F}\leq q_n, ||(r',u')-(i,t)||_{F}\leq q_n\},\\
  &C^c_{(i,t)(i',t')}=\{(r,u)\in \widetilde{B}_{k,j}^c, (r',u')\in\widetilde{B}_{k',j'}^c: \max\{||(r,u)-(i',t')||_{F}, ||(r',u')-(i,t)||_{F}\}> q_n\}.
\end{align}
According to Proposition \ref{prop 10}, we obtain 
\begin{align}
    \lim_{n\to\infty}\frac{V_2}{\sigma_n^2}=0 \ \ a.s.-[\mathbb{Q}].
\end{align}

\par Combining the results from  Step 2.1 and 2.2  we are able to calculate the conditional variance of $U_n$, that is 
 \begin{align}
    \label{d19}
    \lim_{n}\frac{\text{Var}_{|S_n}(U_{1n})}{\sigma_n^2}=1
\end{align}
holds almost surely with respect to $\mathbb{Q}$.\\\\
\noindent \textbf{Step 3 (calculation of ($\text{Var}_{|S_n}(U_{kn})$, $k\geq 2$).} The main goal of this step is to show 
\begin{align}
    \label{d20}
\lim_{n\to\infty}\sigma^{-2}_{n}\text{Var}_{|S_n}(U_{kn})=0
\end{align}
 almost surely with respect to $\mathbb{Q}$ which implies $\sigma_{n}^{-1}U_{kn}=o_{\mathbb{P}_{|S_n}}(1)$ holds almost surely with respect to $\mathbb{Q}$. 

\par First, we focus on $U_{2n}$ defined in \eqref{d16b}.  Similar to the decomposition of $U_{1n}$, we have
\begin{align}
    U_{2n}=\sum_{(k,j)}\sum_{(i,t)\in \widetilde{B}^-_{k,j}}Y^*_{k,j}(Z_{it})=\sum_{(k,j)}T^*_{k,j},
\end{align}
where
\begin{align}
  Y^*_{k,j}(Z_{it})  &=  \sum_{(i',t')\in\widetilde{B}_{k,j}}H_n(Z_{it},Z_{i't'}), 
 \\
T^*_{k,j}  &=  \sum_{(i,t)\in\widetilde{B}^-_{k,j}}Y_{k,j}(Z_{it}). 
\end{align}

This gives 
\begin{align}
&\text{Var}_{|S_n}(U_{2n})=V^*_{1}+V^*_{2}+V^*_{3}+V_{4}^*,
\end{align}
where
   \begin{align}
    V^*_{1}&= \sum_{k,j}\sum_{(i,t)\in\widetilde{B}^-_{k,j}}\sum_{(i',t')\in\widetilde{B}_{k,j}}E_{|S_n}[H_{n}^2(Z_{it},Z_{i't'})],\\
    V^*_{2}&=\sum_{k,j}\sum_{(i,t)\in\widetilde{B}^-_{k,j}}\sum_{(i',t')\neq (i'',t'')\in\widetilde{B}_{k,j}}E_{|S_n}[H_{n}(Z_{it},Z_{i't'})H_{n}(Z_{it},Z_{i''t''})],\\
     V^*_{3}&=\sum_{k,j}\sum_{(i,t)\neq (i',t')\in\widetilde{B}^-_{k,j}}\sum_{(r,s)\neq (r',s')\in\widetilde{B}_{k,j}}E_{|S_n}[H_{n}(Z_{it},Z_{rs})H_{n}(Z_{i't'},Z_{r's'})],\\
     V^*_{4}&=\sum_{(k,j)\neq (k',j')}\sum_{(i,t)\in\widetilde{B}_{k,j}^{-}}\sum_{(i',t')\in\widetilde{B}_{k',j'}^{-}}\sum_{(r,u)\in \widetilde{B}_{k,j}}\sum_{(r',u')\in \widetilde{B}_{k',j'}}E_{|S_n}[H_{n}(Z_{it},Z_{ru})H_{n}(Z_{i't'},Z_{r'u'})].
\end{align}
Recalling the notation of  
$g_n$, $ H_{n}$ and $m$ in \eqref{d15c}, \eqref{d15d} and \eqref{d15e}, respectively, 
and observing  $0<\lim_{n\to\infty}\frac{\sigma_n^2}{(nT)^2b^3}<\infty$, Propositions \ref{prop 11} -  \ref{prop 13}  we obtain for $k=1,2,3,4$
\begin{align}
    \lim_{n\to\infty}\frac{V^*_k}{\sigma_n^2}=0,\ \ \ a.s.-[\mathbb{Q}], 
\end{align}
which gives 
 $\sigma_{n}^{-1}U_{2n}=o_{\mathbb{P}_{|S_n}}(1)$ holds almost surely with respect to $\mathbb{Q}$.\\

\par Note that $U_{3n}$ and $U_{4n}$ have nearly the same structure as $U_{2n}$ and $U_{1n}$ respectively. By similar arguments as given  in the  previous steps, we can therefore  show $\sigma_{n}^{-1}U_{kn}=o_{\mathbb{P}_{|S_n}}(1)$ 
 almost surely with respect to $\mathbb{Q}$ ($k=3,4$).   As for the term $U_{5n}$, notice that the only difference between $U_{5n}$ and $\sum_{k=1}^4U_{kn}$ is that the number of time locations involved in $U_{5n}$ is strictly smaller than $p_T+q_T$. By repeating the method demonstrated above, together with the fact that $\frac{p_T+q_T}{T}=o(1)$, we can show  $\sigma_{n}^{-1}U_{5n}=o_{\mathbb{P}_{|S_n}}(1)$ holds almost surely with respect to $\mathbb{Q}$.
\par Summarizing the discussion, we manage to show that the statistic $U_n$ in \eqref{d16} satisfies 
\begin{align}
\label{proof 12}
    \sigma^{-1}_{n}U_{n}=\sigma_{n}^{-1}U_{1n}+o_{\mathbb{P}_{|S_n}}(1)
\end{align}
almost surely with respect to $\mathbb{Q}$, where $U_{1n}$ is defined in \eqref{d16a}. This means that we only need to focus on the leading term $\sigma_n^{-1}U_{1n}$.\\\\
\noindent \textbf{Step 4 (asymptotic properties of the leading term of $U_{1n}$).} In this step, we aim to simplify $U_{1n}$  such that we can then employ Theorem \ref{theorem CLT 2} to prove the central limit theorem of our U-statistic. A key observation here is that, for each set $\widetilde{B}_{k,j}$   introduced in \eqref{d21}, 
$$
\widetilde{B}^c_{L,k,j}:= \bigcup_{(k',j')\neq (k,j)}\widetilde{B}^-_{k',j'} \subset \widetilde{B}^c_{k,j},
$$
which gives the decomposition  
\begin{align}
    U_{1n}&=\sum_{k,j}\sum_{(i,t)\in\widetilde{B}^-_{k,j}}\sum_{(i',t')\in\widetilde{B}^c_{L,k,j}}H_{n}(Z_{it},Z_{i't'})+\sum_{k,j}\sum_{(i,t)\in\widetilde{B}^-_{k,j}}\sum_{(i',t')\in\widetilde{B}^c_{k,j}\backslash\widetilde{B}^c_{L,k,j}}H_{n}(Z_{it},Z_{i't'})\\
    &=: U_{1n,L}+U_{1n,R}.
\end{align}
We are going to show that, for every $\epsilon>0$, 
\begin{align}
   \lim_{n\to\infty}\mathbb{P}_{|S_n}( |\sigma_n^{-1}U_{1n,R}|>\epsilon)=0 \ \ \ a.s.-[\mathbb{Q}].
\end{align}
It suffice to prove $\sigma_n^{-2}\text{Var}_{|S_n}(U_{1n,R})=0$ holds almost surely with respect to $\mathbb{Q}$. For this purpose, we use the decomposition 
\begin{align}
    &\text{Var}_{|S_n}(U_{1n,R})= \widetilde{V}_1+\widetilde{V}_2+\widetilde{V}_3+\widetilde{V}_4,
    \end{align}
    where
    \begin{align}
    \widetilde{V}_1&=\sum_{k,j}\sum_{(i,t)\in\widetilde{B}_{k,j}}\sum_{(i',t')\in\widetilde{B}^c_{k,j}\backslash\widetilde{B}^c_{L,k,j}}E[H^2_{n}(Z_{it},Z_{i't'})]\\
    \widetilde{V}_{2}&=\sum_{k,j}\sum_{(i,t)\in\widetilde{B}^-_{k,j}}\sum_{(i',t')\neq (i'',t'')\in\widetilde{B}^c_{k,j}\backslash\widetilde{B}^c_{L,k,j}}E_{|S_n}[H_{n}(Z_{it},Z_{i't'})H_{n}(Z_{it},Z_{i''t''})],\\
     \widetilde{V}_{3}&=\sum_{k,j}\sum_{(i,t)\neq (i',t')\in\widetilde{B}^-_{k,j}}\sum_{(r,s), (r',s')\in\widetilde{B}^c_{k,j}\backslash\widetilde{B}^c_{L,k,j}}E_{|S_n}[H_{n}(Z_{it},Z_{rs})H_{n}(Z_{i't'},Z_{r's'})],\\
     \widetilde{V}_{4}&=\sum_{(k,j)\neq (k',j')}\sum_{(i,t)\in\widetilde{B}_{k,j}^{-}}\sum_{(i',t')\in\widetilde{B}_{k',j'}^{-}}\sum_{(r,s)\in \widetilde{B}^c_{k,j}\backslash\widetilde{B}^c_{L,k,j}}\sum_{(r',s')\in \widetilde{B}^c_{k',j'}\backslash\widetilde{B}^c_{L,k',j'}}E_{|S_n}[H_{n}(Z_{it},Z_{ru})H_{n}(Z_{i't'},Z_{r'u'})],
\end{align}
and   prove that $\lim_{n\to\infty}\frac{\widetilde{V}_k}{\sigma_n^2}=0$  almost surely with respect to $\mathbb{Q}$ ($1\leq k\leq 4$). Compared with Step 2, a crucial difference here is that
\begin{align}
    \lim_{n\to\infty}\max_{k,j}\left|\frac{\textbf{Leb}(\Lambda_{n}^{-1}(\widetilde{B}_{k,j,S}^c\backslash\widetilde{B}^c_{L,k,j,S}))}{\textbf{Leb}(\Lambda_n^{-1}\widetilde{B}^c_{k,j,S})}\right|=0,
\end{align}
where $\widetilde{B}^c_{L,k,j,S}$ is the projection of set $\widetilde{B}^c_{L,k,j}$ on spatial plane. Together with the design of spatial blocks, 
Proposition \ref{prop 5} implies for the event 
\begin{align}
    \mathcal{G}=\bigcap_{k=1}^{R_n}\bigcap_{j=1}^{J_n}\Big\{\lim_{n\to\infty}n^{-1}\textbf{Card}\Big((\widetilde{B}^c_{k,j,S}\backslash\widetilde{B}^c_{L,k,j,S})\cap\Gamma_n\Big)=0\Big\},
\end{align}
 that 
  \begin{align}
  \label{proof 14}
      \mathbb{Q}(\overline{\lim}_{n\to\infty}\mathcal{G}^c_n)=1,
  \end{align}

  where $\mathcal{G}^c$ is the complement of set $\mathcal{G}$. Then, based on \eqref{proof 14}, by repeating the method used in Step 2, we can easily obtain that 
  \begin{align}
      \label{proof 15}
      \lim_{n\to\infty}\frac{\text{Var}_{|S_n}(U_{1n,R})}{\sigma_n^2}=0\    \  \text{and}
      \lim_{n\to\infty}\frac{\text{Var}_{|S_n}(U_{1n,L})}{\sigma_n^2}<\infty
  \end{align}
hold almost surely with respect to $\mathbb{Q}$. Therefore, we obtain from the representation 
$$\text{Var}_{|S_n}(U_{1n})=\text{Var}_{|S_n}(U_{1n,L})+\text{Var}_{|S_n}(U_{1n,R})+2\text{Cov}_{|S_n}(U_{1n,L},U_{1n,R}),$$
\eqref{proof 15} and the  Cauchy-Schwartz inequality  that 
\begin{align}
\label{proof 16}
    \lim_{n\to\infty}\frac{\text{Var}_{|S_n}(U_{1n,L})}{\sigma_n^2}=1
\end{align}
holds almost surely with respect to $\mathbb{Q}$.

\par Consequently, \eqref{proof 16} implies that we only need to focus on proving the asymptotic normality of $\frac{U_{1n,L}}{\sigma_n}$ (almost surely with respect to $\mathbb{Q}$). An important point here  is that $\{\widetilde{B}^-_{k,j}\}$ is an array of disjoint sets, which means that there is a line connecting all $\widetilde{B}^-_{k,j}$'s in linear order.  By naming $R_nJ_n=M_n$, we can rename $\{\widetilde{B}^-_{k,j}: 1\leq k\leq R_n, 1\leq j\leq J_n\}$ as $\{\widetilde{B}^-_{m}:m=1,2,\dots,M_n\}$, see Figure \ref{fig:chess}.

\begin{figure}
    \centering
    \includegraphics[width=0.5\linewidth]{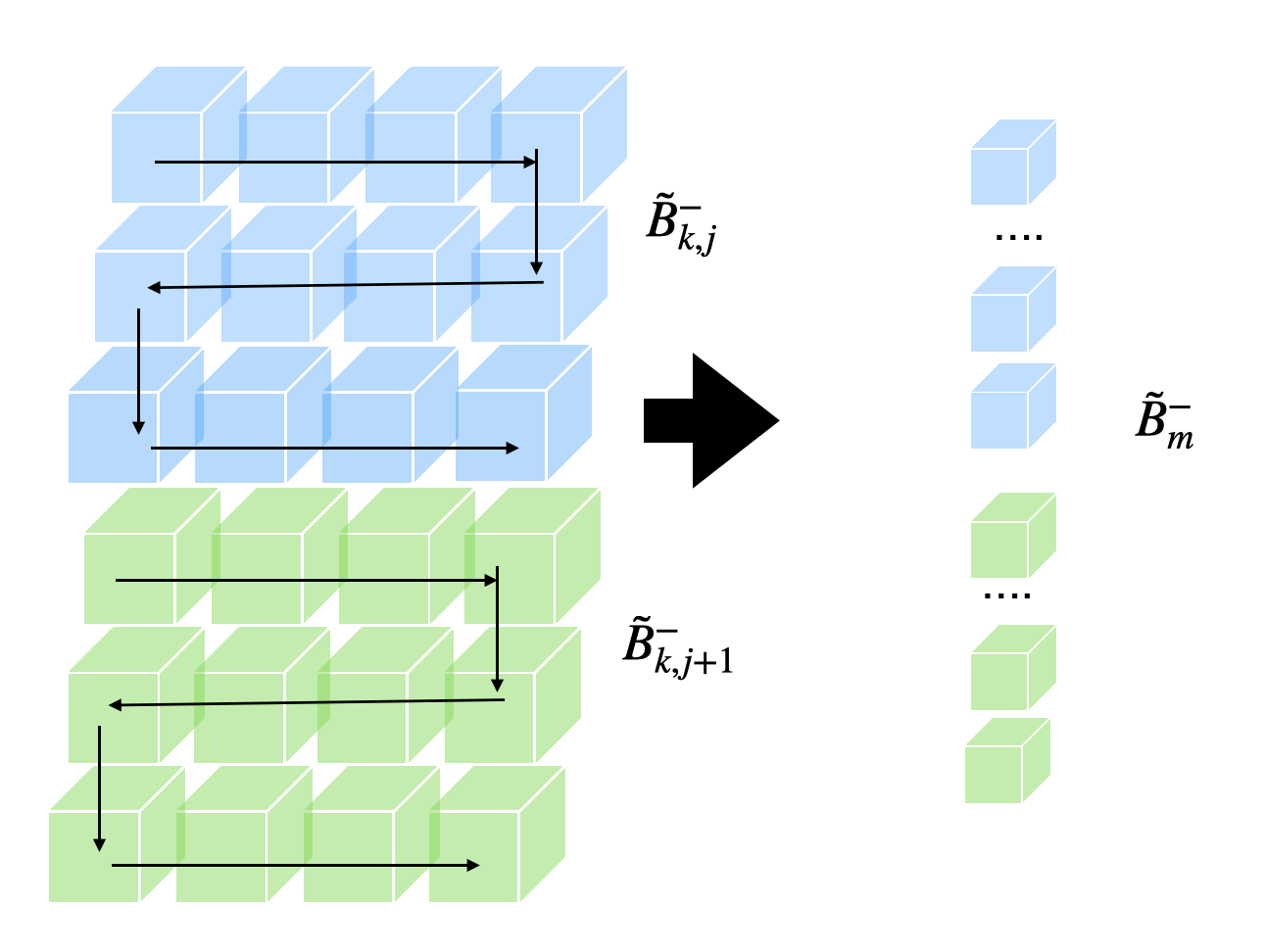}
    \caption{Linear order of the disjoint blocks.}
    \label{fig:chess}
\end{figure}
Furthermore, we have $d(\widetilde{B}^-_{m},\widetilde{B}^-_{m'})\geq q_n$, where $d(A,B)=\inf_{a\in A,b\in B}||a-b||_{F}$, for any $A,B\subset \mathbb{R}^3$. With this notation  rewrite $U_{1n,L}$ as follows,
\begin{align}
    \sigma_n^{-1}U_{1n,L}&=\sigma_n^{-1}\sum_{m=1}^{M_n}\sum_{(i,t)\in\widetilde{B}^-_{m}}\sum_{m'\neq m\atop 1\leq m'\leq M_n}\sum_{(i',t')\in\widetilde{B}^-_{m'}}H_n(Z_{it},Z_{i't'}) \\
    &=2\sigma_n^{-1}\sum_{m=2}^{M_n}\sum_{(i,t)\in\widetilde{B}^-_{m}}\sum_{ 1\leq m'<m}\sum_{(i',t')\in\widetilde{B}^-_{m'}}H_n(Z_{it},Z_{i't'}) \\
    &=: \sum_{m=1}^{M_n}\widetilde{T}_{m,n}, \label{eq:tildeT}
\end{align}
where 
\begin{align}
    \label{d22}
    \widetilde{T}_{m,n} ={2 \over \sigma_n}\sum_{(i,t)\in\widetilde{B}^-_{m}}\sum_{ 1\leq m'<m}\sum_{(i',t')\in\widetilde{B}^-_{m'}}H_n(Z_{it},Z_{i't'})
\end{align}
Hence, we only need to show the asymptotic normality of $\sum_{m=1}^{M_n}\widetilde{T}_{m,n}$ holds almost surely with respect to $\mathbb{Q}$. Note that we can regard $\bS_n$ and $\mathbb{Q}$ above as the $O_n$ and $Q$ mentioned in Theorem \ref{theorem CLT 2}. Then, the asymptotic normality follows from Theorem \ref{theorem CLT 2} if we are able to show the  conditions (D1)-(D3) introduced there.\\\\
\noindent\textbf{Step 5} This step is divided into three parts, in which we will verify  conditions (D1) to (D3) of Theorem \ref{theorem CLT 2}. Define $\mathcal{F}_{k}$ is the sigma field generated by $\{\widetilde{T}_{m,n}:1\leq m\leq k\}$. 
\par \textbf{Step 5.1} ($\sum_{m=2}^{M_n}E_{|S_n}[\widetilde{T}_{m,n}|\mathcal{F}_{m-1}] = o_{\mathbb{P}_{| S_n}} (1) \ \ a.s.-[\mathbb{Q}]$) First, note that $E_{|S_n}[\widetilde{T}_{m,n}|\mathcal{F}_{m-1}]$ is well defined since we can understand it as a real-valued random variable measurable with respect to the sigma field $\mathcal{F}_{m-1}$ and its distribution is determined by the realization $\bS_n=S_n$. Since $\sigma_n=O(nTb^{1.5})$, it is sufficient  to  prove $\sum_{m=2}^{M_n}E_{|S_n}[T_{m,n}|\mathcal{F}_{m-1}]=o_{\mathbb{P}_{|S_n}} (nTb^{1.5})\ \ a.s.-[\mathbb{Q}]$, where  $ T_{m,n} = {\sigma_n \over 2}  \widetilde T_{m,n}$. Simple algebra shows that for every realization $\bS_n=S_n$, we have 
\begin{align}
    &\Big|E_{|S_n}[T_{m,n}|\mathcal{F}_{m-1}]\Big|=\Big|\sum_{(i,t)\in\widetilde{B}^-_{m}}\sum_{(i',t')\in\widetilde{B}^-_{m'}\atop 1\leq m'<m}K_{0b}(s_{in}-s_{i'n},t-t')E_{|S_n}[X_{it}X_{i't'}-E_{|S_n}[X_{it}X_{i't'}]|\mathcal{F}_{m-1}]\Big|\\
    =&\Big|\sum_{(i,t)\in\widetilde{B}^-_{m}}\sum_{(i',t')\in\widetilde{B}^-_{m'}\atop 1\leq m'<m}K_{0b}(s_{in}-s_{i'n},t-t')(E_{|S_n}[X_{it}X_{i't'}|\mathcal{F}_{m-1}]-E_{|S_n}[X_{it}X_{i't'}])\Big|\\
    \leq & \sum_{(i,t)\in\widetilde{B}^-_{m}}\sum_{(i',t')\in\widetilde{B}^-_{m'}\atop 1\leq m'<m}K_{0b}(s_{in}-s_{i'n},t-t')\Big|E_{|S_n}[X_{it}X_{i't'}|\mathcal{F}_{m-1}]\Big|\\
    &\ \ \ \ \ \ \ \ \ \ \ \ \ \ +\sum_{(i,t)\in\widetilde{B}^-_{m}}\sum_{(i',t')\in\widetilde{B}^-_{m'}\atop 1\leq m'<m}K_{0b}(s_{in}-s_{i'n},t-t')|\text{Cov}_{|S_n}(X_{it},X_{i't'})|\\
    =&:\mathbf{E}_{1m}+\mathbf{E}_{2m},
\end{align}
where the second equality is due to the fact that $E_{|S_n}[E_{|S_n}[X_{it}X_{i't'}]|\mathcal{F}_{m-1}]=E_{|S_n}[X_{it}X_{i't'}]$ holds almost surely with respect to $\mathbb{P}_{|S_n}$. Since condition (M1) in  Assumption \ref{moment conditions} implies that $E[X_{i't'}X_{it}]=\int_{\mathbb{R}}X_{i't'}xd\mathbb{P}_{X_{it}}=0$ holds for every realization $X_{i't'}=x'$ and $\bS_n=S_n$,  Lemma \ref{lemma 2} yields 
\begin{align}
   E_{|S_n} | \mathbf{E}_{1m}|& \leq  \sum_{(i,t)\in\widetilde{B}^-_{m}}\sum_{(i',t')\in\widetilde{B}^-_{m'}\atop 1\leq m'<m}K_{0b}(s_{in}-s_{i'n},t-t')\Big|E_{|S_n}[X_{it}X_{i't'}|\mathcal{F}_{m-1}]-g_{S_n}(X_{i't'})\Big|\\
    &\leq 3(M_{1+\delta})^{\frac{1}{1+\delta}}\beta^{\frac{\delta}{1+\delta}}(q_n)\sum_{(i,t)\in\widetilde{B}^-_{m}}\sum_{(i',t')\in\widetilde{B}^-_{m'}\atop 1\leq m'<m}K_{0b}(s_{in}-s_{i'n},t-t')
\end{align}
  and $2\leq m\leq M_n$, where $M_n$ is introduced in Step 4. Moreover, Lemma \ref{lemma 3} also implies that, for every $1\leq m\leq M_n$,
\begin{align}
    E_{|S_n} | \mathbf{E}_{2m}| \leq 4(M_{1+\delta})^{\frac{1}{1+\delta}}\beta^{\frac{\delta}{1+\delta}}(q_n)\sum_{(i,t)\in\widetilde{B}^-_{m}}\sum_{(i',t')\in\widetilde{B}^-_{m'}\atop 1\leq m'<m}K_{0b}(s_{in}-s_{i'n},t-t').
\end{align}
Above all, we obtain that 
\begin{align}
    &E\Big|\sum_{m=2}^{M_n}E_{|S_n}[T_{m,n}|\mathcal{F}_{m-1}]\Big|\leq \sum_{m=2}^{M_n}(E|\mathbf{E}_{1m}|+E|\mathbf{E}_{2m}|)\\
    \leq& 7(M_{1+\delta})^{\frac{1}{1+\delta}}\beta^{\frac{\delta}{1+\delta}}(q_n)\sum_{m=2}^{M_n}\sum_{(i,t)\in\widetilde{B}^-_{m}}\sum_{(i',t')\in\widetilde{B}^-_{m'}\atop 1\leq m'<m}K_{0b}(s_{in}-s_{i'n},t-t'),\ \ \ \forall\ \bS_n=S_n.
\end{align}
Together with condition  (T4)(1) in Assumption \ref{technical assumptions} and the settings of array $\{\s_{in}\}$, some simple algebra shows 
\begin{align}
    \sum_{m=2}^{M_n}E_{|S_n}[T_{m,n}|\mathcal{F}_{m-1}]=o_{\mathbb{P}_{| S_n}}(nTb^{3})\ \ a.s.-[\mathbb{Q}],
\end{align}
which gives
\begin{align}
 {\sum_{m=2}^{M_n}E_{|S_n}[\widetilde{T}_{m,n}|\mathcal{F}_{m-1}]}{b^{1.5}}  = o_{\mathbb{P}_{|S_n}} (1) \ \  \ \ a.s.-[\mathbb{Q}].
\end{align}
Therefore,   $\{\widetilde{T}_{m,n}\}$ satisfies condition (D1) of Theorem \ref{theorem CLT 2}.
\par \textbf{Step 5.2} ($\sum_{m=2}^{M_n}E_{|S_n}[\widetilde{T}^2_{m,n}|\mathcal{F}_{m-1}]\xrightarrow[n\to\infty]{\mathbb{P}_{|S_n}} 1\ \ a.s.-[\mathbb{Q}]$) By  \eqref{proof 16} and the definition of $\widetilde{T}_{m,n}$  it is sufficient to prove
\begin{align}
    \Big(\text{Var}_{|S_n}(\sum_{m=2}^{M_n}T_{m,n})\Big)^{-1}\sum_{m=2}^{M_n}E_{|S_n}[T^2_{m,n}|\mathcal{F}_{m-1}]\xrightarrow[n\to\infty]{\mathbb{P}_{|S_n}} 1\ \ a.s.-[\mathbb{Q}],
\end{align}
where  $ T_{m,n} = {\sigma_n \over 2}  \widetilde T_{m,n}$ and  $\widetilde T_{m,n}$ is defined in \eqref{d22}.
Since 
\begin{align}
    \text{Var}_{|S_n} \Big (\sum_{m=2}^{M_n}T_{m,n} \Big )=\sum_{m=2}^{M_n}E_{|S_n}[T^2_{m,n}]+2\sum_{m=2}^{M_n}\sum_{1\leq m'<m}E_{|S_n}[T_{m,n}T_{m',n}]=:V_{T1}+V_{T2},
\end{align}
we only need to prove  that 
\begin{align}
\label{proof 17}
    &\frac{\sum_{m=2}^{M_n}E_{|S_n}[T^2_{m,n}|\mathcal{F}_{m-1}]}{V_{T1}}\xrightarrow[n\to\infty]{\mathbb{P}_{|S_n}}1,\\
    \label{proof 18}
    &\text{and}\ \ \ \ \ \ \ \ \ \                  \lim_{n\to\infty}\frac{V_{T2}}{V_{T1}}=0
\end{align}
almost surely with respect to $\mathbb{Q}$. First, note that, by repeating Step 2.2, we can obtain $\lim_{n\to\infty}\sigma^{-2}_{n}V_{T2}=0$ almost surely with respect to $\mathbb{Q}$. Moreover, \eqref{proof 16} implies that 
\begin{align}
    \lim_{n\to\infty}\sigma_n^{-2}\text{Var}_{|S_n}(\sum_{m=2}^{M_n}T_{m,n})=\frac{1}{4},
\end{align}
which, together with the Cauchy-Schwartz inequality gives  $\lim_{n\to\infty}\sigma^{-2}_{n}V_{T1}=\frac{1}{4}$ and $\lim_{n\to\infty}\frac{V_{T2}}{V_{T1}}=0$   almost surely with respect to $\mathbb{Q}$, which proves \eqref{proof 18}. 

\par To prove \eqref{proof 17},  we use  the decomposition 
\begin{align}
   \sum_{m=2}^{M_n} E_{|S_n}[T^2_{m,n}|\mathcal{F}_{m-1}]&=\sum_{m=2}^{M_n}\sum_{(i,t)\in\widetilde{B}^-_{m}}E_{|S_n}[Y^2_{m}(Z_{it})|\mathcal{F}_{m-1}]+\sum_{m=2}^{M_n}\sum_{(i,t)\neq (r,s)\in\widetilde{B}^-_{m}}E_{|S_n}[Y_{m}(Z_{it})Y_{m}(Z_{rs})|\mathcal{F}_{m-1}]  \\
   =&: \mathbf{F}_1+\mathbf{F}_2,  \label{eq:F1F2}  
   \end{align}
where
\begin{align}
    Y_{m}(Z_{it})&=\sum_{(i',t')\in\widetilde{B}^-_{m'}\atop 1\leq m'<m}H_n(Z_{it},Z_{i't'}).\label{d33} 
\end{align}

\noindent \textbf{Step 5.2.1} ($V_{T1}^{-1}\mathbf{F}_2 =o_{{\mathbb{P}_{|S_n}}}(1),\ a.s.-[\mathbb{Q}]$) Since $\lim_{n\to\infty}\sigma^{-2}_{n}V_{T1}=\frac{1}{4}$ holds almost surely with respect to $\mathbb{Q}$, it suffices to show 
\begin{align}
\label{proof 19}
    \frac{\mathbf{F}_2}{(nT)^2b^3}\xrightarrow[n\to\infty]{\mathbb{P}_{|S_n}} 0\ \ \ a.s.-[\mathbb{Q}].
\end{align}
Using the notation
\begin{align}
    &K_{0ir}=K(\frac{\lambda_n^{-1}(s_{in,1}-s_{rn,1}-h_{0,1})}{b})K(\frac{(\lambda_n\bb_n)^{-1}(s_{in,2}-s_{rn,2}-h_{0,2})}{b}),\\
 & K_{0ts}=K(\frac{|t-s|-v_0}{Tb}).
\end{align}
simple algebra yields  
\begin{align}
    \mathbf{F}_2= &\sum_{m=2}^{M_n}\sum_{(i,t)\neq (r,s)\in\widetilde{B}^-_m}\sum_{(i',t')\in \widetilde{B}^-_{m'}\atop 1\leq m'<m}\sum_{(r',s')\in \widetilde{B}^-_{m''}\atop 1\leq m''<m}K_{0ir}K_{0i'r'}K_{0ts}K_{0t's'}E_{|S_n}[X_{it}X_{i't'}X_{rs}X_{r's'}|\mathcal{F}_{m-1}]\\
    &- \sum_{m=2}^{M_n}\sum_{(i,t)\neq (r,s)\in\widetilde{B}^-_m}\sum_{(i',t')\in \widetilde{B}^-_{m'}\atop 1\leq m'<m}\sum_{(r',s')\in \widetilde{B}^-_{m''}\atop 1\leq m''<m}K_{0ir}K_{0i'r'}K_{0ts}K_{0t's'}E_{|S_n}[X_{it}X_{i't'}]E_{|S_n}[X_{rs}X_{r's'}]\\
    =&:\mathbf{F}_{21}+\mathbf{F}_{22}\label{F21F22}
\end{align}
 With the notation  $g_{S_n}(X_{i't'}X_{r's'})=X_{i't'}X_{r's'}\int_{\mathbb{R}}xyd\mathbb{P}_{X_{it},X_{rs}|S_n}$ we have  
\begin{align}
    E_{|S_n}[X_{it}X_{i't'}X_{rs}X_{r's'}|\mathcal{F}_{m-1}]
    =&E_{|S_n}[X_{it}X_{i't'}X_{rs}X_{r's'}|\mathcal{F}_{m-1}]-g_{S_n}(X_{i't'}X_{r's'})+g_{S_n}(X_{i't'}X_{r's'})\\
    =&:\triangle_{S_n}(X_{i't'}X_{r's'})+g_{S_n}(X_{i't'}X_{r's'}) \label{DeltaSngSn}
\end{align}
holds for every $\bS_n=S_n$. Based on the definition of $\widetilde{B}^-_{m}$'s,  Lemma \ref{lemma 2} gives  
$$E_{|S_n}|\triangle_{S_n}(X_{i't'}X_{r's'})|=E_{|S_n}[X_{it}X_{i't'}X_{rs}X_{r's'}|\mathcal{F}_{m-1}]-g_{S_n}(X_{i't'}X_{r's'})\leq 3C_{M,\delta}\beta^{\frac{\delta}{1+\delta}}(q_n),$$ where the constant  $C_{M,\delta}>0$ depends solely on $\delta$ and the  moment conditions in Assumption \ref{moment conditions}. Then, we can rewrite $F_{21}$ in \eqref{F21F22} using \eqref{DeltaSngSn} as 
\begin{align}
\label{d25a}
    &\mathbf{F}_{21}=\mathbf{F}_{21,1}+\mathbf{F}_{21,2} \\
    &:=2\sum_{m=2}^{M_n}\sum_{(i,t)\neq (r,s)\in\widetilde{B}^-_m}\sum_{(i',t')\in \widetilde{B}^-_{m'}\atop 1\leq m'<m}\sum_{(r',s')\in \widetilde{B}^-_{m''}\atop 1\leq m''\leq m'}K_{0ir}K_{0i'r'}K_{0ts}K_{0t's'}\triangle_{S_n}(X_{i't'}X_{r's'})\\
    &+2\sum_{m=2}^{M_n}\sum_{(i,t)\neq (r,s)\in\widetilde{B}^-_m}\sum_{(i',t')\in \widetilde{B}^-_{m'}\atop 1\leq m'<m}\sum_{(r',s')\in \widetilde{B}^-_{m''}\atop 1\leq m''\leq m'}K_{0ir}K_{0i'r'}K_{0ts}K_{0t's'}g_{S_n}(X_{i't'}X_{r's'}).
\end{align}
Thus, the non-negativity of kernel function yields
\begin{align}
    E_{|S_n}|\mathbf{F}_{21,1}|&\leq 6C_{\delta,M}\sum_{m=2}^{M_n}\sum_{(i,t)\neq (r,s)\in\widetilde{B}^-_m}K_{0ir}K_{0ts} \sum_{(i',t')\in \widetilde{B}^-_{m'}\atop 1\leq m'<m}\sum_{(r',s')\in \widetilde{B}^-_{m''}\atop 1\leq m''\leq m'}K_{0i'r'}K_{0t's'}\beta^{\frac{\delta}{1+\delta}}(q_n)
    \\
    &\leq 6C_{\delta,M}\sum_{m=2}^{M_n}\sum_{(i,t)\neq (r,s)\in\widetilde{B}^-_m}K_{0ir}K_{0ts} \sum_{(i',t')\in \widetilde{B}^c_{m}}\sum_{(r',s')\in \widetilde{B}^c_{m}}K_{0i'r'}K_{0t's'}\beta^{\frac{\delta}{1+\delta}}(q_n).
\end{align}
Note  that
\begin{align}
    \sum_{(i',t')\in \widetilde{B}^c_{m}}\sum_{(r',s')\in \widetilde{B}^c_{m}}K_{0i'r'}K_{0t's'}\beta^{\frac{\delta}{1+\delta}}(q_n) 
   & \leq \sum_{(i',t')}\sum_{(r',s')}K_{0i'r'}K_{0t's'}\beta^{\frac{\delta}{1+\delta}}(q_n) \\ 
   & \lesssim \Big (\sum_{s_{i'n},s_{r'n}\in\Gamma_n}K_{0i'r'}\Big )\Big (\sum_{t'\neq s'\in \{1,...,T\}}K_{0t's'} \Big )\beta^{\frac{\delta}{1+\delta}}(q_n)\\
   & \lesssim  \overline{E} (nT)^2b^3\beta^{\frac{\delta}{1+\delta}}(q_n)
\end{align}
almost surely with respect to $\mathbb{Q}$,
where 
$\overline{E}=\max_{n}\max_{i,r}||b^{-2}E_{|S_{n}}[K_{0ir}]||_{\infty}$.
and the last inequality follows  from the strong law of large numbers.
Therefore, we obtain (observing condition (T4)(i)) 
\begin{align}
\label{proof 37}
    \lim_{n\to\infty}\frac{\max_{m}\sum_{(i',t')\in \widetilde{B}^c_{m}}\sum_{(r',s')\in \widetilde{B}^c_{m}}K_{0i'r'}K_{0t's'}\beta^{\frac{\delta}{1+\delta}}(q_n)}{(nT)b^3}=0.
\end{align}
Similar to the proof of Proposition \ref{prop 10}, the definition of $\widetilde{B}^-_{m}$ and Proposition \ref{prop 5} imply that 
\begin{align}
    \lim_{n\to\infty}\frac{\sum_{m=2}^{M_n}\textbf{Card}(\widetilde{B}^-_{m,S}\cap\Gamma_n)p_T}{nT}<\infty\ \ a.s.-[\mathbb{Q}],
\end{align}
and it follows
 \begin{align}
\label{proof 38}
    &\lim_{n\to\infty}\frac{\sum_{m=2}^{M_n}\sum_{(i,t)\neq (r,s)\in\widetilde{B}^-_m}K_{0ir}K_{0ts}}{(\frac{nr_{1n}r_{2n}}{\lambda_n^2\bb_n})p_Tb^3(\sum_{m=2}^{M_n}\textbf{Card}(\widetilde{B}^-_{m,S}\cap\Gamma_n)p_T)}
    =\lim_{n\to\infty}\frac{\sum_{m=2}^{M_n}\sum_{(i,t)\neq (r,s)\in\widetilde{B}^-_m}K_{0ir}K_{0ts}}{(\frac{nr_{1n}r_{2n}}{\lambda_n^2\bb_n})p_TnTb^3}<\infty 
\end{align}
$a.s.-[\mathbb{Q}]$.
Now, using condition  (T4)(3) and (T1) in  Assumption \ref{technical assumptions}, \eqref{eq:r1r2},  Markov's inequality, \eqref{proof 37} and \eqref{proof 38} shows
\begin{align}
    \label{proof 39}
    \frac{\mathbf{F}_{21,1}}{(nT)^2b^6(\frac{nr_{1n}r_{2n}}{\lambda_n^2\bb_n})p_T}\xrightarrow[n\to\infty]{\mathbb{P}_{|S_n}}0\ \ \ a.s.-[\mathbb{Q}], \ \ \ \text{where}\ \frac{(nT)^2b^6(\frac{nr_{1n}r_{2n}}{\lambda_n^2\bb_n})p_T}{(nT)^2b^3}=o(1).
\end{align}
This yields 
\begin{align}
\label{d24}
    \frac{\mathbf{F}_{21,1}}{(nT)^2b^3} = o_{\mathbb{P}_{|S_n}} (1) 
    \end{align}
    almost surely with respect to $\mathbb{Q}$. To prove a corresponding statement for $\frac{\mathbf{F}_{21,2}}{(nT)^2b^3}$,
we first introduce the  decomposition
\begin{align}
    \mathbf{F}_{21,2}=&2\sum_{m=2}^{M_n}\sum_{(i,t)\neq (r,s)\in\widetilde{B}^-_m\atop ||(i,t)-(r,s)||_{F}>q_n}\sum_{(i',t')\in \widetilde{B}^-_{m'}\atop 1\leq m'<m}\sum_{(r',s')\in \widetilde{B}^-_{m''}\atop 1\leq m''\leq m'}K_{0ir}K_{0i'r'}K_{0ts}K_{0t's'}g_{S_n}(X_{i't'}X_{r's'})\\
    &+2\sum_{m=2}^{M_n}\sum_{(i,t)\neq (r,s)\in\widetilde{B}^-_m\atop ||(i,t)-(r,s)||_{F}\leq q_n}\sum_{(i',t')\in \widetilde{B}^-_{m'}\atop 1\leq m'<m}\sum_{(r',s')\in \widetilde{B}^-_{m''}\atop 1\leq m''\leq m'}K_{0ir}K_{0i'r'}K_{0ts}K_{0t's'}g_{S_n}(X_{i't'}X_{r's'})\\
    =&:\widetilde{F}_{21,21}+\widetilde{F}_{21,22
    },
    \label{d23}
\end{align}
and estimate the terms  $\widetilde{F}_{21,21}$ and $\widetilde{F}_{21,22}$ separately. Since $||(i,t)-(r,s)||_{F}>q_n$ and 
\begin{align}
    g_{S_n}(X_{i't'}X_{r's'})=X_{i't'}X_{r's'}E_{|S_n}[X_{it}X_{rs}]=X_{i't'}X_{r's'}\text{Cov}_{|S_n}(X_{it},X_{rs}),
\end{align}
we obtain for the conditional expectation of $| \widetilde{F}_{21,21}| $  for each $\bS_n=S_n$,
\begin{align}
    E_{|S_n}|\widetilde{F}_{21,21}|\leq& 2\sum_{m=2}^{M_n}\sum_{(i,t)\neq (r,s)\in\widetilde{B}^-_m\atop ||(i,t)-(r,s)||_{F}>q_n}\sum_{(i',t')\in \widetilde{B}^-_{m'}\atop 1\leq m'<m}\sum_{(r',s')\in \widetilde{B}^-_{m''}\atop 1\leq m''\leq m'}K_{0ir}K_{0i'r'}K_{0ts}K_{0t's'}E_{|S_n}|X_{i't'}X_{r's'}|\text{Cov}_{|S_n}(X_{it},X_{rs})\\
    \leq& 2M_{1}(M_{1+\delta})^{\frac{1}{1+\delta}}\beta^{\frac{\delta}{1+\delta}}(q_n)\sum_{m=2}^{M_n}\sum_{(i,t)\neq (r,s)\in\widetilde{B}^-_m\atop ||(i,t)-(r,s)||_{F}>q_n}\sum_{(i',t')\in \widetilde{B}^-_{m'}\atop 1\leq m'<m}\sum_{(r',s')\in \widetilde{B}^-_{m''}\atop 1\leq m''\leq m'}K_{0ir}K_{0i'r'}K_{0ts}K_{0t's'}, 
\end{align}
where $M_1$ and $M_{1+\delta}$ are the constants from Assumption \ref{moment conditions}. Repeating the calculations to derive a bound for the   term $E_{|S_n}|\mathbf{F}_{21,1}|$  yields
\begin{align}
\label{proof C1}
    \frac{\widetilde{F}_{21,21}}{(nT)^2b^3}\xrightarrow[n\to\infty]{\mathbb{P}_{|S_n}}0\ \ a.s.-[\mathbb{Q}].
\end{align}

\par For the term  $\widetilde{F}_{21,22}$ we use the decomposition 
\begin{align}
    \widetilde{F}_{21,22}=&2\sum_{m=2}^{M_n}\sum_{(i,t)\neq (r,s)\in\widetilde{B}^-_m\atop ||(i,t)-(r,s)||_{F}\leq q_n}\sum_{(i',t')\in \widetilde{B}^-_{m'}\atop 1\leq m'<m}\sum_{(r',s')\in \widetilde{B}^-_{m''}\atop  m''= m'}K_{0ir}K_{0i'r'}K_{0ts}K_{0t's'}g_{S_n}(X_{i't'}X_{r's'})\\
    &+2\sum_{m=2}^{M_n}\sum_{(i,t)\neq (r,s)\in\widetilde{B}^-_m\atop ||(i,t)-(r,s)||_{F}\leq q_n}\sum_{(i',t')\in \widetilde{B}^-_{m'}\atop 1\leq m'<m}\sum_{(r',s')\in \widetilde{B}^-_{m''}\atop 1\leq m''< m'}K_{0ir}K_{0i'r'}K_{0ts}K_{0t's'}g_{S_n}(X_{i't'}X_{r's'})\\
    =&:\widetilde{F}_{21,22,1}+\widetilde{F}_{21,22,2}.
\end{align}

\par Since
\begin{align}
    &2\widetilde{F}_{21,22,2}=\sum_{m=2}^{M_n}\sum_{(i,t)\neq (r,s)\in\widetilde{B}^-_m\atop ||(i,t)-(r,s)||_{F}\leq q_n}\sum_{(i',t')\in \widetilde{B}^-_{m'}\atop 1\leq m'\neq m\leq M_n}\sum_{(r',s')\in \widetilde{B}^-_{m''}\atop 1\leq m''\neq m\leq M_n}K_{0ir}K_{0i'r'}K_{0ts}K_{0t's'}g_{S_n}(X_{i't'}X_{r's'})-2\widetilde{F}_{21,22,1}\\
  =&  \sum_{m=2}^{M_n}\sum_{(i,t)\neq (r,s)\in\widetilde{B}^-_m\atop ||(i,t)-(r,s)||_{F}\leq q_n}\sum_{(i',t')\in \widetilde{B}^-_{m'}\atop 1\leq m'\neq m\leq M_n}\sum_{(r',s')\in \widetilde{B}^-_{m''}\atop 1\leq m''\neq m\leq M_n}K_{0ir}K_{0i'r'}K_{0ts}K_{0t's'}(X_{i't'}X_{r's'}-E_{|S_n}[X_{i't'}X_{r's'}])E_{|S_n}[X_{it}X_{rs}]\\
   &+\sum_{m=2}^{M_n}\sum_{(i,t)\neq (r,s)\in\widetilde{B}^-_m\atop ||(i,t)-(r,s)||_{F}\leq q_n}\sum_{(i',t')\in \widetilde{B}^-_{m'}\atop 1\leq m'\neq m\leq M_n}\sum_{(r',s')\in \widetilde{B}^-_{m''}\atop 1\leq m''\neq m\leq M_n}K_{0ir}K_{0i'r'}K_{0ts}K_{0t's'}E_{|S_n}[X_{i't'}X_{r's'}]E_{|S_n}[X_{it}X_{rs}]-2\widetilde{F}_{21,22,1}\\
   =&:\mathbf{A}+\mathbf{B}-2\widetilde{F}_{21,22,1}, \label{d30}
\end{align}
which asserts
\begin{align}
  \widetilde{F}_{21,22,1}+  \widetilde{F}_{21,22,2}=\mf A+\mf B.
\end{align}
Thus, the incoming two steps focus on proving 
$\frac{\mathbf{A}}{(nT)^2b^3}= o_{{\mathbb{P}_{|S_n}}}(1) $ and $\frac{\mathbf{B}}{(nT)^2b^3}= o_{{\mathbb{P}_{|S_n}}}(1)$ hold almost surely with respect to $\mathbb{Q}$ and we name these steps as \textbf{Step 5.2.1.(a)}, \textbf{Step 5.2.1.(b)}.
\smallskip

\par \textbf{Step 5.2.1(a)} ($\frac{\mathbf{A}}{(nT)^2b^3}= o_{{\mathbb{P}_{|S_n}}}(1) \ a.s.-[\mathbb{Q}]$) We   use decomposition 
\begin{align}
    \mathbf{A}=&\sum_{m=2}^{M_n}\sum_{(i,t)\neq (r,s)\in\widetilde{B}^-_m\atop ||(i,t)-(r,s)||_{F}\leq q_n}K_{0ir}K_{0ts}E_{|S_n}[X_{it}X_{rs}] \sum_{(i',t')\in \widetilde{B}^-_{m'}\atop 1\leq m'\neq m\leq M_n}\sum_{(r',s')\in \widetilde{B}^-_{m''}\atop 1\leq m''\neq m\leq M_n}K_{0i'r'}K_{0t's'}\big (X_{i't'}X_{r's'}-E_{|S_n}[X_{i't'}X_{r's'}] \big )\\
    =&\sum_{m=2}^{M_n}\sum_{(i,t)\neq (r,s)\in\widetilde{B}^-_m\atop ||(i,t)-(r,s)||_{F}\leq q_n}K_{0ir}K_{0ts}E_{|S_n}[X_{it}X_{rs}]\sum_{(i',t')\in \widetilde{B}^-_{m'}\atop 1\leq m'\leq M_n}\sum_{(r',s')\in \widetilde{B}^-_{m''}\atop 1\leq m''\leq M_n}K_{0i'r'}K_{0t's'} \big (X_{i't'}X_{r's'}-E_{|S_n}[X_{i't'}X_{r's'}] \big  )  \\
    &-\sum_{m=2}^{M_n}\sum_{(i,t)\neq (r,s)\in\widetilde{B}^-_m\atop ||(i,t)-(r,s)||_{F}\leq q_n}K_{0ir}K_{0ts}E_{|S_n}[X_{it}X_{rs}] \sum_{(i',t')\in \widetilde{B}^-_{m'}\atop 1\leq m'\neq m\leq M_n}\sum_{(r',s')\in \widetilde{B}^-_{m}}K_{0i'r'}K_{0t's'}\big  (X_{i't'}X_{r's'}-E_{|S_n}[X_{i't'}X_{r's'}] \big ) \\
    &-\sum_{m=2}^{M_n}\sum_{(i,t)\neq (r,s)\in\widetilde{B}^-_m\atop ||(i,t)-(r,s)||_{F}\leq q_n}K_{0ir}K_{0ts}E_{|S_n}[X_{it}X_{rs}] \sum_{(i',t')\in \widetilde{B}^-_{m}}\sum_{(r',s')\in \widetilde{B}^-_{m''}\atop 1\leq m''\neq m\leq M_n}K_{0i'r'}K_{0t's'}\big (X_{i't'}X_{r's'}-E_{|S_n}[X_{i't'}X_{r's'}] \big ) \\
    &-\sum_{m=2}^{M_n}\sum_{(i,t)\neq (r,s)\in\widetilde{B}^-_m\atop ||(i,t)-(r,s)||_{F}\leq q_n}K_{0ir}K_{0ts}E_{|S_n}[X_{it}X_{rs}]\sum_{(i',t')\in \widetilde{B}^-_{m}}\sum_{(r',s')\in \widetilde{B}^-_{m}}K_{0i'r'}K_{0t's'}\big (X_{i't'}X_{r's'}-E_{|S_n}[X_{i't'}X_{r's'}] \big ) \\
    =&:\mathbf{A}_{L}- \sum_{k=1}^3\mathbf{A}_{R,k}.
\end{align}

\par The terms $\mathbf{A}_{R,1}, \mathbf{A}_{R,2}$ and $\mathbf{A}_{R,3}$ can be bounded in the same way and we concentrate on $\mathbf{A}_{R,1}$ for the sake of brevity. A key observation is that Assumption \ref{moment conditions} implies (note that the kernels are non-negative)
\begin{align}
    E_{|S_n}|\mathbf{A}_{R,1}|\leq 2M^2_1\sum_{m=2}^{M_n}\sum_{(i,t)\neq (r,s)\in\widetilde{B}^-_m\atop ||(i,t)-(r,s)||_{F}\leq q_n}K_{0ir}K_{0ts}\sum_{(i',t')\in \widetilde{B}^-_{m'}\atop 1\leq m'\neq m\leq M_n}\sum_{(r',s')\in \widetilde{B}^-_{m}}K_{0i'r'}K_{0t's'},\ \forall\ \bS_n=S_n.
\end{align}
Using the same arguments as given  in \textbf{Step 5.2.1(a)}, we obtain 
\begin{align}
    \frac{\mathbf{A}_{R,1}}{(nT)^{2}b^6(p_T(\frac{nr_{1n}r_{2n}}{\lambda_n^2\bb_n}))^2}=O_{\mathbb{P}_{|S_n}}(1)\ \  \ a.s.-[\mathbb{Q}], 
\end{align}
and condition   (T4)(3) in Assumption \ref{technical assumptions} yields 
\begin{align}
    \label{proof C3} 
    \frac{\mathbf{A}_{R,1}}{(nT)^2b^3}\xrightarrow[n\to\infty]{\mathbb{P}_{|S_n}}0\ \ a.s.-[\mathbb{Q}].
\end{align}
Corresponding estimates for $\mathbf{A}_{R,2}$  and $\mathbf{A}_{R,3}$ are shown in exactly the same way, that is 
\begin{align}
\label{proof C4}
    \frac{\mathbf{A}_{R,\ell }}{(nT)^2b^3}\xrightarrow[n\to\infty]{\mathbb{P}_{|S_n}}0\ \ a.s.-[\mathbb{Q}]   ~~~(\ell =2,3).
\end{align}

\par As the inner two sums of the  term $\mathbf{A}_{L}$ have a similar structure  as the statistic $U_n$ in \eqref{d15a}, we can use the same arguments as given  in \textbf{Step 2} to show that
\begin{align}
\label{proof C5}
    \lim_{n\to\infty}\frac{1}{(nT)^2b^3}\text{Var}_{|S_n}\Big(\sum_{(i',t')\in \widetilde{B}^-_{m'}\atop 1\leq m'\leq M_n}\sum_{(r',s')\in \widetilde{B}^-_{m''}\atop 1\leq m''\leq M_n}K_{0i'r'}K_{0t's'}(X_{i't'}X_{r's'}-E_{|S_n}[X_{i't'}X_{r's'}])\Big)<\infty \ \ a.s.-[\mathbb{Q}].
\end{align}
Therefore,  Chebyschev's inequality implies, for any $\xi_n\nearrow\infty$,
\begin{align}
    \frac{1}{(nT)^2b^3}\sum_{(i',t')\in \widetilde{B}^-_{m'}\atop 1\leq m'\leq M_n}\sum_{(r',s')\in \widetilde{B}^-_{m''}\atop 1\leq m''\leq M_n}K_{0i'r'}K_{0t's'}(X_{i't'}X_{r's'}-E_{|S_n}[X_{i't'}X_{r's'}])=o_{\mathbb{P}_{|S_n}} \Big (\frac{\xi_n}{nTb^{1.5}} \Big )\ \ a.s.-[\mathbb{Q}],
\end{align}
and the same arguments  as given before show that for any sequence $(\xi_n)_{n\in \mathbb{N}}$ converging to infinity
\begin{align}
    \frac{\mathbf{A}_{L}}{(nT)^2b^{4.5}(\frac{nq_n^3}{\lambda_n^2\bb_n})\xi_n}\xrightarrow[n\to\infty]{\mathbb{P}_{|S_n}}0 \ \ a.s.-[\mathbb{Q}].
\end{align}
By condition (T4)(3) in  Assumption \ref{technical assumptions} we have   $q_n^3=o(b)$, and by  (T1) of Assumption \ref{technical assumptions}, it follows that 
\begin{align}
    \label{proof C6}
    \frac{\mathbf{A}_{L}}{(nT)^2b^3}\xrightarrow[n\to\infty]{\mathbb{P}_{|S_n}}0\ \ a.s.-[\mathbb{Q}].
\end{align}
Thus, we finish the proof of \textbf{Step 5.2.1(a)}.\\
\smallskip

\par \textbf{Step 5.2.1(b)}($\frac{\mathbf{B}}{(nT)^2b^3}= o_{{\mathbb{P}_{|S_n}}}(1) \ \  a.s.-[\mathbb{Q}]$) By the triangle inequality and condition (M2) of Assumption \ref{moment conditions} we have 
\begin{align}
    |\mathbf{B}|&\leq 2M_1\sum_{m=2}^{M_n}\sum_{(i,t)\neq (r,s)\in\widetilde{B}^-_m\atop ||(i,t)-(r,s)||_{F}\leq q_n}\sum_{(i',t')\in \widetilde{B}^-_{m'}\atop 1\leq m'< m}\sum_{(r',s')\in \widetilde{B}^-_{m''}\atop 1\leq m''<m}K_{0ir}K_{0i'r'}K_{0ts}K_{0t's'}|\text{Cov}_{|S_n}(X_{i't'},X_{r's'})|\\
    &=2M_1(M_{1+\delta})^{\frac{1}{1+\delta}}\mathbf{B}' ,  \label{d26a}
\end{align}
where
\begin{align}
\mathbf{B}' &= \sum_{m=2}^{M_n}\sum_{(i,t)\neq (r,s)\in\widetilde{B}^-_m\atop ||(i,t)-(r,s)||_{F}\leq q_n}\sum_{(i',t')\in \widetilde{B}^-_{m'}\atop 1\leq m'< m}\sum_{(r',s')\in \widetilde{B}^-_{m''}\atop 1\leq m''<m}K_{0ir}K_{0i'r'}K_{0ts}K_{0t's'}\beta^{\frac{\delta}{1+\delta}}(d(\widetilde{B}_{m'},\widetilde{B}_{m''})).   
\end{align}
 We use $A_{S}$ to denote the projection of set $A\subset \mathbf{R}_n\times \{1,2,...,T\}$ onto the spatial component $\mathbf{R}_n$.  Then, for every $\bS_n=S_n$,
\begin{align}
\label{d26b}
    &\mathbf{B}'\leq \mathbf{B}'_{S}\mathbf{B}'_T,
        \end{align}    
where
\begin{align}&\mathbf{B}'_{S}=\sum_{m=2}^{M_{n,S}}\sum_{s_{in}\neq s_{rn}\in\widetilde{B}^-_{m,S}}K_{0ir}\Big(\sum_{s_{i'n}\in\widetilde{B}^-_{m',S}\atop 1\leq m'<m} \sum_{s_{r'n}\in\widetilde{B}^-_{m'',S}\atop 1\leq m''<m} K_{0i'r'}\beta^{\frac{\delta}{(1+\delta)}}(d(\widetilde{B}^-_{m',S},\widetilde{B}^-_{m'',S}))\Big),\\
    &\mathbf{B}'_{T}=\sum_{m=2}^{M_{n,T}}\sum_{t\neq s\in\widetilde{B}^-_{m,T}}K_{0ts}\Big(\sum_{t'\in\widetilde{B}^-_{m',T}\atop 1\leq m'<m}\sum_{s'\in\widetilde{B}^-_{m'',T}\atop 1\leq m''<m}K_{0t's'}\Big),\\ 
    & M_n=M_{n,S}M_{n,T}.
\end{align}
Note that  we can regard $\{\widetilde{B}^-_{m,S}\}$ as a matrix of two-dimensional cubes, where  the distance between any two cubes is $q_n$. Furthermore, by regarding this matrix of cubes as a matrix of the lattice, we know, for any $m'\neq m''$, $d(\widetilde{B}^-_{m',S},\widetilde{B}^-_{m'',S}))\geq k_{m',m''}q_n$, where $k_{m',m''}$ is some positive integer representing the “lattice-distance”. Then,
\begin{align}
    \mathbf{B}'_{S}&\leq\sum_{m=2}^{M_{n,S}}\sum_{s_{in}\neq s_{rn}\in\widetilde{B}^-_{m,S}}K_{0ir}\Big(\sum_{s_{i'n}\in\widetilde{B}^-_{m',S}\atop 1\leq m'<m}\sum_{k_{m',m''}=0}^{\infty} \sum_{s_{r'n}\in \mathcal{B}(k_{m',m''})} K_{0i'r'}\beta^{\frac{\delta}{(1+\delta)}}(k_{m',m''}q_n)\Big) \leq C\mathbf{B}''_{S}\beta^{\frac{\delta}{1+\delta}}(q_n), 
\end{align}
where   the last inequality is due to  condition (T2)(ii) in Assumption \ref{technical assumptions}, and
$$
\mathbf{B}''_{S}:=\sum_{m=2}^{M_{n,S}}\sum_{s_{in}\neq s_{rn}\in\widetilde{B}^-_{m,S}}K_{0ir}\sum_{s_{i'n}\in\widetilde{B}^-_{m',S}\atop 1\leq m'<m}\sum_{k_{m',m''}=0}^{\infty} \sum_{s_{r'n}\in \mathcal{B}(k_{m',m''})} K_{0i'r'}\beta^{\frac{\delta}{(1+\delta)}}(k_{m',m''})
$$
and the 
constant $C>0$ is independent of the sample size. As $\mathcal{B}(k_{m',m''})$ is the set of all $\widetilde{B}^-_{m''}$ whose aforementioned “lattice-distance” to $\widetilde{B}^-_{m'}$ is equal to $k_{m',m''}\in\mathbb{N}$, it follows that 
\begin{align}
    \label{d26b+}
\mathbf{B}'\leq C\mathbf{B}''_{S}\mathbf{B}'_{T}\beta^{\frac{\delta}{1+\delta}}(q_n). 
\end{align}
Simple algebra, Lemmas \ref{lemma 5}, \ref{lemma 6} and condition  (T4)(1) in Assumption \ref{technical assumptions} imply
\begin{align}
  \mathbf{B}'_{T}\beta^{\frac{\delta}{1+\delta}}(q_n)\lesssim (T^2b)(q_T\log T\lor Tb)q_T(\log T)^2\beta^{\frac{\delta}{1+\delta}}(q_n)=o(b(q_T\log T\lor Tb)q_T(\log T)^2). \label{d26c}
\end{align}

\par To derive a  bound for  $\mathbf{B}''_{S}$, we  use the strong law of large numbers (note that the $\{\s_{in}\}$ are row-wise iid) and obtain 
\begin{align}
    \mathbf{B}''_{S}\leq& C b^4\overline{E}\Big(\frac{nr_{1n}r_{2n}}{\lambda_n^2\bb_n}\Big)\Big(\sum_{m=2}^{M_{n,S}}|\widetilde{B}^-_{m,S}|^2\Big)\Big(\sum_{m'=2}^{M_{n,S}}|\widetilde{B}^-_{m',S}|\Big)\Big(\sum_{l=0}^{\infty}l\beta^{\frac{\delta}{1+\delta}}(l)\Big)\\
    =&C\overline{E}\Big(\sum_{l=0}^{\infty}l\beta^{\frac{\delta}{1+\delta}}(l)\Big)\Big(\frac{nr_{1n}r_{2n}}{\lambda_n^2\bb_n}\Big)^2n^2b^4=o(n^2b^3)  \label{d26d}
\end{align}
 almost surely with respect to $\mathbb{Q}$.  Summarizing \eqref{d26a}, \eqref{d26b+}, \eqref{d26c}   and \eqref{d26d}, we obtain 
\begin{align}
    \lim_{n\to\infty}\frac{\mathbf{B}}{n^2b^4(q_T\log T\lor Tb)q_T(\log T)^2}=0\ \ \ a.s.-[\mathbb{Q}],
\end{align}
which gives 
\begin{align}
     \label{proof 20}
      \lim_{n\to\infty}\frac{\mathbf{B}}{(nT)^2b^3}=0\ \ \ a.s.-[\mathbb{Q}].
\end{align}

\par Combining  the results from \textbf{Step 5.2.1(a)} and \textbf{Step 5.2.1(b)} therefore yields  
$$
\frac{\widetilde{F}_{21,22}}{(nT)^2b^3}\xrightarrow[n\to\infty]{\mathbb{P}_{|S_n}}0\ a.s.-[\mathbb{Q}], 
$$
and combining this result with \eqref{d23} and \eqref{proof C1}
yields 
$$
\frac{\mathbf{F}_{21,2}}{(nT)^2b^3}\xrightarrow[n\to\infty]{\mathbb{P}_{|S_n}}0\ a.s.-[\mathbb{Q}]. 
$$
This gives (observing \eqref{d24}  and \eqref{d25a}) 
$$
\frac{\mathbf{F}_{21}}{V_{T1}}\xrightarrow[n\to\infty]{\mathbb{P}_{|S_n}}0\ a.s.-[\mathbb{Q}].
$$
To derive a corresponding statement for  $\mathbf{F}_{22}$, we note that 
\begin{align}
    |E_{|S_n}[X_{it}X_{i't'}]E_{|S_n}[X_{rs}X_{r's'}]|
   &  \leq 16(M_{1+\delta})^{\frac{1}{1+\delta}}\beta^{\frac{\delta}{1+\delta}}(d(\widetilde{B}^-_{m},\widetilde{B}^-_{m'}))\beta^{\frac{\delta}{1+\delta}}(d(\widetilde{B}^-_{m},\widetilde{B}^-_{m''}))\\
  &  \leq  16 (M_{1+\delta})^{\frac{1}{1+\delta}}\beta^{\frac{\delta}{1+\delta}}(d(\widetilde{B}^-_{m,S},\widetilde{B}^-_{m',S}))\beta^{\frac{\delta}{1+\delta}}(d(\widetilde{B}^-_{m,S},\widetilde{B}^-_{m'',S})), 
\end{align}
and similar arguments as used  in \textbf{Step 5.2.1(b)} show 
\begin{align}
    \label{proof 21}
     \frac{\mathbf{F}_{22}}{V_{T1}}\xrightarrow[n\to\infty]{\mathbb{P}_{|S_n}}0 \ \ \ a.s.-[\mathbb{Q}].
\end{align}
Therefore, Markov inequality and \eqref{F21F22} yield 
\begin{align}
    \label{proof 22}
    \frac{\mathbf{|F}_{2}|}{V_{T1}}\xrightarrow[n\to\infty]{\mathbb{P}_{|S_n}} 0\ \ \ a.s.-[\mathbb{Q}],
\end{align}
which completes the proof of  \textbf{Step 5.2.1}.

\noindent \textbf{Step 5.2.2} ($\frac{\mathbf{F}_{1}}{V_{T1}}\xrightarrow[n\to\infty]{\mathbb{P}_{|S_n}} 1\ \ a.s.-[\mathbb{Q}]$) 
Recalling  the definition of $\mathbf F_1$ in \eqref{eq:F1F2}   reveals that 
\begin{align}
    \mathbf{F}_{1}=&\sum_{m=2}^{M_n}\sum_{(i,t)\in\widetilde{B}^-_{m}}\sum_{(i',t')\in\widetilde{B}^{-}_{m'}\atop 1\leq m'<m}E_{|S_n}[H^2_{n}(Z_{it},Z_{i't'})|\mathcal{F}_{m-1}]\\
    &+\sum_{m=2}^{M_n}\sum_{(i,t)\in\widetilde{B}^-_{m}}\sum_{(i',t')\neq (i'',t'')\in\widetilde{B}^{c}_{m,S}}E_{|S_n}[H_{n}(Z_{it},Z_{i't'})H_{n}(Z_{it},Z_{i''t''})|\mathcal{F}_{m-1}]\\
    =&:\mathbf{F}_{11}+\mathbf{F}_{12}
\end{align}
where $\widetilde{B}^c_{m}=\bigcup_{1\leq m'<m}\widetilde{B}^-_{m'}$. By  the definition of $\tilde T_{m^{\prime},n}$ in  \eqref{eq:tildeT}, for each $\bS_n=S_n$, $\mathcal{F}_{m-1}=\sigma(\widetilde{T}_{m',n}: 1\leq m'\leq m-1)$ and $\widetilde{T}_{m',n}$ is measurable with respect to $\sigma(X_{it},X_{i't'}:(i,t)\in\widetilde{B}^-_{m}, (i',t')\in \bigcup_{1\leq m'<m}\widetilde{B}^-_{m'})=:\mathcal{G}_{m-1}$. Hence, by the tower property of conditional expectation,   it is sufficient to focus on the following two terms
\begin{align}
    \widetilde{\mathbf{F}}_{11}:&=\sum_{m=2}^{M_n}\sum_{(i,t)\in\widetilde{B}^-_{m}}\sum_{(i',t')\in\widetilde{B}^{-}_{m'}\atop 1\leq m'<m}E_{|S_n}[H^2_{n}(Z_{it},Z_{i't'})|\mathcal{G}_{m-1}],\\
    \widetilde{\mathbf{F}}_{12}:&=\sum_{m=2}^{M_n}\sum_{(i,t)\in\widetilde{B}^-_{m}}\sum_{(i',t')\neq (i'',t'')\in\widetilde{B}^{c}_{m}}E_{|S_n}[H_{n}(Z_{it},Z_{i't'})H_{n}(Z_{it},Z_{i''t''})|\mathcal{G}_{m-1}].
\end{align}
Note that
\begin{align}
   \widetilde{\mathbf{F}}_{12}\leq &\sum_{m=2}^{M_n}\sum_{(i,t)\in\widetilde{B}^-_{m}}\sum_{(i',t')\neq (i'',t'')\in\widetilde{B}^{c}_{m}}K_{0b}(s_{in}-s_{i'n},t-t')K_{0b}(s_{in}-s_{i''n},t-t'')|E_{|S_n}[X^2_{it}X_{i't'}X_{i''t''}|\mathcal{G}_{m-1}]|\\
   +& \sum_{m=2}^{M_n}\sum_{(i,t)\in\widetilde{B}^-_{m}}\sum_{(i',t')\neq (i'',t'')\in\widetilde{B}^{c}_{m}}K_{0b}(s_{in}-s_{i'n},t-t')K_{0b}(s_{in}-s_{i''n},t-t'')|E_{|S_n}[X_{it}X_{i't'}]E_{|S_n}[X_{it}X_{i''t''}]|\\
   =&:\widetilde{F}_{12,1}+\widetilde{F}_{12,2}. \label{d29}
\end{align}
The same arguments used in deriving a bound for  the term “$I_{k,j,2}$” in “Step 2.1.1.b” show that 
\begin{align}
    \label{d28}
\frac{\widetilde{F}_{12,2}}{V_{T1}}\xrightarrow[n\to\infty]{\mathbb{P}_{|S_n}}0
\end{align}
almost surely with respect to $\mathbb{Q}$. For the term  $\widetilde{F}_{12,1}$ note that  $g(X_{it},X_{i't'})=\int_{\mathbb{R}}X_{it}^2X_{i't'}xd\mathbb{P}_{X_0} =0$ (for every realization of $(X_{it},X_{i't'})$). Thus, Lemma \ref{lemma 2} implies  
\begin{align}
    E_{|S_n}|\widetilde{F}_{12,1}|& \leq 3(\widetilde{M}_{1+\delta})^{\frac{1}{1+\delta}}\sum_{m=2}^{M_n}\sum_{(i,t)\in\widetilde{B}^-_{m}}\sum_{(i',t')\neq (i'',t'')\in\widetilde{B}^{c}_{m}}K_{0ii'}K_{0ii''}K_{0tt'}K_{0tt''}\beta^{\frac{\delta}{1+\delta}}(d(\{(i,t),(i',t')\},\{(i'',t'')\}))\\
    &\leq 3(\widetilde{M}_{1+\delta})^{\frac{1}{1+\delta}}\sum_{m=2}^{M_n}\sum_{(i,t)\in\widetilde{B}^-_{m}}\sum_{(i',t')\neq (i'',t'')\in\widetilde{B}^{c}_{m}}K_{0ii'}K_{0ii''}K_{0tt'}K_{0tt''}\beta^{\frac{\delta}{1+\delta}}(d(\{s_{in},s_{i'n}\},\{s_{i''n}\})),
\end{align}
where $d(A,B)=\inf_{a\in A,b\in B}||a-b||_{F}$, for any $A,B\subset \R_n$. By similar arguments as   used in Step 5.2.1, we have
\begin{align}
    &\sum_{m=2}^{M_n}\sum_{(i,t)\in\widetilde{B}^-_{m}}\sum_{(i',t')\neq (i'',t'')\in\widetilde{B}^{c}_{m}}K_{0ii'}K_{0ii''}K_{0tt'}K_{0tt''}\beta^{\frac{\delta}{1+\delta}}(d(\{s_{in},s_{i'n}\},\{s_{i''n}\})) 
    \leq \triangle_{S}\triangle_{T} ,
\end{align}
where
\begin{align}
 \triangle_{S} &:= \sum_{m=2}^{M_{n,S}}\sum_{s_{in}\in\widetilde{B}^-_{m,S}}\sum_{s_{i'n}\neq s_{i''n}\in \widetilde{B}^c_{m, S}}K_{0ii'}K_{0ii''}\beta^{\frac{\delta}{1+\delta}}(d(\{s_{in},s_{i'n}\},\{s_{i''n}\})) , \\
 \triangle_{T} &:= \sum_{m=2}^{M_{n,T}}\sum_{t\in\widetilde{B}^-_{m,T}}\sum_{t'\neq t''\in\widetilde{B}^c_{m,T}}K_{0tt'}K_{0tt''} . 
\end{align}

Similar to arguments as given  in Step 5.2.1 (b), we have
\begin{align}
    \triangle_{S}\leq C_{\beta}\sum_{m=2}^{M_{n,S}}\sum_{s_{in}\in\widetilde{B}^-_{m}}\sum_{s_{i'n}\in\widetilde{B}^c_{m}}K_{0ii'}\sum_{k_{m',m''}=0}^{\infty}\sum_{s_{i''n}\in \mathcal{B}(k_{m',m''})}K_{0ii''}\beta^{\frac{\delta}{1+\delta}}(k_{m',m''})\beta^{\frac{\delta}{1+\delta}}(q_n)=:\widetilde{\triangle}_S\beta^{\frac{\delta}{1+\delta}}(q_n),
\end{align}
which asserts 
\begin{align}
 E_{|S_n}|\widetilde{F}_{12,1}|\leq 3(\widetilde{M}_{1+\delta})^{\frac{1}{1+\delta}}   \triangle_{S}\triangle_{T}\leq 3(\widetilde{M}_{1+\delta})^{\frac{1}{1+\delta}}C_{\beta}\widetilde{\triangle}_S(\triangle_T\beta^{\frac{\delta}{1+\delta}}(q_n)).
\end{align}
Hence, similar to the arguments used in Step 5.2.1,  Markov's inequality, Lemmas \ref{lemma 5} and \ref{lemma 6} and  condition (T4)(1)  in Assumption \ref{technical assumptions} yield $ \frac{\widetilde{F}_{12,1}}{V_{T1}}\xrightarrow[n\to\infty]{\mathbb{P}_{|S_n}}0$ (almost surely with respect to $\mathbb{Q}$), which asserts  observing \eqref{d28} $ \frac{\widetilde{\mathbf{F}}_{12}}{V_{T1}}\xrightarrow[n\to\infty]{\mathbb{P}_{|S_n}}0.$ By \eqref{d29} and  \eqref{d28} we therefore obtain
\begin{align}
    \label{proof 23}
\frac{\mathbf{F}_{12}}{V_{T1}}\xrightarrow[n\to\infty]{\mathbb{P}_{|S_n}}0  \ \ a.s.-[\mathbb{Q}].
\end{align}

\par Now we focus on showing 
\begin{align}
    \label{d32}\frac{\widetilde{\mathbf{F}}_{11}}{V_{T1}}\xrightarrow[n\to\infty]{\mathbb{P}_{|S_n}}1\ \ a.s.-[\mathbb{Q}].
    \end{align}
    First, note that similar arguments as used in  
\begin{align}
\label{proof 24}&\lim_{n\to\infty}\frac{\Delta_{n}}{V_{T1}}=1,
\end{align}
almost surely with respect to $\mathbb{Q}$, where
\begin{align}
    \Delta_n=\sum_{m=2}^{M_n}\sum_{(i,t)\in\widetilde{B}^-_{m}}\sum_{(i',t')\in\widetilde{B}^-_{m'} \atop 1\leq m'<m}K_{0b}^2(s_{in}-s_{i'n},t-t')(E[X^2_0])^2. 
    \label{proof 24a}
\end{align}

Recalling the notation of  
 $ H_{n}$ and $m$ in  \eqref{d15d} and \eqref{d15e}, respectively,  we have 

\begin{align}
    \widetilde{\mathbf{F}}_{11}&= \sum_{m=2}^{M_n}\sum_{(i,t)\in\widetilde{B}^-_{m}}\sum_{(i',t')\in\widetilde{B}^{-}_{m'}\atop 1\leq m'<m}E_{|S_n}[H^2_{n}(Z_{it},Z_{i't'})|\mathcal{G}_{m-1}]\\
    &= \sum_{m=2}^{M_n}\sum_{(i,t)\in\widetilde{B}^-_{m}}\sum_{(i',t')\in\widetilde{B}^{-}_{m'}\atop 1\leq m'<m}K^2_{0b}(s_{in}-s_{i'n},t-t')E_{|S_n}[m^2(X_{it},X_{i't'})|\mathcal{G}_{m-1}].
\end{align}

For the conditional expectation  we obtain 
\begin{align}
    E_{|S_n}[m^2(X_{it},X_{i't'})|\mathcal{G}_{m-1}]& 
    =(E_{|S_n}[X_{it}^2X_{i't'}^2|\mathcal{G}_{m-1}]-E[X^2_{i't'}\widetilde{X}^2_{it}])+E[X^2_{i't'}\widetilde{X}^2_{it}]\\
    &-2(E_{|S_n}[X_{it}X_{i't'}|\mathcal{G}_{m-1}]-E[X_{i't'}\widetilde{X}_{it}])\text{Cov}_{|S_n}(X_{it},X_{i't'})\\
    &+(\text{Cov}_{|S_n}(X_{it},X_{i't'}))^2 \\
    &
    =:\mathbf{E}_{1}+E[X^2_{i't'}\widetilde{X}^2_{it}]-\mathbf{E}_{2}+\mathbf{E}_{3},
\end{align}
where $\{\X_{it}\}$ is the shadow sequeneces introduced in Step 1. The decomposition above yields, on condition of $\bS_n=S_n$, 
\begin{align}
    \widetilde{\mathbf{F}}_{11}= &\sum_{m=2}^{M_n}\sum_{(i,t)\in\widetilde{B}^-_{m}}\sum_{(i',t')\in\widetilde{B}^{-}_{m'}\atop 1\leq m'<m}K^2_{0b}(s_{in}-s_{i'n},t-t')(\mathbf{E}_{1}-\mathbf{E}_2+\mathbf{E}_3)\\
    &+ \sum_{m=2}^{M_n}\sum_{(i,t)\in\widetilde{B}^-_{m}}\sum_{(i',t')\in\widetilde{B}^{-}_{m'}\atop 1\leq m'<m}K^2_{0b}(s_{in}-s_{i'n},t-t')E[X^2_{i't'}\widetilde{X}^2_{it}]=:\widetilde{F}_{11,1}+\widetilde{F}_{11,2}.
\end{align}
Similar arguments as given  in Step 2,  Lemmas \ref{lemma 2} and \ref{lemma 3} yield for the first term $\frac{\widetilde{F}_{11,1}}{V_{T1}}\xrightarrow[n\to\infty]{\mathbb{P}_{|S_n}}0$. 
On the  other hand, we have 
\begin{align}
 \widetilde{F}_{11,2}
 =&\widetilde{F}_{11,21}+\Delta_n,
\end{align}
where 
\begin{align}
   \label{holger} 
\widetilde{F}_{11,21}:= 
\sum_{m=2}^{M_n}\sum_{(i,t)\in\widetilde{B}^-_{m}}\sum_{(i',t')\in\widetilde{B}^{-}_{m'}\atop 1\leq m'<m}K^2_{0b}(s_{in}-s_{i'n},t-t')(X^2_{i't'}-E[X_0^2])E[X^2_0]
\end{align}
and $\Delta_n$ is introduced in \eqref{proof 24a} and satisfies \eqref{proof 24}. Hence, to finish Step 5.2.2, it suffices to show that $\frac{\widetilde{F}_{11,21}}{(nT)^2b^3}\xrightarrow[n\to\infty]{\mathbb{P}_{|S_n}}0$  almost surely with respect to $\mathbb{Q}$. 

\par For a measurable function $h:\mathbf{R}_n\to\mathbb{R}$, we  define 
\begin{align}
    E_{|s_{in}}[h(s_{in},s_{jn})]=\int_{\mf D} h(s_{in},\Lambda_nx) f(x)dx,
\end{align}
where $f$ is the density introduced in Assumption \ref{as spatial locations 1}. Then, for each $\bS_n=S_n$,
\begin{align}
    2\widetilde{F}_{11,21}=&E[X^2_0]\sum_{m=2}^{M_n}\sum_{(i,t)\in\widetilde{B}^-_{m}}\sum_{(i',t')\in\widetilde{B}^{-}_{m'}\atop 1\leq m'< m}K^2_{0ii'}K^2_{0tt'}(X^2_{i't'}-E[X_0^2])
      =\mathbf{A}+\mathbf{B},\label{eq:AB}
\end{align}
where
\begin{align}
    \mathbf{A}& :=  E[X^2_0]b^2\sum_{m=2}^{M_n}\sum_{(i,t)\in\widetilde{B}^-_{m}}\sum_{(i',t')\in\widetilde{B}^{c}_{m}}[b^{-2}(K^2_{0ii'}-E_{|s_{i'n}}[K^2_{0ii'}])]K^2_{0tt'}(X^2_{i't'}-E[X_0^2]) \\
    \mathbf{B} & := E[X^2_0]b^2\sum_{m=2}^{M_n}\sum_{(i,t)\in\widetilde{B}^-_{m}}\sum_{(i',t')\in\widetilde{B}^{c}_{m}}(b^{-2}E_{|s_{i'n}}[K^2_{0ii'}])K^2_{0tt'}(X^2_{i't'}-E[X_0^2])
\end{align}
and 
$\widetilde{B}^{c}_{m}=\bigcup_{1\leq m'< m}\widetilde{B}^-_{m'}$. Now we focus on showing 
\begin{align}
    \label{d31a}
& \frac{\mathbf{A}}{(nT)^2b^3}\xrightarrow[n\to\infty]{\mathbb{P}_{|S_n}}0, \\  
&\frac{\mathbf{B}}{(nT)^2b^3}\xrightarrow[n\to\infty]{\mathbb{P}_{|S_n}}0
 \label{d31b}
\end{align}
 almost surely with respect to $\mathbb{Q}$.  

We  start with \eqref{d31b} and denote by   $\overline{E}=\max_{n}\max_{1\leq i,i'\leq n}||b^{-2}E[K_{0ii'}]||_{\infty}$, which  is a finite constant independent of sample size. We also denote $\textbf{Card}(A\cap\Gamma_n)=|A|$, $\forall\ A\subset \mf R_n$ and note that  $K_{0tt'}\neq 0$ if and only if $v_0-Tb\leq |t'-t|\leq v_0+Tb$. Hence, together with triangle inequality, by denoting $\mathcal{T}_{t,v_0}=\{t'\in\{1,2,..,T\}:v_0-Tb\leq |t'-t|\leq v_0+Tb\}$, some simple algebra shows that,
\begin{align}
|\mathbf{B}|\leq &
\label{proof 26}
E[X_0^2]b^2\sum_{m=2}^{M_n}\sum_{(i,t)\in\widetilde{B}^-_{m}}\left|\sum_{(i',t')\in\widetilde{B}^{c}_{m}}(b^{-2}E_{\s_{i'n}}[K^2_{0ii'}])K^2_{0tt'}1[t'\in\mathcal{T}_{t,v_0}](X^2_{i't'}-E[X_0^2])\right|.
\end{align}
For the given $v_0$, we name the set of $t$'s such that $\mathcal{T}_{t,v_0}\cap\{1,2,...,T\}$ is asymptotically non-empty as $\mathbf{T}_{v_0}$. More specifically, we decompose $\mathbf{T}_{v_0}$ as ``boundary'' points and ``interior'' points,
\begin{align}
   &\mathbf{T}_{v_0}=\mathbf{T}_{v_0,1}\bigcup\mathbf{T}_{v_0,2},
   \end{align}
where
\begin{align}
& \mathbf{T}_{v_0,1}=\Big\{t\in \{1,...,T\}: \lim_{n\to\infty}\frac{\textbf{Card}(\mathcal{T}_{t,v_0}\cap\{1,...,T\})}{Tb}=0\Big\},\\
   & \mathbf{T}_{v_0,2}=\Big\{t\in \{1,...,T\}: \lim_{n\to\infty}\frac{\textbf{Card}(\mathcal{T}_{t,v_0}\cap\{1,...,T\})}{Tb}\in (0,4]\Big\}.
\end{align}
Together with \eqref{proof 26}, this asserts 
\begin{align}
    |\mathbf{B}|\leq& E[X_0^2]b^2\sum_{m=2}^{M_n}\sum_{\s_{in}\in\widetilde{B}^-_{m,S}}\sum_{t\in\widetilde{B}^-_{m,T}\cap\mathbf{T}_{v_0,1}}\left|\sum_{(i',t')\in\widetilde{B}^{c}_{m}}(b^{-2}E_{\s_{i'n}}[K^2_{0ii'}])K^2_{0tt'}1[t'\in\mathcal{T}_{t,v_0}](X^2_{i't'}-E[X_0^2])\right|\\
    &+E[X_0^2]b^2\sum_{m=2}^{M_n}\sum_{\s_{in}\in\widetilde{B}^-_{m,S}}\sum_{t\in\widetilde{B}^-_{m,T}\cap\mathbf{T}_{v_0,2}}\left|\sum_{(i',t')\in\widetilde{B}^{c}_{m}}
    (b^{-2}
    E_{\s_{i'n}}[K^2_{0ii'}])K^2_{0tt'}1[t'\in\mathcal{T}_{t,v_0}]
    (X^2_{i't'}-E[X_0^2])\right|\\
    =&:\mathbf{B}_1+\mathbf{B}_2.
\end{align}
Note that  $\{(b^{-2}E_{\s_{i'n}}[K^2_{0ii'}])K^2_{0tt'}\}$ as an array of non-negative constants with upper bound independent of $(i,t,i',t')$ and the sample size. This array will be denoted by  $\{c_{it,i't'}\}$ in the following discussion. Moreover, with the notation 
\begin{align}
  O_{m,T,v_0,k}:=   \overline{B}^c_{m,T}\bigcap \mathbf{T}_{v_0,k},\  \ \mathbf{O}_{m,v_0,k}:= O_{m,T,v_0,k}\times \widetilde{B}^c_{m,S},\ \ \ k=1,2,
\end{align}
we obtain $\textbf{Card}(O_{m,T,v_0,1})=o(Tb)$ and $\textbf{Card}(O_{m,T,v_0,2})\leq 4Tb$, see the illustration in 
 Figure \ref{eq:Oterms}.
\begin{figure}
    \centering    \includegraphics[width=0.5\linewidth]{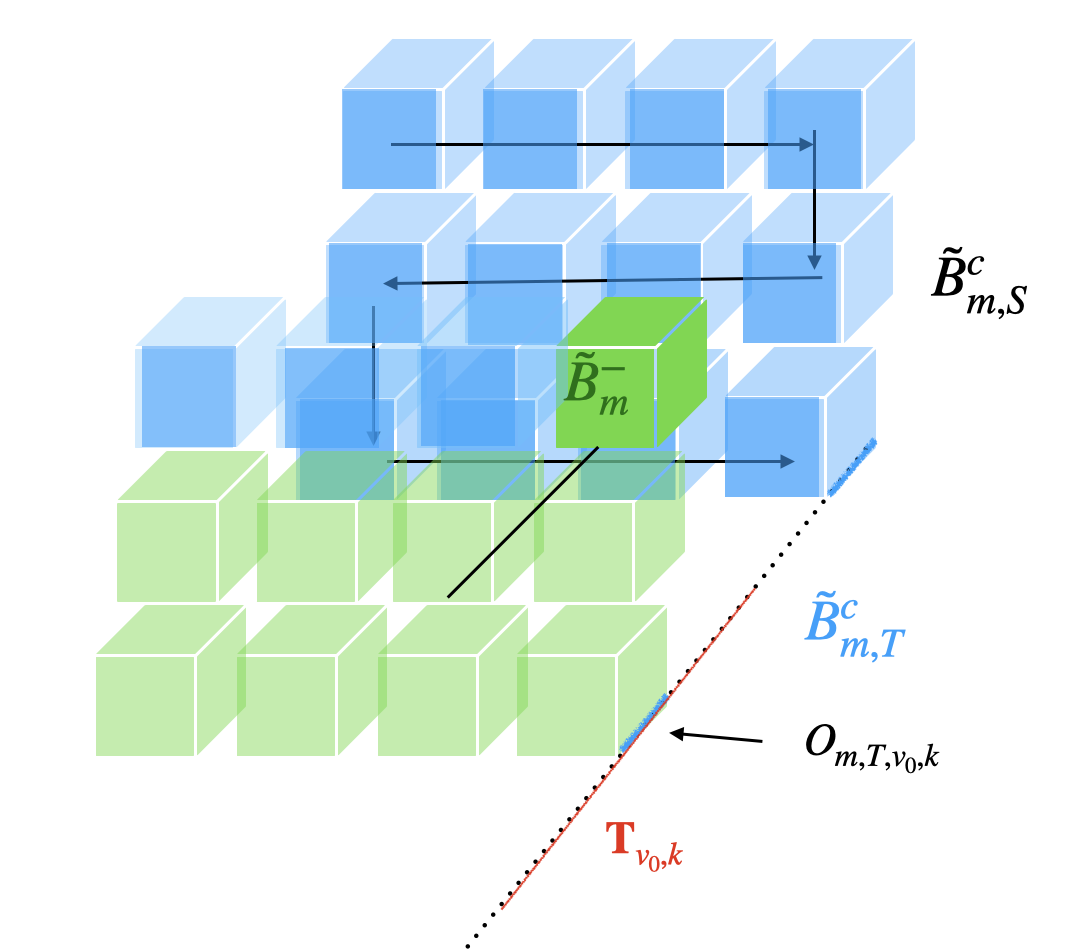}
    \caption{An illustration of $\widetilde B_{m}^{-}$, $\widetilde B_{m, S}^{c}$ and $O_{m, T, v_0, k}$ based on the ordering in Figure \ref{fig:chess}. The light blue cubes denote $\widetilde B_{m}^{c}$, while the deeper blue faces denote $\widetilde B_{m, S}^{c}$ and the deeper blue lines denote $\widetilde B_{m, T}^{c}$. The red line denote $\mf T_{v_0, k}$, the set of non-zero term in the time dimension. The intersection of the red line and the blue lines is $O_{m, T, v_0, k}$. }
    \label{eq:Oterms}
\end{figure}
Similar to the proof of Proposition \ref{prop 10}, by introducing proper sets and event, Proposition \ref{prop 5} yields that, $\textbf{Card}(\widetilde{B}^c_{m,S})/n \to 1$.
Therefore, for every $1\leq m\leq M_n$ 
\begin{align}
    &\lim_{n\to\infty}\frac{\textbf{Card}(\mathbf{O}_{m,v_0,1})}{nTb}=0,\ \lim_{n\to\infty}\frac{\textbf{Card}(\mathbf{O}_{m,v_0,1})}{n}>0\ \text{and}\\ 
    &\lim_{n\to\infty}\frac{\textbf{Card}(\mathbf{O}_{m,v_0,2})}{nTb}<\infty, \lim_{n\to\infty}\frac{\log n\textbf{Card}(\mathbf{O}_{m,v_0,2})}{nTb}>0
\end{align}
hold almost surely with respect to $\mathbb{Q}$. We further introduce events 
\begin{align}
    &\mathcal{C}_{k}:= \Big\{\lim_{n\to\infty}\mathbb{P}\Big(\Big|\textbf{Card}(\mathbf{O}_{m,v_0,k})^{-1}\sum_{(i',t')\in\mathbf{O}_{m,v_0,k}}c_{it,i't'}(X^2_{i't'}-E[X_0^2])\Big|>\epsilon_{k,n}\Big)=0\Big\},\ \  k=1,2,
    \end{align}
where
 \begin{align}
    \epsilon_{1,n}= \sqrt{\frac{\xi_n}{n}},\  \epsilon_{2,n}=\sqrt{\frac{\xi_n\log n}{nTb}}
\end{align}
and $(\xi_n)_{n \in \mathbb{N}} \subset \mathbb{R}$ is a diverging non-negative sequence. The reason why $\epsilon_{k,n}$ is invariant with respect to $m$ is because the growth condition of $\textbf{Card}(\mathbf{O}_{m,v_0,k})$ is independent of $m$.
\par Together with Lemma \ref{lemma 3}, Chebyschev's inequality and typical Bernstein-blocking techniques, we can show 
\begin{align}
\label{proof 27}
   \frac{\mathbf{B}_{1}\lor \mathbf{B}_2}{(nT)^2 b^3}\xrightarrow[n\to\infty]{\mathbb{P}_{|S_n}}0 \ \ \ a.s.-[\mathbb{Q}].
\end{align}
Similarly, by repeating the  concentration arguments for the term  $\mathbf{A}$ in \eqref{d31a}, we can show $\frac{\mathbf{A}}{\mathbf{B}}\xrightarrow[n\to\infty]{\mathbb{P}_{|S_n}}0 \ \ a.s.-[\mathbb{Q}]$, which asserts 
\begin{align}
\label{proof 28}
    \frac{\mathbf{A}}{(nT)^2b^3}\xrightarrow[n\to\infty]{\mathbb{P}_{|S_n}}0 \ \ a.s.-[\mathbb{Q}]
\end{align}
Combining \eqref{eq:AB}, \eqref{proof 27} and \eqref{proof 28} yield $\frac{\widetilde{F}_{11,21}}{(nT)^2b^3}\xrightarrow[n\to\infty]{\mathbb{P}_{|S_n}}0 \ \ a.s.-[\mathbb{Q}]$, which implies  \eqref{d32}. 
Then, together with tower property of conditional expectation, we obtain 
$$
\frac{\mathbf{F}_{1}}{V_{T1}}\xrightarrow[n\to\infty]{\mathbb{P}_{|S_n}}1 \ \ a.s.-[\mathbb{Q}]
$$
and thus have completed  the proof of {Step 5.2.2} and the whole {Step 5.2}.
\smallskip

\par \textbf{Step 5.3} (For any $\epsilon>0$, $\sum_{m=2}^{M_n}E_{|S_n}[\widetilde{T}^2_{m,n}1[|\widetilde{T}_{m,n}|\geq \epsilon]|\mathcal{F}_{m-1}]\xrightarrow[n\to\infty]{\mathbb{P}_{|S_n}}0,\ a.s.-[\mathbb{Q}]$)
\par By  Markov's inequality and $\lim_{n\to\infty}\frac{\sigma^4_n}{(nT)^4b^6}\in (0,\infty)$, it suffices to prove that 
\begin{align}
\label{proof 29}
    \lim_{n\to\infty}\frac{\sum_{m=2}^{M_n}E_{|S_n}[T^4_{m,n}]}{(nT)^4b^6}=0
\end{align}
almost surely with respect to $\mathbb{Q}$. Recalling the notation \eqref{d33}, \eqref{d15d} and \eqref{d15e}, 
we have  
\begin{align}
    T_{m,n}&=\sum_{(i,t)\in\widetilde{B}^-_{m}}Y_m(Z_{it})=\sum_{(i,t)\in\widetilde{B}^-_{m}}\sum_{(i',t')\in\widetilde{B}^-_{m'}\atop 1\leq m'<m}H_{n}(Z_{it},Z_{i't'})=\sum_{(i,t)\in\widetilde{B}^-_{m}}\sum_{(i',t')\in\widetilde{B}^c_{m}}H_{n}(Z_{it},Z_{i't'})\\
    &=\sum_{(i,t)\in\widetilde{B}^-_{m}}\sum_{(i',t')\in\widetilde{B}^c_{m}}K_{0b}(s_{in}-s_{i'n},t-t')m(X_{it},X_{i't'}),
\end{align}
where $ \widetilde{B}^c_{m}=\bigcup_{1\leq m'<m}\widetilde{B}^-_{m'}.$
For any given $\bS_n=S_n$, let $\mathbb{P}_{S_n, m}$ be the distribution of random vector 
$(X_{it}, (i,t)\in\widetilde{B}^-_{m})$. 
In the following, we will apply Lemma \ref{lemma 2}, to approximate   $E_{|S_n}[T^4_{m,n}]$ by $E_{|S_{n}}[\int (\sum_{(i,t)\in\widetilde{B}^-_{m}}Y(z_{it}))^4 d\mathbb{P}_{S_n,m}]$. 

Jensen inequality and condition (M2) of Assumption \ref{moment conditions} yield for any $1\leq m\leq M_n$  
\begin{align}   
    &\max\Big \{E_{|S_n}|T^4_{m,n}|^{1+\delta},E_{|S_n} \Big [\int |T^4_{m,n}|^{1+\delta}d\mathbb{P}_{S_n,m} \Big ] \Big \} \\
 &   \ \  \ \  \ \  \ \   \leq M_{4(1+\delta)}\left( nT\Big(\frac{nr_{1n}r_{2n}}{\lambda^2\bb_n}\Big)p_T\right)^{4(1+\delta)}\Big(\frac{1}{|\widetilde{B}^-_{m}||\widetilde{B}^c_{m}|}\sum_{(i,t)\in\widetilde{B}^-_{m}}\sum_{(i',t')\in\widetilde{B}^c_{m}}K^{4(1+\delta)}_{0b}(s_{in}-s_{i'n},t-t')\Big) \\
 &  \ \  \ \  \ \  \ \   \label{proof 30}
 \leq C\left( nT\Big(\frac{nr_{1n}r_{2n}}{\lambda^2\bb_n}\Big)p_T\right)^{4(1+\delta)}b^3
  \end{align}
almost surely with respect to $\mathbb{Q}$,    where we used the Bernstein  inequality for the last inequaltity.
Now Lemma \ref{lemma 2}, condition (T1)(6) in Assumption \ref{technical assumptions} and the design of the blocks give (up to some constant factor independent of $m$ and the sample size) 
\begin{align}
    \label{proof 31}
    E_{|S_n} [T^4_{m,n}]&\leq E_{|S_{n}}\Big [\int \Big (\sum_{(i,t)\in\widetilde{B}^-_{m}}Y_{{m}}(Z_{it})\Big )^4 d\mathbb{P}_{S_n,m}\Big ]+\Big ( nT\Big(\frac{nr_{1n}r_{2n}}{\lambda^2\bb_n}\Big)p_T\Big  )^{4(1+\delta)}b^3\beta^{\frac{\delta}{1+\delta}}(q_n) \\
    &\leq E_{|S_{n}}\Big [\int \Big (\sum_{(i,t)\in\widetilde{B}^-_{m}}Y_{{m}}(Z_{it})\Big )^4 d\mathbb{P}_{S_n,m}\Big ]+\big( nT(q_n\log n)^3\big )^{4(1+\delta)}b^{3-2(1+\delta)}\beta^{\frac{\delta}{1+\delta}}(q_n) \\
    &\leq E_{|S_n }\Big [\int \Big (\sum_{(i,t)\in\widetilde{B}^-_{m}}Y_{m}(Z_{it}) \Big )^4 d\mathbb{P}_{S_n,m} \Big ]+o((nT)^4b^6)=:L_{m}+o((nT)^4b^6)
\end{align}
 almost surely with respect to $\mathbb{Q}$. Now we focus on $L_m$. For any given $\bS_n=S_n$, 
  by denoting $E_{\mathbb{G}_{m}}$ as the expectation taken with respect to $\{X_{it}:(i,t)\in\widetilde{B}^c_{m}\}$,  Fubini theorem asserts
\begin{align}
    L_m=&\int E_{\mathbb{G}_{m}}[(\sum_{(i,t)\in\widetilde{B}_{m}}Y_{m}(z_{it}))^4]d\mathbb{P}_{S_n,m} = \sum_{k=1}^{5}L_{mk}, 
   \end{align}
where
    \begin{align}
    L_{m1}& =\int\sum_{(i,t)\in\widetilde{B}_{m}}E_{\mathbb{G}_m}[Y_{m}^4(Z_{it})]d\mathbb{P}_{S_n,m}  \\
    L_{m2} & = \int\sum_{(i,t)\neq(i',t')\in\widetilde{B}_{m}}E_{\mathbb{G}_m}[Y_{m}^2(Z_{it})Y_{m}^2(Z_{i't'})]d\mathbb{P}_{S_n,m}\\
    L_{m3} &=\int\sum_{(i,t)\neq(i',t')\in\widetilde{B}_{m}}E_{\mathbb{G}_m}[Y_{m}^3(Z_{it})Y_{m}(Z_{i't'})]d\mathbb{P}_{S_n,m}\\
   L_{m4} &= \int\sum_{(i,t)\neq(i',t')\neq (i'',t'')\in\widetilde{B}_{m}}E_{\mathbb{G}_m}[Y_{m}^2(Z_{it})Y_{m}(Z_{i't'})Y_{m}(Z_{i''t''})]d\mathbb{P}_{S_n,m}\\
   L_{m4} &= \int\sum_{(i,t)\neq(i',t')\neq (i'',t'')\neq (i'''t''')\in\widetilde{B}_{m}}E_{\mathbb{G}_m}[Y_{m}(Z_{it})Y_{m}(Z_{i't'})Y_{m}(Z_{i''t''})Y_{m}(Z_{i'''t'''})]d\mathbb{P}_{S_n,m}
\end{align}
Therefore, to finish the proof of \textbf{Step 5.3}, it suffices to prove that,  for every $k=1,2,...,5$, 
\begin{align}
    \lim_{n\to\infty}\frac{\sum_{m=2}^{M_n}L_{mk}}{(nT)^4b^6}=0\ \ a.s.-[\mathbb{Q}]
\end{align}


\noindent \textbf{Step 5.3.1} ($\lim_{n\to\infty}\frac{L_{m1}}{(nT)^4b^3}=0$) For any given $m$ and $(i,t)\in\widetilde{B}^-_{m}$ we have 
\begin{align}
Y^4_m(Z_{it})=&\sum_{(r,s)\in \widetilde{B}^c_{m}}H_{n}^4(Z_{it},Z_{rs})+\sum_{(r,s)\neq (r',s')\in\widetilde{B}^c_{m}}H_{n}^2(Z_{it},Z_{rs})H_{n}^2(Z_{it},Z_{r's'})\\
    &+\sum_{(r,s)\neq (r',s')\in\widetilde{B}^c_{m}}H_{n}^3(Z_{it},Z_{rs})H_{n}(Z_{it},Z_{r's'})\\
    &+\sum_{(r,s)\neq (r',s')\neq (r'',s'')\in\widetilde{B}^c_{m}}H_{n}^2(Z_{it},Z_{rs})H_{n}(Z_{it},Z_{r's'})H_{n}(Z_{it},Z_{r''s''})\\
    &+\sum_{(r,s)\neq (r',s')\neq (r'',s'')\neq (r''',s''')\in\widetilde{B}^c_{m}}H_{n}(Z_{it},Z_{rs})H_{n}(Z_{it},Z_{r's'})H_{n}(Z_{it},Z_{r''s''})H_{n}(Z_{it},Z_{r'''s'''})\\
    =&: \mathbf{H}_{1}+\mathbf{H}_{2} +...+\mathbf{H}_{5},
\end{align}
which gives  
\begin{align}
    \label{d34}
L_{m1}=\sum_{k=1}^5 \int E_{\mathbb{G}_m}[\mathbf{H}_{k}]d\mathbb{P}_{S_{n},m}=:\sum_{k=1}^5L_{m1,k}.
\end{align}
\par By repeating the calculation for the  term $I_{k,j,1}$ in {\bf Step 2}, it can be shown that 
\begin{align}
\label{proof 32}
    \lim_{n\to\infty}\frac{|L_{m1,1}|}{\textbf{Card}(\widetilde{B}_{m,S}\cap\Gamma_n)\textbf{Card}(\widetilde{B}_{m,T})(nT)b^3}<\infty,\ \ a.s.-[\mathbb{Q}],
\end{align}
where $A_S$ is the projection of set $A\subset \mathbf{R}_n\times \{1,2,..,T\}$ on the spatial plane $\mathbb{R}^2$. 

\par  For the term $L_{m1,2}$ we have  
\begin{align}
    L_{m1,2}=L_{m1,21}+L_{m1,22}
        \end{align}
        where 
    \begin{align}
         L_{m1,21}& = \sum_{(i,t)\in\widetilde{B}^-_{m}}\sum_{(r,s)\neq (r',s')\in\widetilde{B}^c_{m}\atop ||(r,s)-(r',s')||_{F}\leq q_n}K^2_{0b}(s_{in}-s_{rn},t-s)K^2_{0b}(s_{in}-s_{r'n},t-s') \\
         & ~~~~~~~~~~~~~~~~~~~~~~~~~~~~~~~~~~~~~~~~~~~~~~~~~~~ \times E_{X_{it}}[E_{\mathbb{G}_{m}}[m^2(X_{it},X_{rs})m^2(X_{it},X_{r's'})]]\\   L_{m1,22}& = \sum_{(i,t)\in\widetilde{B}^-_{m}}\sum_{(r,s)\neq (r',s')\in\widetilde{B}^c_{m}\atop ||(r,s)-(r',s')||_{F}> q_n}K^2_{0b}(s_{in}-s_{rn},t-s)K^2_{0b}(s_{in}-s_{r'n},t-s') \\
         & ~~~~~~~~~~~~~~~~~~~~~~~~~~~~~~~~~~~~~~~~~~~~~~~~~~~ \times E_{X_{it}}[E_{\mathbb{G}_{m}}[m^2(X_{it},X_{rs})m^2(X_{it},X_{r's'})]].
\end{align}
The estimate  \eqref{lemma 4 eq-1} in  Lemma \ref{lemma 4}, condition (M1) and  Assumption \ref{moment conditions} give 
\begin{align}
\label{proof 33}
    \lim_{n\to\infty}\frac{L_{m1,21}}{(\textbf{Card}(\widetilde{B}^-_{m,S}\cap\Gamma_n)\frac{n^2q_n^{2}}{\lambda_n^2\bb_n})(\textbf{Card}(\widetilde{B}_{m,T})Tq_T)b^5}<\infty \ \ a.s.-[\mathbb{Q}].
\end{align}

To bound $L_{m1,22}$, we use condition  (M1) in  Assumption \ref{moment conditions} and Lemma \ref{lemma 2} and obtain 
\begin{align}
    L_{m1,22}\leq C_M\sum_{(i,t)\in\widetilde{B}^-_{m}}\sum_{(r,s)\neq (r',s')\in\widetilde{B}^c_{m}\atop ||(r,s)-(r',s')||_{F}> q_n}K^2_{0b}(s_{in}-s_{rn},t-s)K^2_{0b}(s_{in}-s_{r'n},t-s')\beta^{\frac{\delta}{1+\delta}}(q_n), 
\end{align}
 where $C_M$ is a positive constant depending only on moment conditions introduced in Assumption \ref{moment conditions}. Then,  Lemma \ref{lemma 4} shows  that 
\begin{align}
\label{proof 34}
    \lim_{n\to\infty}\frac{L_{m1,22}}{(\textbf{Card}(\widetilde{B}_{m,S}\cap\Gamma_n)\textbf{Card}(\widetilde{B}_{m,T}))(nT)^2b^5}<\infty, \ \ a.s.-[\mathbb{Q}].
\end{align}
Combining \eqref{proof 33} and \eqref{proof 34} asserts that
\begin{align}
    \label{proof 35}
     \lim_{n\to\infty}\frac{L_{m1,2}}{(\textbf{Card}(\widetilde{B}_{m,S}\cap\Gamma_n)\textbf{Card}(\widetilde{B}_{m,T}))(nT)^2b^5}<\infty \ \ a.s.-[\mathbb{Q}]
\end{align}
holds for all $1\leq m\leq M_n$ simultaneously.For the term $L_{m1,3}$ we obtain 
\begin{align}
    &L_{m1,3}\leq  L_{m1,31}+L_{m1,32}
        \end{align}
        where
    \begin{align}
    L_{m1,31}&:=\sum_{(i,t)\in\widetilde{B}^-_{m}}\sum_{(r,s)\neq (r',s')\in\widetilde{B}^c_{m}\atop ||(r,s)-(r',s')||_{F}\leq q_n}K^2_{0b}(s_{in}-s_{rn},t-s)K^2_{0b}(s_{in}-s_{r'n},t-s')\\
   & ~~~~~~~~~~~~~~~~~
   ~~~~~~~~~~~~~~~~~
   ~~~~~~~~~~~~~~~~~
\times    E_{X_{it}}|E_{\mathbb{G}_m}[m^3(X_{it},X_{rs})m(X_{it},X_{r's'})]|\\
   L_{m1,32} &:=\sum_{(i,t)\in\widetilde{B}^-_{m}}\sum_{(r,s)\neq (r',s')\in\widetilde{B}^c_{m}\atop ||(r,s)-(r',s')||_{F}> q_n}K^2_{0b}(s_{in}-s_{rn},t-s)K^2_{0b}(s_{in}-s_{r'n},t-s') \\
   & ~~~~~~~~~~~~~~~~~
   ~~~~~~~~~~~~~~~~~
   ~~~~~~~~~~~~~~~~~
\times    
   E_{X_{it}}|E_{\mathbb{G}_m}[m^3(X_{it},X_{rs})m(X_{it},X_{r's'})]|.
\end{align}
Since $m(X_{it},X_{rs})=X_{it}X_{rs}-E_{\mathbb{P}_{|S_n}}[X_{it}X_{rs}]$, $E_{\mathbb{G}_m}[m(X_{it},X_{rs})]=0$ holds almost surely with respect to $\mathbb{P}_{|S_n}$, which implies 
$$
E_{X_{it}}|E_{\mathbb{G}_m}[m^3(X_{it},X_{rs})m(X_{it},X_{r's'})]|=E_{\mathbb{P}_{|S_n}}|\text{Cov}_{\mathbb{G}_m}(m^3(X_{it},X_{rs}),m(X_{it},X_{r's'}))|,
$$
where the calculation of $\text{Cov}_{\mathbb{G}_m}(m^3(X_{it},X_{rs}),m(X_{it},X_{r's'}))$ should treat $X_{it}$ as a constant. Thus, Lemma \ref{lemma 2} gives 
\begin{align}
    \lim_{n\to\infty}\frac{L_{m1,32}}{L_{m1,22}}=0\ \ a.s.-[\mathbb{Q}],
\end{align}
and, similar to $L_{m1,21}$, we obtain
\begin{align}
    \lim_{n\to\infty}\frac{L_{m1,31}}{(\textbf{Card}(\widetilde{B}^-_{m,S}\cap\Gamma_n)\frac{n^2q_n^{2}}{\lambda_n^2\bb_n})(\textbf{Card}(\widetilde{B}_{m,T})Tq_T)b^5}<\infty\ \ a.s.-[\mathbb{Q}].
\end{align}
Above all, 
\begin{align}
\label{proof 36}
     \lim_{n\to\infty}\frac{L_{m1,3}}{(\textbf{Card}(\widetilde{B}_{m,S}\cap\Gamma_n)\textbf{Card}(\widetilde{B}_{m,T}))(nT)^2b^5}<\infty, \ \ a.s.-[\mathbb{Q}].
\end{align}

For the term  $L_{m1,4}$ in \eqref{d34} recall that  $(r,s)$, $(r',s')$ and $(r'',s'')$ are elements of the  set $\widetilde{B}^c_{m}=\cup_{1\leq m'<m}\widetilde{B}^-_{m'}$ and that $d(\widetilde{B}^-_{m'},\widetilde{B}^-_{m''})\geq 2q_n$ holds for any $1\leq m'\neq m''<m$ which gives the  decomposition 
 \begin{align}
     L_{m1,4}=&\sum_{(i,t)\in\widetilde{B}^-_{m}}\sum_{(r,s)\neq (r',s')\neq (r'',s'')\in\widetilde{B}^c_{m}}K^2_{0ir}K_{0ir'}K_{0ir''}K^2_{0ts}K_{0ts'}K_{0ts''}E_{X_{it}}
     \\
     & ~~~~~~~~~~~~~~~~~~~~~~~~~~~~~~~~~~~~~~~~~~  ~~~~~~~~~~~~~~ \times [E_{\mathbb{G}_m}[m^2(X_{it},X_{rs})m(X_{it},X_{r's'})m(X_{it},X_{r''s''})]]\\
     =&\sum_{(i,t)\in\widetilde{B}^-_{m}}(\sum_{(1)}+\sum_{(2)}+\sum_{(3)})K^2_{0ir}K_{0ir'}K_{0ir''}K^2_{0ts}K_{0ts'}K_{0ts''}      \\
     & ~~~~~~~~~~~~~~~~~~~~~~~~~~~~~~~~~~~~~~~~~~  ~~~~~~~~~~~~~~ \times E_{X_{it}}[E_{\mathbb{G}_m}[m^2(X_{it},X_{rs})m(X_{it},X_{r's'})m(X_{it},X_{r''s''})]]\\
     =&:L_{m1,41}+L_{m1,42}+L_{m1,43}, \label{d35}
 \end{align}
where  $\sum_{(k)}$ denotes the summation of elements in set  \begin{align}
\label{d36}
    \mathcal{D}_{m,k}:= &\{(r,s)\neq (r',s')\neq (r'',s'')\in \widetilde{B}^c_{m}:\\
    &\text{$k$ out of these three locations belong to the same $\widetilde{B}^-_{m'}$, $1\leq m'<m$}\},\ \ 1\leq k\leq 3.
\end{align}
Note that 
\begin{align}
\textbf{Card}(\mathcal{D}_{m,1}) & \leq m(m-1)(m-2)(\frac{nr_{1n}r_{2n}}{\lambda^2_{n}\bb_n}), ~\textbf{Card}(\mathcal{D}_{m,2})\leq m(m-1)(\frac{nr_{1n}r_{2n}}{\lambda^2_{n}\bb_n}),
\\
\textbf{Card}(\mathcal{D}_{m,3})& \leq m(\frac{nr_{1n}r_{2n}}{\lambda^2_{n}\bb_n}).
\end{align}
To derive an estimate for $L_{m1,41}$, we denote by  $A_{S}$ and $A_T$ the projection of $A\subset \mathbb{R}_n\times \{1,2,...,T\}$ onto the spatial plane and the time axis respectively, and   regard $\{\widetilde{B}^-_{m,S}\}$ as a lattice, in which we consider $\widetilde{B}^-_{m,S}$ as a lattice point. Then, the lattice distance, denoted as $d_{\text{lattice}}$, between any two $\widetilde{B}^-_{m,S}$'s is always positive integer. Then, we have the following representation of the set $\mathcal{D}_{m,1,S}$, 
\begin{align}
\label{proof 40}
    &\mathcal{D}_{m,1,S}=\bigcup_{\atop 1\leq m'<m}\bigcup_{\atop 1\leq m''\neq m'<m}\bigcup_{l=1}^{\infty}\bigcup_{D_{m',m''}(l)}\{(\s_{rn},\s_{r'n},\s_{jn}):\s_{rn}\in\widetilde{B}^-_{m',S},\s_{r'n}\in\widetilde{B}^-_{m'',S},\s_{jn}\in D_{m',m''}(l)\},
        \end{align}
        where 
    \begin{align}
 D_{m',m''}(l)=\{\widetilde{B}^-_{j,S}:j\neq m',\ j\neq m'', d_{\text{lattice}}(\widetilde{B}_{j,S}, \{\widetilde{B}_{m,S},\widetilde{B}_{m',S}\} )=l\}. 
\end{align}
Note that $\max_{m',m''}\textbf{Card}(D_{m',m''}(l))\leq 2((2l+1)^2-(2l-1))^2=16l$. Additionally, since $(r,s)$, $(r',s')$ and $(r'',s'')$ located in different $\widetilde{B}^-_{j}$'s whose $j$'s are smaller than $m$, together with the property that $E_{\mathbb{G}_m}[m(X_{it},X_{r''s''})]=0$ holds for every given value of $X_{it}$. Then, for every $X_{it}=x$ and $\bS_n=S_n$, provided that $(r,s)\in\widetilde{B}^-_{m',S}$, $(r',s')\in\widetilde{B}^-_{m'',S}$ and $(r'',s'')\in\widetilde{B}^-_{j,S}$ and $m'\neq m''\neq j$,
\begin{align}
    &|E_{\mathbb{G}_m}[m^2(x,X_{rs})m(x,X_{r's'})m(x,X_{r''s''})]|=|\text{Cov}_{\mathbb{G}_m}(m^2(x,X_{rs})m(x,X_{r's'}),m(x,X_{r''s''}))|\\
    \leq &4 \widetilde{C}(x) \beta^{\frac{\delta}{1+\delta}}(d(\widetilde{B}_{j},\widetilde{B}^-_{m'}\cup \widetilde{B}^-_{m''}))\leq 4 \widetilde{C}(x) \beta^{\frac{\delta}{1+\delta}}(d(\widetilde{B}_{j,S},\widetilde{B}^-_{m',S}\cup \widetilde{B}^-_{m'',S}))\\
    \leq& 4 \widetilde{C}(x) \beta^{\frac{\delta}{1+\delta}}(d_{\text{lattice}}(\widetilde{B}_{j,S},\{\widetilde{B}^-_{m',S}, \widetilde{B}^-_{m'',S}\})q_n),
\end{align}
where
\begin{align}
    \widetilde{C}^{1+\delta}(x)=\max\{&E_{\mathbb{G}_{m}}|m^2(x,X_{rs})m(x,X_{r's'})m(x,X_{r''s''})|^{1+\delta},\\
    &E_{\mathbb{G}_{m}}|m^2(x,X_{rs})m(x,X_{r's'})|^{1+\delta}E_{\mathbb{G}_m}|m(x,X_{r''s''})|^{1+\delta}\}.
\end{align}
By  Assumption \ref{moment conditions} we have  $\widetilde{C}:= \int_{\mathbb{R}}\widetilde{C}(x)d\mathbb{P}_{X_{it}}<\infty$, which gives
\begin{align}
    E_{X_{it}}|E_{\mathbb{G}_m}[m^2(x,X_{rs})m(x,X_{r's'})m(x,X_{r''s''})]|\leq 4 \widetilde{C} \beta^{\frac{\delta}{1+\delta}}(d_{\text{lattice}}(\widetilde{B}_{j,S},\{\widetilde{B}^-_{m',S}, \widetilde{B}^-_{m'',S}\})q_n) .
\end{align}
Therefore we obtain from the   representation \eqref{proof 40}
\begin{align}
    |4L_{m1,41}|= & \Big  |\sum_{s_{in}\in\widetilde{B}^-_{m,S}}\sum_{s_{rn}\in\widetilde{B}^-_{m',S}\atop 1\leq m'\neq m\leq M_n}\sum_{s_{r'n}\in\widetilde{B}^-_{m'',S}\atop 1\leq m''\neq m'\neq m\leq M_n}\sum_{l=1}^{\infty}\sum_{s_{r''n}\in D_{m',m''}(l)}K^2_{0ir}K_{0ir'}K_{0ir''}K^2_{0ts}K_{0ts'}K_{0ts''}
    \\ 
    & ~~~~~~~~~~~~~~~~~~
    ~~~~~~~~~~~~~~~~~~
    \times  E_{\mathbb{G}_m}[m^2(X_{it},X_{rs})m(X_{it},X_{r's'})m(X_{it},X_{r''s''})]  \Big | \\
    \leq &\widetilde{C}C'_{\beta}\Big (\sum_{s_{in}\in\widetilde{B}^-_{m,S}}\sum_{s_{rn}\in\widetilde{B}^-_{m',S}\atop 1\leq m'\neq m\leq M_n}\sum_{s_{r'n}\in\widetilde{B}^-_{m'',S}\atop 1\leq m''\neq m'\neq m\leq M_n}\sum_{l=1}^{\infty}\sum_{s_{r''n}\in D_{m',m''}(l)}K^2_{0ir}K_{0ir'}K_{0ir''}\beta^{\frac{\delta}{1+\delta}}(l)\\
   &~~~~~~~~~~~~~~~~~~ \times \sum_{t
    \in\widetilde{B}^-_{m,T}}\sum_{s\neq s'\neq s'' \in\mathcal{D}_{m,1,T}}K^2_{0ts}K_{0ts'}K_{0ts''}\beta^{\frac{\delta}{1+\delta}}(q_n) \Big ) =:\widetilde{C}C'_{\beta}\mathbf{A}\mathbf{B},
\end{align}
where the first equality is due to the symmetry of $
E_{\mathbb{G}_m}[m^2(X_{it},X_{rs})m(X_{it},X_{r's'})m(X_{it},X_{r''s''})] 
$ and $C_{\beta}'$ is the constant from condition  (T2)(ii) in Assumption \ref{technical assumptions}. Proposition \ref{prop 5} and strong law of large number could show that the following inequality holds almost surely with respect to $\mathbb{Q}$ up to a constant factor independent of $m$ and sample size,
\begin{align}
    \mathbf{A}\leq \overline{E}\textbf{Card}(\widetilde{B}^-_{m,S}\cap\Gamma_n)n^2b^6\Big(\frac{nr_{1n}r_{2n}}{\lambda_n^2\bb_n}\Big)(\sum_{l=1}^{\infty}l\beta^{\frac{\delta}{1+\delta}}(l)),
\end{align}
where $\max_{i,n}||b^{-2}E_{\s_{rn}}[K_{0ir}]||_{\infty}\leq \overline{E}<\infty$. To bound $\mathbf{B}$, a noteworthy fact is that, for any given $v_0$ and $t\in\widetilde{B}^-_{m}$, a necessary condition that $K_{ts}\neq 0$ is $s\in \{v_0-Tb\leq |t-s|\leq v_0+ Tb\}$. Hence, together with point (1) of (T4) of Assumption \ref{technical assumptions}, we obtain
\begin{align}
    \mathbf{B}=o( \textbf{Card}(\widetilde{B}_{m,T})T^3b^3 (nT)^{-1})=O(\textbf{Card}(\widetilde{B}_{m,T})Tb^3),
\end{align}
where the second inequality is due to the assumption without loss of generality, $n=T$. Summarizing,    condition (T4)(3) of Assumption \ref{technical assumptions} gives 
\begin{align}
\label{proof 41}
    \lim_{n\to\infty}\frac{L_{m1,41}}{(\textbf{Card}(\widetilde{B}_{m,S}\cap\Gamma_n)\textbf{Card}(\widetilde{B}_{m,T}))n^2Tb^{7.5}}=0 \ \ a.s.-[\mathbb{Q}].
\end{align}
By similar arguments we can also obtain   
\begin{align}
    \lim_{n\to\infty}\frac{L_{m1,4k}}{(\textbf{Card}(\widetilde{B}_{m,S}\cap\Gamma_n)\textbf{Card}(\widetilde{B}_{m,T}))n^2Tb^{7.5}}=0\ \ \ a.s.-[\mathbb{Q}]
\end{align}
for $k=2,3$, which implies 
\begin{align}
\label{proof 42}
    \lim_{n\to\infty}\frac{L_{m1,4}}{(\textbf{Card}(\widetilde{B}_{m,S}\cap\Gamma_n)\textbf{Card}(\widetilde{B}_{m,T}))n^2Tb^{7.5}}=0 \ \ a.s.-[\mathbb{Q}].
\end{align}
To derive a bound for $L_{m1,5}$ we use the same sets  $\mathcal{D}_{m,k}$'s introduced in \eqref{d36} and the decomposition 
\begin{align}
    L_{m1,5}&=\sum_{(i,t)\in\widetilde{B}^-_{m}} \Big (\sum_{(1)}+\sum_{(2)}+\sum_{(3)}+\sum_{(4)} \Big )K_{0ir}K_{0ir'}K_{0ir''}K_{0ir'''}K_{0ts}K_{0ts'}K_{0ts''}K_{0ts'''} \\
    & ~~~~~~~~~~~~~~
    ~~~~~~~~~~~~~~
    \times    E_{X_{it}}[E_{\mathbb{G}_m}[m(X_{it},X_{rs})m(X_{it},X_{r's'})m(X_{it},X_{r''s''})m(X_{it},X_{r'''s'''})]].
\end{align}
Now similar techniques used to deliver the upper bound of for   $L_{m1,4}$ give  
\begin{align}
    \label{proof 43}
    &\lim_{n\to\infty}\frac{|L_{m1,5}|}{(\textbf{Card}(\widetilde{B}^-_{m,S}\cap\Gamma_n)\textbf{Card}(\widetilde{B}^-_{m,T}))n^3T^2b^{10}(\frac{nr_{1n}r_{2n}}{\lambda_n^2\bb_n})} \\
   &  ~~~~~~~~~~~~~~ ~~~~~~~~~~~~~~  = 
   \lim_{n\to\infty}\frac{|L_{m1,5}|}{(\textbf{Card}(\widetilde{B}^-_{m,S}\cap\Gamma_n)\textbf{Card}(\widetilde{B}^-_{m,T}))n^3T^2b^{8.5}}=0\ \ \ \  a.s.-[\mathbb{Q}],
\end{align}
where the  equality holds because  of  the design of $r_{1n}$ and $r_{2n}$ and condition (T1) of Assumption \ref{technical assumptions}.
 Since $\lim_{n \to \infty }\frac{\sigma_n^2}{(nT)^4b^6}\in (0,\infty)$ holds almost surely with respect to $\mathbb{Q}$, we obtain \eqref{proof 32}, \eqref{proof 35}, \eqref{proof 36}, \eqref{proof 42} and \eqref{proof 43}  
\begin{align}
    \label{proof 44}
    \lim_{n\to\infty}\frac{L_{m1}}{\textbf{Card}(\widetilde{B}^-_{m,S}\cap\Gamma_n)\textbf{Card}(\widetilde{B}^-_{m,T})\eta_n}=0\ \ a.s.-[\mathbb{Q}], 
        \end{align}
        where
$    
    \eta_n=\max\big \{(nT)b^3, (nT)^2b^5, n^3T^2b^{8.5} \big \}, 
$
which asserts
\begin{align}
    \label{Proof F1}
    \lim_{n\to\infty}\frac{\sum_{m=2}^{M_n}L_{m1}}{(nT)^4b^6}=0 \ \ a.s.-[\mathbb{Q}].
\end{align}

\noindent \textbf{Step 5.3.2} ($\lim_{n\to\infty}\frac{\sum_{m=2}^{M_n}L_{m2=k}}{(nT)^4b^6}=0\ \ a.s.-\mathbb{Q}$, $k=2,3$) We first focus on the case $k=2$ using Cauchy-Schwartz inequality, which gives 
\begin{align}
    L_{m2}=&\sum_{(i,t)\neq (i',t')}\int E[Y^2(z_{it})Y^2(z_{i't'})]d\mathbb{P}_{S_n,m}\\
    \leq & \sum_{(i,t)\neq (i',t')}\left(\int E[Y^4(z_{it})]d\mathbb{P}_{S_n,m}\right)^\frac{1}{2}\left(\int E[Y^4(z_{i't'})]d\mathbb{P}_{S_n,m}\right)^{\frac{1}{2}}
\end{align}
By similar arguments as  used for  proving \eqref{proof 32}, \eqref{proof 35}, \eqref{proof 36}, \eqref{proof 42} and \eqref{proof 43}, we obtain, for any given $2\leq m\leq M_n$ and $(i,t)\in\widetilde{B}^-_{m}$, 
\begin{align}
    \lim_{n\to\infty}\frac{\int E[Y^4(z_{it})]d\mathbb{P}_{S_n,m}}{\eta_n}=0 \ \ \ \ \ a.s.-[\mathbb{Q}],
\end{align}
where $\eta_n$ is defined in \eqref{proof 44}. This implies
\begin{align}
    \lim_{n\to\infty}\frac{\sum_{m=2}^{M_n}L_{m2}}{nT\eta_n(\frac{nr_{1n}r_{2n}}{\lambda_n^2\bb_n})p_T}= \lim_{n\to\infty}\frac{\sum_{m=2}^{M_n}L_{m2}}{nT\eta_nb^{-1.5}}=0 \ \ \ a.s.-[\mathbb{Q}],
\end{align}
where the first equality is based on the design of spatial blocks and conditions (T1) and  (T4)(3) in Assumption \ref{technical assumptions}. Together with   Assumption \ref{technical assumptions} we obtain
\begin{align}
    \label{proof F2}
     \lim_{n\to\infty}\frac{\sum_{m=2}^{M_n}L_{m2}}{(nT)^4b^6}=0 \ \ \ a.s.-[\mathbb{Q}]. 
\end{align}
The proof of the corresponding statement for $L_{m3}$ is the nearly the same and  omitted  for the sake of  brevity.
\smallskip

\noindent \textbf{Step 5.3.3} ($ \lim_{n\to\infty}\frac{\sum_{m=2}^{M_n}L_{m4}}{(nT)^4b^6}=0 \ \ \ a.s.-[\mathbb{Q}]$) We use the decomposition 
\begin{align}
    L_{m4}
    =&L_{m4,1}+L_{m4,2} 
       \end{align}
       with 
   \begin{align}
     L_{m4,1} &= 
    \sum_{(i,t)\neq (i',t')\neq (i'',t'')\atop d(\{(i,t),(i',t')\},(i'',t''))> q_n}\int E_{\mathbb{G}_m}[ Y^2_{m}(Z_{it})Y_{m}(Z_{i't'})Y(Z_{i''t''})    ]d\mathbb{P}_{S_n,m}\\
    L_{m4,2}  & = \sum_{(i,t)\neq (i',t')\neq (i'',t'')\atop d(\{(i,t),(i',t')\},(i'',t''))\leq q_n}\int E_{\mathbb{G}_m}[ Y_m^2(Z_{it})Y_m(Z_{i't'})Y_m(Z_{i''t''})    ]d\mathbb{P}_{S_n,m}
\end{align}
For the term $L_{m4,1}$ we note that, for any given $\bS_n=S_n$,   Fubini's theorem gives 
\begin{align}
   | L_{m4,1}|&\leq \sum_{(i,t)\neq (i',t')\neq (i'',t'')\atop d(\{(i,t),(i',t')\},(i'',t''))> q_n}E_{\mathbb{G}_m}|\int Y_m^2(Z_{it})Y_m(Z_{i't'})Y_m(Z_{i''t''})    d\mathbb{P}_{it,i't',i''t''}|\\
    & =\sum_{(i,t)\neq (i',t')\neq (i'',t'')\atop d(\{(i,t),(i',t')\},(i'',t''))> q_n}E_{\mathbb{G}_m}\Big|\int Y_m^2(Z_{it})Y_m(Z_{i't'})Y_m(Z_{i''t''})    d\mathbb{P}_{it,i't',i''t''}\\
    & \ \ \ \ \ \ \ \ \ \ \ \ \ \ \ \ \ \ \ \ \ \ \ \ \ 
    -\int Y_m^2(Z_{it})Y_m(Z_{i't'})    d\mathbb{P}_{it,i't'}\int Y_m(Z_{i''t''})    d\mathbb{P}_{i''t''}\Big|,
\end{align}
where $\mathbb{P}_{(i_1t_1,...,i_kt_k)}$ denotes  the distribution of the  vector $(X_{i_1t_1},\dots,X_{i_kt_k})\in\mathbb{R}^k$ and the second equality follows from the fact that $\int Y(Z_{i''t''})d\mathbb{P}_{i''t''}=0$  (almost surely with respect to $\mathbb{P}_{|S_n}$). Fubini's theorem  shows 
\begin{align}
&E_{\mathbb{G}_m} \Big [\int |Y_m^2(Z_{it})Y_m(Z_{i't'})Y_m(Z_{i''t''})|^{1+\delta}    d\mathbb{P}_{it,i't',i''t''} \Big ]\\
=&E_{\mathbb{G}_m}\Big[\int  \Big|\Big(\sum_{(r,s)\in\widetilde{B}^c_m}K_{0ir}K_{0ts}m_{it,rs} \Big)^2 \sum_{(r',s')\in\widetilde{B}^c_m}K_{0i'r'}K_{0t's'}m_{i't',r's'}\\
&\ \ \ \ \ \ \ \ \ \ \ \ \ \ \ \ \ \ \ \times \sum_{(r'',s'')\in\widetilde{B}^c_m}K_{0i''r''}K_{0t''s''}m_{i''t'',r''s''} \Big |^{1+\delta}    d\mathbb{P}_{it,i't',i''t''}\Big]\\
=&E_{\mathbb{G}_m}\Big[\int \Big |\sum_{(r_1,s_1), (r_2,s_2),(r',s')(r'',s'')\in\widetilde{B}^c_m}K_{0ir_1}K_{0ts_1}K_{0ir_2}K_{0ts_2}K_{0i'r'}K_{0t's'}K_{0i''r''}K_{0t''s''} \\
& ~~~~~~~~~~~~~~~
\times m_{it,r_1s_1}m_{it,r_2s_2}m_{i't',r's'}m_{i''t'',r''s''}\Big |^{1+\delta}    d\mathbb{P}_{it,i't',i''t''}\Big]
\end{align}
where 
$m_{it,rs}=m(X_{it},X_{rs})$.
We also denote $\widetilde{B}^c_{m,U}$ as the set of spatial temporal location $(r,s)$ such that $(r,s)\in\widetilde{B}^c_{m}$ and $(r,s)$ is not ruled out by the compactness of the support of kernel functions. Thus, Proposition \ref{prop 5} implies that 
$$
\lim_{n\to\infty}\frac{\textbf{Card}(\widetilde{B}^c_{m,U})}{nTb}<\infty \ \ \ a.s.-[\mathbb{Q}]
$$ for all $m$, and  
 using the basic $C_r$-inequality  yields
\begin{align}
\label{proof 45}
    &(\textbf{Card}(\widetilde{B}^c_{m,U}))^{-4(1+\delta)} E_{\mathbb{G}_m} \Big [\int |Y_m^2(Z_{it})Y_m(Z_{i't'})Y_m(Z_{i''t''})|^{1+\delta}    d\mathbb{P}_{it,i't',i''t''} \Big ] \\
    & ~~~  \leq \frac{C_M}{(\textbf{Card}(\widetilde{B}^c_{m,U}))^4}\sum_{(r_1,s_1), (r_2,s_2),(r',s')(r'',s'')\in\widetilde{B}^c_{m,U}}|K_{0ir_1}K_{0ts_1}K_{0ir_2}K_{0ts_2}K_{0i'r'}K_{0t's'}K_{0i''r''}K_{0t''s''}|^{1+\delta}  \\
    & = O\Big (
    {\left(\begin{array}{c}
          4 \\
          \mu  \end{array}\right)^{-1}}\sum_{(r_1,s_1)\neq (r_2,s_2)\neq (r',s')\neq (r'',s'')\in\widetilde{B}^c_{m,U}}|K_{0ir_1}K_{0ts_1}K_{0ir_2}K_{0ts_2}K_{0i'r'}K_{0t's'}K_{0i''r''}K_{0t''s''}|^{1+\delta}\big )  
\end{align}
almost surely with respect to $\mathbb{Q}$, 
where  $\mu =\textbf{Card}(\widetilde{B}^c_{m,U})$,  $C_M$ is a  constant determined by the moment conditions in Assumption \ref{moment conditions}, 
and the last equality follows by   considering the sum   as a $V$-statistic of order $4$ and using Proposition \ref{prop 5} and the  non-negativity of the  kernel functions. 
Since $\{\s_{in}\}$ is iid and kernel functions are uniformly bounded, Proposition \ref{prop 5} and Lemma \ref{lemma 1} imply
\begin{align}
\label{proof 46}
    \lim_{n\to\infty}\frac{(\textbf{Card}(\widetilde{B}^c_{m,U}))^{-4(1+\delta)} }{b^8}  E_{\mathbb{G}_m} \Big [\int |Y_m^2(Z_{it})Y_m(Z_{i't'})Y_m(Z_{i''t''})|^{1+\delta}    d\mathbb{P}_{it,i't',i''t''} \Big ]  
    <\infty\ \ \ a.s.-[\mathbb{Q}],
\end{align}
for any given $(i,t)$, $(i',t')$ and $(i'',t'')$. We thus obtain 
\begin{align}
   \lim_{n\to\infty}\frac{ (E_{\mathbb{G}_m}[\int |Y_m^2(Z_{it})Y_m(Z_{i't'})Y_m(Z_{i''t''})|^{1+\delta}    d\mathbb{P}_{it,i't',i''t''}])^{\frac{1}{1+\delta}}}{(nT)^4b^{4+\frac{8}{1+\delta}}}<\infty\ \ a.s.-[\mathbb{Q}].
\end{align}
Similarly, we obtain 
\begin{align}
    \lim_{n\to\infty}\frac{(E_{\mathbb{G}_m}[\int |Y_m^2(Z_{it})Y_m(Z_{i't'})Y_m(Z_{i''t''})|^{1+\delta}    d\mathbb{P}_{it,i't'}d\mathbb{P}_{i''t''}])^{\frac{1}{1+\delta}}}{(nT)^4b^{4+\frac{8}{1+\delta}}}<\infty \ \ \ a.s.-[\mathbb{Q}].
\end{align}
Then, Lemma \ref{lemma 3} yields 
\begin{align}
   \lim_{n\to\infty} \frac{ |L_{m4,1}| }{\textbf{Card}(\widetilde{B}^-_{m,S})\textbf{Card}(\widetilde{B}^-_{m,T})(nT)^4b^{4+\frac{8}{1+\delta}}(\frac{nr_{1n}r_{2n}}{\lambda^2_n\bb_n}p_T)^2\beta^{\frac{\delta}{1+\delta}}(q_n)}<\infty\ \ a.s.-[\mathbb{Q}],
\end{align}
and the design of spatial-temporal blocks, and conditions (T1) and (T4)(3) in Assumption \ref{technical assumptions},  yield
\begin{align}
    \label{proof 47}
\lim_{n\to\infty} \frac{ |\sum_{m=2}^{M_n}L_{m4,1}| }{(nT)^4b^{10}}<\infty\ \ a.s.-[\mathbb{Q}].
\end{align}

To derive a bound for the term $L_{m4,2}$) we define  $G_{m,it}=\int E_{\mathbb{G}_m}[Y^4(z_{it})]d\mathbb{P}_{S_n,m}=E_{\mathbb{G}_m}[\int Y^4(z_{it})d\mathbb{P}_{it}]$, and a repetitive application of Cauchy-Schwartz inequality asserts
\begin{align}
    \sum_{m=2}^{M_n}|L_{m4,2}|\leq \sum_{m=2}^{M_n}\sum_{(i,t)\neq (i',t')\neq (i'',t'')\atop d(\{(i,t),(i',t')\},(i'',t''))\leq q_n}G_{m,it}^\frac{1}{2}G^{\frac{1}{4}}_{m,i't'}G^{\frac{1}{4}}_{m,i''t''}. 
\end{align}
For any given $m$ and $(i,t)\in\widetilde{B}^-_{m}$,
\begin{align}
    \lim_{n\to\infty}\frac{G_{m,it}}{\eta_n}<\infty \ \ a.s.-[\mathbb{Q}],
\end{align}
where $\eta_n=\max\{(nT)b^3, (nT)^2b^5, n^3T^2b^{8.5}\}$ has been introduced in \eqref{proof 44}. Now similar arguments as given in the proof of Proposition \ref{prop 10}, conditions  (T4)(3) and (T1) in Assumption \ref{technical assumptions}, and  Proposition \ref{prop 5} yield
\begin{align}
    \label{proof F3}
\lim_{n\to\infty}\frac{\sum_{m=2}^{M_n}|L_{m4,2}|}{(nT)^4b^6}=0\ \ \ a.s.-[\mathbb{Q}].
\end{align}
\noindent \textbf{Step 5.3.4} ($ \lim_{n\to\infty}\frac{\sum_{m=2}^{M_n}L_{m5}}{(nT)^4b^6}=0 \ \ \ a.s.-[\mathbb{Q}]$) As the proof of \textbf{Step 5.3.3} is very similar to \textbf{Step 5.3.3},  we only sketch it  and highlight the key steps. We have 
\begin{align}
  &|L_{m5}|\leq {L}_{m5,1}+{L}_{m5,2}
  \end{align}
  where
    \begin{align}
{L}_{m5,1} &=\sum_{(i,t)\neq(i',t')\neq (i'',t'')\neq (i'''t''')\in\widetilde{B}_{m}\atop d(\{(i,t),(i',t'),(i'',t'')\}, (i''',t'''))> q_n}E_{\mathbb{G}_m}\left|\int[Y_m(Z_{it})Y_m(Z_{i't'})Y_m(Z_{i''t''})Y_m(Z_{i'''t'''})]d\mathbb{P}_{it,i't',i''t'',i'''t'''}\right|\\
 {L}_{m5,2} &= \sum_{(i,t)\neq(i',t')\neq (i'',t'')\neq (i'''t''')\in\widetilde{B}_{m}\atop d(\{(i,t),(i',t'),(i'',t'')\}, (i''',t'''))\leq  q_n}E_{\mathbb{G}_m}\left|\int[Y_m(Z_{it})Y_m(Z_{i't'})Y_m(Z_{i''t''})Y_m(Z_{i'''t'''})]d\mathbb{P}_{it,i't',i''t'',i'''t'''}\right|_{m5,2}
\end{align}
By  the same techniques used in  \textbf{Step 5.3.3}, we obtain
\begin{align}
    &\lim_{n\to\infty}\frac{ (E_{\mathbb{G}_m}[\int |Y_m(Z_{it})Y_m(Z_{i't'})Y_m(Z_{i''t''})Y_m(Z_{i'''t'''})|^{1+\delta}    d\mathbb{P}_{it,i't',i''t'',i'''t'''}])^{\frac{1}{1+\delta}}}{(nT)^4b^{12}}<\infty\ \ a.s.-[\mathbb{Q}] , \\
    & \lim_{n\to\infty}\frac{ (E_{\mathbb{G}_m}[\int |Y_m(Z_{it})Y_m(Z_{i't'})Y_m(Z_{i''t''})Y_m(Z_{i'''t'''})|^{1+\delta}    d\mathbb{P}_{it,i't',i''t''}d\mathbb{P}_{i'''t'''}])^{\frac{1}{1+\delta}}}{(nT)^4b^{12}}<\infty\ \ a.s.-[\mathbb{Q}] ~, 
\end{align}
and  with Lemma \ref{lemma 2}, we can show
\begin{align}
\label{proof 48}
    \lim_{n\to\infty}\frac{\sum_{m=2}^{M_n}{L}_{m5,2}}{(nT)^4b^{9}}=0\ \ a.s.-[\mathbb{Q}].
\end{align}
Similar as given in  \textbf{Step 5.3.3}, condition  (T4)(3) and (T1) in  Assumption \ref{technical assumptions} yield
\begin{align}
    \label{proof 49}
     \lim_{n\to\infty}\frac{\sum_{m=2}^{M_n}{L}_{m5,1}}{(nT)^4b^{6}}=0\ \ a.s.-[\mathbb{Q}], 
\end{align}
and a combination of \eqref{proof 48} and \eqref{proof 49} shows 
\begin{align}
    \label{proof C7}
     \lim_{n\to\infty}\frac{\sum_{m=2}^{M_n}\widetilde{L}_{m5}}{(nT)^4b^{6}}=0\ \ a.s.-[\mathbb{Q}].
\end{align}

\par Now, by collecting the results obtained in \textbf{Step 5.3.1}-\textbf{Step 5.3.4}, we have finished the proof of \textbf{Step 5.3}. Therefore the assertion of Theorem  \ref{Theorem CLT U-Statistic}  follows by an application of  Theorem \ref{theorem CLT 2}.
\end{proof}



\subsubsection{\texorpdfstring{Proof of Theorem \ref{Theorem CLT Covariance Estimator}}{Proof of Theorem 3.1}} 
\label{seca12}
The proof is a direct application of Theorem \ref{Theorem CLT U-Statistic}
but the treatment of  the bias term is a bit different from the techniques used for usual kernel smoother since it still contains randomness (note that we observe the process at random locations   $\{\s_{in}\}$). Recall the notation of $U_n$ in \eqref{d15a}, define $\widetilde{U}_n=\frac{1}{(nT)^2b^3}U_n$, then Theorem \ref{Theorem CLT U-Statistic} yields
\begin{align}
\label{Theorem CLT Covariance Estimator eq 1}
   nTb^{1.5}\widetilde{U}_{n} \xrightarrow{d} N(0,\sigma^2)\ \ a.s.-[\mathbb{Q}],
\end{align}
where  $
\sigma^2 $ is defined in \eqref{d7b}. 
Therefore we obtain 
\begin{align}
\label{Theorem CLT Covariance Estimator eq 2}
&nTb^{1.5}\Big (\hat C(\h_0,v_0)-C(\h_0,v_0)-\frac{\textbf{Bias}(\h_0,v_0)}{2\mf A_1}\Big )\\
=&\frac{nTb^{1.5}\widetilde{U}_n}{2\mf A_1(\widetilde{K}_n(\h_0,v_0)/2\mf A_1)}+\frac{nTb^{1.5}\mf B_n}{2\mf A_1(\widetilde{K}_n(\h_0,v_0)/2\mf A_1)}-\frac{nTb^{1.5}\textbf{Bias}(\h_0,v_0)}{2\mf A_1}
      \end{align} 
    where 
      \begin{align}   
      \widetilde{K}_n (\h_0,v_0) &  = \frac{1}{(nT)^2b^3}\sum_{1\leq i\neq i'\leq n}\sum_{1\leq t\neq t'\leq T}K_{0b}(\s_{in}-\s_{i'n},t-t') \\
      \mf B_n& = \frac{1}{(nT)^2b^3}\sum_{1\leq t\neq t'\leq T}\sum_{1\leq i\neq i'\leq n}K_{0b}(\s_{in}-\s_{i'n},t-t')(E_{|S_n}[X_{it}X_{i't'}]-C(\h_0,v_0)). 
\end{align}
The strong law of large number implies that $\widetilde{K}_{n}(\h_0,v_0)\rightarrow 2\mf A_1\ \ a.s.-[\mathbb{Q}]$, which together with \eqref{Theorem CLT Covariance Estimator eq 2} asserts
\begin{align}
\label{Theorem CLT Covariance Estimator eq 2+}
    &nTb^{1.5}\Big (\hat C(\h_0,v_0)-C(\h_0,v_0)-\frac{\textbf{Bias}(\h_0,v_0)}{2\mf A_1}\Big )
    =\frac{nTb^{1.5}\widetilde{U}_n}{2\mf A_1}+\frac{nTb^{1.5}(\mf B_n-\textbf{Bias}(\h_0,v_0))}{2\mf A_1} + o_(1) ~~\text{a.s.}
\end{align}
By \eqref{Theorem CLT Covariance Estimator eq 1} we have 
\begin{align}
    \frac{nTb^{1.5}\widetilde{U}_{n}}{2\mf A_1}\xrightarrow{d} N(0,\frac{\sigma^2}{4\mf A_1^2})\ \ \  \ a.s.-[\mathbb{Q}],
\end{align}
and we only need to focus on the second term on the right hand side of \eqref{Theorem CLT Covariance Estimator eq 2+}. Recall that $\Lambda^{-1}_n\s_{in}=\x_{in}$, where $\x_{in}\in\mathbf{D}$ and $\Lambda_n=\lambda_n\text{diag}(1,\bb_n)$. Observing  condition (M1) in Assumption \ref{moment conditions} and \eqref{covariance}, we have
\begin{align}
    \mf B_n=& \frac{1}{(nT)^2b^3}\sum_{1\leq t\neq t'\leq T}\sum_{1\leq i\neq i'\leq n}K_{0b}(\s_{in}-\s_{i'n},t-t')(\text{Cov}_{|S_n}(X_{it},X_{i't'})-C(\h_0,v_0))\\
    =&\frac{1}{(nT)^2b^3}\sum_{1\leq t\neq t'\leq T}\sum_{1\leq i\neq i'\leq n}K_{0ii'}K_{0tt'}\Big \{ C \big (\s_{in}-\s_{i'n},|t-t'| \Big )-C(\h_0,v_0) \Big \} \\
    =&\frac{1}{n^2b^2}\sum_{1\leq i\neq i'\leq n}K_{0ii'}\Big (  \frac{1}{T^2b}\sum_{1\leq t\neq t'\leq T}K_{0tt'}\Big  \{ C \Big  (\s_{in}-\s_{i'n},|t-t'| \Big )-C(\s_{in}-\s_{i'n},v_0) \Big \} \Big )  \\
    &+\frac{1}{T^2b}\sum_{1\leq t\neq t'\leq T}K_{0tt'}\Big(\frac{1}{n^2b^2}\sum_{1\leq i\neq i'\leq n}K_{0ii'} \Big \{ C \Big  (\s_{in}-\s_{i'n},v_0 \Big )-C(\h_0,v_0) \Big \} \Big)\\
    =&:\mf B_{1n}+\mf B_{2n}
\end{align}
where $K_{0tt'}$
 and $K_{0ii'}$ are defined in \eqref{eq definition of K0tt'} and \eqref{eq definition of K0ii'}, respectively.
We now consider the two  terms 
$\mf B_{1n}$ and $\mf B_{2n}$ separately.
\smallskip

 For the term $\mf B_{2n}$,  \cref{smoothness},the notation introduced in \eqref{eq abbreviation} and a second-order Taylor expansion yields
\begin{align}
    &C(\s_{in}-\s_{i'n},v_0)-C(\h_0,v_0)=C(\Lambda_n(\x_{in}-\x_{i'n}),v_0)-C(\Lambda_n \h_{0n},v_0)\\
    =&(\lambda_n)^l\sum_{l=1}^2\partial_{1}^l C(\h_0,v_0)((\x_{in,1}-\x_{i'n,1})-h_{0,1n})^l+(\lambda_n\bb_n)^l\sum_{l=1}^2\partial_{2}^l C(\h_0,v_0)((\x_{in,2}-\x_{i'n,2})-h_{0,2n})^l\\
    &\ \ \ \ \ \ \ +\lambda_n^2\bb_n\partial_{12}C (\theta_{0ii'})\prod_{k=1}^2((\x_{in,k}-\x_{i'n,k})-h_{0,nk}),
\end{align}
where $\partial^l_{k}C$ denotes the $l$-th partial derivative of the covariance function $C$ with respect to the $k$-th coordinate, $\partial_{k,j}C$  denotes the  second-order partial derivative of $C$  with respect to the $k$-th and $j$-th coordinate and
$\theta_{0ii'}=\Lambda_n(\lambda(\x_{in}-\x_{i'n})+(1-\lambda)\h_{0,n})$, $\lambda\in (0,1)$. Furthermore, it follows from \eqref{eq en} and \cref{as regularity} that 
\begin{align}
   &C(\s_{in}-\s_{i'n},v_0)-C(\h_0,v_0)\\
   =&\lambda_n \partial_1C(\h_0,v_0)(\x_{in,1}-\x_{i'n,1}-h_{0,1n})+ \lambda_n\bb_n \partial_2C(\h_0,v_0)(\x_{in,2}-\x_{i'n,2}-h_{0,2n})+O(b^2)\\
   =&\sum_{k=1}^2\partial_kC(\h_0,v_0)(\s_{in,k}-\s_{i'n,k}-h_{0k})+O(b^2)
\end{align}
where the remainder  does not depend on  $i$, $i'$ and $(\h_0,v_0)$.
Therefore,
\begin{align}
   \mf B_{2n} & =\frac{1}{T^2b}\sum_{1\leq t\neq t'\leq T}K_{0tt'}\Big(\sum_{k=1}^2\frac{1}{n^2b^2}\sum_{1\leq i\neq i'\leq n}K_{0ii'}\partial_{k}C(\h_0,v_0)((\s_{in,k}-\s_{i'n,k})-h_{0k})\Big)+O(b^2)\\
    & =\sum_{k=1}^2\partial_{k}C(\h_0,v_0)\Big\{\frac{1}{T^2b}\sum_{1\leq t\neq t'\leq T}K_{0tt'}\Big(\frac{1}{n^2b^2}\sum_{1\leq i\neq i'\leq n}K_{0ii}
    ((\s_{in,k}-\s_{i'n,k})-h_{0k})\Big)\Big\}+O(b^2).\label{Theorem CLT Covariance Estimator eq 7}
\end{align}

To derive a corresponding expansion for the term $\mf B_{1n}$, first note that 
\begin{align}
    &C(\s_{in}-\s_{i'n},|t-t'|)-C(\s_{in}-\s_{i'n},v_0)=C \Big (\s_{in}-\s_{i'n},T\Big  (\frac{|t-t'|}{T}\Big )\Big )-C(\s_{in}-\s_{i'n},Tv_{0T})\\
    &=T\partial_3 C(\s_{in}-\s_{i'n},v_0)\Big (\frac{|t-t'|}{T}-v_{0T}\Big  )+T^2\partial_3^2(\s_{in}-\s_{i'n},\theta_{0tt'})\Big (\frac{|t-t'|}{T}-v_{0T}\Big )^2,
\end{align}
where, similar to the previous $\theta_{0ii'}$, $\theta_{0,tt'}$ is a the intermediate point between $u_{0}$ and $|t-t'|$. Furthermore, \cref{as regularity} yields 
\begin{align}
\label{Theorem CLT Covariance Estimator eq 3}
    C(\s_{in}-\s_{i'n},|t-t'|)-C(\s_{in}-\s_{i'n},v_0)=T\partial_3 C(\s_{in}-\s_{i'n},v_0)(|t-t'|/T-v_{0T})+O(b^2),
\end{align}
where the absolute value of remainder has bound independent of $t,t'$ and $(\h_0,v_0)$. Additionally, mean-value theorem  implies
\begin{align}
\label{Theorem CLT Covariance Estimator eq 4}
    &\partial_3C(\s_{in}-\s_{i'n},v_0)-\partial_3C(\h_0,v_0)\\
    =&\lambda_n\partial_{31} C(\xi,v_0)(\x_{in,1}-\x_{i'n,1}-h_{0,1n})+\lambda_n\bb_n\partial_{32}C(\xi,v_0)(\x_{in,2}-\x_{i'n,2}-h_{0,2n}).
\end{align}
Together with Assumption \ref{as regularity}, substituting \eqref{Theorem CLT Covariance Estimator eq 4} to \eqref{Theorem CLT Covariance Estimator eq 3} yields
\begin{align}
    \label{Theorem CLT Covariance Estimator eq 5}
    &C(\s_{in}-\s_{i'n},|t-t'|)-C(\s_{in}-\s_{i'n},v_0)\\
    =&T\{\lambda_n\partial_{31} C(\xi,v_0)(\x_{in,1}-\x_{i'n,1}-h_{0,1n})+\lambda_n\bb_n\partial_{32}C(\xi,v_0)(\x_{in,2}-\x_{i'n,2}-h_{0,2n})\}(|t-t'|/T-v_{0T})\\
    &+T\partial_3(\h_0,v_0)(|t-t'|/T-v_{0T})+O(b^2)\\
    =&\partial_3C(\h_0,v_0)(|t-t'|-v_0)+O(b^2),
\end{align}
where the absolute value of remainder has bound independent of $t,t'$ and $(\h_0,v_0)$. Above all, we obtain 
\begin{align}
    \label{Theorem CLT Covariance Estimator eq 6}
    \mf B_{1n}=\frac{1}{n^2b^2}\sum_{1\leq i\neq i'\leq n}K_{0ii'}\Big(\frac{1}{T^2b}\sum_{1\leq t\neq t'\leq T}K_{0tt'}\partial_3C(\h_0,v_0)(|t-t'|-v_0)\Big)+O(b^2).
\end{align}

Combining \eqref{Theorem CLT Covariance Estimator eq 6} and \eqref{Theorem CLT Covariance Estimator eq 7} yields
\begin{align}
\label{Theorem CLT Covariance Estimator eq 8}
\mf B_n=& \mf B_{1n}+\mf B_{2n}\\
=&\frac{1}{n^2b^2}\sum_{1\leq i\neq i'\leq n}K_{0ii'}\Big(\frac{1}{T^2b}\sum_{1\leq t\neq t'\leq T}K_{0tt'}\partial_3C(\h_0,v_0)(|t-t'|-v_0)\Big)\\
&+\sum_{k=1}^2\partial_{k}C(\h_0,v_0)\Big\{\frac{1}{T^2b}\sum_{1\leq t\neq t'\leq T}K_{0tt'}\Big(\frac{1}{n^2b^2}\sum_{1\leq i\neq i'\leq n}K_{0ii}
    ((\s_{in,k}-\s_{i'n,k})-h_{0k})\Big)\Big\}+O(b^2)\\
    =&\textbf{Bias}(\h_0,v_0)+O(b^2)
\end{align}
where the remainder is  independent of $i,i',t,t'$ and $(\h_0,v_0)$. Together with condition $nTb^{3.5}\to 0$,   the proof is completed.

\subsection{\texorpdfstring{Proof of Theorem \ref{Theorem Feasible Inference}}{Proof of Theorem 3.2}} 
According to \eqref{Theorem CLT Covariance Estimator eq 2},  \eqref{Theorem CLT Covariance Estimator eq 8} and Slutsky's Theorem, we only need to prove that, when $nTb^{2.5}\to 0$ holds, we have
\begin{align}
    nTb^{1.5}\mf B_{kn}\to 0 \ \ a.s-[\mathbb{Q}],\ \ k=1,2,
\end{align}
where $\mf B_{kn}$'s are introduced in \eqref{Theorem CLT Covariance Estimator eq 8}. 

Note that 
\begin{align}
    nTb^{1.5}\mf B_{1n}=nTb^{1.5}\frac{1}{n^2b^2}\sum_{1\leq i\neq i'\leq n}K_{0ii'}\Big(\frac{1}{T^2b}\sum_{1\leq t\neq t'\leq T}K_{0tt'}T\partial_3C(\h_0,v_0)\Big (\frac{|t-t'|-v_0  )}{T}\Big )\Big).
\end{align}
(E3) of \cref{as regularity} indicates that $T\partial_3C(\h_0,v_j)=O(1)$. Together with the fact that $\frac{|t-t'|-v_0)}{T}]=O(b)$. Some simple algebra yields
\begin{align}
    \label{Theorem Feasible Inference eq 1}
     nTb^{1.5}\mf B_{1n}=nTb^{2.5}\to 0\ \ a.s-[\mathbb{Q}].
\end{align}

Similarly, we have 
\begin{align}
    nTb^{1.5}\mf B_{1n}&= nTb^{1.5}\sum_{k=1}^2\partial_{k}C(\h_0,v_0)\Big\{\frac{1}{T^2b}\sum_{1\leq t\neq t'\leq T}K_{0tt'}\Big(\frac{1}{n^2b^2}\sum_{1\leq i\neq i'\leq n}K_{0ii}
    ((\s_{in,k}-\s_{i'n,k})-h_{0k})\Big)\Big\}\\
    &\lesssim nTb^{1.5}(\lambda_n \sum_{k=1}^2\partial_{k}C(\h_0,v_0))b\lesssim nTb^{2.5}\to 0\ \ a.s.-[\mathbb{Q}].  \label{Theorem Feasible Inference eq 2}
\end{align}

Combining \eqref{Theorem Feasible Inference eq 1} and \eqref{Theorem Feasible Inference eq 2} finished the proof.

\subsection{\texorpdfstring{ Proof of Theorem \ref{Theorem MCLT Covariance Estimator}}{Proof of Theorem 3.3}} 
We will use Cramér–Wold device to prove this theorem. 
\par \textbf{Step 1} (Covariance) We first focus on investigating 
\begin{align}
    (nT)^2b^3\text{Cov}_{|S_n}(\hat{C}(\h_{1},v_1),\hat{C}(\h_{2},v_2))
\end{align}
for any given $(\h_1,v_1)\neq (\h_2,v_2)$. According to the definition of $\hat{C}(\h,v)$, some simple algebra yields
\begin{align}
    &(nT)^2b^3\text{Cov}_{|S_n}(\hat{C}(\h_{1},v_1),\hat{C}(\h_{2},v_2))\\
    = D^{-1}_{K_n}\frac{1}{(nT)^2b^3}&\sum_{s_{in}\neq s_{i'n}\atop t\neq t'}\sum_{s_{rn}\neq s_{r'n}\atop s\neq s'}K_{1b}(s_{in}-s_{i'n},t-t')K_{2b}(s_{rn}-s_{r'n},s-s')\text{Cov}_{|S_n}(X_{it}X_{i't'},X_{rs}X_{r's'}),
\end{align}
where $D_{K_n}=\Big(\frac{1}{(nT)^2b^3}\sum_{s_{in}\neq s_{i'n}\atop t\neq t'}K_{1b}(s_{in}-s_{i'n},t-t')\Big)\Big(\frac{1}{(nT)^2b^3}\sum_{s_{rn}\neq s_{r'n}\atop s\neq s'}K_{2b}(s_{rn}-s_{r'n},s-s')\Big)$. $K_{lb}(s_{in}-s_{i'n},t-t')$, for $l=1,2$, are defined similarly to $K_{0b}(s_{in}-s_{i'n},t-t')$ introduced above, except that $(\h_0,v_0)$ is replaced by $(\h_l,v_l)$, $l=1,2$. Together with Lemma \ref{lemma 1}, some simple algebra shows that $D_{K_n}$ convergence in $\mathbb P_{|S_n}$ to some strictly positive universal constant. i.e.
\begin{align}
    D_{K_n}\xrightarrow[n\to\infty]{\mathbb{P}_{|S_n}} (\mathbf{A}_{1}\mathbf{B}_{1})^2\ \ \ a.s.-[\mathbb{Q}]
\end{align}
Some algebra yields 
\begin{align}
    &\frac{1}{(nT)^2b^3}\sum_{s_{in}\neq s_{i'n}\atop t\neq t'}\sum_{s_{rn}\neq s_{r'n}\atop s\neq s'}K_{1b}(s_{in}-s_{i'n},t-t')K_{2b}(s_{rn}-s_{r'n},s-s')\text{Cov}_{|S_n}(X_{it}X_{i't'},X_{rs}X_{r's'}) \\
    =&\frac{1}{(nT)^2b^3}\sum_{\s_{in}\neq \s_{i'n}\atop t\neq t'}K_{1ii'tt'}K_{2ii'tt'} \text{Var}_{|S_n}(X_{it}X_{i't'}) \\
    &+\frac{1}{(nT)^2b^3}\sum_{s_{in}\neq s_{i'n}\atop t\neq t'}\sum_{s_{rn}\neq s_{r'n}, s\neq s'\atop ((r,s),(r',s'))\neq ((i,t),(i',t'))}K_{1ii'tt'}K_{2rr'ss'}\text{Cov}_{|S_n}(X_{it}X_{i't'},X_{rs}X_{r's'}) \\
    \label{Theorem MCLT Covariance Estimator eq 1}
    =& 0+ \frac{1}{(nT)^2b^3}\sum_{s_{in}\neq s_{i'n}\atop t\neq t'}K_{1ii'tt'}\sum_{s_{rn}\neq s_{r'n}, s\neq s'\atop \text{Card}(\{(r,s),(r',s')\}\cap \{(i,t),(i',t')\})=1}K_{2rr'ss'}\text{Cov}_{|S_n}(X_{it}X_{i't'},X_{rs}X_{r's'})\\
    &+\frac{1}{(nT)^2b^3}\sum_{s_{in}\neq s_{i'n}\atop t\neq t'}K_{1ii'tt'}\sum_{s_{rn}\neq s_{r'n}, s\neq s'\atop \text{Card}(\{(r,s),(r',s')\}\cap \{(i,t),(i',t')\})=0}K_{2rr'ss'}\text{Cov}_{|S_n}(X_{it}X_{i't'},X_{rs}X_{r's'})=: \textbf{Cov}_1+\textbf{Cov}_2, 
\end{align}
where the last equality holds almost surely with respect to $\mathbb{Q}$ for sufficiently large $n$. It can be proved by using Lemma \ref{lemma 1} and change of variable. Now we investigate $\textbf{Cov}_1$ and $\textbf{Cov}_2$.
\par ($\textbf{Cov}_1=o(1)\ \ a.s.-[\mathbb{Q}]$) 
\par When $\text{Card}(\{(r,s),(r',s')\}\cap \{(i,t),(i',t')\})=1$, we can focus on the case $(i,t)= (r,s)$ without loss of generality. For brevity, we sometimes would use $i$ ($r'$) to indicate location $\s_{in}$ $(\s_{r'n})$. Thus, when $(i,t)= (r,s)$,  
\begin{align}
   \textbf{Cov}_{1}=\frac{1}{(nT)^2b^3}\sum_{i\neq i', t\neq t'}K_{1ii'tt'}\sum_{i\neq r', t\neq s'}K_{2ir'ts'}\text{Cov}_{|S_n}(X_{it}X_{i't'},X_{it}X_{r's'}).
\end{align}
A noteworthy fact is that 
\begin{align}
    \text{Cov}_{|S_n}(X_{it}X_{i't'},X_{it}X_{r's'})&=E_{|S_n}[X^2_{it}X_{i't'}X_{r's'}]-E_{|S_n}[X_{it}X_{i't'}]E_{|S_n}[X_{it}X_{r's'}]\\
    &=\text{Cov}_{|S_n}(X^2_{it}X_{i't'},X_{r's'})-\text{Cov}_{|S_n}(X_{it},X_{i't'})\text{Cov}_{|S_n}(X_{it},X_{r's'}).
\end{align}
This indicates 
\begin{align}
    \textbf{Cov}_{1}=&\frac{1}{(nT)^2b^3}\sum_{i\neq i', t\neq t'}K_{1ii'tt'}\sum_{i\neq r', t\neq s'}K_{2ir'ts'}\text{Cov}_{|S_n}(X^2_{it}X_{i't'},X_{r's'})\\
    &-\frac{1}{(nT)^2b^3}\sum_{i\neq i', t\neq t'}K_{1ii'tt'}\sum_{i\neq r', t\neq s'}K_{2ir'ts'}\text{Cov}_{|S_n}(X_{it},X_{i't'})\text{Cov}_{|S_n}(X_{it},X_{r's'})\\
    =&:\textbf{Cov}_{11}+\textbf{Cov}_{12}.
\end{align}

\par ($\textbf{Cov}_{11}$) It is obvious that we can ignore the case $(r',s')=(i',t')$, since, as is pointed out by \eqref{Theorem MCLT Covariance Estimator eq 1}, this is almost surely a $o(1)$ term. We thus have
\begin{align}
    \textbf{Cov}_{11}&= \frac{1}{(nT)^2b^3}\sum_{(i,t)\neq (i',t')\neq (r',s')}K_{1ii'tt'}K_{2ir'ts'}\text{Cov}_{|S_n}(X^2_{it}X_{i't'},X_{r's'})\\
    =& \frac{1}{(nT)^2b^3}\sum_{(i,t)\neq (i',t')\neq (r',s')\atop d(\{(r',s')\},\{(i,t),(i',t')\})\leq q_n}K_{1ii'tt'}K_{2ir'ts'}\text{Cov}_{|S_n}(X^2_{it}X_{i't'},X_{r's'})\\
    &+\frac{1}{(nT)^2b^3}\sum_{(i,t)\neq (i',t')\neq (r',s')\atop d(\{(r',s')\},\{(i,t),(i',t')\})> q_n}K_{1ii'tt'}K_{2ir'ts'}\text{Cov}_{|S_n}(X^2_{it}X_{i't'},X_{r's'})\\
    =:& \triangle_{11}+\triangle_{12}.
\end{align}

Note that, $d(\{(r',s')\},\{(i,t),(i',t')\})\leq  q_n$ indicates 
\begin{align}
    \min\{||(r',s')-(i,t)||_{F},||(r',s')-(i',t')||_{F}\}\leq q_n.
\end{align}
If $||(r',s')-(i,t)||_{F}\leq q_n$ holds, we have $|t-s'|\leq q_n$. However, regarding $K_{2ir'ts'}=K_{2ir'}K_{2ts'}$ and $K_{2ts'}\neq 0$ holds only when $v_0-Tb\leq |t-s'|\leq v_0+Tb$, which indicates that the distance between $t$ and $s'$ is of order $O(T)$. Together with the fact that $q_n/T=o(1)$, there exists sufficiently large $T_0$ independent of sample size such that, for every $T\geq T_0$,  $K_{2ts'}= 0$. Thus, we only need to consider the case where $||(r',s')-(i,t)||_{F}\leq q_n$ holds. i.e.
\begin{align}
    \triangle_{11}= &\frac{1}{(nT)^2b^3}\sum_{(i,t)\neq (i',t')\neq (r',s')\atop ||(r',s')-(i',t')||_{F}\leq q_n}K_{1ii'tt'}K_{2ir'ts'}\text{Cov}_{|S_n}(X^2_{it}X_{i't'},X_{r's'})\\
    =&\frac{1}{(nT)^2b^3}\sum_{(i,t)\neq (i',t')\neq (r',s')\atop ||(r',s')-(i',t')||_{F}\leq q_n}K_{1ii'}K_{2ir'}K_{1tt'}K_{2ts'}\text{Cov}_{|S_n}(X^2_{it}X_{i't'},X_{r's'})
\end{align}

\par Similarly, $||(r',s')-(i',t')||_{F}\leq q_n$ implies $|t'-s'|\leq q_n$. However, as we frequently mentioned before, for each given $t=1,...,T$, $K_{1tt'}K_{2ts'}$ is not equal to $0$ if and only if
\begin{align}
  v_1-Tb\leq   |t-t'|\leq v_1+Tb\ \text{and}\ v_2-Tb\leq   |t-s'|\leq v_2+Tb
\end{align}
hold simultaneously. Without loss of generality, we set $v_1<v_2$. Since we have assumed $n=T$, there exists some universal $c>0$ and sufficiently large $T_0\in\mathbb{N}$ such that
\begin{align}
\label{Theorem MCLT Covariance Estimator proof eq 1}
  (t-v_1-Tb)- (t-v_2+Tb)>q_n,\ (t+v_2-Tb)-(t+v_1+Tb)>q_n
\end{align}
holds simultaneously for every $T\geq T_0$ and $T_0$ is independent of $t$. This indicates, for each given $t$, the $t'$ and $s'$ letting $K_{1tt'}K_{2ts'}\neq 0$ always satisfy $|t'-s'|>q_n$ for every $n=T\geq T_0$. However, we also require 
\begin{align}
\label{Theorem MCLT Covariance Estimator proof eq 2}
    |t'-s'|\leq q_n.
\end{align}
Thus, the contradiction between \eqref{Theorem MCLT Covariance Estimator eq 1} and \eqref{Theorem MCLT Covariance Estimator proof eq 2} implies
\begin{align}
    \label{Theorem MCLT Covariance Estimator proof eq 3}
    \triangle_{11}=0
\end{align}
holds for every $n=T\geq T_0$.

\par As for $\triangle_{12}$, some simple algebra shows that 
\begin{align}
    \triangle_{12}\leq C\beta^{\frac{\delta}{1+\delta}}(q_n)
     \frac{1}{(nT)^2b^3}\sum_{(i,t)\neq (i',t')}K_{1ii'}K_{1tt'}\sum_{(r',s')\atop d (\{(r',s')\},\{(i,t),(i',t')\})>q_n}K_{2ir'}K_{2ts'}
\end{align}
holds for every $\bS_n=S_n$, where $C$ is a universal constant depending on the moment conditions introduced in Assumption \ref{moment conditions}. By repeating the tricks used in proving Proposition \ref{prop 10} or \ref{prop 11}, using Proposition \ref{prop 5} yields
\begin{align}
   \lim_{n\to\infty} \frac{\sum_{(i,t)\neq (i',t')}K_{1ii'}K_{1tt'}\sum_{(r',s')\atop d (\{(r',s')\},\{(i,t),(i',t')\})>q_n}K_{2ir'}K_{2ts'}}{(nT)^3b^6}<\infty \ \ a.s.-[\mathbb{Q}].
\end{align}
Point (1) of (T4) of Assumption \ref{technical assumptions} implies
\begin{align}
    \label{Theorem MCLT Covariance Estimator proof eq 4}
    \lim_{n\to\infty}\triangle_{12}=0 \ \ a.s.-[\mathbb{Q}].
\end{align}
Combining \eqref{Theorem MCLT Covariance Estimator proof eq 3} and \eqref{Theorem MCLT Covariance Estimator proof eq 4} asserts
\begin{align}
\label{Theorem MCLT Covariance Estimator proof eq 5}
    \lim_{n\to\infty} \textbf{Cov}_{11}=0\ \ a.s.-[\mathbb{Q}].
\end{align}

\par ($\textbf{Cov}_{12}$) As for $\textbf{Cov}_{12}$, according to our previous argument, we only need to consider the case where $||(i,t)-(i',t')||_{F}>q_n$ and $||(i,t)-(r',s')||_{F}>q_n$ hold simultaneously, since $K_{1ii'tt'}$ and $K_{2ir'ts'}$ is non-zero only if $|t-s'|\land |t-t'|>q_n$. Then, we can show, for each $\bS_n=S_n$, 
\begin{align}
    \textbf{Cov}_{12}\leq C \beta^{\frac{2\delta}{1+\delta}}(q_n)\frac{1}{(nT)^2b^3}\sum_{(i,t)\neq (i',t')}K_{1ii'}K_{1tt'}\sum_{(i,t)\neq (r',s')}K_{2ir'}K_{2ts'},
\end{align}
which makes it a higher-order term of $\triangle_{12}$. Thus, using \eqref{Theorem MCLT Covariance Estimator proof eq 4} yields 
\begin{align}
\label{Theorem MCLT Covariance Estimator proof eq 6}
   \lim_{n\to\infty}\textbf{Cov}_1=0\ \ a.s.-[\mathbb{Q}].
\end{align}

\par ($\textbf{Cov}_2=o(1)\ \ a.s.-[\mathbb{Q}]$)
\par Similarly, we only need to consider the case where $||(i,t)-(i',t')||_{F}>q_n$ and $||(r,s)-(r',s')||_{F}>q_n$ holds simultaneously. Therefore, for each $\bS_n=S_n$,
\begin{align}
    \textbf{Cov}_2\leq &\frac{1}{(nT)^2b^3}\sum_{||(i,t)- (i',t')||_{F}>q_n}K_{1ii'tt'}\sum_{||(r,s)-(r',s')||_{F}>q_n}K_{2rr'ss'}|E_{|S_n}[X_{it}X_{i't'}X_{rs}X_{r's'}]|\\
    &+ \frac{1}{(nT)^2b^3}\sum_{||(i,t)-(i',t')||_{F}>q_n}K_{1ii'tt'}\sum_{||(r,s)- (r',s')||>q_n}K_{2rr'ss'}|\text{Cov}_{|S_n}(X_{it},X_{i't'})\text{Cov}_{|S_n}(X_{rs},X_{r's'})|\\
    =&: \textbf{Cov}_{21}+\textbf{Cov}_{22}
\end{align}
By repeating the procedures used in proving \eqref{Theorem MCLT Covariance Estimator proof eq 6}, we can obtain 
\begin{align}
    \label{Theorem MCLT Covariance Estimator proof eq 7}
    \lim_{n\to\infty}\textbf{Cov}_{22}=0\ \ a.s.-[\mathbb{Q}].
\end{align}

\par For $\textbf{Cov}_{21}$, we have decomposition 
\begin{align}
    \textbf{Cov}_{21}=(\sum_{(1)}+\sum_{(2)})K_{1ii'tt'}K_{2rr'ss'}|E_{|S_n}[X_{it}X_{i't'}X_{rs}X_{r's'}]|=:\bigcirc_1+\bigcirc_2,
\end{align}
where, by denoting $d_{rs}=d(\{(i,t),(i',t')\},\{(r,s)\})$ and  $d_{r's'}=d(\{(i,t),(i',t')\},\{(r',s')\})$
\begin{align}
    \sum_{(1)}=\sum_{||(i,t)-(i',t')||_{F}>q_n,||(r,s)-(r',s')||_{F}>q_n\atop d_{rs}\leq \frac{q_n}{3}, d_{r's'}\leq \frac{q_n}{3}}\ \ \text{and}\ \    \sum_{(2)}=\sum_{||(i,t)-(i',t')||_{F}>q_n,||(r,s)-(r',s')||_{F}>q_n\atop \max\{d_{rs},d_{r's'}\}> \frac{q_n}{3}}.
\end{align}

($\bigcirc_1$) Obviously, we only need to focus on the situations where $||(i,t)-(i',t')||>q_n$, $||(i,t)-(r,s)||_{F}\leq \frac{q_n}{3}$ and $||(i',t')-(r',s')||_{F}\leq \frac{q_n}{3}$ hold simultaneously. According to (M3) of Assumption \ref{moment conditions}, we have
\begin{align}
    \bigcirc_1&\lesssim \frac{1}{(nT)^2b^3}
   \mathbb{K}_S\mathbb{K}_T, 
\end{align}
where  
\begin{align}
    \mathbb{K}_S &:= \sum_{\s_{in}\neq \s_{i'n}}K_{1,ii'}\Big(\sum_{||\s_{rn}-\s_{in}||\leq q_n/3\atop ||\s_{r'n}-\s_{i'n}||\leq q_n/3}K_{2,rr'}\Big), \\
 \mathbb{K}_T &:=   
\sum_{t\neq t'}K_{1tt'}\Big(\sum_{|s-t|\leq q_n/3\atop |s'-t'|\leq q_n/3}K_{2ss'}\Big). 
\end{align}
Lemma \ref{lemma 4} implies $\mathbb{K}_T\lesssim T^2q_n^2b$. Meanwhile, using strong law of large number of 2nd-order U-statistic yields that 
\begin{align}
    \lim_{n\to\infty}\frac{\mathbb{K}_S}{n^2b^4q_n^2}<\infty\ \ a.s.-[\mathbb{Q}].
\end{align}
Above all, together with point (5) of T4 in Assumption \ref{technical assumptions}, we obtain 
\begin{align}
    \bigcirc_1\lesssim q_n^4b^2=o(1)\ \ a.s.-[\mathbb{Q}].
\end{align}

($\bigcirc_2$) Some simple algebra yields that, for each $\bS_n=S_n$, there exists some universal $C>0$ such that
\begin{align}
   \bigcirc_2 =&\sum_{(2)}K_{1ii'tt'}K_{2rr'ss'}|E_{|S_n}[X_{it}X_{i't'}X_{rs}X_{r's'}]|\\
   =& 2\sum_{||(i,t)-(i',t')||_{F}>q_n,||(r,s)-(r',s')||_{F}>q_n\atop d_{rs}>\frac{q_n}{3},d_{r's'}\leq\frac {q_n}{3}}K_{1ii'tt'}K_{2rr'ss'}|\text{Cov}_{|S_n}(X_{it}X_{i't'}X_{r's'},X_{rs})|\\
   &+\sum_{||(i,t)-(i',t')||_{F}>q_n,||(r,s)-(r',s')||_{F}>q_n\atop d_{rs}>\frac{q_n}{3},d_{r's'}> \frac{q_n}{3}}K_{1ii'tt'}K_{2rr'ss'}|\text{Cov}_{|S_n}(X_{it}X_{i't'},X_{rs}X_{r's'})|\\
   &+\sum_{||(i,t)-(i',t')||_{F}>q_n,||(r,s)-(r',s')||_{F}>q_n\atop d_{rs}>\frac{q_n}{3},d_{r's'}> \frac{q_n}{3}}K_{1ii'tt'}K_{2rr'ss'}|\text{Cov}_{|S_n}(X_{it},X_{i't'})||\text{Cov}_{|S_n}(X_{rs},X_{r's'})|\\
   =&:2\bigcirc_{21}+\bigcirc_{22}+\bigcirc_{23}.
\end{align}
Together with Assumption \ref{moment conditions}, we obtain
\begin{align}
   |\text{Cov}_{|S_n}(X_{it}X_{i't'}X_{r's'},X_{rs})|\lor |\text{Cov}_{|S_n}(X_{it}X_{i't'},X_{rs}X_{r's'})|&\lesssim \beta^{\frac{\delta}{1+\delta}}(q_n),\\
   |\text{Cov}_{|S_n}(X_{it},X_{i't'})||\text{Cov}_{|S_n}(X_{rs},X_{r's'})|&\lesssim \beta^{\frac{2\delta}{1+\delta}}(q_n).
\end{align}
Thus, by repeating the techniques used in proving the asymptotic variance of Theorem \ref{Theorem CLT Covariance Estimator}, we obtain 
\begin{align}
    \lim_{n\to\infty}\frac{\bigcirc_2}{(nT)^2b^3}=0 \ \ \ a.s.-[\mathbb{Q}].
\end{align}
Above all, we manage to prove that 
\begin{align}
    \label{eq of Step 1}
    \lim_{n,T} (nT)^2b^3\text{Cov}_{|S_n}(\hat{C}(\h_{1},v_1),\hat{C}(\h_{2},v_2))=0\ \ \ a.s.-[\mathbb{Q}].
\end{align}
 

\par \textbf{Step 2} (Cramér–Wold device) Please note that, in Step 1, we have proven that, for any $(\h_1,v_1)\neq (\h_2,v_2)$, $(nT)^2b^3\text{Cov}_{|S_n}(\hat{C}(\h_{1},v_1),\hat{C}(\h_{2},v_2))=o(1)$ holds almost surely. Meanwhile, regarding that $\text{vec}(\widehat{\mathbf C})-\text{vec}(\mathbf C)\in \mathbb{R}^{MN}$, we introduce notation
\begin{align}
 (  \text{vec}(\widehat{\mathbf C})-\text{vec}(\mathbf C))=:\mathbf T=(T_{11},...,T_{1N},...T_{M1},...,T_{MN})^\top.
\end{align}
Hence, for any $(w_{11},...,w_{MN})^{\top}=:\w \in \mathbb{R}^{MN}$, Cramér–Wold theorem implies that, to prove the asymptotic normality of $\mathbf T$, we only need to concentrate on $\w^{\top}\mathbf T$. 

\par Based on \textbf{Step 1}, by repeating the procedure used in proving the variance in Theorem \ref{Theorem CLT U-Statistic}, we obtain that 
\begin{align}
    \lim_{n\to \infty}nTb^{1.5}\text{Var}_{|S_n}(\w^\top\mathbf T)
    &=\lim_{n\to\infty }\sum_{k=1}^{MN}w_k\text{Var}_{|S_n}(T_{k})+o(1)=(\sigma/2\mf A_1)^2\sum_{i=1}^M\sum_{j=1}^Nw_{ij} \ \ a.s.-[\mathbb{Q}],
    \end{align}
where $\sigma^2=E[X_0^2]\mf A_2\mf B_2$. Equivalently, we have
\begin{align}
      \label{eq step 2 1}
    \text{Var}\left(\frac{nTb^{1.5}\w^\top \mathbf{T}}{((\sigma/2\mf A_1)^2\sum_{i=1}^M\sum_{j=1}^Nw_{ij})^{\frac{1}{2}}}\right)\xrightarrow[n\to\infty]{}1 \ \ a.s.-[\mathbb{Q}].
\end{align}

\par Note that 
\begin{align}
&nTb^{1.5}\w^{\top}\mathbf T=nTb^{1.5}\sum_{i=1}^{M}\sum_{j=1}^Nw_{ij}T_{ij}\\
=&nTb^{1.5}\sum_{i=1}^{M}\sum_{j=1}^Nw_{ij}\Big(\hat{C}(\h_i,v_j)-C(\h_i,v_j)\Big)\\
=&nTb^{1.5}\sum_{i=1}^{M}\sum_{j=1}^Nw_{ij}\widetilde{K}_n(\h_i,v_j)^{-1}\Big(U_n(\h_i,v_j)-E[U_n(\h_i,v_j)]+\textbf{Bias}(\h_i,v_j)\Big)\\
=& (\widetilde{K}+o_{a.s.}(1))^{-1}\Big\{nTb^{1.5}\Big[\sum_{i,j}(U_n(\h_i,v_j)-E[U_n(\h_i,v_j)])\Big]+  nTb^{1.5}\sum_{i,j}\textbf{Bias}(\h_i,v_j) \Big\}
\end{align}
where $U_n(\h_i,v_j)$ shares the same definition as $U_n$ in Theorem \ref{Theorem CLT U-Statistic}. Thus, by repeating the proof of the asymptotic normality in Theorem \ref{Theorem CLT U-Statistic}, together with $nTb^{2.5}\to 0$ and the finiteness of $M$ and $N$, we obtain 
\begin{align}
    nTb^{1.5}\w^\top\mathbf{T}\xrightarrow[n\to\infty]{d}N(0,(\sigma/2\mf A_1)^2\sum_{i=1}^N\sum_{j=1}^M w_{ij})\ \ \ \ a.s.-[\mathbb{Q}].
\end{align}
This finish the proof of Theorem \ref{Theorem MCLT Covariance Estimator}.


\subsection{\texorpdfstring{Proof of \cref{thm:partialexact}}{Proof of Theorem 4.1}}
  By Theorem \ref{Theorem MCLT Covariance Estimator}, we have $ nTb^{1.5} (\widehat{\mf C} - \mf C) \xrightarrow[]{d} \mathcal G~~~~a.s. ~[\mathbb{Q}]$, 
       where $ \mathcal G $ is $N(0,\tau^2I_{MN})$ distributed. Recalling the definition \eqref{d7}, we define 
       \begin{align}
           \phi(\mf C) :=   D_{\bs  \psi} (\mf C) \|\mf C \bs  \psi\|_F^2 
           = \|\mf C\|_F^2 \|\mf C \bs  \psi \|_F^2 - \|\bs  \psi^{\top}   \mf C^{\top} \mf C\|_F^2   
       \end{align}
       and use von Mises calculus to derive the limiting distribution of  $\phi(\widehat{\mf C})$. Then, since $\|\widehat {\mf C} \bs \psi \|_F \xrightarrow[]{p} \|\mf C \bs \psi \|_F $ the limiting distribution of $\widehat  D_{\bs \psi}$ will follow from Slusky's lemma. For a matrix $\mf G \in \R^{M\times N}$, we have 
       \begin{align}
       \label{d40}
           \phi(\mf C + t \mf G) = \|\mf C+ t \mf G\|_F^2 \|(\mf C+ t \mf G )\bs  \psi \|_F^2 - \|\bs  \psi^{\top}   (\mf C+ t \mf G)^{\top} (\mf C + t \mf G)\|_F^2 . 
       \end{align}
       Note that $\phi(\mf C + t \mf G)$ is a polynomial in $t$ of degree $4$, 
       \begin{align}
           \phi(\mf C + t \mf G) = (a_0 + a_1 t + a_2 t^2)(c_0 - c_1 t + c_2 t^2) - (b_0 + b_1 t + b_2 t^2 + b_3 t^3 + b_4 t^4), \label{d40b}
       \end{align}
       where 
       \begin{align}
           a_0 = \mathrm{tr}(\mf C \mf C^{\top}), \quad c_0 = \mathrm{tr}(\mf C \bs \psi \bs \psi^{\top} \mf C^{\top})\\
           a_1 = 2 \mathrm{tr}(\mf C \mf G^{\top}), \quad c_1 = 2 \mathrm{tr}(\mf G \bs \psi \bs\psi^{\top} \mf C^{\top})\\
           a_2 = \mathrm{tr}(\mf G \mf G^{\top}), \quad c_2 = \mathrm{tr}(\mf G \bs \psi \bs \psi^{\top} \mf G^{\top})\\
           b_0 = \|\bs \psi^{\top} \mf C^{\top} \mf C\|_F^2, \quad b_4 = \|\bs \psi^{\top} \mf G^{\top} \mf G\|_F^2\\ b_1 = 2 \{\mathrm{tr}[\bs \psi^{\top} \mf G^{\top} \mf C \mf C^{\top} \mf C \bs \psi] + \mathrm{tr}[\bs \psi^{\top} \mf C^{\top} \mf G \mf C^{\top} \mf C \bs \psi]\}\\
           b_2 = 2 \mathrm{tr}[\bs \psi^{\top} \mf G^{\top} \mf G \mf C^{\top} \mf C \bs \psi] + \mathrm{tr}[\bs \psi^{\top} \mf C^{\top} \mf G \mf G^{\top} \mf C  \bs \psi]\\ 
           + 2 \mathrm{tr}[\bs \psi^{\top} \mf G^{\top} \mf C \mf G^{\top} \mf C \bs \psi] + \mathrm{tr}[\bs \psi^{\top} \mf G^{\top} \mf C \mf C^{\top} \mf G \bs \psi]\\
           b_3 = 2 \mathrm{tr}[\bs \psi^{\top} \mf C^{\top} \mf G \mf G^{\top} \mf G \bs \psi] + 2 \mathrm{tr}[\bs \psi^{\top} \mf G^{\top} \mf C \mf G^{\top} \mf G \bs \psi].
       \end{align}
       A straightforward  calculation gives for the linear coefficient 
       \begin{align}
          \left. \frac{\partial \phi(\mf C + t \mf G)}{\partial t}\right|_{t = 0} &= 2\tr(\mf C \mf G^{\top}) \tr( \mf C \bs  \psi \bs  \psi^{\top} \mf C^{\top}) +  2\tr(\mf C \mf C^{\top}) \tr(\mf G \bs  \psi \bs \psi^{\top} \mf C^{\top}) \\ 
         & - 2 \tr (\bs  \psi^{\top} \mf C \mf G^{\top} \mf C \mf C^{\top} \bs  \psi )- 2 \tr (\bs  \psi^{\top} \mf G \mf C^{\top} \mf C \mf C^{\top} \bs \psi ). 
       \end{align}
       Under the null hypothesis, $\mf C = \mf c_1  \mf c_2^{\top} $, which implies
   \begin{align}   
      \tr (\bs  \psi^{\top} \mf C \mf G^{\top} \mf C \mf C^{\top} \bs \psi ) &= \tr (\bs \psi^{\top}\mf c_1 \mf c_2^{\top} \mf G^{\top} \mf c_1 \mf c_2^{\top} \mf C^{\top}\bs \psi )\\
      & =   \tr (\mf c_2^{\top} \mf G^{\top} \mf c_1) \tr (\bs \psi^{\top}\mf c_1 \mf c_2^{\top} \mf C^{\top}\bs \psi )  = \tr(\mf C \mf G^{\top}) \tr(\mf C \bs \psi \bs \psi^{\top}\mf C^{\top})   
      \end{align}
      and 
        \begin{align} 
       \tr (\bs \psi^{\top}\mf G \mf C^{\top} \mf C\mf C^{\top}\bs \psi ) & =  \tr (\bs \psi^{\top}\mf G (\mf c_1 \mf c_2^{\top})^{\top} \mf C(\mf c_1 \mf c_2^{\top})^{\top}\bs \psi ) \\
      & = \tr ( (\mf c_1 \mf c_2^{\top})^{\top} \mf C\mf c_1 \mf c_1^{\top}\bs \psi \bs \psi^{\top}\mf G \mf c_2) \tr(\mf c_1^{\top} \mf C \mf c_2) = \tr(\mf C \mf C^{\top}) \tr(\mf G \bs \psi \bs \psi^{\top}\mf C^{\top}).
          \end{align}
          Hence, under the null hypothesis, we have  for the linear coefficient
       \begin{align}
            \left. \frac{\partial \phi(\mf C + t \mf G)}{\partial t}\right|_{t = 0} = 0.
       \end{align}
       By a tedious calculation, we obtain under the null hypothesis for the second directional derivative the representation 
       \begin{align}
            \left. \frac{\partial^2 \phi(\mf C + t \mf G)}{2\partial t^2 }\right|_{t = 0} & = \| \mf C\|_F^2 \|\mf G \bs \psi \|_F^2 + 4 \tr(\mf C \mf G^{\top})\tr(\mf G \bs \psi \bs \psi^{\top} \mf C^{\top}) + \|\mf G\|_F^2 \| \mf C \bs \psi\|_F^2 \\ 
            & - 2 \tr(\bs \psi^{\top} \mf G^{\top} \mf G \mf C^{\top} \mf C \bs \psi)- \|\mf G^{\top} \mf C \bs \psi\|_F^2 -  2 \tr(\bs \psi^{\top} \mf G^{\top} \mf C \mf G^{\top} \mf C \bs \psi) - \|\mf C^{\top} \mf G \bs \psi\|_F^2\\
            &=\left \| \mf G \|\mf C \bs \psi\|_F - \frac{\mf G \bs \psi \bs \psi^{\top} \mf C^{\top} \mf C}{\|\mf C \bs \psi\|_F} \right\|_F^2 - \| \mf G^{\top} \mf C \bs \psi - \mf C^{\top} \mf G \bs \psi \|^2,
       \end{align}
which gives the coefficient of $t^2$ in \eqref{d40}. 
Now, we take \( \mf G := nTb^{1.5}\big (\widehat  {\mf C} -  \mf C\big) \), \( t = 1/\big (nTb^{1.5} \big) \) and expand the polynomial of degree $4$,
\( \phi(\mf C + t\mf G) \), in powers of \( t \). Note that \( \phi(\mf C) = 0 \) under
the null hypothesis and that the terms corresponding to \( t^3 \) and \( t^4 \) in \eqref{d40}
are at least of order \( \big (nTb^{1.5} \big ) ^{-3} \). Therefore, we obtain 
\begin{align}
(nT)^2b^{3}   \phi (\widehat  {\mf C} )  &=  (nT)^2b^{3} \big (  \phi (\widehat  {\mf C} ) - \phi ( \mf C  ) \big )   = \frac{1}{2}\,\frac{\partial^2}{\partial t^2}
\phi\!\big (\mf C + t  nTb^{1.5}\,(\widehat  {\mf C} - \mf C) \big)
\Big|_{t=0}
+ o_\mathbb{P}(1) \\ 
&
=\left \| \mf G \|\mf C \bs \psi\|_F - \frac{\mf G  \bs \psi \bs \psi^{\top} \mf C^{\top} \mf C}{\|\mf C \bs \psi\|_F} \right\|_F^2 - \| \mf G ^{\top} \mf C \bs \psi - \mf C^{\top}  \mf G  \bs \psi \|^2 + o_\mathbb{P}(1)
~~~a.s. ~[\mathbb{Q}],
\end{align}
and the assertion follows by the continuous mapping theorem, dividing by $\|\widehat  {\mf C} \bs \psi \|^2$ and Slutsky's lemma.

\subsection{\texorpdfstring{Proof of \cref{thm:alterexact}}{Proof of Theorem 4.2}}
The proof of Theorem \ref{thm:alterexact} follows from the arguments given in the  proof of \cref{thm:partialexact}. More precisely, note that under the alternative hypothesis,  \begin{align}
            \left. \frac{\partial \phi(\mf C + t \mf G)}{\partial t}\right|_{t = 0} \neq 0,
       \end{align}
       and that the coefficients of $t^2$, $t^3$ and $t^4$ of the polynomial 
       $$
       \phi (\widehat {\mf C} ) - \phi ( \mf C  )  = \phi ( \mf C + t \mf G ) - \phi (\mf C) 
       $$
       (with  \( \mf G := nTb^{1.5}\big (\widehat {\mf C} -  \mf C\big) \), \( t = 1/\big (nTb^{1.5} \big) \)
       ) 
       are at least of order $(nT)^{-2}b^{-3}$.
       Consequently, by von Mises calculus and the continuous mapping theorem, $nTb^{1.5}\{\phi(\widehat {\mf C}) - \phi(\mf C)\}$  converges weakly  to 
       \begin{align}
           \left. \frac{\partial \phi(\mf C + t \mathcal G)}{\partial t}\right|_{t = 0} & =  2\tr(\mf C \mathcal G^{\top}) \tr( \mf C \bs  \psi \bs  \psi^{\top} \mf C^{\top}) +  2\tr(\mf C \mf C^{\top}) \tr(\mathcal G \bs  \psi \bs \psi^{\top} \mf C^{\top}) \\ 
         & - 2 \tr (\bs  \psi^{\top} \mf C \mathcal G^{\top} \mf C \mf C^{\top} \bs  \psi )- 2 \tr (\bs  \psi^{\top} \mathcal G \mf C^{\top} \mf C \mf C^{\top} \bs \psi ), \label{eq:firstorder}
       \end{align}
       where the matrix $\mathcal G$ stands for the $N(0,\tau^2I_{MN})$ distributed matrix. The result follows from an elementary calculation of the covariance of \eqref{eq:firstorder}, $\|\widehat{\mf C} \bs \psi \|_F \xrightarrow[]{p} \|\mf C \bs \psi \|_F $ and Slusky's lemma.

\subsection{\texorpdfstring{Proof of Theorem \ref{th exact test SVD}}{Proof of Theorem 4.3}} 

With the notation $\mf E:=\widehat {\mf C}-\mf C$ we have by  \cref{Theorem MCLT Covariance Estimator}
\begin{align}
\label{eq proof SVD 1}
    nTb^{1.5}\text{vec}( \mf E)\xrightarrow[]{d}  N(0,\tau^2I_{MN}) \ \ \ \ \ a.s.-[\mathbb{Q}]
\end{align}
Then, under the null hypothesis, we obtain for the singular value decomposition of the matrix $\mf C$
\begin{align}
    \label{eq proof SVD 2}
    \mf C=\sigma_{sv,1}(\mf C)\u\mf v,
\end{align}
where $\mf u\in \mathbb{R}^M$ and $\mf v\in \mathbb{R}^N$ are the left and right singular vectors with $||\mf u||=||\mf v||=1$ corresponding to the (largest) singular value $\sigma_{sv,1}(\mf C)$. Moreover, we have
\begin{align}
    \widehat{D}=||\mf C+ \mf E||^2_{F}-\sigma^2_{sv,1}(\mf C+ \mf E).
\end{align}

\noindent \textbf{Step 1}(Orthogonal decomposition of the perturbation) With the notation 
\begin{align}
    P_{\mf u}=\mf u\mf u^{\top}, P_{\mf u}^{\perp}=I-P_{\mf u},\\
    P_{\mf v}=\mf v\mf v^{\top}, P_{\mf v}^{\perp}=I-P_{\mf v}.
\end{align}
we obtain the decomposition
\begin{align}
    &\mf E= A_E+ B_E+ C_E+D_E, \label{d41}
    \end{align}
    where 
    \begin{align}
  A_E &= P_{\mf u}EP_{\mf v}=(\mf u^{\top}E\mf v)\mf u\mf v^{\top}=\alpha \mf u\mf v^{\top},\\
  \alpha& =\mf u^{\top}E\mf v\in\mathbb{R}, \\
 C_E& =P_{\mf u}EP^{\perp}_{\mf v}=\mf u(\mf u^{\top}EP^{\perp}_{\mf v})=\mf u c^{\top},\\ c& =P^{\perp}_{\mf v}E^{\top}\mf u\  (\Rightarrow c\perp \mf v) \label{eq:defce}\\
 B_E& =P_{\mf u}^{\perp}EP_{\mf v}=(P_{\mf u}^{\perp}E\mf v\mf) \mf v^{\top}=b\mf v^{\top},\\ b&=P_{\mf u}^{\perp}E \mf v\  (\Rightarrow b\perp \mf u)\label{eq:defbe}\\
 D_E& = P^{\perp}_{\mf u}EP^{\perp}_{\mf v}\ (\Rightarrow\ \mf u^{\top}D_E=0,\  D_E\mf v=0).
 \label{nhd9}
\end{align}
A simple calculation shows that  $\text{span}\{\mf u\mf v^{\top}\}:=\{a\mf u\mf v^{\top}:a\in\mathbb{R}\} \subset \mathbb{R}^{M\times N} $  is orthogonal (with respect to the inner product $\langle \mf A, \mf B \rangle_F = \text{tr} (A B^\top ) $) to the  three subspaces 
\begin{align}
    \mathcal{C}& =\{\mf u \xi_{c}^{\top}: \xi_{c}\perp \mf v\},\\
    \mathcal{B}& =\{\xi_b\mf v^{\top}: \xi_b\perp \mf u\},\\ \mathcal{D}& =\{\Xi\in\mathbb{R}^{M\times N}:\mf u^{\top}\Xi= \Xi\mf v=0\}.
\end{align}
 Noting that $B_E\in \mathcal{B}$, $C_E\in \mathcal{C}$ and $D_E\in \mathcal{D}$, we obtain from \eqref{d41} that 
$$
\mf C+\mf  E=\mf C+A_E+B_E+C_E+D_E=(\sigma_{sv,1}(\mf C)+\alpha)\mf u\mf v^{\top}+b\mf v^{\top}+\mf uc^{\top}+P_{\mf u}^{\perp}EP_{\mf v}^{\perp},
$$
which implies  that 
\begin{align}
\label{nhd8}
||\mf C+\mf  E||^2_{F}
&=(\sigma_{sv,1}(\mf C)+\alpha)^2+||b||^2+||c||^2+||D_E||^2_{F}.
\end{align}

\noindent \textbf{Step 2} (Expansion of SVD mapping) 
This step is dedicated to the investigation of term $\sigma^2_{sv,1}(\mf C+\mf E)$. Let  $\mf u$ and $\mf v$ denote the vectors in the decomposition \eqref{eq proof SVD 2}. If  $\mf x\in \mathbb{R}^M$, $\mf y\in \mathbb{R}^N$ are two other vectors such that  $||\mf x||_F=||\mf y||_F=1$ and $\mf x^\top\mf u>0$, $\mf y^\top \mf v>0$, we can represent $\mf x$ and $\mf y$ as 
\begin{align} \label{d42}
  \mf x=\mf x(\h)=\frac{\mf u+\mf h}{\sqrt{1+||\h||^2}},\ \mf y= \mf y(\mf g)=\frac{\mf v+\mf g}{\sqrt{1+||\mf g||^2}} 
\end{align}
where the vectors $\mf h\in \mathbb{R}^{M}$ and $\mf g\in \mathbb{R}^{N}$  satisfy 
$ \h\perp \u$ and  $\mf g\perp \mf v$. 
With these notation we define 
\begin{align}
\label{d43}
    \Phi(\h,\mf g)=\mf x(\h)^\top\mf C \mf y(\mf g)&+\mf x(\h)^\top \mf E\mf y(\mf g) =:\Phi_{\mf C}(\h,\mf g) +\Phi_{E}(\h,\mf g).
\end{align}
To simplify the calculation later, we first linearize the  denominators in \eqref{d42}, that is 
\begin{align}
    &\mf x(\h)=\Big (1-\frac{1}{2}||\h||^2+O(||\h||^4)\Big  )(\mf u+\h)=\mf u+\h-\frac{1}{2}\mf u ||\h||^2+O(||\h||^3),\\
     &\mf y(\mf g)=\Big  (1-\frac{1}{2}||\mf g||^2+O(||\mf g||^4)\Big  )(\mf v+\mf g)=\mf v+\mf g-\frac{1}{2}\mf v ||\mf g||^2+O(||\mf g||^3),
\end{align}
which gives (using the orthogonality) 
\begin{align}
    \mf x(\h)^{\top}\mf u=1-\frac{1}{2}||\h||^2+O(||\h||^3)\ \ \text{and}\ \  \mf v^{\top}\mf y(\mf g)=1-\frac{1}{2}||\mf g||^2+O(||\mf g||^3). \label{eq proof of Prop 2.1 eq 1}
\end{align}
For the term $\Phi_{\mf C}$ in \eqref{d43} we obtain   
using \eqref{eq proof of Prop 2.1 eq 1}  (note  that  $\mf C=\sigma_{sv,1}(\mf C)\mf u\mf v^{\top}$)
\begin{align}
    \Phi_{\mf C} (\h,\mf g)
    &=\sigma_{sv,1}(\mf C)(\mf x(\h)^{\top}\mf u)(\mf v^{\top}\mf y(\mf g)) \\
    &=\sigma_{sv,1}(\mf C)\Big  (1-\frac{1}{2}||\h||^2\Big )\Big (1-\frac{1}{2}||\mf g||^2\Big )+O(||\h||^3+||\mf g||^3+||\mf h||^2||\mf g||^2)\\
    &=\sigma_{sv,1}(\mf C)-\frac{1}{2}
    \sigma_{sv,1}(\mf C )\Big (||\mf h||^2+||\mf g||^2\Big )+O(||\mf h||^3+||\mf g||^3+||\mf h||^2||\mf g||^2). \label{eq proof of Prop 2.1 Phi C}
\end{align}
Similarly, we have 
\begin{align}
    \Phi_{E} (\h,\mf g) =&(\mf u+\h-\frac{1}{2}\mf u ||\h||^2+O(||\h||^3))^{\top}\mf  E(\mf v+\mf g-\frac{1}{2}\mf v ||\mf g||^2+O(||\mf g||^3))\\
    =& \mf u^{\top} \mf E\mf v+\mf u^{\top} \mf E\mf g +\h^{\top} \mf E\mf v+\h^{\top}\mf E\mf g-\frac{1}{2}||\h||^2\mf u^{\top} \mf E\mf v-\frac{1}{2}||\mf g||^2\mf u^{\top} \mf E\mf v\\
    &+\underbrace{(-\frac{1}{2}||\h||^2\mf u^{\top}\mf E\mf g)}_{=O(||\h||^2||\mf g||)}+\underbrace{(-\frac{1}{2}||\mf g||^2\h^{\top}\mf E\mf v)}_{=O(||\mf g||^2||\h||)}+O(||\h||^2||\mf g||^2+||\h||^3+||\mf g||^3)\\
    =&\alpha+\mf u^{\top}\mf E\mf g +\h^{\top} \mf E\mf v+\h^{\top}\mf  E\mf g-\frac{\alpha}{2}(||\h||^2\mf+ ||\mf g||^2)+O(r_n),
\end{align}
where $r_n=O(||\h||^2||\mf g||^2+||\h||^3+||\mf g||^3+||\h||^2||\mf g||+||\mf g||^2 ||\h||)$. As  $P_{\mf u}^{\perp}$ and $P_{\mf v}^{\perp}$ are orthogonal projection matrices, we have 
\begin{align}
    &P_{\mf v}^{\perp}\mf g=\mf g~,~~  \h^{\top}P_{\mf u}^{\perp}\mf =\mf h^{\top},
\end{align}
which gives (observing the definition of  $b$ and $c$  in \eqref{eq:defbe} and \eqref{eq:defce})
\begin{align}
   \mf u^{\top}E\mf g=c^{\top}\mf g~,~~ \h^{\top}E\mf v=\h^{\top}b. 
\end{align}
Therefore, 
\begin{align}
    \Phi_{E}(\h,\mf g)=\alpha+c^{\top}\mf g +\h^{\top}b+\h^{\top}E\mf g-\frac{\alpha}{2}(||\h||^2\mf+\frac{1}{2}||\mf g||^2) { +O(r_n)
    }\label{eq proof of Prop 2.1 eq Phi E}.
\end{align}
Then, combining \eqref{eq proof of Prop 2.1 Phi C} and \eqref{eq proof of Prop 2.1 eq Phi E} yields
\begin{align}
    \Phi(\h,\mf g)=\sigma_{sv,1}(\mf C)+\alpha+c^{\top}\mf g +\h^{\top}b+\h^{\top}E\mf g-\frac{1}{2}(\sigma_{sv,1}(\mf C)+\alpha)(||\h||^2+||\mf g||^2)+R_3, \label{eq proof of Prop 2.1 eq 2}
\end{align}
where $ R_3 = O(r_n) =O(  ||\h||^3+||\mf g||^3+ ||\h||^2 ||\mf g||+||\mf g||^2||\h||)$  is a real-valued mapping of $(||\h||, ||\mf g||)$. Furthermore, it is easy to see that  $\Phi$  has partial derivatives of arbitrary order. Thus, a necessary condition for the maximum  is 
\begin{align}
   0&= \partial_{\mf h}\Phi(\h,\mf g)= b+\mf  E\mf g-(\sigma_{sv,1}(\mf C)+\alpha)\h+r_{\h},\label{eq proof of Prop 2.1 eq 2.5}\\
   0&=\partial_{\mf g}\Phi(\h,\mf g)=c+\mf  E^{\top}\mf h-(\sigma_{sv,1}(\mf C)+\alpha)\mf g+r_{\mf g}, \label{eq proof of Prop 2.1 eq 3}
\end{align}
where remainders $r_{\h}$ and $r_{\mf g}$ are  obtained from \eqref{eq proof of Prop 2.1 eq 2} and are of order $O(||\h||^2+||\mf g||^2)$. Moreover, by  definitions of $\alpha$, $b$ and $c$ we have 
\begin{align}
    |\alpha|\leq ||\mf E||_{op},\ ||b||=||P_{\mf u}^{\perp}\mf E\mf v||\leq ||\mf  E||_{op},\ ||c||=||P_{\mf v}^{\perp}\mf  E^{\top}\mf u||\leq ||\mf  E||_{op},
\end{align}
where $||\cdot||_{op}$ is the operator norm. Note that  $||\mf  E||_{op}\leq ||\mf  E||_{F} = o_{\mathbb{P}_{|S_n}}(1)$ since   $\widehat {\mf C}$ is a consistent estimator of $\mf C$.
Then it follows from  \eqref{eq proof of Prop 2.1 eq 2.5} and \eqref{eq proof of Prop 2.1 eq 3} that 
\begin{align}
    ||\h||\lesssim \frac{1}{|\sigma_{sv,1}(\mf C)+\alpha|}(||\mf E||_{op}+||\mf  E||_{op}||\mf g||+||\h||^2+||\mf g||^2)
    ,\\
    ||\mf g||\lesssim \frac{1}{|\sigma_{sv,1}(\mf C)+\alpha|}(||\mf E||_{op}+||\mf E||_{op}||\mf h||+||\h||^2+||\mf g||^2),
\end{align}
 which gives 
\begin{align}
    ||\h||\lor ||\mf g||=O_{\mathbb{P}_{|S_n}}( ||\mf E||_{op})=O_{\mathbb{P}_{|S_n}}(||\mf  E||_{F}).
\end{align}
We can now  solve the equations \eqref{eq proof of Prop 2.1 eq 2.5} and \eqref{eq proof of Prop 2.1 eq 3} to first order by dropping all higher order terms and obtain the follow leading terms of solutions
\begin{align}
    \h^*&=\frac{1}{\sigma_{sv,1}(\mf C)}b+O_{\mathbb{P}_{|S_n}}(||\mf E||_{F}^2)=\frac{1}{\sigma_{sv,1}(\mf C)}P_{\mf u}^{\perp}E\mf v+O_{\mathbb{P}_{|S_n}}(||\mf E||_{F}^2),\\
    \mf g^{*}&= \frac{1}{\sigma_{sv,1}(\mf C)}c+O_{\mathbb{P}_{|S_n}}(||\mf E||_{F}^2)=\frac{1}{\sigma_{sv,1}(\mf C)}P^{\perp}_{\mf v}E^{\top}\mf u+O_{\mathbb{P}_{|S_n}}(||\mf E||^2_{F}).
\end{align}

Then, substituting $\h^*$ and $\mf g^*$ into \eqref{eq proof of Prop 2.1 eq 2} yields
\begin{align}
    \sigma_{sv,1}(\mf C+\mf E)=\sigma_{sv,1}(\mf C)+\alpha+\frac{||b||^2+||c||^2}{2\sigma_{sv,1}(\mf C)}+O_{\mathbb{P}_{|S_n}}(||\mf E||^3_{F}),
\end{align}
which gives 
\begin{align}
    \sigma_{sv,1}^2(\mf C+\mf E)&=(\sigma_{sv,1}(\mf C)+\alpha)^2+\frac{(\sigma_{sv,1}(\mf C)+\alpha)(||b||^2+||c||^2)}{\sigma_{sv,1}(\mf C)}+O_{\mathbb{P}_{|S_n}}(||\mf E||^3_{F})\\
    &=(\sigma_{sv,1}(\mf C)+\alpha)^2+||b||^2+||c||^2+O_{\mathbb{P}_{|S_n}}(||\mf E||^3_{F}).
    \label{eq proof of Prop 2.1 eq 4}
\end{align}

\noindent \textbf{Step 3} Combining \eqref{eq proof of Prop 2.1 eq 4} with \eqref{nhd8} and \eqref{nhd9} gives
\begin{align}
    ||\mf C+\mf E||_{F}^2-\sigma^2_{sv,1}(\mf C+\mf E)=||P_{\mf u}^{\perp} \mf EP_{\mf v}^{\perp}||^2_{F}+O_{\mathbb{P}_{|S_n}}(||\mf E||^3_{F})
\end{align}
Observing that 
\begin{align}
&\text{vec}(P_{\mf u}^{\perp}\mf EP_{\mf v}^{\perp}) =(P_{\mf u}^{\perp}\otimes P_{\mf v}^{\perp})\text{vec}(\mf E),  
\end{align}
and (note that 
the matrix $P_{\mf u}^{\perp}\otimes P_{\mf v}^{\perp}$ is symmetric and idempotent)
\begin{align}
    ||P_{\mf u}^{\perp}\mf EP_{\mf v}^{\perp}||^2_{F}&=||\text{vec}(P_{\mf u}^{\perp}\mf EP_{\mf v}^{\perp})||_F^2=\text{vec}(\mf E)^{\top}(P_{\mf u}^{\perp}\otimes P_{\mf v}^{\perp})\text{vec}(\mf E)=\text{vec}(\mf E)^{\top}\mathbf{P} \text{vec}(\mf E),
\end{align}\
where $\mathbf{P}=P_{\mf u}^{\perp}\otimes P_{\mf v}^{\perp}$, 
 we obtain
\begin{align}
    \widehat{D}=||\mf C+\mf E||_{F}^2-\sigma^2_{sv,1}(\mf C+\mf E)=\text{vec}(\mf E)^{\top} \mathbf P  \text{ vec}(\mf E)+o_{\mathbb{P}_{|S_n}}(||\text{vec}(\mf E)||^2).  
\end{align}
Therefore, Slutsky lemma and  \cref{Theorem MCLT Covariance Estimator} yield
\begin{align}
    (nT)^2b^{3}\widehat{D}&=(nTb^{1.5}\text{vec}(\mf E))^{\top}\mf P(nTb^{1.5}\text{vec}(\mf E))+o_{\mathbb{P}_{|S_n}}(1)\\
    &=(nTb^{1.5}\text{vec}(\widehat {\mf C}-\mf C))^{\top}\mf P(nTb^{1.5}\text{vec}(\widehat{\mf C}-\mf C))+o_{\mathbb{P}_{|S_n}}(1) \\
    & \overset{d}{\to}  {\cal L}_{sv}:=\tau^2\mf G^{\top}  {\mf P}  \mf G  \ \ \ a.s.-[\mathbb{Q}],
\end{align}
which completes  the proof.

\subsection{\texorpdfstring{Proof of Theorem \ref{th relevant test SVD}}{Proof of Theorem 4.4}} 

 For any given $\mf X\in \mathbb{R}^{M\times N}$, we define  
    \begin{align}
        T(\mf X)=||\mf X||^2_{F}-\sigma^2_{sv,1}(\mf X)=:T_{1}(\mf X)-T_{2}(\mf X)\ 
    \end{align}
    and note that $ \widehat{D}=T(\widehat {\mf C})$.
Moreover, by letting $\mf  R=\mf C-\sigma_{sv,1}(\mf C)\u \mf v^{\top}$ (with $\mf u, \mf v$ denoting the singular vectors corresponding the largest singualr value), we have $D=|| \mf R||_{F}^2$ and $\langle \mf R, \mf u\mf v^{\top}\rangle_{F}=0$, where the latter one is based on the projection theorem in Hilbert spaces.  Simple algebra shows 
\begin{align}
\label{nhd7}
    \widehat{D}-D&=T_{1}(\widehat {\mf C})-T_{2}(\widehat {\mf C})-||\mf  R||_{F}^2 =T_{1}(\mf C+\mf E)-T_{2}(\mf C+\mf E)-||\mf R||^2_{F}
\end{align}
where $ \mf E =  \widehat{\mf C} - \mf C$. For the first term in this representation  (using  $\langle \mf R, \mf u \mf v^{\top}\rangle_{F}=0$) 
\begin{align}
    T_{1}(\mf C+\mf  E)&=||\mf C||^2_{F}+||\mf E||^2_{F}+2\langle \mf C, \mf E \rangle_{F}\\
    &=||\sigma_{sv,1}(\mf C)\u \mf v^{\top}||_{F}^2+||\mf R||^2_{F}+||\mf E||^2_{F}+2\langle \mf \sigma_{sv,1}(\mf C)\u \mf v^{\top}+\mf R,\mf E \rangle_{F}.
\end{align}
Since \cref{Theorem MCLT Covariance Estimator} implies the asymptotic normality of $nTb^{1.5}\text{vec}(\mf E)$, we obtain
\begin{align}
    nTb^{1.5}T_{1}(\mf C+\mf E)= nTb^{1.5}\sigma_{sv,1}^2(\mf C)+ nTb^{1.5}||\mf R||^2_{F}+o_{\mathbb{P}_{|S_n}}(1)+2 nTb^{1.5}\langle \mf \sigma_{sv,1}(\mf C)\u \mf v^{\top}+\mf R,\mf E \rangle_{F}. \label{eq proof of Prop 2.2 eq 1}
\end{align}
For the second term in \eqref{nhd7} we note that the assumption $\sigma_{sv,1}(\mf C)>\sigma_{sv,2}(\mf C)$ implies  that the mapping $
\mf C  \to \sigma^2_{sv,1} (\mf C) $ is Fréchet differentiable, which gives the expansion 
\begin{align}
    \sigma^2_{sv,1}(\mf C+ \mf E)=\sigma^2_{sv,1}(\mf C)+2\sigma_{sv,1}(\mf C)\langle \u\mf v^{\top}, \mf  E\rangle_F +o(||\mf  E||_{F}).
\end{align}
Therefore, we obtain 
\begin{align}
    nTb^{1.5}T_{2}(\mf C+\mf  E)=nTb^{1.5}\sigma^2_{sv,1}(\mf C)+2nTb^{1.5}\sigma_{sv,1}(\mf C)\langle \u\mf v^{\top}, \mf  E\rangle+o_{\mathbb{P}_{|S_n}}(1), \label{eq proof of Prop 2.2 eq 2}
\end{align}
and  combining this result with \eqref{eq proof of Prop 2.2 eq 1}  yields
\begin{align}
    nTb^{1.5}(\widehat{D}-D)=2nTb^{1.5}\langle \mf R,\mf E\rangle_{F}+o_{\mathbb{P}_{|S_n}}(1)=2 \text{vec}(\mf  R)^{\top}(nTb^{1.5}\text{vec}(\mf E))+o_{\mathbb{P}_{|S_n}}(1).
\end{align}
Finally,  Slutsky's lemma and the properties of the standard normal vector give
\begin{align}
    nTb^{1.5}(\widehat{D}-D)\xrightarrow[]{d} N(0,(\sigma/\mf A_1)^2||\mf Q||_F^2), \ \ a.s.-[\mathbb{Q}],
\end{align}
where $\mf Q=
\mf C-\sigma_{sv,1}(\mf C)\u \mf v^{\top}$. Together with the fact that $(\sigma/\mf A_1)^2||\mf Q||_F^2=4\tau^2||\mf Q||_F^2=\vartheta_{sv}^2$, we finish the proof.

\newpage 

\newpage
 \section{Almost Surely CLT For Triangular Array}
 \label{secb}

 \def\theequation{B.\arabic{equation}}
\setcounter{equation}{0}
\par Suppose $n\in\mathbb{N}$ and $m=m_n$ is a positive number sequence such that $n \to \infty$, $\lim_n m_n\nearrow\infty$. Let 
$\mathcal{P}_{n,m_n} := \{\mathbb{P}_{\theta_{n}}: \theta_{n}\in \Theta_{n,m_n}\}$ denote  a class of distributions on $\mathbb{R}^n$, such that  each element of $\mathcal{P}_{n,m_n}$, $\mathbb{P}_{\theta_{n}}$, is uniquely determined by a parameter $\theta_{n}\in \Theta_{n,m_n}\subset \mathbf{\Theta}$, where   $\mathbf{\Theta}$ is a metric space. Given a probability space $(\Omega,\mathcal{F},\mathbb{P})$, we introduce the following two groups of random variables defined on it.
\begin{itemize}
    \item [R1] Let $\mathbf{O}:= \{O_{n}:n\in\mathbb{N}\}=:\mathbf{O}$ is a sequence of random variables in $ \Theta_{n,m_n}$. We denote the distribution of process $\mathbf{O}$ as $Q$.
    \item [R2] Let $\{W_{in}:1\leq i\leq n;\ n\in\mathbb{N}\}$ be an array of real-valued random variables such that, for  $O_{n}=\theta_n\in\Theta_{n,m_n}$, the joint distribution of random vector $(W_{1n}, W_{2n},...,W_{nn})$ is $\mathbb{P}_{\theta_n}$. 
\end{itemize}
Define $\mu_{in}=E_{\mathbb{P}_{\theta_n}}[W_{in}|\mathcal{B}_{i-1,n}]$ and $\sigma^2_{in}=E_{\mathbb{P}_{\theta_n}}[W^2_{in}|\mathcal{B}_{i-1,n}]-\mu^2_{in}$, where $\mathcal{B}_{in}$ is the Borel field generated by $\{W_{jn}:j\leq  i\}$. For any real-valued random variable $Y$ and Borel field $\mathcal{A}$, $E_{\mathbb{P}_{\theta_n}}[Y]$ and $E_{\mathbb{P}_{\theta_n}}[Y|\mathcal{A}]$ denote  the expectation and conditional expectation of $Y$ with respect to distribution $\mathbb{P}_{\theta_n}(\cdot)$ and conditional distribution $\mathbb{P}_{\theta_n}(\cdot|\mathcal{A})$, respectively.

\par Without loss of generality, we assume the triangular array $\{W_{in}\}$ and the  process $\mathbf{O}$ are defined on the same probability space $(\Omega, \mathcal{F},\mathbb{P})$.

\begin{theorem}
    \label{theorem CLT 1}
    Assume the triangular array $\{W_{in}:1\leq i\leq n;\ n\in\mathbb{N}\}$ satisfies the following three conditions.
    \begin{itemize}
        \item [C1] For each $n$ and $i=1,2,...,n$, $\mu_{in}= 0$ holds almost surely with respect to $Q$.
        \item [C2] For each $n$, $\sum_{i=1}^{n}\sigma^2_{in}= 1$ holds almost surely with respect to $Q$.
        \item [C3] For every $\epsilon>0$, $\lim_{n\to\infty}\sum_{i=1}^{n}E_{\mathbb{P}_{\theta_n}}[W^2_{in}1[|W_{in}|>\epsilon]]=0$ holds almost surely with respect to $Q$.
    \end{itemize}
    Then, by denoting $S_{n}=\sum_{i=1}^nW_{in}$, for given any $x\in\mathbb{R}$, 
    \begin{align}
        \lim_{n\to \infty} \mathbb{P}_{\theta_n}(S_n\leq x)=\Phi(x)
    \end{align}
    holds almost surely with respect to $Q$, where $\Phi(x)$ is the cumulative distribution function of standard normal distribution.
\end{theorem}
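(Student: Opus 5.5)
The plan is to reduce the statement to the classical martingale central limit theorem, applied separately for each fixed realization $\theta_n$ of $O_n$. Then we check that the exceptional $Q$-null sets do not accumulate.

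Since $\mathbf O$ is a countable sequence, conditions C1--C3 each hold outside a $Q$-null set. For C1 and C2 this is one null set per $n$ (and per $i$). For C3 we have one null set for each rational $\epsilon>0$. Monotonicity in $\epsilon$ lets us pass from rational to arbitrary $\epsilon$, because $E[W^2 1[|W|>\epsilon]]$ is nonincreasing in $\epsilon$. Taking the countable union gives a single $Q$-null set $N$. Off $N$, we fix a path $(\theta_n)_n$ of $\mathbf O$ and work only with the deterministic sequence of laws $\mathbb P_{\theta_n}$ on $\mathbb R^n$.

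For such a fixed path, $\{W_{in},\mathcal B_{in}\}_{i\le n}$ under $\mathbb P_{\theta_n}$ is a martingale difference array by C1. By C2 its conditional variances sum to one, so the conditional-variance condition $\sum_i \sigma_{in}^2\to 1$ holds trivially in probability. Condition C3 is exactly the unconditional Lindeberg condition. Taking expectations, the conditional Lindeberg sum $\sum_i E[W_{in}^21[|W_{in}|>\epsilon]\mid\mathcal B_{i-1,n}]$ has expectation tending to zero, so by Markov's inequality it tends to zero in $\mathbb P_{\theta_n}$-probability. The filtrations are nested within each row, so we are in the setting of the martingale CLT for triangular arrays (e.g.\ McLeish 1974, or Hall--Heyde, Corollary 3.1). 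That theorem gives $S_n\Rightarrow N(0,1)$ under $\mathbb P_{\theta_n}$. Because $\Phi$ is continuous, this means $\mathbb P_{\theta_n}(S_n\le x)\to\Phi(x)$ for every $x$.

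One subtlety is that the martingale CLT needs a genuine sequence of probability spaces indexed by $n$, and here each row may live under a different $\mathbb P_{\theta_n}$. This is harmless, since convergence in distribution only concerns the marginal law of each row. If one prefers a self-contained argument instead of citing Hall--Heyde, I would use the characteristic function approach. One shows $E\prod_i e^{itW_{in}+t^2\sigma_{in}^2/2}$ stays close to $1$ using the Taylor bound $|e^{iy}-1-iy+y^2/2|\le\min(|y|^2,|y|^3)$. The small-$|W|$ part is controlled by $\epsilon$ and C2, and the large-$|W|$ part by C3. Then one replaces $\prod_i e^{t^2\sigma_{in}^2/2}$ by $e^{t^2/2}$ using C2.

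The main obstacle is this last step: making sure the random factor $\prod_i e^{t^2\sigma_{in}^2/2}$ is handled correctly when exact normalization holds only $Q$-almost surely. Here C2 makes it deterministic and equal to $e^{t^2/2}$ off $N$, so the step is clean. The remaining care is only the null-set bookkeeping described above.
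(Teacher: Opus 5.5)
Your proposal is correct, but it takes a different route from the paper. The paper does not invoke a martingale CLT; it re-proves one for each fixed realization $\theta$, following Dvoretzky (1972). It adjoins independent standard normals $G_i$, independent of $\mathcal{B}_{nn}$ and of $\mathbf{O}$, and sets $T_{kn}=\sum_{i>k}\sigma_{in}G_i$. It then telescopes $|E e^{iuS_n}-e^{-u^2/2}|$ through the hybrids $S_{kn}+T_{kn}$. Dvoretzky's Lemma~3.2 gives $T_{kn}=(\sum_{i>k}\sigma_{in}^2)^{1/2}\Phi_{k+1}$ with $\Phi_{k+1}$ independent, so each increment reduces to $E|E[e^{iuW_{kn}}-e^{iu\sigma_{kn}G_k}\mid\mathcal{B}_{k-1,n}]|$. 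That term is bounded by a third-order Taylor expansion together with C2, C3 and $E\max_k\sigma_{kn}^2\to 0$.

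The exact normalization C2 enters at the same structural point as in your fallback. In the paper it makes the remaining variance $\sum_{i>k}\sigma_{in}^2=1-\sum_{i\le k}\sigma_{in}^2$ predictable. In your product argument it makes $\prod_i e^{t^2\sigma_{in}^2/2}$ deterministic.

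Citing Hall--Heyde (Corollary~3.1) or McLeish collapses all of this to a few lines. Your bookkeeping of exceptional sets (countable union over $n,i$, rational $\epsilon$ plus monotonicity) is also more careful than the paper's, which leaves the dependence of the $Q$-null sets on $\epsilon$ and $u$ implicit. You even obtain a single null set valid for all $x$ at once. The paper's version buys self-containedness, and it reuses its conditional-characteristic-function bound to replace C3 by the conditional Lindeberg condition (D3) in Theorem~\ref{theorem CLT 2}. Your cited theorem covers that case directly as well, since its hypothesis is the conditional Lindeberg condition in probability.

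Two minor points. First, the nesting hypothesis in Hall--Heyde's Corollary~3.1 is across rows, $\mathcal{F}_{n,i}\subseteq\mathcal{F}_{n+1,i}$, not within rows; within-row nesting is just the filtration property. It can be dropped precisely because the limiting conditional variance is the constant $1$, and that is the real reason a different law $\mathbb{P}_{\theta_n}$ in each row is harmless.

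Second, your self-contained fallback needs one more estimate besides bounding the partial products $\prod_{i<k}e^{t^2\sigma_{in}^2/2}$ by $e^{t^2/2}$ via C2. You must also control $e^{a}(1-a)-1=O(a^2e^{a})$ with $a=t^2\sigma_{kn}^2/2$, which amounts to $\sum_k E\sigma_{kn}^4\le E\max_k\sigma_{kn}^2\to 0$. This follows from C3, since $\max_k\sigma_{kn}^2\le\epsilon^2+\sum_kE[W_{kn}^2 1[|W_{kn}|>\epsilon]\mid\mathcal{B}_{k-1,n}]$. The paper controls the same term in its Step~3.
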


\begin{theorem}
    \label{theorem CLT 2}
    Assume the triangular array $\{W_{in}:1\leq i\leq n;\ n\in\mathbb{N}\}$ satisfies the following three conditions.
    \begin{itemize}
        \item [(D1)] $\sum_{i=1}^{n}\mu_{in}\xrightarrow[n\to\infty]{\mathbb{P}} 0$ holds almost surely with respect to $Q$.
        \item [(D2)] $\sum_{i=1}^{n}\sigma^2_{in}\xrightarrow[n\to\infty]{\mathbb{P}} 1$ holds almost surely with respect to $Q$.
        \item [(D3)] For every $\epsilon>0$, $\lim_{n\to\infty}\sum_{i=1}^{n}E_{\mathbb{P}_{\theta_n}}[W^2_{in}1[|W_{in}|>\epsilon]|\mathcal{B}_{i-1,n}]=0$ holds almost surely with respect to distribution $Q$. 
    \end{itemize}
    Then, for given any $x\in\mathbb{R}$, 
    \begin{align}
        \lim_{n\to \infty} \mathbb{P}_{\theta_n}(S_n\leq x)=\Phi(x)
    \end{align}
    holds almost surely with respect to $Q$.
\end{theorem}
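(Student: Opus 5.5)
The plan is to reduce the statement to the classical martingale central limit theorem for triangular arrays, applied pathwise, that is, for each fixed realization of the parameter process $\mathbf O$. Let $\Omega_1$, $\Omega_2$ and $\Omega_3$ be the $Q$-full sets on which (D1), (D2) and (D3) hold. For $\epsilon$ in (D3) it suffices to take $\epsilon$ in the countable set $\{1/k:k\in\mathbb N\}$, because the Lindeberg sum is monotone in $\epsilon$. The intersection $\Omega^*=\Omega_1\cap\Omega_2\cap\Omega_3$ therefore still has $Q$-probability one. Fix a realization $(\theta_n)_{n\in\mathbb N}$ of $\mathbf O$ in $\Omega^*$. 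From then on we have a deterministic sequence of laws $\mathbb P_{\theta_n}$ on $\mathbb R^n$, and the claim becomes the ordinary statement $\mathbb P_{\theta_n}(S_n\le x)\to\Phi(x)$. Since $\Phi$ is continuous, it is enough to show $S_n\xrightarrow{d}N(0,1)$ under $\mathbb P_{\theta_n}$ for every such sequence. I read the limit in (D3) as convergence in $\mathbb P_{\theta_n}$-probability, which is the weakest version and covers the almost sure one.

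\textbf{Step 1: centering.} Write $W'_{in}=W_{in}-\mu_{in}$. Then $W'_{in}$ is a martingale difference array with respect to the row filtrations $(\mathcal B_{in})_i$, and
\begin{align}
S_n=\sum_{i=1}^n W'_{in}+\sum_{i=1}^n\mu_{in}.
\end{align}
The second sum tends to zero in probability by (D1). By Slutsky's lemma it therefore suffices to show $\sum_i W'_{in}\xrightarrow{d}N(0,1)$. The conditional variance of $W'_{in}$ is exactly $\sigma^2_{in}$, so (D2) gives the conditional-variance condition $\sum_i E[W'^2_{in}\mid\mathcal B_{i-1,n}]\to 1$ in probability.

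\textbf{Step 2: conditional Lindeberg for $W'$.} This is the main technical point, because the centering terms $\mu_{in}$ need not be small a priori. For any $\eta>0$,
\begin{align}
|\mu_{in}|\le \eta+E[|W_{in}|1(|W_{in}|>\eta)\mid\mathcal B_{i-1,n}]\le \eta+\eta^{-1}E[W^2_{in}1(|W_{in}|>\eta)\mid\mathcal B_{i-1,n}].
\end{align}
By (D3), the maximum over $i$ of the last term is bounded by the full sum and hence tends to zero in probability. This gives $\max_i|\mu_{in}|\to0$ in probability.

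Now fix $\epsilon>0$ and work on the event $\{\max_i|\mu_{in}|\le\epsilon/2\}$, whose probability tends to one. On this event $\{|W'_{in}|>\epsilon\}\subset\{|W_{in}|>\epsilon/2\}$, and $W'^2_{in}\le 2W^2_{in}+2\mu^2_{in}$. Using $\mu_{in}^2\le\epsilon^2/4$ and the conditional Markov inequality
\begin{align}
\mathbb P(|W_{in}|>\epsilon/2\mid\mathcal B_{i-1,n})\le 4\epsilon^{-2}E[W^2_{in}1(|W_{in}|>\epsilon/2)\mid\mathcal B_{i-1,n}],
\end{align}
we obtain
\begin{align}
\sum_i E[W'^2_{in}1(|W'_{in}|>\epsilon)\mid\mathcal B_{i-1,n}]\le 4\sum_i E[W^2_{in}1(|W_{in}|>\epsilon/2)\mid\mathcal B_{i-1,n}].
\end{align}
The right-hand side tends to zero in probability by (D3), applied with $\epsilon/2$. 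Hence the conditional Lindeberg condition holds for $W'$.

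\textbf{Step 3: conclusion.} Apply the martingale CLT for triangular arrays (Hall and Heyde, Corollary 3.1, or McLeish's theorem). Because the limiting conditional variance is the constant $1$, the nestedness condition on the $\sigma$-fields is not needed. The theorem yields $\sum_i W'_{in}\xrightarrow{d}N(0,1)$ under $\mathbb P_{\theta_n}$. Combined with Step 1, this gives $\mathbb P_{\theta_n}(S_n\le x)\to\Phi(x)$ for every $x$ and every realization in $\Omega^*$, which is the assertion almost surely with respect to $Q$.

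I expect Step 2 to be the only delicate point, namely controlling the random centering terms through (D3). The reduction to a fixed realization is routine once the countable intersection of null sets has been arranged. Theorem \ref{theorem CLT 1} is the special case in which (D1) and (D2) hold exactly.
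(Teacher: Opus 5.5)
Your proof is correct, but it takes a different route from the paper.

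\textbf{The paper's route.} It reproduces Dvoretzky's argument. After using (D1) to pass to the centred array, it introduces the predictable index $t_{1n}$ and a padding term $\widetilde W_{n+1,n}$ built from an independent Gaussian, so that the conditional variances of the modified array sum exactly to one. It then uses (D2) to show this modification is asymptotically negligible. Finally it applies its own Theorem~\ref{theorem CLT 1}, which is proved from scratch by replacing each $W_{kn}$ with $\sigma_{kn}G_k$ inside characteristic functions. That theorem must then be re-checked with the conditional Lindeberg condition (D3) in place of C3.

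\textbf{Your route.} You observe that once a single $Q$-null set is removed, the claim is just the classical martingale CLT with constant limiting conditional variance, applied to the fixed sequence of laws $(\mathbb P_{\theta_n})_n$. Your reduction to $\epsilon\in\{1/k\}$ is what makes one null set suffice. Dvoretzky's Theorem~2.2, or Corollary~3.1 of Hall and Heyde, then does all the work.

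\textbf{Comparison.} Your route is shorter and shows that nothing beyond the classical theorem is involved. The paper's route is self-contained, at the cost of length.

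Your Step~2 also does something the paper leaves implicit. The paper simply declares that one may ``treat $W_{in}$ as having $\mu_{in}=0$'' without checking that (D3) survives the centring. Your bound $\max_i|\mu_{in}|\to0$ in probability, together with the factor-$4$ comparison of the Lindeberg sums, is exactly what justifies that step.

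\textbf{A minor caveat.} Hall and Heyde's Corollary~3.1 is stated for square-integrable arrays, which you (like the paper) tacitly assume. If you want to drop that assumption, stop each row at the predictable index $\tau_n=\max\{m\le n:\sum_{j\le m}\sigma^2_{jn}\le 2\}$, much as the paper's $t_{sn}$ does.
\begin{itemize}
\item Since $\{i\le\tau_n\}\in\mathcal B_{i-1,n}$, the stopped differences $W'_{in}1(i\le\tau_n)$ are still martingale differences.
\item They satisfy $E\sum_i W'^2_{in}1(i\le\tau_n)\le 2$, so the stopped array is square integrable.
\item By (D2), $\mathbb P_{\theta_n}(\tau_n<n)\to0$, so stopping alters $S_n$ only on an event of vanishing probability.
\end{itemize}
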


\noindent 
Our proofs of Theorem \ref{theorem CLT 1} and \ref{theorem CLT 2} are very similar to the proof of Theorems 2.1 and 2.2 in \cite{dvoretzky1972asymptotic}. However,  the background setting is different and we use more constructive (intuitive) method for  the proof. Thus, we still demonstrate the complete proof here for the sake of  completeness.

\subsection{\texorpdfstring{Proof of Theorem \ref{theorem CLT 1}}{Proof of Theorem B.1}}
 \label{secb1}
 
\par We decompose the proof into multiple steps. In each step, we first brief the goal we aim to achieve and then show the details. 
\medskip

\par \textbf{Step 1} In the first step, we prove the following proposition is true. 
\par 
\begin{proposition} Suppose $(\Omega,\mathcal{F},\mathbb{P})$ is a probability space and, for any given $n\geq 2$, $\{Z_{i}:1\leq i\leq n\}$ is a group of real-valued random variables defined on the probability $(\Omega,\mathcal{F},\mathbb{P})$ such that $\mathbb{P}_{\theta_i}$ is the distribution of $Z_{i}$, where $\mathbb{P}_{\theta_i}$ is introduced above. Then, there exists a probability space $(\widetilde{\Omega},\widetilde{\mathcal{F}},\widetilde{\mathbb{P}})$ and random variables $\widetilde{Z}_{1}$,...,$\widetilde{Z}_{n}$ defined on  $(\widetilde{\Omega},\widetilde{\mathcal{F}},\widetilde{\mathbb{P}})$ satisfying the following two conditions. 
\begin{itemize}
    \item[(1)] $\mathbb{P}(Z_{i} \leq  u, Z_{i-1} \leq v])=\widetilde{\mathbb{P}}(\widetilde{Z}_{i}\leq u,\widetilde{Z}_{i-1}\leq v)$ for all  $i=2,...,n$ and all  $u,v \in \mathbb R$.
    \item[ (2)] $\widetilde{\mathbb{P}}(\widetilde{Z}_{i}\leq u|\mathcal{B}(\widetilde{Z}_{1},...,\widetilde{Z}_{i-1}))=\widetilde{\mathbb{P}}(\widetilde{Z}_{i}\leq u|\mathcal{B}(\widetilde{Z}_{i-1}))$. 
    \end{itemize}
\end{proposition}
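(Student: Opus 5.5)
The plan is to build $(\widetilde Z_1,\ldots,\widetilde Z_n)$ explicitly as a Markov chain driven by independent uniforms. Its transition kernels will be the conditional laws of $Z_i$ given $Z_{i-1}$ under $\mathbb P$. Take $\widetilde\Omega=[0,1]^n$ with the Borel $\sigma$-field and product Lebesgue measure $\widetilde{\mathbb P}$, and let $U_1,\ldots,U_n$ be the coordinate projections, which are i.i.d.\ uniform. Since each pair $(Z_{i-1},Z_i)$ is real-valued, the disintegration theorem (already used in Section~\ref{sec32}) gives a regular conditional distribution $\kappa_i(v,\cdot)$ of $Z_i$ given $Z_{i-1}=v$ for $i=2,\ldots,n$. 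Define the conditional quantile function
\begin{align}
G_i(v,u):=\inf\{x\in\mathbb R:\ \kappa_i(v,(-\infty,x])\ge u\},\qquad u\in(0,1),
\end{align}
and let $F_1^{-1}$ be the quantile function of $Z_1$. Set $\widetilde Z_1:=F_1^{-1}(U_1)$ and, recursively, $\widetilde Z_i:=G_i(\widetilde Z_{i-1},U_i)$.

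The key steps, in order, are as follows.

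(a) Show that $G_i$ is jointly Borel measurable, so that every $\widetilde Z_i$ is a random variable. I expect this to be the only delicate point. I would handle it by noting that $\{(v,u): G_i(v,u)\le x\}=\{(v,u): u\le \kappa_i(v,(-\infty,x])\}$. The right-hand side is measurable for each fixed $x$, because $v\mapsto\kappa_i(v,(-\infty,x])$ is measurable. Hence $\{G_i\le x\}$ is a measurable set for every $x$, which gives joint measurability.

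(b) Prove by induction on $i$ that $\widetilde Z_i$ has the same law as $Z_i$. The base case is the quantile transform: $F_1^{-1}(U_1)$ has law $\mathbb P_{Z_1}$. For the inductive step, $U_i$ is independent of $\widetilde Z_{i-1}$, and $G_i(v,U_i)$ has law $\kappa_i(v,\cdot)$ for each fixed $v$. Therefore
\begin{align}
\widetilde{\mathbb P}(\widetilde Z_i\le u,\widetilde Z_{i-1}\le v)=\widetilde E\big[\kappa_i(\widetilde Z_{i-1},(-\infty,u])\,1\{\widetilde Z_{i-1}\le v\}\big].
\end{align}
Since $\widetilde Z_{i-1}$ has the law of $Z_{i-1}$ by the induction hypothesis, this equals $E[\kappa_i(Z_{i-1},(-\infty,u])1\{Z_{i-1}\le v\}]=\mathbb P(Z_i\le u,Z_{i-1}\le v)$. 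This is property (1). Letting $v\to\infty$ gives the marginal of $\widetilde Z_i$, which closes the induction.

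(c) Verify the Markov property (2). The variables $\widetilde Z_1,\ldots,\widetilde Z_{i-1}$ are functions of $(U_1,\ldots,U_{i-1})$, and $U_i$ is independent of these. By the freezing lemma for conditional expectations,
\begin{align}
\widetilde{\mathbb P}(\widetilde Z_i\le u\mid \widetilde Z_1,\ldots,\widetilde Z_{i-1})=\kappa_i(\widetilde Z_{i-1},(-\infty,u]).
\end{align}
This is $\sigma(\widetilde Z_{i-1})$-measurable. By the tower property it therefore also equals $\widetilde{\mathbb P}(\widetilde Z_i\le u\mid\widetilde Z_{i-1})$, which proves (2).

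Apart from the measurability issue in step (a), everything is routine. Step (a) I would settle with the sublevel-set description given there.
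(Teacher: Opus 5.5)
Your proposal is correct and follows the same route as the paper: an inductive conditional-quantile construction $\widetilde Z_i = G_i(\widetilde Z_{i-1},U_i)$ driven by an independent uniform, with the Markov property coming from the independence of the new uniform from the past. The differences are only cosmetic. The paper starts by realizing the joint law of $(Z_1,Z_2)$ directly on $\mathbb{R}^2$ and enlarges the space one factor at a time, whereas you work on $[0,1]^n$ from the outset. You also supply the joint measurability of the conditional quantile function, which the paper leaves implicit.
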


\begin{proof}
     When $n=2$, we only need to define $(\widetilde{Z}_{1},\widetilde{Z}_{2}): \mathbb{R}^2\to \mathbb{R}^2$ and set $\widetilde{\mathbb{P}}$ as the joint distribution of $(Z_{1},Z_{2})$. Then, we can regard $\{\widetilde{Z}_{1},\widetilde{Z}_{2}\}$ as an array of random variables defined on probability space $(\mathbb{R}^2,\mathcal{B}(\mathbb{R}^2),\widetilde{\mathbb{P}})=:(\widetilde{\Omega}_2,\widetilde{\mathcal{F}}_2,\widetilde{\mathbb{P}}_2)$. It is easy to check that  $\{\widetilde{Z}_{1},\widetilde{Z}_{2}\}$ and $(\widetilde{\Omega}_2,\widetilde{\mathcal{F}}_2,\widetilde{\mathbb{P}}_2)$ are the variables and space satisfying (1) and (2). When $n=3$, the previous argument implies that $Z_{i}$ has the same distribution as $\widetilde{Z}_{i}$, $i=1,2$, and $\mathbb{P}(Z_2\leq u|Z_{1}=v)=\widetilde{\mathbb{P}}(\widetilde{Z}_{2}\leq u|\widetilde{Z}_1=v)$. Define $F_{2}(u|v)=\mathbb{P}(Z_3\leq u|Z_{2}=v)$ and $F_2^{-1}(u|v)$ as the corresponding conditional quantile function. Define $U:[0,1]\to [0,1]$ as a uniform random variable. Let $\widetilde{\Omega}=\widetilde{\Omega}_2\times [0,1]$, $\widetilde{\mathcal{F}}=\widetilde{\mathcal{F}}_2\times \mathcal{B}[0,1]$ and $\widetilde{\mathbb{P}}=\widetilde{\mathbb{P}}_2 \bigotimes\mathbb{P}_U$, where $\mathbb{P}_U$ is the distribution function of random variable $U$ and $\mathcal{B}[0,1]$ is the Borel field on $[0,1]$. Then,  for any given $v$, $F_2^{-1}(U|v)$ is a real-valued random variable whose distribution is $F_{2}(u,v)$. Let $\widetilde{Z}_{3}=F^{-1}_2(U,\widetilde{Z}_2)$. Then, $\widetilde{Z}_{1}$, $\widetilde{Z}_{2}$ and $\widetilde{Z}_{3}$ are well defined random variables on probability space $(\widetilde{\Omega},\widetilde{\mathcal{F}},\widetilde{\mathbb{P}})$ and 
\begin{align}
    \widetilde{\mathbb{P}}(\widetilde{Z}_{3}\leq u|\widetilde{Z}_2=v)=&E[1[F^{-1}_{2}(U,\widetilde{Z}_{2})\leq u]|\widetilde{Z}_{2}=v]\\
=&E[1[Z_{3}\leq u]|Z_{2}=\widetilde{Z}_{2}=v]=\mathbb{P}(Z_{3}\leq u|Z_{2}=v).
\end{align}
Furthermore, since $U$ is independent of the Borel field $\mathcal{B}(\widetilde{Z}_{1},\widetilde{Z}_2)$, $\widetilde{\mathbb{P}}(\widetilde{Z}_3\leq u|\mathcal{B}(\widetilde{Z}_2, \widetilde{Z}_1))=\widetilde{\mathbb{P}}(\widetilde{Z}_3\leq u|\mathcal{B}(\widetilde{Z}_2))$ holds almost surely. The assertion now follows by an induction argument.
\end{proof}

\par \textbf{Step 2} In this step, we construct normal approximation based on proper introduction of independent normal random variables. Moreover, all of our arguments in this step holds for any given $n$ and $O_n=\theta_n$.

\par Based on the $\mathcal{B}_{in}$ introduced before, we have
\begin{align}
    \mathcal{B}_{1n}\subset \mathcal{B}_{2n}\subset \dots \subset\mathcal{B}_{kn}\subset \dots\subset \mathcal{B}_{nn}
\end{align}
and $S_{kn}:=\sum_{i=1}^kW_{in}$ is measurable with respect to $\mathcal{B}_{kn}$. The validity of this statement is independent of the realization of random variable $O_n$. Meanwhile, we introduce a group of mutually independent standard normal random variables, denoted as $\{G_{i}:1\leq i\leq n\}$, and let them be independent of sigma algebra $\mathcal{B}_{nn}$ and the sigma algebra generated by $\{O_n\}$. Let $V_{in}=\sigma_{in}G_{i}$ and $T_{kn}=\sum_{i=k+1}^{n}\sigma_{in}G_{i}$. Then, based on condition C2 and independence among $G_{i}$'s, there exists $\mathbf{\Theta}_0\subset \mathbf{\Theta} $ such that $Q(\mathbf{\Theta}_0)=0$ and, for every $\mathbf{O}=\theta\in \mathbf{\Theta}\backslash\mathbf{\Theta}_0$, $T_{0n}=:T_{n}$ is a standard normal random variable. Then, for any given $u\in\mathbb{R}$ and realization $\theta$, by setting $S_{0n}=T_{nn}=0$, we have
\begin{align}
  \triangle(u):=&| E_{\mathbb{P}_{\theta_n}}[\exp (iuS_n)]-\exp(-\frac{u^2}{2})|= |E_{\mathbb{P}_{\theta_n}}[\exp (iuS_n)]-E_{\mathbb{P}_{\theta_n}}[\exp(iuT_n)]|\\
  =&|E_{\mathbb{P}_{\theta_n}}[\exp (iu(S_{nn}+T_{nn}))]-E_{\mathbb{P}_{\theta_n}}[\exp(iu(T_{0n}+S_{0n}))]|\\
  \leq &\sum_{k=1}^n|E_{\mathbb{P}_{\theta_n}}[\exp (iu(S_{kn}+T_{kn}))]-E_{\mathbb{P}_{\theta_n}}[\exp(iu(S_{k-1,n}+T_{k-1,n}))]|\\
  =&\sum_{k=1}^n|E_{\mathbb{P}_{\theta_n}}[\exp (iu(S_{k-1,n}+T_{kn}))(\exp(iu W_{kn})-\exp(iu \sigma_{kn}G_{k}))]|.
\end{align}
According to Lemma 3.2 in \cite{dvoretzky1972asymptotic}, for any $n$ and $k=1,...,n$, there exists a standard normal random variable $\Phi_{k+1}$ independent of $\mathcal{B}(G_j:j\leq k)\times \mathcal{B}_{nn}$ such that
\begin{align}
    T_{kn}=\Big(\sum_{i=k+1}^{n}\sigma^2_{in}\Big)^{\frac{1}{2}}\Phi_{k+1}.
\end{align}
Then, due to the aforementioned independence of $\Phi_{k+1}$, we have
\begin{align}
    &|E_{\mathbb{P}_{\theta_n}}[\exp (iu(S_{k-1,n}+T_{kn}))(\exp(iu W_{kn})-\exp(iu \sigma_{kn}G_{k}))]|\\
    \leq &\int_{\mathbb{R}}|E_{\mathbb{P}_{\theta_n}}[\exp \{iu(S_{k-1,n}+(\sum_{i=k+1}^{n}\sigma^2_{in})^{\frac{1}{2}}x)\}(\exp(iu W_{kn})-\exp(iu \sigma_{kn}G_{k}))]|d\Phi(x).
\end{align}
By denoting $\mathcal{H}_{k+1}=\mathcal{B}(G_{j}:j\geq k+1)$, this asserts
\begin{align}
    \triangle(u) \leq &\sum_{k=1}^n\int_{\mathbb{R}}\Big|E_{\mathbb{P}_{\theta_n}}[\exp \{iu(S_{k-1,n}+(\sum_{i=k+1}^{n}\sigma^2_{in})^{\frac{1}{2}}x)\}\\& \times E_{\mathbb{P}_{\theta_n}}[(\exp(iu W_{kn})-\exp(iu \sigma_{kn}G_{k}))|\mathcal{B}_{k-1,n}\times \mathcal{H}_{k+1}]]\Big|d\Phi(x)\\
  \leq &\sum_{k=1}^nE_{\mathbb{P}_{\theta_n}}\Big|E_{\mathbb{P}_{\theta_n}}[(\exp(iu W_{kn})-\exp(iu \sigma_{kn}G_{k}))|\mathcal{B}_{k-1,n}\times \mathcal{H}_{k+1}]\Big|\\
    =&\sum_{k=1}^nE_{\mathbb{P}_{\theta_n}}\Big|E_{\mathbb{P}_{\theta_n}}[(\exp(iu W_{kn})-\exp(iu \sigma_{kn}G_{k}))|\mathcal{B}_{k-1,n}]\Big|=:\triangle^*(u),
\end{align}
where the last equality is according to the independence among $\mathcal{B}_{k}$, $\mathcal{H}_{k+1}$ and $\mathcal{B}(G_k)$.
\medskip

\par \textbf{Step 3} The main goal of this step is to show that, for every $u$, $\triangle'(u)$ converges to $0$ holds almost surely with respect to measure $Q$.
\smallskip

\par According to condition C1, there exists some $\mathbf{\Theta}_1\subset \mathbf{\Theta}$ such that $Q(\mathbf{\Theta}_1)=0$ for every $\mathbf{O}=\theta\in\mathbf{\Theta}\backslash(\mathbf{\Theta}_1 \bigcup\mathbf{\Theta}_0)$, $\mu_{in}=0$. Recall that, for any $x\in \mathbb{R}$, 
\begin{align}
    \Big|e^{ix}-\sum_{j=0}^{n}\frac{(ix)^n}{j!}\Big|\leq \min\Big\{\frac{|x|^{n+1}}{(n+1)!},\frac{2|x|^n}{n!}\Big\}.
\end{align}
The results above imply that, for every $n$, $k=1,2,...,n$ and $\epsilon>0$, 
\begin{align}
\label{inequality}
        &|E_{\mathbb{P}_{\theta_n}}[\exp(iu W_{kn})-1-\frac{1}{2}u^2\sigma^2_{kn}|\mathcal{B}_{k-1,n}]| \\
    \leq &\frac{1}{6}|u|^3E[|W_{kn}|^31[|W_{kn}|\leq \epsilon]|\mathcal{B}_{k-1,n}]+u^2E[W^2_{kn}1[|W_{kn}|> \epsilon]|\mathcal{B}_{k-1,n}] \\
    \leq & \frac{1}{6}|u|^3\epsilon\sigma_{kn}^2+u^2E[W^2_{kn}1[|W_{kn}|> \epsilon]|\mathcal{B}_{k-1,n}]
\end{align}
holds for every $\theta\in \mathbf{\Theta}\backslash(\mathbf{\Theta}_0\bigcup\mathbf{\Theta}_1)$, where $Q(\mathbf{\Theta}_1 \bigcup\mathbf{\Theta}_0)=0$. Then,
\begin{align}
    &\sum_{k=1}^{n}E_{\mathbb{P}_{\theta_n}}|E_{\mathbb{P}_{\theta_n}}[\exp(iu W_{kn})-1-\frac{1}{2}u^2\sigma^2_{kn}|\mathcal{B}_{k-1,n}]|\\
    \leq& \frac{1}{6}|u|^3\epsilon+u^2\sum_{k=1}^nE[W_{kn}^21[|W_{kn}|>\epsilon]],\ \forall\ \theta\in \mathbf{\Theta}\backslash(\mathbf{\Theta}_0\bigcup\mathbf{\Theta}_1).
\end{align}
Together with C3, for every $u\in\mathbb{R}$ and $\epsilon>0$,
\begin{align}
\label{star 1}
    \lim_{n\to\infty}\sum_{k=1}^{n}E_{\mathbb{P}_{\theta_n}}|E_{\mathbb{P}_{\theta_n}}[\exp(iu W_{kn})-1-\frac{1}{2}u^2\sigma^2_{kn}|\mathcal{B}_{k-1,n}]|\leq \frac{1}{6}|u|^3\epsilon
\end{align}
holds almost surely with respect to $Q$. 

\par On the other hand, based on the normality of $\sigma_{kn}G_{k}$ and the independence between $\mathcal{B}_{k-1,n}$ and $\mathcal{B}(G_k)$, repeating the argument above indicates
\begin{align}
    \label{star 2}
    &\sum_{k=1}^nE_{\mathbb{P}_{\theta_n}}|E_{\mathbb{P}_{\theta_n}}[\exp(iu \sigma_{kn}G_{k})-1-\frac{1}{2}u^2\sigma^2_{kn}|\mathcal{B}_{k-1,n}]| \\
    =&\sum_{k=1}^nE_{\mathbb{P}_{\theta_n}}\Big|\exp\Big(-\frac{\sigma^2_{kn}u^2}{2}\Big)-1-\frac{1}{2}u^2\sigma^2_{kn}\Big|\leq 3u^4\sum_{k=1}^nE[\sigma^4_{kn}]\\
    \leq& 3u^4E\Big[(\max_{1\leq k\leq n}\sigma^2_{kn})\Big(\sum_{k=1}^n\sigma_{kn}^2\Big)\Big]=3u^4E[\max_{1\leq k\leq n}\sigma^2_{kn}]
\end{align}
holds almost surely with respect to $Q$. Meanwhile, please note that C3 implies $\lim_{n\to\infty}E[\max_{1\leq k\leq n}W_{kn}^2]=0$. Using Jensen inequality asserts that, for every $u$,  
\begin{align}
\label{star 3}
    \lim_{n}3u^4E[\max_{1\leq k\leq n}\sigma^2_{kn}]=0
\end{align}
holds almost surely with respect to $Q$. 

\par Finally, by combining \eqref{star 1}-\eqref{star 3} and Jensen inequality, we obtain, for every $u\in\mathbb{R}$, $\lim_{n}\triangle'(u)=0$ holds almost surely with respect to $Q$, which finishes the proof of Theorem \ref{theorem CLT 1}.\\

\subsection{\texorpdfstring{Proof of Theorem \ref{theorem CLT 2}}{Proof of Theorem B.2}}
 \label{secb2}
 
\par Similar to the proof of Theorem 2.2 in \cite{dvoretzky1972asymptotic}, we transform conditions (D1)-(D3) to some settings similar to conditions C1-C3 in Theorem \ref{theorem CLT 1} and then apply Theorem \ref{theorem CLT 1} to finish the proof.

\par First, based on the definition of $\mu_{in}$, we have $W_{in}=W_{in}-\mu_{in}+\mu_{in}=:\bar{W}_{in}+\mu_{in}$. Condition (D1) indicates that there exists $\mathbf{\Theta}_2\subset \mathbf{\Theta}$ such that $Q(\mathbf{\Theta}_2)=Q(\mathbf{\Theta})$ and, for each $\theta_n\in\mathbf{\Theta}_2$, $|\sum_{i=1}^nW_{in}-\sum_{i=1}^n\bar{W}_{in}|=o_{\mathbb{P}_{\theta_n}}(1)$. Regarding $E[\bar{W}_{in}]=0$, we only need to prove Theorem \ref{theorem CLT 2} under conditions C1, (D2) and (D3). More specifically, from now on, we treat $W_{in}$ as random variable whose $\mu_{in}=E[W_{in}|\mathcal{B}_{in}]=0$ and focus on the case where $\theta_n\in\mathbf{\Theta}_{2}$.

\par Now we introduce random variable $t_{sn}=\max\{m\leq n: \sum_{i=1}^m\sigma^2_{in}\leq s \}$, $s\in\mathbb{N}$. Define 
\begin{align}
           &\widetilde{W}_{in}=W_{in}\ \ \ \text{if}\ i\leq t_{sn}\\
           &\widetilde{W}_{in}=0\ \ \ \ \ \ \ \text{otherwise},\\
\text{and}\ &\widetilde{W}_{n+1,n}=\Big(1-\sum_{i=1}^{t_{1n}}\sigma^2_{in}\Big)G,
\end{align}
where $G$ is a standard normal variable independent of $\mathcal{B}_{nn}$. Since $t_{sn}$ is measurable with respect to $\mathcal{B}_{nn}$ for each $n$ for each positive integer $s$, it can be shown that the following equations hold almost surely with respect to $\mathbb{P}_{\theta_n}$ for every $\theta_n\in \mathbf{\Theta}$,
\begin{align}
\label{star 4}
    &E[\widetilde{W}_{n+1,n}|\mathcal{B}_{nn}]=\Big(1-\sum_{i=1}^{t_{1n}}\sigma^2_{in}\Big)E[G]=0,\\
    \label{star 5}
    &E[\widetilde{W}^2_{n+1,n}|\mathcal{B}_{nn}]=\Big(1-\sum_{i=1}^{t_{1n}}\sigma^2_{in}\Big)E[G^2]=1-\sum_{i=1}^{t_{1n}}\sigma^2_{in}.
\end{align}
It can be shown that, for every $\mathbf{O}=\theta\in \mathbf{\Theta}_2$,
\begin{align}
\label{star 6}
    &E[\widetilde{W}_{in}|\mathcal{B}_{i-1,n}]=:\widetilde{\mu}_{in}=0,\ i=1,2,...,n;\\
\label{star 7}
    &\sum_{i=1}^{n+1}(E[\widetilde{W}^2_{in}|\mathcal{B}_{i-1,n}]-\widetilde{\mu}_{in})=1.
\end{align}

\par First, we show that, under condition (D2),  $\overline{\lim}_{n\to\infty}\mathbb{P}_{\theta_n}(|t_{1n}-n|\geq 1)=0$ holds almost surely with respect to $Q$. We prove this by showing its contradiction is wrong. Assume event $\overline{\lim}_{n\to\infty}\mathbb{P}_{\theta_n}(|t_{1n}-n|>1)=:\overline{\lim}_{n\to\infty}P_n>0$ has strictly positive probability mass with respect to $Q$. We denote this mass as $\xi>0$ and $\xi$ here is apparently independent of $n$. More specifically, there exists some $\Theta^*\subset \mathbf{\Theta}$ such that $Q(\Theta^*)>0$ and, for every $\mathbf{O}=\theta\in \Theta^*$, we can obtain a subsequence $\{P_{n_l}:l\in\mathbb{N}\}\subset \{P_{n}:n\in\mathbb{N}\}$ such that, for some $\eta>0$ independent of $n$ (and $n_l$), $P_{n_l}>\eta$ holds for every $l$. This means $\mathbb{P}_{\theta_{n_l}}(t_{1n_l}\leq n-1)>\eta$ holds for every $l$ and $\mathbf{O}_{n_l}=\theta_{n_l}\in \Theta^*$. By denoting $M_k=\sum_{j=1}^{k}\sigma^2_{jn}$, $k\in\mathbb{N}$, for each aforementioned $n_l$, the definition of $t_{1n_l}$ indicates
\begin{align}
   \{|t_{1n_l}-n_l|\geq 1\}=\{t_{1n_l}\leq n_l-1\}\subset \{M_{n_l}>1\}.
\end{align}
Thus, $Q(\{\theta:\mathbb{P}_{\theta}(M_{n_l}>1)>\eta\})>0$ holds for every $l$. On the other hand, condition (D2) means
$Q(\{\theta_{n}:\lim_{n}\mathbb{P}_{\theta_n}(|M_{n}-1|>\eta)=0\})=1$. Then, the paradox proves $\overline{\lim}_{n\to\infty}\mathbb{P}_{\theta_n}(|t_{1n}-n|\geq 1)=0$ holds almost surely with respect to $Q$.

\par By denoting $\widetilde{S}_{m}=\sum_{i=1}^m \widetilde{W}_{in}$, $m=1,2,...,n+1$, we have approximation error
\begin{align}
    &S_{n}- \widetilde{S}_{n+1}=\sum_{i=t_{1n}+1}^{t_{2n}} W_{in}+\sum_{i=t_{2n}+1}^{n} W_{in}-\widetilde{W}_{n+1,n}=:\mathbb{D}_{1}+\mathbb{D}_{2}-\mathbb{D}_3.
\end{align}
For $\mathbb{D}_1$, since we have treat $W_{in}$'s as random variables satisfying condition C1, we have
\begin{align}
    E[\mathbb{D}^2_1]&=E[\sum_{i=t_{1n}+1}^{t_{2n}}W^2_{in}]+2\sum_{t_n+1\leq i<i'\leq t_{2n}}E[W_{in}W_{i'n}]\\
    &=E[\sum_{i=t_{1n}+1}^{t_{2n}}\sigma^2_{in}]+2\sum_{t_n+1\leq i<i'\leq t_{2n}}E[W_{in}E[W_{i'n}|\mathcal{B}_{i'-1,n}]]=E[\sum_{i=t_{1n}+1}^{t_{2n}}\sigma^2_{in}].
\end{align}
Condition (D3) indicates that $\sum_{i=t_{1n}+1}^{t_{2n}}\sigma^2_{in}$ is integrable. Then, based on condition (D2) and dominated convergence theorem, $\mathbb{D}_1$ converges in probability to $0$ almost surely with respect to $Q$. 
For $\mathbb{D}_2$, regarding that condition (D2) also indicates $\mathbb{P}_{\theta_n}(t_{2n}\neq n)\to 0$ holds almost surely with respect to $Q$, condition (D3) guarantees $\mathbb{D}_2$ converges in probability to $0$ almost surely with respect to $Q$. This argument holds true for term  $\mathbb{D}_3$ as well due to \eqref{star 4}, \eqref{star 5} and condition (D2). Therefore, we only need to focus on $\widetilde{S}_{n+1}$ and equations \eqref{star 6} and \eqref{star 7} have ensured that array $\{\widetilde{W}_{in}:1\leq i\leq n+1\}$ satisfies condition C1 and C2 of Theorem \ref{theorem CLT 1}. 

\par Another crucial observation is that Theorem \ref{theorem CLT 1} still holds if we replace condition C3 with condition (D3). In the proof of Theorem \ref{theorem CLT 1}, inequality \eqref{inequality} is valid for the approximation of both $E[\exp(iuW_{kn})]$ and $E[\exp(iuG_{k})]$. Then, condition (D3) is sufficient to ensure this approximation of Fourier transformation converges to 0 and thus guarantees the pointwise convergence of $\triangle(u)$ there holds almost surely with respect to $Q$.

\par Therefore, to finish the proof, we need to prove, for any $\epsilon>0$, 
\begin{align}
    \lim_{n\to\infty}\sum_{i=1}^{n+1}E[\widetilde{W}^2_{in}1[|\widetilde{W}_{in}|>\epsilon]|\mathcal{B}_{i-1,n}]=0
\end{align}
holds almost surely with respect to $Q$. Since
\begin{align}
    &\sum_{i=1}^{n+1}E[\widetilde{W}^2_{in}1[|\widetilde{W}_{in}|>\epsilon]|\mathcal{B}_{i-1,n}]\\
    =&\sum_{i=1}^{t_{1n}}E[\widetilde{W}^2_{in}1[|\widetilde{W}_{in}|>\epsilon]|\mathcal{B}_{i-1,n}]+E[\widetilde{W}^2_{n+1,n}1[|\widetilde{W}_{n+1,n}|>\epsilon]|\mathcal{B}_{nn}]\\
    =&\sum_{i=1}^{t_{1n}}E[W^2_{in}1[|W_{in}|>\epsilon]|\mathcal{B}_{i-1,n}]+E[\widetilde{W}^2_{n+1,n}1[|\widetilde{W}_{n+1,n}|>\epsilon]|\mathcal{B}_{nn}]\\
    \leq &\sum_{i=1}^{n}E[W^2_{in}1[|W_{in}|>\epsilon]|\mathcal{B}_{i-1,n}]+E[\widetilde{W}^2_{n+1,n}1[|\widetilde{W}_{n+1,n}|>\epsilon]|\mathcal{B}_{nn}],
\end{align}
combining condition (D3) and \eqref{star 5} finish the proof.

\newpage
\section{Technical Results}\label{secc}

\def\theequation{C.\arabic{equation}}
\setcounter{equation}{0}

\subsection{Propositions}

\begin{proposition}
    \label{prop 5}
    Given sample size $n$, suppose $\{A_{kn}: k=1,2,...,K_n\}$ is a collection  of subsets of sampling region $\mathbf{R}_n$ and define $N_n(A)$ as the number of observed locations in the set $A\subset\mf R_n$. Suppose $\epsilon_n\searrow 0$ and define event
    \begin{align}
        \mathcal{E}_n&=\bigcap_{k=1}^{K_n}\{m_n\leq N_n(A_{kn})\leq M_n\},\\
        M_n&=(1+\log(1+\epsilon_n))n\textbf{Leb}(\lambda_n^{-1}A_{kn}\cap\mathbf{D}_n),\\
        m_n&=(1-\epsilon_n)n\textbf{Leb}(\lambda_n^{-1}A_{kn}\cap\mathbf{D}_n).
    \end{align}
   We further assume $n\textbf{Leb}(\lambda_n^{-1}A_{kn}\cap\mathbf{D}_n)\to \infty$ holds for every $k$ and there exists some $\eta>0$ such that $K_{n}= \mathcal{O}(n^{\eta})$. Then, by letting $\epsilon_n= (\frac{(\eta+2)\log n}{n\textbf{Leb}(\lambda_n^{-1}A_{kn}\cap\mathbf{D}_n)})^{0.5}$, together with Assumptions \ref{as spatial locations 1} and \ref{as spatial locations 1}, we obtain
   \begin{align}
       \mathbb{Q}(\underline{\lim}_{n\to\infty}\mathcal{E}_n)=1,
   \end{align}
    where $A$ is any Borel set $A\subset ([-\lambda_n,\lambda_n]^{2})^d$. $\mathbb{Q}$ is the distribution of vectorized triangular-array of spatial locations, denoted as $(\s_{in}:1\leq i\leq n)_{n\geq 1}$ such that $\mathbb{Q}(B)=\mathbb{P}(\{\omega:(\s_{in}:1\leq i\leq n)_{n\geq 1}(\omega)\in B\})$ holds for any Borel set $B\subset \mathbb{R}^{\infty}$. 
\end{proposition}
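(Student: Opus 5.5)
The plan is a binomial (Chernoff/Bernstein) concentration bound for each fixed set, followed by a union bound over the $K_n=\mathcal O(n^\eta)$ sets, and then Borel--Cantelli.

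Fix $n$ and $k$, and write $p_{kn}:=P(\x_{1n}\in\lambda_n^{-1}A_{kn}\cap\mathbf D_n)$. Since the $\x_{in}$ are i.i.d. with density $f$, the count satisfies
\[
N_n(A_{kn})=\sum_{i=1}^n 1[\x_{in}\in \Lambda_n^{-1}A_{kn}]\sim \mathrm{Bin}(n,p_{kn}).
\]
By Assumption \ref{as spatial locations 1}, $\underline f\le f\le\overline f$, so $p_{kn}$ is comparable to $\mu_{kn}:=\textbf{Leb}(\lambda_n^{-1}A_{kn}\cap\mathbf D_n)$. I read the bounds $m_n,M_n$ as multiples of $np_{kn}$; absorbing the density factor only changes constants, or one takes $f$ uniform as in the normalisation implicit in the statement. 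The multiplicative Chernoff bounds give
\begin{align}
P\big(N_n(A_{kn})<(1-\epsilon)np_{kn}\big)&\le \exp(-\epsilon^2 np_{kn}/2),\\
P\big(N_n(A_{kn})>(1+\epsilon')np_{kn}\big)&\le \exp(-\epsilon'^2 np_{kn}/3),
\end{align}
the second valid for $0<\epsilon'\le1$.

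For the upper bound, set $1+\epsilon'=1+\log(1+\epsilon_n)$. Since $\log(1+\epsilon)\ge \epsilon/2$ for small $\epsilon$, the exponent is at least of order $\epsilon_n^2 np_{kn}$. With $\epsilon_n^2 n\mu_{kn}=(\eta+2)\log n$, each tail is at most $n^{-c(\eta+2)}$ for a constant $c$. That constant is fixed by the Chernoff constants and the ratio $p_{kn}/\mu_{kn}$. If $c<1$, I inflate the constant in $\epsilon_n$; the statement's constant $(\eta+2)$ is meant for the normalised case, where the lower tail gives exactly $n^{-(\eta+2)/2}$. The extra care needed is to make sure the final exponent exceeds $\eta+1$.

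Next, take the union over $k\le K_n$:
\[
P(\mathcal E_n^c)\le 2K_n\max_k n^{-c(\eta+2)}=\mathcal O\big(n^{\eta-c(\eta+2)}\big).
\]
This is summable in $n$ provided $c(\eta+2)-\eta>1$. That condition holds once the constant in $\epsilon_n$ is chosen appropriately; with the lower-tail constant $1/2$, replacing $(\eta+2)$ by $2(\eta+2)$ suffices. Borel--Cantelli then gives $\mathbb Q(\limsup_n \mathcal E_n^c)=0$, which is the same as $\mathbb Q(\liminf_n\mathcal E_n)=1$. Two technical points need checking. First, $\epsilon_n\to0$, which follows from $n\mu_{kn}\gg\log n$ uniformly in $k$; this is implicit in requiring $\epsilon_n\searrow0$. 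Second, the triangular-array structure is harmless: Borel--Cantelli needs no independence across $n$, only summability of the marginal probabilities.

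The main obstacle is matching constants. The stated $\epsilon_n$ with $(\eta+2)$ only gives a summable bound if the Chernoff exponent constant and the density ratio are handled carefully. I would first try the sharp lower-tail bound $\exp(-\epsilon^2 np/2)$ together with Bernstein for the upper tail. If the summability margin fails, I would rescale $\epsilon_n$ by a fixed constant, which leaves all later uses of the proposition unchanged since they only need $\epsilon_n\to0$.
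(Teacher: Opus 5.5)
Your argument is correct once the statement is properly normalised, and it is the standard proof. Each count satisfies $N_n(A_{kn})\sim\mathrm{Bin}(n,p_{kn})$; you bound each tail exponentially, take a union bound over the $\mathcal O(n^{\eta})$ sets, and finish with the first Borel--Cantelli lemma, which indeed needs no independence across rows. The paper gives no argument of its own: it calls the proposition a direct corollary of Lemma 5.1 of Lahiri (2003).

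What your self-contained route gains is that it exposes two defects in the statement as printed. You should present your repairs as necessary, not as contingencies.

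First, the centring must be $np_{kn}$ with $p_{kn}=\int_{\Lambda_n^{-1}A_{kn}}f$. This is not a change of constants. When $f\not\equiv1$, the ratio $N_n(A_{kn})/(n\mu_{kn})$ concentrates at $p_{kn}/\mu_{kn}\neq1$, so the Lebesgue-centred event with $\epsilon_n\to0$ eventually fails. Note also that a uniform density on $\mathbf D$ equals $1$ only when $\textbf{Leb}(\mathbf D)=1$.

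Second, the constant $\eta+2$ is too small. Even with sharp tails, the union bound gives only $P(\mathcal E_n^c)\lesssim K_n n^{-(\eta+2)/2}=\mathcal O(n^{\eta/2-1})$, which is not summable. This is not an artefact of the union bound. Take independent rows, $f\equiv1$, and $K_n\asymp n^{\eta}$ disjoint sets of mass $n^{-\eta}$ with $0<\eta<1$.
\begin{itemize}
\item A local-CLT estimate gives, for each lower tail, $q_n\gtrsim n^{-(\eta+2)/2}(\log n)^{-1/2}$.
\item Negative association of the multinomial counts gives $P(\mathcal E_n)\le(1-q_n)^{K_n}$.
\item The second Borel--Cantelli lemma then shows that $\mathcal E_n^c$ occurs infinitely often almost surely.
\end{itemize}
So the statement as written is false.

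State the result instead with $\epsilon_{kn}^2=C\log n/(np_{kn})$ for a fixed $C>2(\eta+1)$, for example $C=2(\eta+2)$, under the hypothesis $\min_k np_{kn}/\log n\to\infty$. For the upper tail, $C=2(\eta+2)$ suffices only with a sharp exponent, such as Bernstein's $\epsilon'^2np/(2+2\epsilon'/3)$ combined with $\log(1+\epsilon_n)=\epsilon_n(1+o(1))$. Your cruder bounds $\log(1+\epsilon)\ge\epsilon/2$ and exponent $\epsilon'^2np/3$ would force $C>12(\eta+1)$.
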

Proposition \ref{prop 5} is a direct corollary of Lemma 5.1 of \cite{lahiri2003central} and we thus omit its proof here. This proposition is very crucial for us to derive the asymptotic normality of our estimator. In particular, since big and small block technique (\cite{Bernstein1927}) is a main tool in our proof, Proposition \ref{prop 5} allows us to easily control the number of observed locations in each big or small block.

\begin{proposition}
    \label{prop 6}
    Under Assumption \ref{technical assumptions}, by defining $(\s_{in},t)=:(i,t)$, we obtain
    \begin{align}
       \sum_{(i,t)\in\tilde{B}^-_{k,j}}\sum_{(i',t'),(i'',t'')\in \tilde{B}^c_{k,j} \atop ||(i',t')-(i'',t'')||_{F}\leq q_n}&K_{0b}(\s_{in}-\s_{i'n},t-t')K_{0b}(\s_{in}-\s_{i''n},t-t'')\beta^{\frac{\delta}{1+\delta}}(||(i',t')-(i'',t'')||_{F})\\
       &=o(n\textbf{Card}(\widetilde{B}^{-}_{k,j}\cap\Gamma_{n})Tp_T b^4),\ \ \  a.s.-[\mathbb{Q}],
    \end{align}
    where “$a.s.-[\mathbb{Q}]$” indicates “almost surely with respect to measure $\mathbb{Q}$” and $\textbf{Card}(\widetilde{B}^{-}_{k,j}\cap\Gamma_{n})=\sum_{i=1}^{n}1[\s_{in}\in\widetilde{B}_{k,j}^-]$ is a random variable. 
\end{proposition}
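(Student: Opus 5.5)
The plan is to use the product structure $K_{0b}=K_{0ii'}K_{0tt'}$ to bound the triple sum by a product of a purely spatial factor and a purely temporal factor. These two factors are then handled separately: the spatial one with Proposition \ref{prop 5} and U-statistic laws of large numbers, and the temporal one deterministically. For fixed $(k,j)$ I would write $\|(i',t')-(i'',t'')\|_F\ge \max\{\|\s_{i'n}-\s_{i''n}\|,|t'-t''|\}$. Since $\beta$ is non-increasing, this gives $\beta^{\delta/(1+\delta)}(\|(i',t')-(i'',t'')\|_F)\le \beta^{\delta/(2(1+\delta))}(\|\s_{i'n}-\s_{i''n}\|)\,\beta^{\delta/(2(1+\delta))}(|t'-t''|)$. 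The restriction $\|(i',t')-(i'',t'')\|_F\le q_n$ is relaxed to $\|\s_{i'n}-\s_{i''n}\|\le q_n$ and $|t'-t''|\le q_n$. The sum is then bounded by $\mathbb{S}\cdot\mathbb{T}$, where
\[
\mathbb{S}=\sum_{\s_{in}\in \widetilde B^-_{k,j,S}}\ \sum_{\|\s_{i'n}-\s_{i''n}\|\le q_n}K_{0ii'}K_{0ii''}\beta^{\frac{\delta}{2(1+\delta)}}(\|\s_{i'n}-\s_{i''n}\|),
\qquad
\mathbb{T}=\sum_{t\in I_{bj}}\ \sum_{|t'-t''|\le q_n}K_{0tt'}K_{0tt''}\beta^{\frac{\delta}{2(1+\delta)}}(|t'-t''|).
\]

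The temporal factor is deterministic. For fixed $t$, $K_{0tt'}\neq0$ forces $|t-t'|\in[v_0-Tb,v_0+Tb]$, so there are $O(Tb)$ admissible $t'$. For each such $t'$, the sum over $t''$ with $|t'-t''|\le q_n$, weighted by $\beta^{\delta/(2(1+\delta))}(|t'-t''|)$, is bounded by $\sum_{l\ge0}\beta^{\delta/(2(1+\delta))}(l)<\infty$ by (T3). Since $t$ ranges over a big block of size $p_T$, this gives $\mathbb{T}\lesssim p_T\,Tb$.

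For the spatial factor I would condition on $\s_{in}$ and $\s_{i'n}$. The number of $\s_{i''n}$ in the annulus $\{l\le\|\s_{i''n}-\s_{i'n}\|<l+1\}$ that also lie in the support of $K_{0ii''}$ is, uniformly on an event of $\mathbb{Q}$-probability one eventually, of order $\frac{n}{\lambda_n^2\bb_n}\,l$. This follows from Proposition \ref{prop 5}, applied to the polynomially many annuli and cells defined by the block grid. Summing over $l$ with the weight $\beta^{\delta/(2(1+\delta))}(l)$ and using (T3) ($\sum l\beta^{\delta/(1+\delta)}(l)<\infty$, after Cauchy–Schwarz if needed) gives a factor $O\bigl(\frac{n}{\lambda_n^2\bb_n}\bigr)$. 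The remaining sum $\sum_{i}\sum_{i'}K_{0ii'}$ over $\s_{in}\in\widetilde B^-_{k,j,S}$ is, by the strong law for U-statistics (or by Proposition \ref{prop 5} on the $b$-neighbourhood of $\s_{in}+\h_0$, whose rescaled area is $\asymp b^2$), of order $\textbf{Card}(\widetilde B^-_{k,j}\cap\Gamma_n)\,n b^2$. Hence
\[
\mathbb{S}\lesssim \textbf{Card}(\widetilde B^-_{k,j}\cap\Gamma_n)\,nb^2\cdot \frac{n}{\lambda_n^2\bb_n}.
\]

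Combining the two bounds, the total is $O\bigl(\textbf{Card}(\widetilde B^-_{k,j}\cap\Gamma_n)\,nTp_Tb^3\cdot \frac{n}{\lambda_n^2\bb_n}\bigr)$. Condition (T1) gives $\frac{n}{\lambda_n^2\bb_n}=o(b^{-\xi})$ with $\xi\le1/3<1$. So the claimed $o(\cdot\, b^4)$ rate holds provided one extra factor of $b$ is gained. I expect this extra factor to be the main obstacle.

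To gain it, I would exploit that $\s_{i''n}$ must fall in the $\lambda_nb$-window of $K_{0ii''}$ \emph{and} within distance $q_n$ of $\s_{i'n}$. The number of such points is therefore $O\bigl(\frac{n}{\lambda_n^2\bb_n}\min\{q_n^2,(\lambda_nb)^2\}\bigr)$ rather than the full kernel count $nb^2$. Because (T4)(2) gives $q_n=o(\lambda_n\bb_n n^{-\theta})$, I would redo the count with the $q_n$-ball replacing the kernel window for $i''$. This yields a factor $\frac{n q_n^2}{\lambda_n^2\bb_n}$ in place of $nb^2$, with the $\beta$-sum still summable. I would then check that $\frac{nq_n^2}{\lambda_n^2\bb_n}\cdot b^{-1}\to0$, using (T1) together with (T4)(3), namely $(q_n\log n)^6 b=o(1)$. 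If this exponent bookkeeping fails, the fallback is to keep one $\beta$ factor at distance $\ge1$ and use the small-block geometry to remove the diagonal contribution.

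Finally, all the counting estimates above come from Proposition \ref{prop 5} applied to $O(n^\eta)$ sets. They therefore hold uniformly in $(k,j)$, so the bound is almost sure with respect to $\mathbb{Q}$ simultaneously for all blocks.
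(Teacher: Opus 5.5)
The obstacle you flag is real, and the repair you propose does not remove it.

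\emph{Why the repair fails.} In your own bound the factor $nb^2$ comes from $\sum_{i'}K_{0ii'}$, that is, from counting the points $\s_{i'n}$ in the window of $K_{0ii'}$. The constraint $\|\s_{i'n}-\s_{i''n}\|\le q_n$ restricts $i''$ given $i'$, not $i'$ itself. So it cannot turn $nb^2$ into $\frac{nq_n^2}{\lambda_n^2\bb_n}$. Also, the $i''$-sum is already $O(\frac{n}{\lambda_n^2\bb_n})$ because of the $\beta$-weights, so intersecting with a $q_n$-ball gains nothing. The limit $\frac{nq_n^2}{\lambda_n^2\bb_n}b^{-1}\to0$ is false as well: under DEI each of $\frac{n}{\lambda_n^2\bb_n}$, $q_n^2$ and $b^{-1}$ diverges. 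A single extra power of $b$ would not suffice anyway, because $b^4\frac{n}{\lambda_n^2\bb_n}$ is not $o(b^4)$.

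\emph{Why no repair can work once $\beta(1)>0$.} Consider the terms with $i''=i'$ and $t''=t'+1$. They have $\|(i',t')-(i'',t'')\|_F=1\le q_n$ and satisfy $K_{0b}(\s_{in}-\s_{i''n},t-t'')=K_{0ii'}K_{0t,t'+1}$. Up to a negligible boundary layer they lie in $\widetilde B^c_{k,j}$. They alone contribute at least
\[
\beta^{\frac{\delta}{1+\delta}}(1)\sum_{(i,t)\in\widetilde B^-_{k,j}}\ \sum_{(i',t')\in\widetilde B^c_{k,j}}K_{0ii'}^2K_{0tt'}K_{0t,t'+1}\asymp \textbf{Card}(\widetilde B^-_{k,j}\cap\Gamma_n)\,p_T\cdot nb^2\cdot Tb .
\]
This is the same order as $H_2(k,j)$ in Step 2.1.1.a. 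Its ratio to $n\,\textbf{Card}(\widetilde B^-_{k,j}\cap\Gamma_n)Tp_Tb^4$ diverges like $b^{-1}$. The diagonal $(i'',t'')=(i',t')$, which the statement does not exclude and where $\beta(0)=1$, has the same effect. Your bound $O(\textbf{Card}\cdot nTp_Tb^3\frac{n}{\lambda_n^2\bb_n})$ therefore misses the true order by at most the factor $\frac{n}{\lambda_n^2\bb_n}$. It is the stated rate that is wrong, not your estimate.

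\emph{Where the paper's proof goes wrong.} The paper follows your route step for step: it splits $\beta$ via (T2)(i), bounds $\mathbf K_2\lesssim Tp_Tb$ by Lemma \ref{lemma 4}, and controls the annuli $M_{i'l}$ by Proposition \ref{prop 5}. It obtains the two missing powers of $b$ by writing the $i''$-sum as $|M_{i'l}|b^2\cdot\frac{1}{|M_{i'l}|b^2}\sum_{\s_{i''n}\in M_{i'l}}K_{0ii''}$ and treating the normalized average as bounded. This yields $\mathbf K_1=O(\xi_n\textbf{Card}\cdot nb^4\frac{n}{\lambda_n^2\bb_n})$.

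That step is invalid. The average runs over a unit-width annulus around $\s_{i'n}$, not over the whole sampling region. On $\{K_{0ii'}\neq0\}$ this annulus is centred at a point of the support of $K_{0ii''}$, viewed as a function of $\s_{i''n}$. Already for $l=0$ the point $i''=i'$ contributes $K_{0ii'}/|M_{i'0}|$, with $|M_{i'0}|=O(\frac{n}{\lambda_n^2\bb_n})=o(b^{-\xi})$, so the average is $\gg b^2$. Equivalently, $\mathbf K_1\ge\sum_{i,i'}K_{0ii'}^2\asymp\textbf{Card}\cdot nb^2$. This contradicts the claimed bound, since $b^{2-\xi}\log n\to0$.

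So you have not missed an idea from the paper; the proposition has to be weakened. Since the lower bound above is not even $o(\textbf{Card}\cdot nTp_Tb^3)$, the estimate \eqref{proof 6} cannot be obtained through this proposition either.

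\emph{A smaller point, shared with the paper.} After your symmetric split, the annulus sum needs $\sum_l l\,\beta^{\delta/(2(1+\delta))}(l)<\infty$. Condition (T3) does not imply this, by Cauchy--Schwarz or otherwise; take $\beta^{\delta/(1+\delta)}(l)=l^{-7/2}$. You can avoid this by summing $\beta^{\delta/(1+\delta)}$ of the joint space--time distance over shells of volume $\asymp l^2$. That uses exactly $\sum_l l^2\beta^{\delta/(1+\delta)}(l)<\infty$.
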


\begin{proof}
    Note that,  for each $(k,j)$, we have the following representation.
\begin{align}
    \widetilde{B}^{-}_{k,j}=\widetilde{B}^{-}_{k,j,S}\times \widetilde{B}^{-}_{k,j,T},\\
    \widetilde{B}^{c}_{k,j}=\widetilde{B}^{c}_{k,j,S}\times \widetilde{B}^{c}_{k,j,T},
\end{align}
where the $\widetilde{B}^{-}_{k,j,S}$ and $\widetilde{B}^{-}_{k,j,T}$ denote the projections of the set $\widetilde{B}^{-}_{k,j}$ onto the spatial plane and time axis respectively. So do the $\widetilde{B}^{c}_{k,j,S}$ and $\widetilde{B}^{c}_{k,j,T}$. Let
\begin{align}
    &K_{0ir,S}=K \Big (\frac{\lambda_n^{-1}(\mf s_{in,1}-\mf s_{rn,1}-h_{0,1})}{b} \Big )K\Big (\frac{(\lambda_n\bb_n)^{-1}(\mf s_{in,2}-\mf s_{rn,2}-h_{0,2})}{b} \Big ) , \label{def:KS}\\    
& K_{0ts, T}=K \Big (\frac{|t-s|-v_0}{Tb} \Big )\label{def:KT}.
\end{align}
Additionally, basic geometrical properties of Euclidean space asserts that, for any $(\mathbf{u},v)\neq (\mathbf{u}',v')\in\mathbb{R}^3$ and $\delta>0$, 
\begin{align}
    \beta^{\frac{\delta}{1+\delta}}(||(\mathbf{u},v)-(\mathbf{u}',v')||_E)\leq \beta^{\frac{\delta}{1+\delta}}(C_E(||\mathbf{u}-\mathbf{u}'||_E+|v-v'|)),
\end{align}
where $C_{E}$ is a constant independent of $n$, $\mathbf{u}$ and $v$.

\par According to (T2), (i) of Assumption \ref{technical assumptions}, we obtain 
\begin{align}
    &\sum_{(i,t)\in\tilde{B}^-_{k,j}}\sum_{(i',t'),(i'',t'')\in \tilde{B}^c_{k,j} \atop ||(i',t')-(i'',t'')||_E\leq q_n}K_{0b}(\s_{in}-\s_{i'n},t-t')K_{0b}(\s_{in}-\s_{i''n},t-t'')\beta^{\frac{\delta}{1+\delta}}(||(i',t')-(i'',t'')||_E)\\
    \lesssim&  \sum_{\s_{in}\in \widetilde{B}^-_{k,j,S}}\sum_{ \s_{i'n},\s_{i''n}\in \widetilde{B}^c_{k,j,S} \atop ||\s_{i'n}-\s_{i''n}||_E\leq q_n}K_{0ii', S}K_{0ii'', S}\beta^{\frac{\delta}{2(1+\delta)}}(||\s_{i'n}-\s_{i''n}||_E)  \\&\times \sum_{t\in\widetilde{B}^-_{k,j,T}}\sum_{t',t''\in \widetilde{B}^c_{k,j,T}\atop |t'-t''|\leq q_n}K_{0tt', T}K_{0tt'', T}\beta^{\frac{\delta}{2(1+\delta)}}(|t'-t''|)\\
    =&:\mathbf{K}_1\cdot \mathbf{K}_2.
\end{align}

First, we bound $\mathbf{K}_2$. Simple algebra, observing the design of the blocks and  Lemma \ref{lemma 4} yield 
\begin{align}
    \mathbf{K}_2 & \lesssim\sum_{t\in\widetilde{B}^-_{k,j,T}}\sum_{t'\in\widetilde{B}^c_{k, j, T}}K_{0tt',T} \sum_{t''\in \widetilde{B}^c_{k,j,T}\atop 1\leq t''-t'\leq q_n}K_{0tt'', T}\beta^{\frac{\delta}{2(1+\delta)}}(t''-t')  \\
    & \lesssim\sum_{t\in\widetilde{B}^-_{k,j,T}}\sum_{t'\in\widetilde{B}^c_{k,j, T}}K_{0tt', T}\lesssim Tp_Tb.
\end{align}

Recall that $\{\s_{1n},...,\s_{nn}\}=:\Gamma_{n}$. As for $\mathbf{K}_1$, we introduce sets 
\begin{align}
    M_{i'l} &=\{\s_{i''n}\in \tilde{B}_{k,j}^c\cap \Gamma_n:||\s_{i''n}-\s_{i'n}||_E\leq (l,l+1]\},\\
    \widetilde{M}_{i'l} & =\{\s\in \tilde{B}_{k,j}^c\cap \R_n:||\s-\s_{i'n}||_E\leq (l,l+1]\},\ l\geq 1;\\
     M_{i'0} & =\{\s_{i''n}\in \tilde{B}_{k,j}^c\cap \Gamma_n:||\s_{i''n}-\s_{i'n}||_E<  1\},\\
     \widetilde{M}_{i'0} & =\{\s\in \tilde{B}_{k,j}^c\cap \R_n:||\s-\s_{i'n}||_E< 1\}.
\end{align}
We also define the following events used to control the number of locations in each block simultaneously. 
\begin{align}
    \mathcal{M}_n(k,j)=&\bigcap_{\s_{in}\in\widetilde{B}^-_{k,j}}\bigcap_{\s_{i'n}\in \widetilde{B}^c_{k,j}}\bigcap_{l=1}^{[q_n]}\Big\{n\textbf{Leb}(\Lambda_n^{-1}\widetilde{M}_{i'l}\cap D_n)\leq \textbf{Card}(M_{i'l})\leq \frac{2nl}{\lambda_n^2\bb_n}\Big\}\notag\\
    &\bigcap\Big\{n\textbf{Leb}(\Lambda_n^{-1}\widetilde{M}_{i'0}\cap D_n)\leq \textbf{Card}(M_{i'0})\leq \frac{2n}{\lambda_n^2\bb_n} \Big\},\notag\\
    \mathbf{B}^-_{n}(k,j)=&\Big\{n\textbf{Leb}(\Lambda_n^{-1}\widetilde{B}^-_{k,j}\cap D_n)\leq  \textbf{Card}(\widetilde{B}^-_{k,j}\cap\Gamma_n)\leq \frac{nr_{1n}r_{2n}}{\lambda_n^2\bb_n}  \Big\},\notag\\
    \mathbf{B}_{n}(k,j)=&\Big\{n\textbf{Leb}(\Lambda_n^{-1}\widetilde{B}_{k,j}\cap D_n)\leq  \textbf{Card}(\widetilde{B}_{k,j}\cap\Gamma_n)\leq \frac{nr_{1n}r_{2n}}{\lambda_n^2\bb_n} \Big\}, \label{def:Bkj}\\
    \mathbf{B}^c_{n}(k,j)=&\Big\{n\textbf{Leb}(\Lambda_n^{-1}\widetilde{B}^c_{k,j}\cap D_n)\leq  \textbf{Card}(\widetilde{B}^c_{k,j}\cap\Gamma_n)\leq n\Big\},\notag\\ \mathcal{M}_{n}=&\bigcap_{k=1}^{R_n}\bigcap_{j=1}^{J_n}\Big(\mathcal{M}_{n}(k,j)\bigcap \mathbf{B}_{n}(k,j)\bigcap \mathbf{B}^-_{n}(k,j)\bigcap\mathbf{B}^c_{n}(k,j)\Big) \notag,
\end{align}
where $R_n$ and $J_n$ are introduced above. Based on the definition of $R_n$ and $J_n$, we can regard $\mathcal{M}_n$ as an intersection of $K_n$-many $\mathcal{M}_{n}(k,j)$'s and $K_{n}\lesssim n^{\eta_1}$, for some $\eta_1>0$ independent of $n$. Then Proposition \ref{prop 5} asserts that 
\begin{align}
    \mathbb{Q}(\overline{\lim}_{n}\mathcal{M}^c_{n})=0.
\end{align}
For any set $A\subset \R_n$, by abbreviating $\textbf{Card}(A\cap\Gamma_n)$ as $|A|$, we have
\begin{align}
    \mathbf{K}_1&=\sum_{\s_{in}\in\widetilde{B}^-_{k,j}}\sum_{\s_{i'n}\in\widetilde{B}^c_{k,j}}K_{0ii'}\sum_{\s_{i''n}\in\widetilde{B}^c_{k,j}\atop ||\s_{i'n}-\s_{i''n}||_E\leq q_n}K_{0ii''}\beta^{\frac{\delta}{2(1+\delta)}}(||\s_{i'n}-\s_{i''n}||_E)\\
     & =\sum_{\s_{in}\in\widetilde{B}^-_{k,j}}\sum_{\s_{i'n}\in\widetilde{B}^c_{k,j}}K_{0ii'}\bigg (\sum_{l=1}^{[q_n]}\sum_{\s_{i''n}\in M_{i'l}}K_{0ii''}\beta^{\frac{\delta}{2(1+\delta)}}(||\s_{i'n}-\s_{i''n}||_E) \\
      &  ~~~~~~~~~~~~~~~~~~~~~~~~~~~~~~~~~~~~~~ +\sum_{\s_{i''n}\in M_{i'0}}K_{0ii''}\beta^{\frac{\delta}{2(1+\delta)}}(||\s_{i'n}-\s_{i''n}||_E) \bigg )\\
    & \leq \frac{|\widetilde{B}^-_{k,j}||\widetilde{B}^c_{k,j}|b^4}{|\widetilde{B}^-_{k,j}||\widetilde{B}^c_{k,j}|b^2}\sum_{\s_{in}\in\widetilde{B}^-_{k,j}}\sum_{\s_{i'n}\in\widetilde{B}^c_{k,j}}K_{0ii'} \\
     &  ~~~~\times \bigg (\sum_{l=1}^{[q_n]}|M_{i'l}|\beta^{\frac{\delta}{2(1+\delta)}}(l)\frac{1}{|M_{i'l}|b^2}\sum_{\s_{i''n}\in M_{i'l}}K_{0ii''}
    +\frac{|M_{i'0}|}{|M_{i'0}|b^2}\sum_{\s_{i''n}\in M_{i'0}}K_{0ii''}\bigg ).
\end{align}
Therefore, for arbitrary slow $\xi_{n}\nearrow\infty$, since we have assumed that $n=T$, we obtain
\begin{align}
   &\mathbb{Q} \left(\overline{\lim}_{n}\Big\{\mathbf{K}_1\geq \xi_{n}|\widetilde{B}^-_{k,j}|nb^4\Big(\frac{n}{\lambda_n^2\bb_n}\Big)\Big\}\right)\\
   \leq&  \mathbb{Q} \left(\overline{\lim}_{n}\Big\{\Big\{\mathbf{K}_1\geq \xi_{n}|\widetilde{B}^-_{k,j}|nb^4\Big(\frac{n}{\lambda_n^2\bb_n}\Big)\Big\}\bigcap\mathcal{M}_n\Big\}\right)+\mathbb{Q}(\overline{\lim}_n \mathcal{M}_n^c)=0.
\end{align}
Meanwhile, by letting $\xi_n=\log n$, T1 of Assumption \ref{technical assumptions} indicates that 
\begin{align}
    \xi_{n}n|\widetilde{B}^-_{k,j}|b^4\Big(\frac{n}{\lambda_n^2\bb_n}\Big)Tp_{T}b= o(n|\widetilde{B}^-_{k,j}|Tp_T b^4).
\end{align}
Thus, $\lim_{n\to\infty}\frac{\mathbf{K}_1\mathbf{K}_2}{n\textbf{Card}(\widetilde{B}^-_{k,j}\cap \Gamma_n)Tp_T b^4}=0$ holds almost surely with respect to $\mathbb{Q}$.
\end{proof}
\begin{proposition}
    \label{prop 7}
Under Assumption \ref{technical assumptions}, by defining 
\begin{align}
    A_{k,j}^c&=\{(i,t)\neq (i',t')\in \widetilde{B}^{-}_{k,j}; (r,s),(r',s')\in\widetilde{B}^{c}_{k,j}:\max\{||(i,t)-(i',t')||_{F},||(r,s)-(r',s')||_{F}\}\leq q_n \}\},\\
    A_{k,j}&=\{(i,t)\neq (i',t')\in \widetilde{B}^{-}_{k,j}; (r,s),(r',s')\in\widetilde{B}^{c}_{k,j}:\max\{||(i,t)-(i',t')||_{F},||(r,s)-(r',s')||_{F}\}> q_n \}\},
\end{align}
we obtain that
\begin{itemize}
    \item [(E1)] \begin{align}
         \sum_{A_{k,j}^c }K_{0b}(\s_{in}-\s_{rn},t-s)K_{0b}(\s_{i'n}-\s_{r'n},t'-s') &\beta^{\frac{\delta}{1+\delta}}(||(i,t)-(i',t')||_{F})\beta^{\frac{\delta}{1+\delta}}(||(r,s)-(r',s')||_{F})\\
        &=o(n\textbf{Card}(\widetilde{B}^{-}_{k,j}\cap\Gamma_{n})Tp_T b^4)\ \ \ a.s.-[\mathbb{Q}]; 
    \end{align}
    \item [(E2)]\begin{align}
        \sum_{A_{k,j} }K_{0b}(\s_{in}-\s_{rn},t-s)K_{0b}(\s_{i'n}-\s_{r'n},t'-s') \beta^{\frac{\delta}{1+\delta}}(q_n)=o(n\textbf{Card}(\widetilde{B}^{-}_{k,j}\cap\Gamma_{n})Tp_T b^3)\ \  a.s.-[\mathbb{Q}].
    \end{align}
\end{itemize}
\end{proposition}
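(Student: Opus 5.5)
The plan is to follow the template of the proof of Proposition \ref{prop 6}: factor each kernel sum into a spatial and a temporal part, bound the temporal part deterministically by Lemma \ref{lemma 4} and the block design, and bound the spatial part on the high-probability event $\mathcal{M}_n$ built from Proposition \ref{prop 5}. By (T2)(i) and the geometric inequality $\beta^{\frac{\delta}{1+\delta}}(\|(\mathbf u,v)-(\mathbf u',v')\|)\lesssim \beta^{\frac{\delta}{2(1+\delta)}}(\|\mathbf u-\mathbf u'\|)\beta^{\frac{\delta}{2(1+\delta)}}(|v-v'|)$, every summand in (E1) and (E2) splits into a product of a spatial factor (involving $K_{0ir,S}$, $K_{0i'r',S}$ from \eqref{def:KS}) and a temporal factor (involving $K_{0ts,T}$, $K_{0t's',T}$ from \eqref{def:KT}).

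For (E1), the constraint $\max\{\|(i,t)-(i',t')\|,\|(r,s)-(r',s')\|\}\le q_n$ means that $(i',t')$ lies within $q_n$ of $(i,t)$ and $(r',s')$ within $q_n$ of $(r,s)$. The sum therefore factorizes as $\mathbf K_1\mathbf K_2$.

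The temporal factor $\mathbf K_2$ is a sum over $t,t'\in\widetilde B^-_{k,j,T}$ and $s,s'\in\widetilde B^c_{k,j,T}$ with $|t-t'|,|s-s'|\le q_n$. It carries two summable weights $\beta^{\frac{\delta}{2(1+\delta)}}(|t-t'|)$ and $\beta^{\frac{\delta}{2(1+\delta)}}(|s-s'|)$, which are finite by (T3). Summing these weights first, and then using Lemma \ref{lemma 4} on $\sum_{t,s}K_{0ts,T}\lesssim Tp_Tb$ with $K_{0t's',T}\le\|K\|_\infty$, gives $\mathbf K_2\lesssim Tp_Tb$.

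For the spatial factor $\mathbf K_1$, I would reuse the annuli $M_{i'l}$ around $\s_{in}$ and around $\s_{rn}$ and work on the event $\mathcal M_n$. On $\mathcal M_n$ each annulus contains at most $2nl/(\lambda_n^2\bb_n)$ points, and $\sum_l l\beta^{\frac{\delta}{2(1+\delta)}}(l)<\infty$ by (T3). Together with $b^{-2}E[K_{0ir}]\le\overline E$ and the strong law, this yields
\[
\mathbf K_1\lesssim \xi_n\,\textbf{Card}(\widetilde B^-_{k,j}\cap\Gamma_n)\,nb^2\Big(\frac{n}{\lambda_n^2\bb_n}\Big)^2
\]
almost surely with respect to $\mathbb Q$. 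Condition (T1) gives $n/(\lambda_n^2\bb_n)=o(b^{-\xi})$ with $\xi\le 1/3$, so the product $\mathbf K_1\mathbf K_2$ is $o(n\,\textbf{Card}(\widetilde B^-_{k,j}\cap\Gamma_n)Tp_Tb^4)$ once $\xi_n=\log n$ is absorbed. The exponent bookkeeping in this step — matching the $b^4$ on the right, i.e.\ checking that the second kernel supplies another factor $b^2$ after normalizing by cardinalities — needs care, and I would redo it exactly as in Proposition \ref{prop 6}.

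For (E2) the factor $\beta^{\frac{\delta}{1+\delta}}(q_n)$ is pulled out, and the remaining kernel double sum is bounded crudely. Summing $(i',t')\in\widetilde B^-_{k,j}$ and $(r',s')\in\widetilde B^c_{k,j}$ freely, it factorizes as
\[
\Big(\sum_{(i,t),(r,s)}K_{0b}\Big)\Big(\sum_{(i',t'),(r',s')}K_{0b}\Big).
\]
The first sum is $\lesssim \textbf{Card}(\widetilde B^-_{k,j}\cap\Gamma_n)\,p_T\,nTb^3$ and the second is $\lesssim (nT)^2b^3$, by Proposition \ref{prop 5} and the strong law for kernel U-sums. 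Condition (T4)(1), $nT\beta^{\frac{\delta}{1+\delta}}(q_n)=o(1)$, together with (T4)(6) if needed, then shows $\beta^{\frac{\delta}{1+\delta}}(q_n)(nT)^2b^3=o(nTb^{0})$. This gives the bound $o(n\,\textbf{Card}(\widetilde B^-_{k,j}\cap\Gamma_n)Tp_Tb^3)$.

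The main obstacle is this crude step in (E2). The free double sum carries one factor $nT$ too many, so I would try to cancel it using the precise rate from (T4)(1) and (T4)(6). If that fails, I would split $A_{k,j}$ according to which of the two pairs is far apart. On the close pair I would reuse the local $\mathbf K_1\mathbf K_2$ counting from (E1), so that only one pair carries a global $nT$-sum.

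Uniformity in $(k,j)$ comes from $R_nJ_n=O(n^{\eta})$ and the union over the events in $\mathcal M_n$, exactly as in Proposition \ref{prop 6}.
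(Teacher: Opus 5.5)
\textbf{(E1).} Your plan follows the paper's template: the spatial/temporal factorization, Lemma \ref{lemma 4} with (T3) for $\mathbf K_2$, and annuli with Proposition \ref{prop 5} for the spatial factor. But the bound you actually obtain does not give the stated rate, and no repair of the ``$b^4$ bookkeeping'' exists. Write $\kappa:=\textbf{Card}(\widetilde B^-_{k,j}\cap\Gamma_n)$. Your bound $\mathbf K_1\mathbf K_2\lesssim \xi_n\,\kappa\, nTp_Tb^3\,(n/\lambda_n^2\bb_n)^2$ divided by $n\kappa Tp_Tb^4$ gives $\xi_n(n/\lambda_n^2\bb_n)^2/b\ge c\,\xi_n/b\to\infty$, because $n/(\lambda_n^2\bb_n)$ is bounded below under both DE and DEI asymptotics. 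Condition (T1) only bounds this quantity from above, so it cannot help. The missing factor would have to come from the second kernel $K_{0i'r'}$, and it cannot, because $A^c_{k,j}$ only requires $(i,t)\neq(i',t')$. Take $i'=i$, $t'=t+1$, $r'=r$, $s'=s$. These tuples lie in $A^c_{k,j}$ and carry weight $\beta^{\delta/(1+\delta)}(1)\beta^{\delta/(1+\delta)}(0)=\beta^{\delta/(1+\delta)}(1)$. They also have $K_{0i'r'}=K_{0ir}$ and $K_{0t's'}=K_{0(t+1)s}$, with $\sum_s K_{0ts}K_{0(t+1)s}\asymp\sum_s K_{0ts}^2$ since $Tb\to\infty$. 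So these terms alone are at least a constant times $\sum_{(i,t)\in\widetilde B^-_{k,j}}\sum_{(r,s)\in\widetilde B^c_{k,j}}K_{0b}^2=H_2(k,j)/(E[X_0^2])^2$. That is the leading variance term, whose sum over blocks is $\asymp(nT)^2b^3$ by Proposition \ref{prop 9}. Hence (E1) cannot hold at rate $b^4$, nor at any rate $o(\cdot\,b^3)$, whenever $\beta(1)>0$; your own bound is essentially sharp. The paper's proof reaches $b^4$ only by two unjustified steps. First, it writes the spatial factor over $\s_{in}\neq\s_{i'n}$, $\s_{rn}\neq\s_{r'n}$, which silently discards exactly these terms. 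Second, it asserts that $K_{0i'r'}$ averages to $O(b^2)$ over $N_{il}\times N_{rl}$, although those points lie within $q_n=o(\lambda_n\bb_n)$ of $(\s_{in},\s_{rn})$ rather than spreading over the whole sampling region. ``Redoing it exactly as in Proposition \ref{prop 6}'' would therefore reproduce that error rather than close the gap.

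\textbf{(E2).} Here the failing step is the one you flagged. You replace the sum over $(i',t')\in\widetilde B^-_{k,j}$ and $(r',s')$ by the global quantity $(nT)^2b^3$. That leaves the requirement $\beta^{\delta/(1+\delta)}(q_n)(nT)^2b^3=o(1)$, whereas (T4)(1) yields only $o(nTb^3)$, which Assumption \ref{technical assumptions} does not control. Your line ``$=o(nTb^0)$, this gives the bound'' is a non sequitur. You do not need to split $A_{k,j}$; just keep $(i',t')$ in the block, as the definition of $A_{k,j}$ already requires. On the events of Proposition \ref{prop 5} the sum is then at most $\beta^{\delta/(1+\delta)}(q_n)\bigl(\sum_{(i,t)\in\widetilde B^-_{k,j}}\sum_{(r,s)}K_{0b}\bigr)^2\lesssim\beta^{\delta/(1+\delta)}(q_n)(\kappa p_T\, nTb^3)^2$. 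Its ratio to $n\kappa Tp_Tb^3$ is $nT\beta^{\delta/(1+\delta)}(q_n)\cdot\kappa p_Tb^3=o(\kappa p_Tb^3)$ by (T4)(1). Moreover $\kappa\lesssim n(q_n\log n)^2/(\lambda_n^2\bb_n)$, $p_T=q_T\log T$ with $q_T=q_n$, and $T\asymp n$. So (T1), giving $n/(\lambda_n^2\bb_n)=o(b^{-\xi})$, and (T4)(3), giving $(q_n\log n)^3=o(b^{-1/2})$, yield $\kappa p_Tb^3=o(b^{5/2-\xi})\to0$. This is the Proposition \ref{prop 8}-type counting that the paper invokes for (E2).
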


\begin{proof}
The proof of Proposition \ref{prop 7} is nearly the same as the proof of Proposition \ref{prop 6}. Thus, we only sketch the proof and highlight the key steps.
\par First, note that
\begin{align}
    &\sum_{A^c_{k,j}}K_{0b}(\s_{in}-\s_{rn},t-s)K_{0b}(\s_{i'n}-\s_{r'n},t'-s') \\
    \lesssim &\sum_{(i,t)\neq (i',t')\in \tilde{B}^-_{k,j}\atop
    ||\s_{in}-\s_{i'n}||_{E}\leq q_n,|t-t'|\leq q_n}\sum_{(r,s),(r',s')\in \tilde{B}^c_{k,j}\atop
    ||\s_{rn}-\s_{r'n}||_{E}\leq q_n,|s-s'|\leq q_n}K_{0ir, S}K_{0ts, T}K_{0i'r', S}K_{0t's', T}\\
    \lesssim &  \sum_{\s_{in},\s_{i'n}\in \tilde{B}^-_{k,j,S}; \s_{rn},\s_{r'n}\in \tilde{B}^c_{k,j,S} \atop
    ||\s_{in}-\s_{i'n}||_{E}\leq q_n,||\s_{rn}-\s_{r'n}||_{E}\leq q_n}K_{0ir, S}K_{0i'r', S} \times \sum_{t,t'\in \tilde{B}^-_{k,j,T}; s,s'\in \tilde{B}^c_{k,j,T} \atop
   |t-t'|\leq q_n; |s-s'|\leq q_n}K_{0ts, T}K_{0t's', T} .
\end{align}
Then, similar to the argument used in the proof of Proposition \ref{prop 6}, (T2), (i) of Assumption \ref{technical assumptions} asserts
\begin{align}
     &\sum_{A_{k,j}^c }K_{0b}(\s_{in}-\s_{rn},t-s)K_{0b}(\s_{i'n}-\s_{r'n},t'-s') \beta^{\frac{\delta}{1+\delta}}(||(i,t)-(i',t')||_E)\beta^{\frac{\delta}{1+\delta}}(||(r,s)-(r',s')||_E)\\
     \lesssim&  \sum_{\s_{in}\neq \s_{i'n}\in \tilde{B}^-_{k,j,S}; \s_{rn}\neq \s_{r'n}\in \tilde{B}^c_{k,j,S} \atop
    ||\s_{in}-\s_{i'n}||_{E}\leq q_n;||\s_{rn}-\s_{r'n}||_{E}\leq q_n}K_{0ir, S}K_{0i'r',S}\beta^{\frac{\delta}{2(1+\delta)}}(||\s_{in}-\s_{i'n}||_E)\beta^{\frac{\delta}{2(1+\delta)}}(||\s_{rn}-\s_{r'n}||_E) \\
    &\times \sum_{t,t'\in \tilde{B}^-_{k,j,T}; s,s'\in \tilde{B}^c_{k,j,T} \atop
   |t-t'|\leq q_n; |s-s'|\leq q_n}K_{0ts, T}K_{0t's', T}\beta^{\frac{\delta}{2(1+\delta)}}(|t-t'|)\beta^{\frac{\delta}{2(1+\delta)}}(|s-s'|) \\
   \lesssim& 
    \mathbf{K}_3 Tp_Tb,
\end{align}
where 
$$
\mathbf{K}_3:= \sum_{\s_{in}\neq \s_{i'n}\in \tilde{B}^-_{k,j,S}; \s_{rn}\neq \s_{r'n}\in \tilde{B}^c_{k,j,S} \atop
    ||\s_{in}-\s_{i'n}||_{E}\leq q_n;||\s_{rn}-\s_{r'n}||_{E}\leq q_n}K_{0ir, S}K_{0i'r',S}\beta^{\frac{\delta}{2(1+\delta)}}(||\s_{in}-\s_{i'n}||_E)\beta^{\frac{\delta}{2(1+\delta)}}(||\s_{rn}-\s_{r'n}||_E) 
$$
and 
the last inequality follows from  Lemma \ref{lemma 4} and condition (T3) in Assumption \ref{technical assumptions}. To bound $\mathbf{K}_3$, similar to the proof of Proposition \ref{prop 6}, we introduce the following sets
\begin{align}
        N_{il}&=\{\s_{i'n}\in \widetilde{B}^-_{k,j,S}\cap\Gamma_n:||\s_{i'n}-\s_{in}||_E\in (l,l+1]\}, \\
        N_{rl}& =\{\s_{r'n}\in \widetilde{B}^c_{k,j,S}\cap\Gamma_n:||\s_{r'n}-\s_{rn}||_E\in (l,l+1]\},\ l\geq 1;\\
    N_{i0}&=\{\s_{i'n}\in \widetilde{B}^-_{k,j,S}\cap\Gamma_n:0<||\s_{i'n}-\s_{in}||_E< 1\}, \\
    N_{r0} & =\{\s_{r'n}\in \widetilde{B}^c_{k,j,S}\cap\Gamma_n:0<||\s_{r'n}-\s_{rn}||_E< 1\};\\
    \widetilde N_{il}&=\{\s\in \widetilde{B}^-_{k,j,S}\cap \R_n:||\s-\s_{in}||_E\in (l,l+1]\},  \\
    \widetilde N_{rl} & =\{\s\in \widetilde{B}^c_{k,j,S}\cap \R_n:||\s-\s_{rn}||_E\in (l,l+1]\},\ l\geq 1;\\
     \widetilde N_{i0}&=\{\s\in \widetilde{B}^-_{k,j,S}\cap \R_n:0<||\s-\s_{in}||_E\leq 1\}, \\\widetilde N_{r0} & =\{\s\in \widetilde{B}^c_{k,j,S}\cap \R_n:0<||\s-\s_{rn}||_E\leq 1\}.
\end{align}
We also introduce the following events by denoting $\textbf{Card}(A\cap\Gamma_n)$ as $|A|$, for any $A\subset \R_n$,
\begin{align}
     \mathcal{E}^*_{n}(k,j) &=\bigcap_{\s_{in}\in \widetilde{B}^-_{k,j}}\bigcap_{l=0}^{[q_n]}\Big\{n\textbf{Leb}(\Lambda_n^{-1}\widetilde N_{il}\cap D_n)\leq |N_{il}| \leq \frac{2ln}{\lambda_n^2\bb_n}\Big\},\\
    \mathcal{E}^{**}_{n}(k,j) &=\bigcap_{\s_{rn}\in \widetilde{B}^c_{k,j}}\bigcap_{l=0}^{[q_n]}\Big\{n\textbf{Leb}(\Lambda_n^{-1}\widetilde N_{rl}\cap D_n)\leq |N_{rl}| \leq \frac{2ln}{\lambda_n^2\bb_n}\Big\},\\
\mathcal{E}_n&=\bigcap_{k=1}^{R_n}\bigcap_{j=1}^{J_n}\Big(\mathcal{E}^*_{n}(k,j)\bigcap \mathcal{E}^{**}_{n}(k,j)\bigcap \mathbf{B}^-_{n}(k,j)\bigcap \mathbf{B}_{n}(k,j)\bigcap \mathbf{B}^c_{n}(k,j)\Big),
\end{align}
where $ \mathbf{B}^-_{n}(k,j)$, $ \mathbf{B}^c_{n}(k,j)$ and $ \mathbf{B}_{n}(k,j)$ are defined in the proof of Proposition \ref{prop 6}. Similarly, Proposition \ref{prop 5} asserts
\begin{align}
    \mathbb{Q}(\overline{\lim}_n \mathcal{E}^c_n)=0. 
\end{align}
Moreover, simple algebra can show that, for every realization of $\bS_n=(\s_{in}:1\leq i\leq n)$, 
\begin{align}
    \mathbf{K}_3\lesssim b^2\sum_{\s_{in}\in\widetilde{B}^-_{k,j},\s_{rn}\in\widetilde{B}^c_{k,j}}K_{0ir}\left(\sum_{l=1}^{[q_n]}|N_{il}||N_{rl}|\beta^{\frac{\delta}{1+\delta}}(l)\left(\sum_{\s_{i'n}\in N_{il},\s_{r'n}\in N_{il}}\frac{K_{0i'r'}}{|N_{il}||N_{rl}|b^2}\right)\right).
\end{align}
Then, repeating the argument used in the proof of Proposition \ref{prop 6} yields that
\begin{align}
    \lim_{n\to\infty}\frac{\mathbf{K}_3}{n\textbf{Card}(\widetilde{B}_{k,j}^-\cap\Gamma_n)Tp_Tb^4}=0
\end{align}
holds almost surely with respect to $\mathbb{Q}$. Considering the proof of E2 is nearly the same as the proof of Proposition \ref{prop 8}, we omit it here for brevity.
\end{proof}
\begin{proposition}
    \label{prop 8}
    Under Assumption \ref{technical assumptions}, 
    \begin{align}
        \sum_{(i,t)\in\tilde{B}^-_{k,j}}\sum_{(i',t'),(i'',t'')\in \tilde{B}^c_{k,j}}K_{0b}(\s_{in}-\s_{i'n},t-t')K_{0b}(\s_{in}-\s_{i''n},t-t'')\beta^{\frac{\delta}{1+\delta}}(q_n)\\
        =o(n\textbf{Card}(\widetilde{B}^{-}_{k,j}\cap\Gamma_{n})Tp_T b^3),\ \ a.s.-[\mathbb{Q}].
    \end{align}
\end{proposition}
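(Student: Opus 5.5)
The plan mirrors the proof of Proposition \ref{prop 6}. Since $\beta^{\frac{\delta}{1+\delta}}(q_n)$ does not depend on the summation indices, it can be pulled out of the sum. It then suffices to show that
\begin{align}
S_{k,j}:=\sum_{(i,t)\in\tilde{B}^-_{k,j}}\sum_{(i',t'),(i'',t'')\in \tilde{B}^c_{k,j}}K_{0b}(\s_{in}-\s_{i'n},t-t')K_{0b}(\s_{in}-\s_{i''n},t-t'')
\end{align}
satisfies $S_{k,j}=O(\textbf{Card}(\widetilde{B}^-_{k,j}\cap\Gamma_n)\,p_T\, n^2T^2 b^4)$ almost surely with respect to $\mathbb{Q}$, uniformly in $(k,j)$. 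The claim then reduces to $nTb\,\beta^{\frac{\delta}{1+\delta}}(q_n)=o(1)$, which follows from (T4)(1) because $b\to0$.

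The first step uses the product structure $K_{0b}=K_{0ii',S}K_{0tt',T}$ from \eqref{def:KS}--\eqref{def:KT}, together with $\tilde B^-_{k,j}=\tilde B^-_{k,j,S}\times\tilde B^-_{k,j,T}$ and the analogous decomposition of $\tilde B^c_{k,j}$. Enlarging $\tilde B^c_{k,j}$ to the full index set, which is harmless because all kernels are non-negative, factorizes the bound as $S_{k,j}\le \mathbf{K}_S\mathbf{K}_T$. Here
\begin{align}
\mathbf{K}_S=\sum_{\s_{in}\in\tilde B^-_{k,j,S}}\Big(\sum_{i'}K_{0ii',S}\Big)^2, \qquad \mathbf{K}_T=\sum_{t\in \tilde B^-_{k,j,T}}\Big(\sum_{t'}K_{0tt',T}\Big)^2 .
\end{align}
For the temporal factor, $K_{0tt',T}\neq0$ forces $|t-t'|\in[v_0-Tb,v_0+Tb]$, so the inner sum is at most $4Tb\|K\|_\infty$. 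Since $|\tilde B^-_{k,j,T}|\le p_T$, this gives $\mathbf{K}_T\lesssim p_T T^2b^2$; alternatively one can cite Lemma \ref{lemma 4} as in the proof of Proposition \ref{prop 6}.

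For the spatial factor, note that for each fixed $\s_{in}$ the inner sum $\sum_{i'}K_{0ii',S}$ counts the locations in a rectangle of Lebesgue measure $\asymp\lambda_n^2\bb_n b^2$ around $\s_{in}-\h_0$, weighted by the bounded kernel. The rectangles around different $\s_{in}$ are translates of one another, and their number is polynomial in $n$. Proposition \ref{prop 5}, applied to this family with the density bound $f\le\overline f$ from Assumption \ref{as spatial locations 1}, therefore gives almost surely, eventually and simultaneously for all $i$,
\begin{align}
\sum_{i'}K_{0ii',S}\lesssim n b^2 .
\end{align}
This step requires $nb^2\to\infty$ relative to $\log n$ so that the $\epsilon_n$ in Proposition \ref{prop 5} tends to $0$; I expect this to be implied by (T1), since $nTb^{2+2\xi}\to\infty$ with $T\asymp n$. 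If that is insufficient, the fallback is to cover by deterministic cells as in the proof of Proposition \ref{prop 6}, via the events $\mathcal M_n$. It follows that $\mathbf{K}_S\lesssim \textbf{Card}(\widetilde B^-_{k,j}\cap\Gamma_n)\,n^2b^4$.

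Combining the two factors gives $S_{k,j}\lesssim \textbf{Card}(\widetilde B^-_{k,j}\cap\Gamma_n)\,p_T\,n^2T^2b^6$, which is stronger than the bound needed in the first paragraph. Hence
\begin{align}
\frac{S_{k,j}\,\beta^{\frac{\delta}{1+\delta}}(q_n)}{n\,\textbf{Card}(\widetilde B^-_{k,j}\cap\Gamma_n)\,Tp_Tb^3}\lesssim nTb^3\beta^{\frac{\delta}{1+\delta}}(q_n)\to0
\end{align}
by (T4)(1). The main obstacle is the spatial counting step: making Proposition \ref{prop 5} give a bound that is uniform over all centres $\s_{in}$ and all blocks $(k,j)$, with the $\mathbb{Q}$-null exceptional set chosen independently of $(k,j)$. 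I would handle this exactly as in Proposition \ref{prop 6}, by intersecting the relevant events over the polynomially many blocks and centres and invoking the Borel--Cantelli-type conclusion $\mathbb{Q}(\overline{\lim}_n\mathcal M_n^c)=0$.
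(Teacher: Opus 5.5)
Your architecture is the paper's. You pull $\beta^{\frac{\delta}{1+\delta}}(q_n)$ out, factor the kernel sum as $\mathbf K_S\mathbf K_T$, bound $\mathbf K_T\lesssim p_TT^2b^2$ and the spatial factor by $\textbf{Card}(\widetilde B^-_{k,j}\cap\Gamma_n)(nb^2)^2$, and close with (T4)(1). Your enlargement of $\widetilde B^c_{k,j}$ to the full index set is actually cleaner than the paper's tacit claim that $\widetilde B^c_{k,j}$ is a product set.

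The step that fails is the uniform count $\sum_{i'}K_{0ii',S}\lesssim nb^2$. You rightly note that it needs $nb^2/\log n\to\infty$, but (T1) does not deliver this. With $T\asymp n$ it only gives $nb^{1+\xi}\to\infty$, and $nb^2=nb^{1+\xi}\,b^{1-\xi}$ with $b^{1-\xi}\to 0$.

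In the regime where the proposition is actually used ($T/n\to c$ and $nTb^{3.5}\to 0$ in Theorem \ref{Theorem CLT Covariance Estimator}), one even has $nb^2\to0$ while $nb\to\infty$. There the bound is false, not merely unproved. Summed over the spatial blocks, $\sum_i\big(\sum_{i'}K_{0ii',S}\big)^2\ge\sum_{i,i'}K_{0ii',S}^2$ is at least of order $n^2b^2$. Your bound would give $\sum_i n^2b^4=n^3b^4=o(n^2b^2)$.

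The deterministic-cell fallback does not help either. Proposition \ref{prop 5} also needs the expected number of points per cell to dominate $\log n$. It is moreover stated for deterministic sets, whereas your windows are centred at the random points $\s_{in}$. The paper's assertion $\mathbf K_4\lesssim\textbf{Card}(\widetilde B^-_{k,j}\cap\Gamma_n)(nb^2)^2$ rests on the same unstated requirement.

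The repair is local:
\begin{itemize}
\item Conditionally on $\x_{in}$, let $N_i$ be the number of indices $i'\neq i$ with $\x_{i'n}\in\x_{in}-\h_{0n}+[-b,b]^2$. It is binomial with success probability at most $4\overline f b^2$, so $P(N_i\ge k)\le(4e\overline f nb^2/k)^k$.
\item Choose $k=\lceil\max\{8e\overline f nb^2,\,C\log n\}\rceil$ with $C>2/\log 2$. A union bound over $i\le n$ and Borel--Cantelli then give, $\mathbb Q$-a.s.\ eventually and uniformly in $i$ and $(k,j)$, $\sum_{i'}K_{0ii',S}\lesssim nb^2+\log n$.
\item By (T1), $b=cn^{-\eta}$ with $\eta<1/(1+\xi)<1$, so $nb^2+\log n\lesssim nb$. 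Hence $\mathbf K_S\lesssim\textbf{Card}(\widetilde B^-_{k,j}\cap\Gamma_n)\,n^2b^2$ and $S_{k,j}=O\big(\textbf{Card}(\widetilde B^-_{k,j}\cap\Gamma_n)\,p_Tn^2T^2b^4\big)$.
\end{itemize}
This is exactly the weaker target of your first paragraph. (T4)(1) then finishes the proof via $nTb\,\beta^{\frac{\delta}{1+\delta}}(q_n)\to0$; only the intermediate $b^6$ bound has to be given up.
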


\begin{proof}
 Since the term $\beta^{\frac{\delta}{1+\delta}}(q_n)$ can be regarded as a deterministic constant depending only on sample size $n$, we only need to focus on the grow conditions of the sums of product-kernels. Some simple algebra indicates that
\begin{align}
      &\sum_{(i,t)\in\tilde{B}^-_{k,j}}\sum_{(i',t'),(i'',t'')\in \tilde{B}^c_{k,j}}K_{0b}(\s_{in}-\s_{i'n},t-t')K_{0b}(\s_{in}-\s_{i''n},t-t'')\\
      =&  \sum_{\s_{in}\in\widetilde{B}^-_{k,j,S}}\sum_{(i',t'),(i'',t'')\in \widetilde{B}^c_{k,j,S}}K_{0ii', S}K_{0ii'', S}\times \sum_{t\in\widetilde{B}^-_{k,j,T}}\sum_{t',t''\in\widetilde{B}^c_{k,j, T}}K_{0tt', T}K_{0tt'', T} \\
      \lesssim & p_TT^2b^2 \sum_{\s_{in}\in\widetilde{B}^-_{k,j,S}}\sum_{(i',t'),(i'',t'')\in \widetilde{B}^c_{k,j,S}}K_{0ii', S}K_{0ii'', S} =:p_{T}(Tb)^2\mathbf{K}_4,
\end{align}
where $\widetilde{B}^-_{k,j,S}$ ($\widetilde{B}^c_{k,j,T}$) and $\widetilde{B}^-_{k,j,T}$ ($\widetilde{B}^c_{k,j,T}$) are defined in the proof of Proposition \ref{prop 6}. Then, based on the notation used in the proof of Proposition \ref{prop 6}, define event 
\begin{align}
    \mathbb{B}_n= \bigcap_{k=1}^{R_n}\bigcap_{j=1}^{J_n} (\mathbf{B}_n^{-}(k,j)\bigcap\mathbf{B}_n(k,j)\bigcap\mathbf{B}_n^{c}(k,j)),
\end{align}
we obtain that 
\begin{align}
    \mathbb{Q}(\overline{\lim}_{n\to\infty}\mathbb{B}^c_n)=0.
\end{align}
Then, similar to the argument used in the proof of Propositions \ref{prop 6} and \ref{prop 7},
\begin{align}
    \lim_{n\to\infty}\frac{|\mathbf{K}_4|}{\textbf{Card}(\widetilde{B}_{k,j}^-\cap\Gamma_n)(nb^2)^2}<\infty
\end{align}
holds almost surely with respect to $\mathbb{Q}$. Above all, we obtain 
\begin{align}
    \sum_{(i,t)\in\tilde{B}^-_{k,j}}\sum_{(i',t'),(i'',t'')\in \tilde{B}^c_{k,j}}K_{0b}(\s_{in}-\s_{i'n},t-t')K_{0b}(\s_{in}-\s_{i''n},t-t'')\beta^{\frac{\delta}{1+\delta}}(q_n)\\=O(p_T\textbf{Card}(\widetilde{B}_{k,j}^-\cap\Gamma_n)(Tnb^3)^2)\beta^{\frac{\delta}{1+\delta}}(q_n)=o(n\textbf{Card}(\widetilde{B}_{k,j}^-\cap\Gamma_n)Tp_Tb^3)
\end{align}
holds almost surely with respect to measure $\mathbb{Q}$, where the last equality follows from  condition (1) in (T4) in  Assumption \ref{technical assumptions}. This completes the proof.
\end{proof}
\begin{proposition}
    \label{prop 9}
    Under Assumption \ref{technical assumptions}, 
\begin{align}
\lim_{n}\frac{\sum_{k=1}^{R_n}\sum_{j=1}^{J_n}\sum_{(i,t)\in\tilde{B}^-_{k,j}}\sum_{(i',t')\in \tilde{B}^c_{k,j}}K^2_{0b}(\s_{in}-\s_{i'n},t-t')(E[X^2_{0}])^2}{\sigma_n^2}=1.
\end{align}
\end{proposition}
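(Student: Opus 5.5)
The argument will separate spatial and temporal parts, and then use the block geometry to reduce to the full double sum. First, since $K_{0b}(\s_{in}-\s_{i'n},t-t')=K_{0ii'}K_{0tt'}$, the numerator factorises blockwise. We write $\tilde B^-_{k,j}=B^{-q_n}_k\times I_{bj}$ and $\tilde B^c_{k,j}\supset (\mathbf R_n\times\mathcal T)\setminus(B_k\times I_j)$. The full sum (without the block restriction) is
$$\Big(\sum_{i\neq i'}K^2_{0ii'}\Big)\Big(\sum_{t\neq t'}K^2_{0tt'}\Big)(E[X_0^2])^2 .$$
By the strong law of large numbers for the second order $U$-statistic in the i.i.d.\ locations $\x_{in}$ (the change of variables $\x_{in}-\x_{i'n}-\h_{0n}=b\mathbf u$, as in the proof of Theorem \ref{Theorem CLT Covariance Estimator}), we have $\frac{1}{n^2b^2}\sum_{i\neq i'}K^2_{0ii'}\to \mathbf A_2$ almost surely with respect to $\mathbb Q$. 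A Riemann sum argument gives $\frac{1}{T^2b}\sum_{t\neq t'}K^2_{0tt'}\to \mathbf B_2$, where the factor $2$ in $\mathbf B_2$ accounts for the two signs of $t-t'$. Hence the full sum divided by $\sigma_n^2$ tends to $1$.

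It therefore remains to show that the pairs discarded by the block restriction are negligible. These are the pairs with $(i,t)$ outside $\bigcup_{k,j}\tilde B^-_{k,j}$, and the pairs with both points in the same $\tilde B_{k,j}$. For the first type, the points $(i,t)$ lie either in a spatial margin $B_k\setminus B_k^{-q_n}$ or in a small or remainder time block. By Proposition \ref{prop 5} applied to the $R_n$ margins, which are polynomially many sets, the number of locations in all margins is $O(n\,q_n/r_{1n})=O(n/\log n)=o(n)$ almost surely. Similarly, the small and remainder time blocks contain at most $J_nq_T+r_{3T}=O(T/\log T)+o(T)=o(T)$ time points, using $q_T/p_T=1/\log T$ and $p_T/T=o(1)$. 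Bounding $K^2_{0b}\le \|K\|_\infty^2K_{0b}$ and using the uniform local counts from Proposition \ref{prop 5}, each fixed $(i,t)$ has at most $O(nb^2\cdot Tb)$ partners with nonzero kernel. This gives a contribution of $o(nT)\cdot O(nTb^3)=o(\sigma_n^2)$.

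For pairs inside the same block $\tilde B_{k,j}$, temporal compatibility requires $|t-t'|\approx v_0\ge c e_n$, while the block has length $p_T+2q_T$. The spatial displacement must likewise be $\approx\h_0$, and the block has size $r_{kn}=q_n\log n$. Assumption \ref{as regularity} does not immediately force this case to be empty, so we bound it crudely instead. The count is at most $\sum_{k,j}|B_k\cap\Gamma_n|\cdot\frac{nr_n}{\lambda_n^2\bb_n}b^2\cdot p_T\min(p_T,Tb)$. Using $\sum_k|B_k\cap\Gamma_n|\le n$, $J_np_T\le T$ and $\frac{nr_n}{\lambda_n^2\bb_n}=o(n)$ (which follows from (T1) and (T4)(2)), this total is $o(n^2Tb^2\cdot Tb)=o(\sigma_n^2)$. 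These estimates hold on the event $\mathcal M_n$ of the proof of Proposition \ref{prop 6}, whose complement occurs only finitely often almost surely with respect to $\mathbb Q$.

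The main obstacle is the uniform control of local location counts across all blocks simultaneously. The intended tool is Proposition \ref{prop 5} with polynomially many sets, giving the $\mathbb Q$-a.s.\ event $\mathcal M_n$. The other delicate point is verifying the rate $\frac{nq_n\log n}{\lambda_n^2\bb_n}=o(n)$ from (T1) and (T4), which depends on the precise interplay of $q_n$, $b$ and $\lambda_n^2\bb_n$.
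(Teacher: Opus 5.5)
Your decomposition matches the paper's. The blocked sum equals the full double sum, minus the pairs whose first point lies in a spatial margin or in a small or remainder time block (the paper's $H_{23}$ and the $j=J_n+1$ correction), minus the pairs with both points in one $\tilde B_{k,j}$ (the paper's $H_{24}-H_{22}$). The full-sum and margin steps are fine, apart from two tacit requirements that the paper shares. First, uniform local counts of order $nb^2$ from Proposition \ref{prop 5} need $nb^2\gg\log n$. Second, the Riemann-sum limit $\mathbf B_2=2\int_{-1}^1K^2$ needs the window $[v_0-Tb,v_0+Tb]$ to avoid $0$, i.e.\ $\liminf v_0/(Tb)\ge 1$.

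The genuine gap is in the within-block estimate. The factor $\frac{nr_n}{\lambda_n^2\bb_n}b^2$ does not bound the number of $i'$ with $\s_{i'n}\in B_k$ and $K_{0ii'}\neq0$. Both conditions restrict the same point $\x_{i'n}$, so that number is of order $n\,\mathrm{Leb}(\Lambda_n^{-1}B_k\cap W_i)\lesssim\min\{\frac{nr_n}{\lambda_n^2\bb_n},\,nb^2\}$, where $W_i$ is the kernel window of side $2b$ around $\x_{in}-\h_{0n}$. Multiplying the two proportions as if the events were independent underestimates this count. With the correct count and your bound $\min(p_T,Tb)\le Tb$, you obtain only $O(n\cdot nb^2\cdot T\cdot Tb)=O(\sigma_n^2)$, not $o(\sigma_n^2)$. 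The fact $\frac{nr_n}{\lambda_n^2\bb_n}=o(n)$ buys nothing, because $nb^2\ll n$. A spatial saving would need $r_n/(\lambda_n^2\bb_n)=o(b^2)$, which the assumptions do not guarantee.

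The saving has to come from time, which is where the paper takes it. It factorises the within-block sum as $\mathbf K_S\mathbf K_T$ with $\mathbf K_S\lesssim n^2b^2$, and controls $\mathbf K_T$ through Lemmas \ref{lemma 5} and \ref{lemma 6}, giving a ratio $O(p_T\log T/T)$ to $\sigma_n^2$. In your framework the direct repair is to bound the number of temporal partners by $|I_j|\lesssim p_T$ rather than by $Tb$. The within-block total is then $\lesssim n\cdot nb^2\cdot Tp_T$, and the ratio to $\sigma_n^2$ is $O(p_T/(Tb))$. You must then verify $p_T=o(Tb)$, since this is exactly where negligibility is earned:
by (T4)(3), $p_T=q_n\log T\asymp q_n\log n=o(b^{-1/6})$;
under $nb^2\to\infty$, which your local counts already presuppose, $b^{-1/6}\ll n^{1/12}$ while $Tb\asymp nb\gg n^{1/2}$.
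With that rate established, the rest of your argument goes through.
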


\begin{proof}
\noindent By  simple algebra we obtain
\begin{align}
    &\sum_{k=1}^{R_n}\sum_{j=1}^{J_n}\sum_{(i,t)\in\tilde{B}^-_{k,j}}\sum_{(i',t')\in \tilde{B}^c_{k,j}}K^2_{0b}(\s_{in}-\s_{i'n},t-t')(E[X^2_{0}])^2=:\sum_{k=1}^{R_n}\sum_{j=1}^{J_n}H_{2}(k,j)\\
    =&\sum_{k=1}^{R_n}\sum_{j=1}^{J_n+1}H_{2}(k,j)-\sum_{k=1}^{R_n}H_{2}(k,J_n+1).
\end{align}
For each given $k=1,2,\ldots ,R_n$ and $j=1,2,.\ldots  ,J_n$, 
\begin{align}
    H_{2}(k,j) & =\sum_{(i,t)\in\tilde{B}^-_{k,j}}\sum_{(i',t')\in \tilde{B}^c_{k,j}}K^2_{0b}(\s_{in}-\s_{i'n},t-t')(E[X^2_{0}])^2\\
    =&\sum_{(i,t)\in\tilde{B}^-_{k,j}}\bigg (\sum_{(i',t')\in \mathbf{R}_n\times\{1,..,T\}}K^2_{0b}(\s_{in}-\s_{i'n},t-t')(E[X^2_{0}])^2 \\
    & ~~~~~~~~~~~~~~~~~~~~~~~~~~~~~~
    -\sum_{(i',t')\in \widetilde{B}_{k,j}}K^2_{0b}(\s_{in}-\s_{i'n},t-t')(E[X^2_{0}])^2\bigg)\\
    & =\sum_{(i,t)\in\tilde{B}_{k,j}}\sum_{(i',t')\in \mathbf{R}_n\times\{1,..,T\}}K^2_{0b}(\s_{in}
    \\
    & ~~~~~~~~~~~~~~~~~~~~~
    -\s_{i'n},t-t')(E[X^2_{0}])^2-\sum_{(i,t)\in\tilde{B}_{k,j}}\sum_{(i',t')\in \widetilde{B}_{k,j}}K^2_{0b}(\s_{in}-\s_{i'n},t-t')(E[X^2_{0}])^2\\
    & \\
    & ~~~~~~~~~~~~~~~~~~~~~
    - \sum_{(i,t)\in\tilde{B}_{k,j}\backslash\widetilde{B}^-_{k,j}}\sum_{(i',t')\in \mathbf{R}_n\times\{1,..,T\}}K^2_{0b}(\s_{in}-\s_{i'n},t-t')(E[X^2_{0}])^2
    \\
    & ~~~~~~~~~~~~~~~~~~~~~
    +\sum_{(i,t)\in\tilde{B}_{k,j}\backslash\widetilde{B}^-_{k,j}}\sum_{(i',t')\in \widetilde{B}_{k,j}}K^2_{0b}(\s_{in}-\s_{i'n},t-t')(E[X^2_{0}])^2\\
    & =: H_{21}(k,j)-H_{22}(k,j)-H_{23}(k,j)+H_{24}(k,j).
\end{align}
Similar to the proof of Propositions \ref{prop 6} and \ref{prop 7}, based on the design of blocks, $q_n/ (r_{1n} \wedge r_{2n}) = o(1)$ by introducing some sets and events, Proposition \ref{prop 5} indicates that, for each $k$ and $j$,
\begin{align}
    \lim_{n\to\infty}\Big|\frac{H_{23}(k,j)}{H_{21}(k,j)}\Big|=0
\end{align}
holds almost surely with respect to $\mathbb{Q}$. On the other hand, note that
\begin{align}
   |H_{24}(k,j)-H_{22}(k,j)|=\sum_{(i,t)\in \widetilde{B}^-_{k,j}}\sum_{(i',t')\in\widetilde{B}_{k,j}}K^2_{0b}(\s_{in}-\s_{i'n},t-t')(E[X^2_{0}])^2
\end{align}
and 
\begin{align}
    &\sum_{k=1}^{R_n}\sum_{j=1}^{J_n}\sum_{(i,t)\in \widetilde{B}^-_{k,j}}\sum_{(i',t')\in\widetilde{B}_{k,j}}K^2_{0b}(\s_{in}-\s_{i'n},t-t')\\
    =& \sum_{k=1}^{R_n}\sum_{\s_{in}\in\widetilde{B}^-_{k,j,S}}\sum_{\s_{i'n}\in\widetilde{B}_{k,j,S}}K^2_{0ii', S} \times \sum_{j=1}^{J_n}\sum_{t\in\widetilde{B}^-_{k,j,T}}\sum_{t\in\widetilde{B}_{k,j,T}}K^2_{0tt', T} =:\mathbf{K}_{S}\mathbf{K}_{T}.
\end{align}
Recall that, at each given location $\s_{in}$, we can observe a time series $\{X_{(\s_{in},t)}:1\leq t\leq T\}$ and these time locations are invariant with respect to spatial location $\s_{in}$. We immediately have the following two points hold for every $n$ and $T$.
\begin{itemize}
    \item [1] $\widetilde{B}_{k,j,S}$, $\widetilde{B}^{-}_{k,j,S}$ and $\widetilde{B}^c_{k,j,S}$ are independent of index $j$. 
    \item [2] $\widetilde{B}_{k,j,T}$, $\widetilde{B}^{-}_{k,j,T}$ and $\widetilde{B}^c_{k,j,T}$ are independent of index $k$. More specifically, for each $j$, $\widetilde{B}^{-}_{k,j,T}=I_{bj}$ and $\widetilde{B}_{k,j,T}=I_{bj}\cup I_{sj}$, where $I_{bj}$ and $I_{sj}$ are “big and small” block on time axis introduced above.
\end{itemize}
For $\mathbf{K}_{T}$, together with point 2 above, \eqref{lemma 5 eq-2} of Lemma \ref{lemma 5} and \eqref{lemma 6 eq-2} of Lemma \ref{lemma 6} indicate that 
\begin{align}
    \mathbf{K}_{T}\lesssim (Tbp_T\log T )\land(p_Tq_T\log T).
\end{align}
For $\mathbf{K}_S$, similar to the argument used in the proof of Propositions \ref{prop 6} and \ref{prop 7}, by introducing proper sets and events, we can show 
\begin{align}
    \lim_{n\to\infty}\frac{\mathbf{K}_{S}}{n^2b^2}<\infty,
\end{align}
holds almost surely with respect to $\mathbb{Q}$, which asserts that
\begin{align}
    \lim_{n\to\infty}\frac{|H_{24}(k,j)-H_{22}(k,j)|}{n^2Tb^3p_T\log T}<\infty
\end{align}
holds almost surely with respect to $\mathbb{Q}$.

Above all, the following identity holds almost surely with respect to $\mathbb{Q}$,
\begin{align}
    \lim_{n\to\infty}\frac{\sum_{k,j}H_{2}(k,j)}{\sigma_n^2}=\lim_{n\to\infty}\frac{\sum_{k,j}H_{21}(k,j)}{\sigma^2_n}=1,
\end{align}
which completes the proof.
\end{proof}
\begin{proposition}
    \label{prop 10}
    Under Assumption \ref{technical assumptions}, by denoting $K_{irtu}=K_{0b}(\s_{in}-\s_{rn},t-u)$, we obtain that
    \begin{align}
     \label{prop 10 eq-2}
    \sum_{(k,j)\neq (k',j')}\sum_{(i,t)\in\widetilde{B}^{-}_{k,j}}\sum_{(i',t')\in\widetilde{B}^-_{k',j'}}\sum_{(r,u),(r',u')\in C^c_{(i,t)(i',t')}}&K_{irtu}K_{i'r't'u'}\text{Cov}_{|S_n}(X_{it}X_{ru},X_{i't'}X_{r'u'}) \\
      =:\sum_{(k,j)\neq (k',j')}V_{21}(k,j,k',j') &=o((nT)^2b^3),\ \ \ a.s.-[\mathbb{Q}]\\
    \label{prop 10 eq-1}
        \sum_{(k,j)\neq (k',j')}\sum_{(i,t)\in\widetilde{B}^{-}_{k,j}}\sum_{(i',t')\in\widetilde{B}^-_{k',j'}}\sum_{(r,u),(r',u')\in C_{(i,t)(i',t')}}&K_{irtu}K_{i'r't'u'}\text{Cov}_{|S_n}(X_{it}X_{ru},X_{i't'}X_{r'u'}) \\
       =:\sum_{(k,j)\neq (k',j')}V_{22}(k,j,k',j') &=o((nT)^2b^3),\ \ \ a.s.-[\mathbb{Q}],
    \end{align}
    
    where
\begin{align}
     & C_{(i,t)(i',t')}=\{(r,u)\in \widetilde{B}_{k,j}^c, (r',u')\in\widetilde{B}_{k',j'}^c:||(r,u)-(i',t')||_{F}< q_n, ||(r',u')-(i,t)||_{F}< q_n\},\\
  &C^c_{(i,t)(i',t')}=\{(r,u)\in \widetilde{B}_{k,j}^c, (r',u')\in\widetilde{B}_{k',j'}^c: \max\{||(r,u)-(i',t')||_{F}, ||(r',u')-(i,t)||_{F}\}\geq q_n\}.
\end{align}
\end{proposition}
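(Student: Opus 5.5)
We bound the two sums by splitting each according to the distances in the definitions of $C_{(i,t)(i',t')}$ and $C^c_{(i,t)(i',t')}$, and in each case we use Lemma \ref{lemma 3} to convert a covariance into a mixing coefficient times a moment constant. Throughout, the blocks satisfy $d(\widetilde{B}^-_{k,j},\widetilde{B}^-_{k',j'})\geq q_n$ whenever $(k,j)\neq(k',j')$. We also use that the kernels factor as $K_{irtu}=K_{0ir,S}K_{0tu,T}$, with the factors defined in \eqref{def:KS}--\eqref{def:KT}. This lets every kernel sum split into a spatial part times a temporal part, exactly as in the proofs of Propositions \ref{prop 6}--\ref{prop 8}.

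We start with $V_{21}$ in \eqref{prop 10 eq-2}. On $C^c_{(i,t)(i',t')}$, at least one of $(r,u)$ and $(r',u')$ lies at distance $\geq q_n$ from the opposite point $(i',t')$ or $(i,t)$. The first point $(i,t)$ is also at distance $\geq q_n$ from $(i',t')$, since they sit in different cores. So one of the four indices is separated by at least $q_n$ from the other three. We write $\text{Cov}_{|S_n}(X_{it}X_{ru},X_{i't'}X_{r'u'})=E_{|S_n}[X_{it}X_{ru}X_{i't'}X_{r'u'}]-E_{|S_n}[X_{it}X_{ru}]E_{|S_n}[X_{i't'}X_{r'u'}]$. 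For the isolated index, Lemma \ref{lemma 3} together with (M3) bounds the fourth moment by $C\beta^{\frac{\delta}{1+\delta}}(q_n)$. The product term is treated in the same way via \eqref{proof 4}.

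The remaining kernel sum is bounded crudely. We drop the block restriction and bound it by $\big(\sum_{i\neq r}K_{0ir,S}\big)^2\big(\sum_{t\neq u}K_{0tu,T}\big)^2$. By the strong law for U-statistics and Lemma \ref{lemma 1}, this is $O((nT)^4b^6)$ almost surely with respect to $\mathbb{Q}$. Combined with (T4)(1), i.e. $nT\beta^{\frac{\delta}{1+\delta}}(q_n)=o(1)$, this gives $O((nT)^3b^6)\cdot o(1)=o((nT)^2b^3)$, using $nTb^3\to0$. That the latter holds follows from $nTb^{2.5}\to 0$.

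For $V_{22}$ in \eqref{prop 10 eq-1}, the constraint is that $(r,u)$ lies within $q_n$ of $(i',t')$ and $(r',u')$ lies within $q_n$ of $(i,t)$. Here we split the covariance into its three Gaussian-like pairings plus a cumulant remainder. The pairing $E[X_{it}X_{r'u'}]E[X_{ru}X_{i't'}]$ is the only one that is not small. Its kernel sum, however, is restricted so that $r'$ ranges over a $q_n$-ball around $s_{in}$ and $r$ over a $q_n$-ball around $s_{i'n}$. In time, $u$ must satisfy $|u-t'|\leq q_n$, and also $|t-u|\approx v_0$ through $K_{0tu,T}$.

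We control the number of points in the spatial balls through the events built from Proposition \ref{prop 5}, as in the proof of Proposition \ref{prop 7}. Each ball contributes a factor $nq_n^2/(\lambda_n^2\bb_n)$. Lemma \ref{lemma 4} handles the temporal sums, which contribute $Tq_n$ factors in place of the $Tb$ factors. The summability condition (T3) absorbs the sums $\sum l\,\beta^{\frac{\delta}{1+\delta}}(l)$. Altogether this yields an order like $(nT)^2b^3\cdot\frac{nq_n^2}{\lambda_n^2\bb_n}\,q_n^2 b$, which is $o((nT)^2b^3)$ by (T1) and (T4)(3),(5). The other pairings and the cumulant remainder again involve a $q_n$-separation and are handled as in $V_{21}$.

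The main obstacle is the bookkeeping in $V_{22}$. The pairing term there is not directly small, so the almost-sure control of counts in shrinking neighbourhoods across all $R_nJ_n$ blocks simultaneously must be checked carefully. We plan to handle this by intersecting the Proposition \ref{prop 5} events over polynomially many sets, as in the proof of Proposition \ref{prop 6}.
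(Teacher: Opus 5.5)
Your decomposition of $V_{22}$ correctly isolates the pairing term $E_{|S_n}[X_{it}X_{r'u'}]\,E_{|S_n}[X_{ru}X_{i't'}]$. However, the counting step you use to dispose of it does not produce the factor $q_n^2 b$ you claim. Once $(i,t)$ and $(r,u)$ are fixed, the total weight is $\sum K_{irtu}\asymp (nT)^2b^3$. Then $(i',t')$ runs over the $q_n$-neighbourhood of $(r,u)$ and $(r',u')$ over that of $(i,t)$, and each neighbourhood holds about $nq_n^3/(\lambda_n^2\bb_n)$ sampled space-time points. On these neighbourhoods $K_{i'r't'u'}$ is essentially $K_{0b}(\s_{rn}-\s_{in},u-t)$, so it supplies no further factor of $b$. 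The bound your counting actually gives is $(nT)^2b^3\,(nq_n^3/(\lambda_n^2\bb_n))^2$. If you also use covariance decay and (T3), it improves only to $(nT)^2b^3\,(n/(\lambda_n^2\bb_n))^2$. Neither is $o((nT)^2b^3)$ under DE or DEI asymptotics.

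More fundamentally, absolute-value counting cannot succeed without information on $\h_0$. The transposed-diagonal configurations $(r,u)=(i',t')$, $(r',u')=(i,t)$ lie in $C_{(i,t)(i',t')}$. They contribute $\sum K_{0ii',S}K_{0i'i,S}K_{0tt',T}^2\,\mathrm{Var}_{|S_n}(X_{it}X_{i't'})$. This is of exact order $(nT)^2b^3$ whenever $\h_0$ lies well inside the spatial kernel window, say $|h_{0,1}|\le \lambda_n b/2$ and $|h_{0,2}|\le\lambda_n\bb_n b/2$.

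The missing idea is a support argument rather than a size argument. The temporal kernel depends only on $|t-u|$, so it gives no help. Both spatial kernels, however, are centred at $+\h_0$, while on $C_{(i,t)(i',t')}$ one has $\s_{i'n}-\s_{r'n}=-(\s_{in}-\s_{rn})+O(q_n)$. Adding the two support conditions forces $|h_{0,1}|\le \lambda_n b+q_n$ and $|h_{0,2}|\le\lambda_n\bb_n b+q_n$. Hence $V_{22}\equiv 0$ for all large $n$ provided $|h_{0,1}|>\lambda_n b+q_n$ or $|h_{0,2}|>\lambda_n\bb_n b+q_n$. That separation condition links $e_n$ of Assumption~\ref{as regularity} to $\lambda_n b$ and $q_n$. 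It is not implied by Assumption~\ref{technical assumptions}, so you have to impose it.

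You cannot fill this from the paper either. Its proof of \eqref{prop 10 eq-1} treats adjacent blocks and distant blocks separately.
\begin{itemize}
\item Adjacent blocks: $K_{0tu,T}=0$ because $|t-u|=O(p_T)$, while the kernel needs $|t-u|$ near $v_0$, which the paper takes to be of order $T$.
\item Distant blocks: it bounds $|\mathrm{Cov}(X_{it}X_{r'u'},X_{i't'}X_{ru})|$ by $\beta^{\delta/(1+\delta)}$ of the block distance, factorizes with (T2)(ii) and sums with (T3).
\end{itemize}
That regrouped covariance differs from the one in $V_{22}$ by $E[X_{it}X_{r'u'}]E[X_{ru}X_{i't'}]-E[X_{it}X_{ru}]E[X_{i't'}X_{r'u'}]$. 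Up to an $O(\beta^{2\delta/(1+\delta)}(q_n))$ term, this is precisely your pairing term, so the paper's route silently drops the same term you could not control.

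A secondary gap concerns $V_{21}$. Your isolation argument there is sound, but the crude bound $O((nT)^4b^6)\,\beta^{\delta/(1+\delta)}(q_n)$ needs $nTb^3\to0$, which you import from Theorem~\ref{Theorem Feasible Inference}. The proposition assumes only Assumption~\ref{technical assumptions}. It feeds Theorem~\ref{Theorem CLT U-Statistic} and hence Theorem~\ref{Theorem CLT Covariance Estimator}, where only $nTb^{3.5}\to 0$ is available. You therefore need either a faster decay of $\beta(q_n)$ than (T4)(1) provides, or a localized count that avoids paying the full $(nT)^4b^6$. For instance, (T4)(6) gives enough decay when $\delta\ge 1/2$.
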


\begin{proof} ~~

 \textbf{Proof of \eqref{prop 10 eq-1}}.  Note that the definition of set $C_{(i,t) (i',t')}$ and design of block (see also Figure \ref{fig:cit}) yield that, for every realization of $\bS_n$ and $(k,j)\neq (k',j')$, 
\begin{align}
   V_{22}(k,j,k',j')\leq  \sum_{(i,t)\in\widetilde{B}^-_{k,j}, (r',u')\in \widetilde{B}_{k,j}\atop ||(i,t)-(r',u')||_E\leq q_n}\sum_{(i',t')\in\widetilde{B}^-_{k',j'}, (r,u)\in \widetilde{B}_{k',j'}\atop ||(i',t')-(r,u)||_E\leq q_n}K_{irtu}K_{i'r't'u'}|\text{Cov}(X_{it}X_{r'u'},X_{i't'}X_{ru})|.
\end{align}
Denote $\mathcal{N}_{k,j}=\{\widetilde{B}_{k',j'}: (k',j')\neq (k,j),\ d(\widetilde{B}_{k,j},\widetilde{B}_{k',j'})=0\}$, where $d(A,B)=\inf_{x\in A, y\in B} ||x-y||_E$. Basic properties of Euclidean space shows that $\max_{k,j}\textbf{Card}(\mathcal{N}_{k,j})\leq 26$.
Thus, for every realization of $\bS_n$, by Assumption \ref{moment conditions},
\begin{align}
&\sum_{(k,j)\neq (k',j')}V_{22}(k,j,k',j')\\
\leq &2(\widetilde{M}_1\lor M_1)\sum_{(k,j)}\sum_{(i,t)\in\widetilde{B}^-_{k,j},(r',u')\in\widetilde{B}_{k,j}\atop ||(i,t)-(r',u')||_E<q_n}\sum_{\widetilde{B}_{k',j'}\in \mathcal{N}_{k,j}}\sum_{(i',t')\in\widetilde{B}^-_{k',j'},(r,u)\in\widetilde{B}_{k',j'}\atop ||(i',t')-(r,u)||_E<q_n}K_{irtu}K_{i'r't'u'}\\
&+4\sum_{(k,j)}\sum_{(i,t)\in\widetilde{B}^-_{k,j},(r',u')\in\widetilde{B}_{k,j}\atop ||(i,t)-(r',u')||_E<q_n}\sum_{\widetilde{B}_{k',j'}\notin \mathcal{N}_{k,j}}\sum_{(i',t')\in\widetilde{B}^-_{k',j'},(r,u)\in\widetilde{B}_{k',j'}\atop ||(i',t')-(r,u)||_E<q_n}K_{irtu}K_{i'r't'u'}|\text{Cov}(X_{it}X_{r'u'},X_{i't'}X_{ru})|\\
=&: \mathbf{L}_1+\mathbf{L}_2.
\end{align}

For each realization of $\bS_n$, based on the notation  $K_{0ir, S}$ and $K_{0tu, T}$ introduced {in \eqref{def:KS} and \eqref{def:KT}}, we have $K_{irtu}=K_{0ir, S}K_{0tu, T}$ and
\begin{align}
    \mathbf{L}_1\leq& 2(\widetilde{M}_1\lor M_1) \sum_{k=1}^{R_n}\sum_{\s_{in}\in\widetilde{B}^-_{k,j,S},\s_{r'n}\in \widetilde{B}_{k,j,S}\atop ||\s_{in}-\s_{r'n}||_E<q_n}\sum_{\widetilde{B}_{k',j',S}\in \mathcal{N}_{k,j,S}}\sum_{\s_{i'n}\in\widetilde{B}^-_{k',j',S},\s_{rn}\in\widetilde{B}_{k',j',S} \atop||\s_{i'n}-\s_{rn}||_E<q_n}K_{0ir}K_{0i'r'} \\
    &\times  \sum_{j=1}^{J_n}\sum_{t\in\widetilde{B}^-_{k,j,T},u'\in \widetilde{B}_{k,j,T}\atop |t-u'|_E<q_n}\sum_{\widetilde{B}_{k',j',T}\in \mathcal{N}_{k,j,T}}\sum_{t'\in\widetilde{B}^-_{k',j',T},u\in\widetilde{B}_{k',j',T}\atop |t'-u|<q_n }K_{0tu}K_{0t'u'} \\
    &=: C_1\mathbf{L}_{1S}\mathbf{L}_{1T},
\end{align}
where $\mathcal{N}_{k,j,S}$ and $\mathcal{N}_{k,j,T}$ are the collections of the projections of $\widetilde{B}_{k',j'}$'s in set $\mathcal{N}_{k,j}$ onto spatial plane and time axis respectively. More specifically, $\max_{k,j}\textbf{Card}(\mathcal{N}_{k,j,S})\leq 4$ and $\max_{k,j}\textbf{Card}(\mathcal{N}_{k,j,T})\leq 2$.

\par To bound $\mathbf{L}_{1T}$, we first note that the compactness of the support of kernel function indicates that, for each given $t_0\in\widetilde{B}^-_{k,j,T}$, $v_0-Tb\leq |t_0-u|\leq v_0+Tb$. However, the definition of $\mathcal{N}_{k,j,T}$ implies $|t_0-u|\leq 2(p_T+q_T)\leq 4p_T$. Since $\frac{p_T}{T}=o(1)$ and $n=T$, for any given $v_0\in (0,1)$, there exists some $T_0$ such that $4p_T<v_0-Tb$ holds for every $n>T_0$ and $T_0$ is independent of $(k,j)$ and sample size $n$, which asserts $K_{0tu}=0$, $\mathbf{L}_{1T}=0$ and $\mathbf{L}_1=0$.

Recall that Euclidean spatial-temporal distance is always longer than Euclidean spatial distance and T2, (i) in Assumption \ref{technical assumptions} indicates the mixing coefficient $\beta(r)$ is non-increasing on $\mathbb{R}^+$. Therefore, for any two different spatial-temporal locations $(\s,t)\neq (\s',t')$, 
\begin{align}
    \beta^{a}(||(\s,t)-(\s',t')||_E)= \beta^{a}\Big(\sqrt{||\s-\s'||^2_E+|t-t'|}\Big)\leq \beta^{a}(||\s-\s'||_E)
\end{align}
holds for any given $a\geq 0$. Then, similar to $\mathbf{L}_1$, we can show that there exists some universal constant $C_2>0$ such that 
\begin{align}
    \mathbf{L}_2\leq& C_2 \sum_{k=1}^{R_n}\sum_{\s_{in}\in\widetilde{B}^-_{k,j,S},\s_{r'n}\in \widetilde{B}_{k,j,S}\atop ||\s_{in}-\s_{r'n}||_E<q_n}\sum_{\widetilde{B}_{k',j',S}\notin \mathcal{N}_{k,j,S}}\sum_{\s_{i'n}\in\widetilde{B}^-_{k',j',S},\s_{rn}\in\widetilde{B}_{k',j',S} \atop||\s_{i'n}-\s_{rn}||_E<q_n}K_{0ir, S}K_{0i'r', S}\beta^{\frac{\delta}{(1+\delta)}}(d(\widetilde{B}_{k,j,S},\widetilde{B}_{k',j',S})) \\
    &\times \sum_{j=1}^{J_n}\sum_{t\in\widetilde{B}^-_{k,j,T},u'\in \widetilde{B}_{k,j,T}\atop |t-u'|_E<q_n}\sum_{\widetilde{B}_{k',j',T}\notin \mathcal{N}_{k,j,T}}\sum_{t'\in\widetilde{B}^-_{k',j',T},u\in\widetilde{B}_{k',j',T}\atop |t'-u|<q_n }K_{0tu, T}K_{0t'u', T} =:C_2\mathbf{L}_{2S}\mathbf{L}_{2T}
\end{align}
holds for every realization of $\bS_n$, where $d(A,B)=\inf_{a\in A,b\in B}||a-b||_E$, $A, B\subset \mathbb{R}^{2}$ (or $\mathbb{R}$).

\par By defining $\mathcal{M}_{k,j,l}=\{\widetilde{B}_{k',j',S}\notin\mathcal{N}_{k,j,S}:lq_n\leq d(\widetilde{B}_{k,j,S},\widetilde{B}_{k',j',S})\leq (l+1)q_n\}$, 
\begin{align}
\mathbf{L}_{2S}\leq \sum_{k=1}^{R_n}\sum_{\s_{in}\in\widetilde{B}^-_{k,j,S},\s_{r'n}\in \widetilde{B}_{k,j,S}\atop ||\s_{in}-\s_{r'n}||_E<q_n}\sum_{l=0}^{\infty}\sum_{\widetilde{B}_{k',j',S}\in \mathcal{M}_{k,j,l}}\sum_{\s_{i'n}\in\widetilde{B}^-_{k',j',S},\s_{rn}\in\widetilde{B}_{k',j',S} \atop||\s_{i'n}-\s_{rn}||_E<q_n}K_{0ir}K_{0i'r'}\beta^{\frac{\delta}{(1+\delta)}}(lq_n)   
\end{align}
holds for every $\bS_n=S_n$. T2, (ii) of Assumption \ref{technical assumptions} implies $\beta^{\frac{\delta}{1+\delta}}(lq_n)\leq C_{\beta}\beta^{\frac{\delta}{1+\delta}}(q_n)\beta^{\frac{\delta}{1+\delta}}(l)$, where $C_{\beta}$ is independent of $l$ and $q_n$. Then, for every $\bS_n=S_n$
\begin{align}
    C_2\mathbf{L}_{2S}\mathbf{L}_{2T}\leq C_2C_{\beta}&  \sum_{k=1}^{R_n}\sum_{s_{in}\in\widetilde{B}^-_{k,j,S},s_{r'n}\in \widetilde{B}_{k,j,S}\atop |s_{in}-s_{r'n}||_E<q_n}\sum_{l=1}^{\infty}\sum_{\widetilde{B}_{k',j',S}\in \mathcal{M}_{k,j,l}}\sum_{s_{i'n}\in\widetilde{B}^-_{k',j',S},s_{rn}\in\widetilde{B}_{k',j',S} \atop||s_{i'n}-s_{rn}||_E<q_n}K_{0ir}K_{0i'r'}\beta^{\frac{\delta}{(1+\delta)}}(l) \\
    &\times \sum_{j=1}^{J_n}\sum_{t\in\widetilde{B}^-_{k,j,T},u'\in \widetilde{B}_{k,j,T}\atop |t-u'|<q_n}\sum_{\widetilde{B}_{k',j',T}\notin \mathcal{N}_{k,j,T}}\sum_{t'\in\widetilde{B}^-_{k',j',T},u\in\widetilde{B}_{k',j',T}\atop |t'-u|<q_n }K_{0tu}K_{0t'u'}\beta^{\frac{\delta}{1+\delta}}(q_n) \\
    =:&C_2C_{\beta}\mathbf{L}_{2S}^*\mathbf{L}^*_{2T}.
\end{align}

As we discussed before, given $v_0\in (0,1)$, $K_{0tu}$ and $K_{0t'u'}$ are not equal to $0$ if and only if $v_0-Tb\leq |t-u|\leq v_0+Tb$ and $v_0-Tb\leq |t'-u'|\leq v_0+Tb$ hold. This indicates that
\begin{align}
    \mathbf{L}^*_{2T}\lesssim (T/p_T) p_Tq_n Tb q_n \beta^{\frac{\delta}{1+\delta}}(q_n)=T^2bq_n^2\beta^{\frac{\delta}{1+\delta}}(q_n)=o(bq_n^{2}),\label{eq:L2T}
\end{align}
where the last equation is due to T4 of Assumption \ref{technical assumptions} and the assumption $n=T$.

\par Similar to the proof of Proposition \ref{prop 7}, denote $\textbf{Card}(A\cap \Gamma_n)=:|A|$ for any $A\subset \R_n$ and let $E_{\s_{in}}[K_{0ir}]$ indicate the expectation with respect to random variable $\s_{in}$. Since $\{\s_{in}\}$ is an array which is row-wise iid, $E_{\s_{in}}[K_{0ir}]=E_{\s_{1n}}[K_{0ir}]$ holds for each $1\leq i\leq n$ and $E_{\s_{1n}}[K_{0ir}]$ can be regarded as a real-valued mapping measurable with respect to the sigma algebra generated by random variable $\s_{rn}$. Some simple algebra yields
\begin{align}
    K_{0ir}K_{0i'r'}&=\triangle_{0ir}\triangle_{0i'r'}-E_{\s_{in}}[K_{0ir}]\triangle_{0i'r'}-E_{\s_{i'n}}[K_{0i'r'}]\triangle_{0ir}+E_{\s_{in}}[K_{0ir}]E_{\s_{i'n}}[K_{0i'r'}]\\
    &= \triangle_{0ir}\triangle_{0i'r'}-E_{\s_{1n}}[K_{0ir}]\triangle_{0i'r'}-E_{\s_{1n}}[K_{0i'r'}]\triangle_{0ir}+E_{\s_{1n}}[K_{0ir}]E_{\s_{1n}}[K_{0i'r'}]
\end{align}
 where $\triangle_{0ir}=K_{0ir}-E_{\s_{in}}[K_{0ir}]$. Additionally, some simple algebra can reveal that 
 $$\max_{n}\max_{i,r}||b^{-2}E_{s_{in}}[K_{0ir}]||_{\infty}<\infty.$$

 \par Recall that, in the proof of Proposition \ref{prop 6}, we have shown that 
 \begin{align}
     \mathbb{Q}(\{\underline{\lim}_{n\to\infty}\bigcap_{k=1}^{R_n}\mathbf{B}_{n}(k,j)\}^{C})=0,
 \end{align}
 where $A^C$ indicates the complete of event $A$. $\mathbf{B}_n(k,j)$ is defined in \eqref{def:Bkj} in the proof of Proposition \ref{prop 6} and it is invariant with respect to $j$. Hence, according to strong law of large number, up to some positive constants independent of $n$, the following statement holds almost surely with respect to $\mathbb{Q}$.
\begin{align}
    \mathbf{L}^*_{2S}&\leq b^4\sum_{k=1}^{R_n}\frac{|\widetilde{B}_{k,j,S}|nq^2_n}{\lambda_n^2\bb_n} \sum_{l=0}^{\infty}\sum_{\widetilde{B}_{k',j',S}\in \mathcal{M}_{k,j,l}}\sum_{\s_{i'n}\in\widetilde{B}^-_{k',j',S},\s_{rn}\in\widetilde{B}_{k',j',S} \atop||\s_{i'n}-\s_{rn}||_E<q_n}(b^{-4}E_{\s_{in}}[K_{0ir}]E_{\s_{r'n}}[K_{0i'r'}])\beta^{\frac{\delta}{(1+\delta)}}(l)\\
    &\leq b^4\sum_{k=1}^{R_n}\frac{|\widetilde{B}_{k,j,S}|nq^2_n}{\lambda_n^2\bb_n}\Big(\sum_{l=0}^{\infty}l^2\beta^{\frac{\delta}{(1+\delta)}}(l)\Big) \Big(\frac{nr_{1n}r_{2n}}{\lambda^2_n\bb_n}\Big)\Big(\frac{nq^2_{n}}{\lambda^2_n\bb_n}\Big).  \notag
\end{align}
Therefore, according to Condition (T1) in Assumption \ref{technical assumptions} and the design of the spatial blocks,
\begin{align}
    \lim_{n\to\infty}\frac{\mathbf{L}^*_{2S}}{nb^{2.5}r_{1n}r_{2n}q_n^4}=0 \ \ \ a.s.-[\mathbb{Q}].\label{eq:L2S}
\end{align}
Regarding our design of spatial block indicates that $r_{1n}r_{2n}=O( (q_n\log n)^2)$, combining \eqref{eq:L2S} and \eqref{eq:L2T}, we obtain 
\begin{align}
    \lim_{n\to\infty}\frac{\mathbf{L}^*_{2S}\mathbf{L}^*_{2T}}{nb^{3.5}q_n^8(\log n)^2}=0 \ \ \ a.s.-[\mathbb{Q}].
\end{align}
Therefore, (3) of T4 in Assumption \ref{technical assumptions}, $\lim_{n\to\infty}\frac{\sum_{(k,j)\neq (k',j')}V_{22}(k,j,k',j')}{(nT)^2b^3}=0$ holds almost surely with respect to $\mathbb{Q}$, which completes the proof of \eqref{prop 10 eq-1}. 

As for the proof of \eqref{prop 10 eq-2}, since it is nearly the same as \eqref{prop 10 eq-1}, we omit it here. 
\end{proof}
\begin{figure}[!ht]
    \centering
    \includegraphics[width=0.5\linewidth]{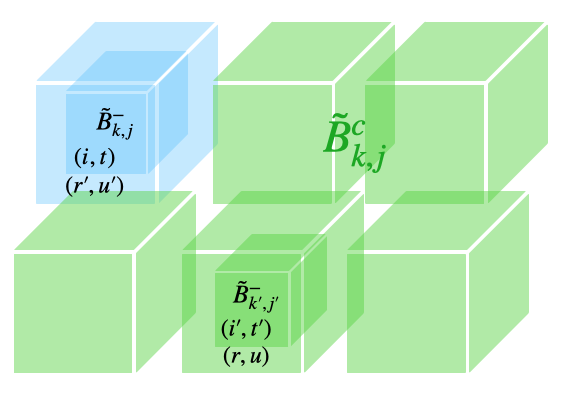}
    \caption{Illustration of $C_{(i,t) (i^{\prime},t^{\prime})}$, where we draw fewer cubes for better illustration. The green area is $\widetilde{B}_{k,j}^c$. The light blue and green areas in the outer part of each cube are for $(r,u)$ and $(r^{\prime}, u^{\prime})$}.
    \label{fig:cit}
\end{figure}
\begin{proposition}
    \label{prop 11}
    Under Assumptions \ref{moment conditions} and \ref{technical assumptions}, we have
    \begin{align}
    \label{prop 11 eq-1}
    &\sum_{k,j}\sum_{(i,t)\in\widetilde{B}^-_{k,j}}\sum_{(i',t')\in \widetilde{B}_{k,j}}K_{0b}^2(\s_{in}-\s_{i'n},t-t')=o((nT)^2b^3)\ \ \ a.s.-[\mathbb{Q}];\\
    \label{prop 11 eq-2}
     \sum_{k,j}\sum_{(i,t)\in\widetilde{B}^-_{k,j}}&\sum_{(i',t')\neq (i'',t'')\in \widetilde{B}_{k,j}}K_{0b}(\s_{in}-\s_{i'n},t-t')K_{0b}(\s_{in}-\s_{i''n},t-t'')\\&\times \text{Cov}_{|S_n}(X_{it}X_{i't'},X_{it}X_{i''t''})  =o((nT)^2b^3)\ \ \ \ \ \ a.s.-[\mathbb{Q}].  
    \end{align}
\end{proposition}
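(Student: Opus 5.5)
The two bounds in Proposition \ref{prop 11} are proved separately, and in both the sum is split into its spatial and temporal parts, as in the proofs of Propositions \ref{prop 6}--\ref{prop 10}. The basic tool is the factorization $K_{0b}(\s_{in}-\s_{i'n},t-t')=K_{0ii',S}K_{0tt',T}$ together with the product structure $\widetilde{B}_{k,j}=B_k\times I_j$. The key observation is that both $(i,t)$ and $(i',t')$ lie in the same block $\widetilde{B}_{k,j}$. Hence $|t-t'|\le p_T+2q_T\le 2p_T$.

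On the other hand, $K_{0tt',T}\neq 0$ forces $v_0-Tb\le |t-t'|\le v_0+Tb$. The evaluation distances satisfy $v_0\ge ce_n$ by \cref{as regularity}, so I would use the mechanism behind the bound on $\mathbf{L}_{1T}$ in the proof of Proposition \ref{prop 10}: compare $p_T=o(T)$ with the size of the temporal window $[v_0-Tb,\,v_0+Tb]$. If that window excludes $\{0,\dots,2p_T\}$ for all large $T$, both sums are identically zero for $n\ge T_0$, and we are done.

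I do not fully trust that the window separates from $[0,2p_T]$, because $v_0$ is only of order $e_n$ and $Tb$ may be large. So the plan includes a fallback that does not rely on the time gap.

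For \eqref{prop 11 eq-1}, I would bound the sum by $\mathbf{K}_S\mathbf{K}_T$, exactly as for $|H_{24}-H_{22}|$ in Proposition \ref{prop 9}.
\begin{itemize}
\item Lemmas \ref{lemma 5} and \ref{lemma 6} give $\mathbf{K}_T\lesssim (Tbp_T\log T)\wedge(p_Tq_T\log T)$.
\item The spatial factor is $\sum_k\sum_{\s_{in}\in B^{-q_n}_k,\ \s_{i'n}\in B_k}K^2_{0ii',S}$. Using Proposition \ref{prop 5} on the events $\mathbf{B}_n(k,j)$ and the strong law for the kernel averages, this is $O(n\cdot (nr_{1n}r_{2n}/\lambda_n^2\bb_n)\, b^2)$ a.s.-$[\mathbb{Q}]$.
\item Multiplying, the product is $O\big(n^2Tb^3 p_T\log T\cdot r_{1n}r_{2n}/(\lambda_n^2\bb_n)\big)$.
\end{itemize}
Since $r_{1n}=r_{2n}=q_n\log n$ and $p_T=o(T)$, (T1) and (T4)(2) make this $o((nT)^2b^3)$.

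For \eqref{prop 11 eq-2}, I would proceed as in Step 2.1.1.b. First bound the covariance by \eqref{proof 3} and \eqref{proof 4}, which gives a term weighted by $\beta^{\delta/(1+\delta)}(\|(i',t')-(i'',t'')\|)$ plus a term carrying $\beta^{\delta/(1+\delta)}(q_n)$. Then split according to whether $\|(i',t')-(i'',t'')\|\le q_n$.
\begin{itemize}
\item On the near part, the shell decomposition $M_{i'l}$ of Proposition \ref{prop 6} and summability (T3) give $o(\textbf{Card}(\widetilde{B}^-_{k,j}\cap\Gamma_n)\,n\,Tp_Tb^4)$ per block. Summing over $(k,j)$ then yields $o(n^2T^2b^4)$.
\item On the far part, the argument of Proposition \ref{prop 8}, with $(i',t'),(i'',t'')$ restricted to $\widetilde{B}_{k,j}$, gives a bound of order $n\cdot(\text{block count})^2 b^4\cdot p_T(Tb)^2\,\beta^{\delta/(1+\delta)}(q_n)$. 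This is negligible by (T4)(1).
\end{itemize}

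The main obstacle is the temporal factor. Because $(i',t')$ is restricted to the same block, one needs bounds of the type in Lemma \ref{lemma 6} rather than the full-range count $T^2b$, and the spatial cardinalities must be controlled uniformly over the polynomially many blocks via Proposition \ref{prop 5}. I expect the routine union-bound bookkeeping there to be the delicate part.
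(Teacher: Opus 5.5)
Your treatment of \eqref{prop 11 eq-1} is the paper's. You factor over the product blocks into $\mathbf K_S\mathbf K_T$, bound $\mathbf K_T$ by Lemmas \ref{lemma 5}--\ref{lemma 6}, and bound $\mathbf K_S$ by Proposition \ref{prop 5} plus the strong law. The paper closes with (T1) and (T4)(4). Your closing via (T4)(2), which gives $r_{1n}r_{2n}/(\lambda_n^2\bb_n)\lesssim (q_n\log n/(\lambda_n\bb_n))^2\to 0$, is equally valid.

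One remark on your worry about the temporal window. Lemmas \ref{lemma 5}--\ref{lemma 6} are formulated for a fixed $u_0=v_0/T\in(0,1)$. That is exactly the regime in which your first observation already makes these within-block sums vanish. If you want \eqref{prop 11 eq-1} also for $v_0\lesssim Tb$, replace the lemmas by the crude bound $\sum_j\sum_{t\in I_{bj},\,t'\in I_j}K^2_{0tt'}\lesssim T^2b$. The factor $r_{1n}r_{2n}/(\lambda_n^2\bb_n)=o(1)$ from (T4)(2) still yields $o((nT)^2b^3)$.

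The genuine gap is in \eqref{prop 11 eq-2}. The estimates \eqref{proof 3} and \eqref{proof 4} were derived for $(i,t)\in\widetilde B^-_{k,j}$ and $(i',t'),(i'',t'')\in\widetilde B^c_{k,j}$. Their $\beta^{\delta/(1+\delta)}(q_n)$ terms come from decoupling $X_{it}$ from the other two variables across the $q_n$-corridor between $\widetilde B^-_{k,j}$ and $\widetilde B^c_{k,j}$. In \eqref{prop 11 eq-2} all three points lie in the same block $\widetilde B_{k,j}$, so the block geometry provides no such corridor. Unless you separately show that the kernel windows keep $(i',t')$ and $(i'',t'')$ farther than $q_n$ from $(i,t)$, both bounds can fail. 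That is precisely the kind of separation your fallback declines to rely on. For example, if the windows admit $(i',t')$ and $(i'',t'')$ adjacent to $(i,t)$, then $|E_{|S_n}[X_{it}X_{i't'}]|\,|E_{|S_n}[X_{it}X_{i''t''}]|$ is of order one, not $\beta^{2\delta/(1+\delta)}(q_n)$ as \eqref{proof 4} asserts. So the covariance bound you feed into the near/far split is unproved in the regime your fallback targets.

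The paper needs no separation. It uses
$|\mathrm{Cov}_{|S_n}(X_{it}X_{i't'},X_{it}X_{i''t''})|\le|\mathrm{Cov}_{|S_n}(X^2_{it}X_{i't'},X_{i''t''})|+|\mathrm{Cov}_{|S_n}(X_{it},X_{i't'})|\,|\mathrm{Cov}_{|S_n}(X_{it},X_{i''t''})|$.
It measures the decay of the first term by the distance from $\s_{i''n}$ to the pair $\{\s_{in},\s_{i'n}\}$ and groups $\s_{i''n}$ into shells $\mathcal W_{i,i',l}$ around that pair. It then sums with $\sum_l l^2\beta^{\delta/(1+\delta)}(l)<\infty$ from (T3), which gives $\mathbf C_1\le\mathbf C_{1S}\mathbf C_{1T}$, and treats the product term $\mathbf C_2$ the same way.

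Your own plan can also be repaired directly:
\begin{itemize}
\item Label the two points so that $\|(i'',t'')-(i,t)\|\ge\|(i',t')-(i,t)\|$. The triangle inequality then gives $d\big((i'',t''),\{(i,t),(i',t')\}\big)\ge\frac12\|(i',t')-(i'',t'')\|$.
\item Decouple $X_{i''t''}$, using $E X_{i''t''}=0$. This bounds both $E_{|S_n}[X^2_{it}X_{i't'}X_{i''t''}]$ and $E_{|S_n}[X_{it}X_{i''t''}]$, hence the whole covariance, by a multiple of $\beta^{\delta/(1+\delta)}\big(\frac12\|(i',t')-(i'',t'')\|\big)$, wherever $(i,t)$ sits.
\item Split at $2q_n$ instead of $q_n$. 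Your near part is then controlled by the shell argument of Proposition \ref{prop 6}, with (T3) and monotonicity absorbing the factor $\frac12$.
\item Your far part is controlled by the counting of Proposition \ref{prop 8} together with (T4)(1). The sums over $\widetilde B_{k,j}$ are dominated by the corresponding sums over all indices, which those arguments bound with the same orders.
\end{itemize}
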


\begin{proof}
 To prove \eqref{prop 11 eq-1}, similar to the previous argument, we only need to notice that, we have  
\begin{align}
&\sum_{k,j}\sum_{(i,t)\in\widetilde{B}^-_{k,j}}\sum_{(i',t')\in \widetilde{B}_{k,j}}K_{0b}^2(\s_{in}-\s_{i'n},t-t') = \mathbf{K}_{S}\mathbf{K}_T ,  
\end{align}
where 
\begin{align}
    \mathbf{K}_{S} &= \sum_{k=1}^{R_n}\sum_{\s_{in}\in\widetilde{B}^-_{k,j,S},\s_{i'n}\in\widetilde{B}_{k,j,S}}K_{0ii'} \\
    \mathbf{K}_T &= \sum_{j=1}^{J_n}\sum_{t\in\widetilde{B}^-_{k,j,T},t'\in\widetilde{B}_{k,j,T}}K_{0tt'}.
\end{align}
The design of the time blocks, \eqref{lemma 5 eq-2} of Lemma \ref{lemma 5} and \eqref{lemma 6 eq-2} of Lemma \ref{lemma 6} yield
\begin{align}
    \mathbf{K}_T\lesssim \min\{p_T^2\log T, Tbp_T\log T\}.
\end{align}
Meanwhile, by repeating the tricks used in the proof of Proposition \ref{prop 6}, we can show 
\begin{align}
    \lim_{n\to\infty}\frac{\mathbf{K}_S}{nb^2(nr_{1n}r_{2n}/ \lambda_n^2\bb_n)}=0\ \ \ a.s.-[\mathbb{Q}].
\end{align}
The design of spatial blocks implies that $r_{1n}r_{2n}=O((q_n\log n)^2)$, which, together with T1 of Assumption \ref{technical assumptions}, asserts that $\mathbf{K}_S=o(b^{-0.5}q_n^2nb^2)$ holds almost surely with respect to $\mathbb{Q}$. Then, based on (4) of T4 in Assumption \ref{technical assumptions}, we finish the proof of \eqref{prop 11 eq-1}.

\par To prove \eqref{prop 11 eq-2}, a crucial observation is that 
\begin{align}
    |\text{Cov}(X_{it}X_{i't'},X_{it}X_{i''t''})|\leq |\text{Cov}(X^2_{it}X_{i't'},X_{i''t''})|+|\text{Cov}(X_{it},X_{i't'})||\text{Cov}(X_{it},X_{i''t''})|.
\end{align}
Hence, it suffice to prove 
\begin{align}
    \lim_{n}&\frac{\mathbf{C}_1\lor \mathbf{C}_2}{(nT)^2b^3}=0\ \ \ a.s.-[\mathbb{Q}],\ \ \text{where}\\
   \mathbf{C}_1= \sum_{k,j}\sum_{(i,t)\in\widetilde{B}^-_{k,j}}&\sum_{(i',t')\neq (i'',t'')\in \widetilde{B}_{k,j}}K_{0b}(\s_{in}-\s_{i'n},t-t')K_{0b}(\s_{in}-\s_{i''n},t-t'')|\text{Cov}(X^2_{it}X_{i't'},X_{i''t''})|,\\
  \mathbf{C}_2=  \sum_{k,j}\sum_{(i,t)\in\widetilde{B}^-_{k,j}}&\sum_{(i',t')\neq (i'',t'')\in \widetilde{B}_{k,j}}K_{0b}(\s_{in}-\s_{i'n},t-t')K_{0b}(\s_{in}-\s_{i''n},t-t'')| \\
  & ~~~~~~~~~~~
  ~~~~~~~~~~~~~
 ~~~~~~~~~~~~~
  ~~~~~~~~~~~~~
 \times \text{Cov}(X_{it},X_{i't'})||\text{Cov}(X_{it},X_{i''t''})|.
\end{align}

First, we introduce sets 
\begin{align}
    \mathcal{W}_{i,i',l}&=\{\s_{i''n}\in\widetilde{B}_{k,j,S}\cap\Gamma_n: d(\{\s_{i''n}\}, \{\s_{in},\s_{i'n}\})\in (l-1,l]\},\\
    \widetilde{\mathcal{W}}_{i,i',l}&=\{\s_{i''n}\in\widetilde{B}_{k,j,S}\cap\R_n: d(\{\s_{i''n}\}, \{\s_{in},\s_{i'n}\})\in (l-1,l]\},
\end{align}
where $l=1,2,...,\max\{[r_{1n}],[r_{2n}]\}+1$. Then, we have 
\begin{align}
    \mathbf{C}_1
    &\leq \mathbf{C}_{1S}\mathbf{C}_{1T} ,
\end{align}
where
\begin{align}
    \mathbf{C}_{1S} &= \sum_{k=1}^{R_n}\sum_{\s_{in}\in\widetilde{B}^-_{k,j,S}}\sum_{\s_{i'n}\in\widetilde{B}_{k,j,S}}K_{0ii'}\sum_{l=1}^{\infty}\sum_{\s_{i''n}\in \mathcal{W}_{i,i',l}}K_{0ii''}\beta^{\frac{\delta}{1+\delta}}(l)\\
    \mathbf{C}_{1T}  &= \sum_{j=1}^{J_n}\sum_{t\in\widetilde{B}^-_{k,j,T}}\sum_{t',t''\in\widetilde{B}_{k,j,T}}K_{0tt'}K_{0tt''}.
\end{align}
The estimates \eqref{lemma 5 eq-1}, \eqref{lemma 5 eq-3} in  Lemma \ref{lemma 5} and \eqref{lemma 6 eq-1}, \eqref{lemma 6 eq-3} in  Lemma \ref{lemma 6} imply that 
\begin{align}
\label{proof 11}
    \mathbf{C}_{1T}\lesssim (p_T\lor Tb)p_T^2\log T=(q_n\log T \lor Tb)q_n^2(\log T)^3,\ \ \text{where}\ \ n=T. 
\end{align}
 Recall that $\mathbf s_{i,n}$ is an independent sequence. As for $\mathbf{C}_{1S}$, similar to the previous argument, by introducing proper sets and events, strong law of large number can show that
\begin{align}
    \mathbf{C}_{1S}\leq &Cb^4\Big(\frac{n}{\lambda_n^2\bb_n}\Big)\sum_{k=1}^{R_n}|\widetilde{B}^-_{k,j,S}|\Big(\sum_{\s_{i'n}\in\widetilde{B}_{k,j,S}}\max_{\s_{in}}(b^{-2}E_{\s_{in}}[K_{0ii'}])\Big)\Big(\sum_{l=1}^{\infty}l^2\beta^{\frac{\delta}{1+\delta}}(l)\Big)\Big(\max_{\s_{i''n}}(b^{-2}E_{\s_{i''n}}[K_{0ii''}])\Big)\\
    \leq & C'\Big(\frac{n}{\lambda_n^2\bb_n}\Big)\Big(\sum_{l=1}^{\infty}l^2\beta^{\frac{\delta}{1+\delta}}(l)\Big)b^4\sum_{k=1}^{R_n}|\widetilde{B}^-_{k,j,S}||\widetilde{B}_{k,j,S}|
\end{align}
holds almost surely with respect to $\mathbb{Q}$, where $C, C'>0$ are constants independent of $(i,t)$ and $n$. 

\par Then, T3 and T1 of Assumption \ref{technical assumptions} imply that 
\begin{align}
    \lim_{n\to\infty}\frac{\mathbf{C}_{1S}}{n(q_n\log n)^{2}b^3}=0\ \ \ a.s.-[\mathbb{Q}].
\end{align}
Point (5) of T4 in Assumption \ref{technical assumptions} and \eqref{proof 11} indicates that $\lim_{n\to\infty}\frac{\mathbf{C}_1}{(nT)^2b^3}=0$ holds almost surely with respect to $\mathbb{Q}$. By repeating the same procedure, we can show $\lim_{n\to\infty}\frac{\mathbf{C}_2}{(nT)^2b^3}=0$ holds almost surely with respect to $\mathbb{Q}$. Thus, we finish the proof. 
\end{proof}
\begin{proposition}
    \label{prop 12}
     Provided that $n=T$, by denoting $K_{irts}=K_{0b}(s_{in}-s_{rn},t-s)$, there exists some $N_0\in\mathbb{N}$ independent of sample size $n$ such that, for every $n>N_0$ and realization $\bS_n=S_n$,
    \begin{align}
     \label{prop 12 eq-1}
     \triangle:=\sum_{k,j}\sum_{(i,t)\neq (i',t')\in\widetilde{B}^-_{k,j}}\sum_{(r,s)\neq (r',s')\in\widetilde{B}_{k,j}}K_{irts}K_{i'r't's'}\text{Cov}_{|S_n}(X_{it}X_{rs},X_{i't'}X_{r's'})=0.
    \end{align}
\end{proposition}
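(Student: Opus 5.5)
The identity \eqref{prop 12 eq-1} is a deterministic support statement about the temporal kernel: we do not bound the covariances at all, we show every summand vanishes for large $n$. Write $K_{irts}=K_{0ir,S}K_{0ts,T}$ as in \eqref{def:KS}--\eqref{def:KT}. The summand in $\triangle$ involves both factors $K_{0ts,T}$ and $K_{0t's',T}$, where $t,t'\in\widetilde{B}^-_{k,j,T}=I_{bj}$ and $s,s'\in\widetilde{B}_{k,j,T}=I_{bj}\cup I_{sj}\cup I_{s(j+1)}$. It therefore suffices to show that $K_{0ts,T}=0$ whenever $t$ and $s$ lie in the same temporal block $I_j$. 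This holds uniformly in $(k,j)$, in $(i,t,r,s,i',t',r',s')$ and in the realisation $\bS_n=S_n$, since the temporal kernel does not depend on the spatial locations.

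First I record the support condition. Because $K$ is supported on $[-1,1]$, $K_{0ts,T}\neq0$ forces
\[
v_0-Tb\le |t-s|\le v_0+Tb .
\]
Second, I bound the temporal diameter of a block. For $t\in I_{bj}$ and $s\in I_j$,
\[
|t-s|\le p_T+2q_T\le 3p_T .
\]
Third, I compare the two. By Assumption \ref{as regularity}, the evaluation lag satisfies $v_0\ge c\,e_n$ with $e_n\to\infty$. The block construction gives $p_T/T=o(1)$, and $b=cn^{-\eta}\to0$, so $Tb=o(T)$. The step that needs care is showing $3p_T<v_0-Tb$ eventually. As in the proof of Proposition \ref{prop 10} (the treatment of $\mathbf{L}_{1T}$), I use that $v_0$ is effectively of order $T$: the kernel is evaluated at $(|t-s|/T-v_{0T})/b$ with $v_{0T}=v_0/T$ a fixed positive proportion. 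Then
\[
v_0-Tb=T(v_{0T}-b)\ge T v_{0T}/2
\]
for large $T$, which dominates $3p_T=o(T)$. If instead one insists only on $v_0\asymp e_n$, I would choose $p_T$ in the block construction so that $p_T=o(e_n)$; since $q_T=q_n$ with $q_T/p_T=1/\log T$, this is compatible with $(q_n\log n)^6b=o(1)$.

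Given that comparison, there is $N_0$, depending only on $v_{0T}$, $b$ and the block lengths and not on $S_n$ or $(k,j)$, such that for $n=T>N_0$ the support window $[v_0-Tb,v_0+Tb]$ is disjoint from $[0,3p_T]$. Hence $K_{0ts,T}=0$ for every admissible $(t,s)$. Every summand of $\triangle$ is then a product containing a zero factor, so $\triangle=0$ identically, with no moment or mixing bounds needed. The only real obstacle is the scale bookkeeping between $v_0$, $Tb$ and $p_T$. I would state it explicitly as the requirement $p_T+2q_T<v_0-Tb$ and cite its use in the proofs of Proposition \ref{prop 10} and Theorem \ref{Theorem MCLT Covariance Estimator}, where the same fact, $q_n/T=o(1)$ against a lag of order $T$, is invoked.
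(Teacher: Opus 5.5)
Your main argument is the paper's own proof. You factor $K_{irts}=K_{0ir}K_{0ts}$ and note that $K_{0ts}\neq 0$ forces $v_0-Tb\le |t-s|\le v_0+Tb$, while $t,s$ in the same temporal block satisfy $|t-s|=O(p_T)$. You then conclude from $Tb=o(T)$ and $p_T=o(T)$ that every summand vanishes for $n=T>N_0$, uniformly in $(k,j)$ and $S_n$. This relies on reading $v_0=Tv_{0T}$ with $v_{0T}$ fixed, which is exactly what the paper does here and for $\mathbf{L}_{1T}$ in Proposition~\ref{prop 10}.

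Your fallback for the regime $v_0\asymp e_n$ does not work, and you should drop it. Making $p_T=o(e_n)$ is not enough, because you also need $Tb<v_0-3p_T$. But (T1) with $n=T$ gives $T^2b^{2+2\xi}\to\infty$, so $Tb\to\infty$ polynomially. It therefore exceeds slowly growing lags such as $e_n=\log n$, the paper's own example. In that case the window $[v_0-Tb,v_0+Tb]$ contains all within-block lags, $K_{0ts}\neq 0$ there, and $\triangle$ need not vanish for any choice of $p_T$.

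The condition $p_T=o(e_n)$ is also unattainable there. Since $p_T=q_n\log T$, and (T4)(1) forces $q_n\gtrsim \log n$ under geometric mixing, you get $p_T\gtrsim(\log n)^2$. The condition $(q_n\log n)^6b=o(1)$ that you cite does not address this. State the order-$T$ reading of $v_0$ explicitly as the hypothesis under which the identity holds.
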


\begin{proof}
The proof  is very similar to the argument for the  term $\mathbf{L}_{1T}$ in the proof of Proposition \ref{prop 10}.
Recall that $K_{irts}=K_{0ir}K_{0ts}$. For each $t\in \widetilde{B}^-_{k,j,T}$, $K_{0ts}$ is non-zero if and only if $s\in\widetilde{B}_{k,j,T}$ satisfies
\begin{align}
    v_0-Tb\leq |t-s|\leq v_0+ Tb.
\end{align}
Meanwhile, for each $j$, because of $t\in \widetilde{B}^-_{k,j,T}$ and $s\in \widetilde{B}_{k,j,T}$, $|t-s|\leq p_T+q_n<2p_T$ holds for every given $T$. Since  $Tb=o(T)$ and $p_T=o(T)$ holds simultaneously,  for any given $v_0$, $n=T$ indicates that there exists some $N_0\in\mathbb{N}$ independent of sample size $n$ and $(k,j)$ such that
\begin{align}
    \{s\in\widetilde{B}_{k,j,T}:|t-s|<2p_T\}\bigcap \{s\in\widetilde{B}_{k,j,T}: v_0-Tb\leq |t-s|\leq v_0+ Tb\}=\emptyset
\end{align}
holds for every $n\geq N_0$. This implies $K_{0ts}=0$ holds for every $k,j$, when $n\geq N_0$. Thus, $\triangle=0$ when $n\geq N_0$, provided that $n=T$.
\end{proof}
\begin{proposition}
\label{prop 13}
    Under Assumptions \ref{technical assumptions} and \ref{moment conditions}, we obtain
    \begin{align}
        \sum_{(k,j)\neq (k',j')}\sum_{(i,t)\in\widetilde{B}_{k,j}^{-}}\sum_{(i',t')\in\widetilde{B}_{k',j'}^{-}}\sum_{(r,u)\in \widetilde{B}_{k,j}}\sum_{(r',u')\in \widetilde{B}_{k',j'}}K_{irtu}K_{i't'r'u'}\text{Cov}_{|S_n}(X_{it}X_{ru},X_{i't'}X_{r'u'})=o((nT)^2b^3) 
    \end{align}
    $a.s.-[\mathbb{Q}].$
\end{proposition}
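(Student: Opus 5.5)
The plan is to treat the sum in Proposition \ref{prop 13} exactly as the term $V_2$ was treated in Step 2.2 and in Proposition \ref{prop 10}. The only structural difference is that the inner indices $(r,u)$, $(r',u')$ now range over the full blocks $\widetilde{B}_{k,j}$, $\widetilde{B}_{k',j'}$ instead of the complements. I would first exploit the temporal kernel support, as in Proposition \ref{prop 12}. For $(i,t)\in\widetilde{B}^-_{k,j}$ and $(r,u)\in\widetilde{B}_{k,j}$ we have $|t-u|\le p_T+2q_T<2p_T$. On the other hand, $K_{0tu}\neq 0$ forces $v_0-Tb\le |t-u|\le v_0+Tb$. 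By Assumption \ref{as regularity}, $v_0\ge c e_n\to\infty$, and in the regime considered $v_0$ is of order $T$ (this is how Propositions \ref{prop 10}--\ref{prop 12} use it). Since $p_T=o(T)$ and $Tb=o(T)$, there is then an $N_0$, independent of $(k,j)$ and of the realization $\bS_n=S_n$, such that $K_{irtu}=0$ for all such pairs when $n=T\ge N_0$.

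Consequently every summand vanishes identically for large $n$, and the whole double-block sum is exactly $0$ for $n\ge N_0$. In particular it is $o((nT)^2b^3)$ almost surely with respect to $\mathbb{Q}$, independently of the mixing and moment structure.

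There is a main obstacle in case one does not want to lean on $v_0\asymp T$, e.g.\ if only $v_0\ge ce_n$ with $e_n=o(T)$ is available, so that $K_{0tu}$ need not vanish inside a block. In that case I would fall back on the decomposition used for $V_{22}$ and $V_{21}$ in Proposition \ref{prop 10}.

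First, split according to whether $\widetilde{B}_{k',j'}$ lies in the neighbour set $\mathcal{N}_{k,j}$, which has at most $26$ elements.

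For non-neighbouring blocks, the covariance $\text{Cov}_{|S_n}(X_{it}X_{ru},X_{i't'}X_{r'u'})$ is bounded via Lemma \ref{lemma 3} and (M3) by $C\beta^{\delta/(1+\delta)}(d(\widetilde{B}_{k,j},\widetilde{B}_{k',j'}))$. I would then use (T2)(ii) to factor $\beta^{\delta/(1+\delta)}(lq_n)\le C\beta^{\delta/(1+\delta)}(l)\beta^{\delta/(1+\delta)}(q_n)$. After that, the spatial sum is bounded by the strong law together with the location-count events of Proposition \ref{prop 5}, giving order $b^4 n^2 (nr_{1n}r_{2n}/\lambda_n^2\bb_n)\sum_l l^2\beta^{\delta/(1+\delta)}(l)$. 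The temporal sum is bounded by Lemmas \ref{lemma 5}--\ref{lemma 6}, and (T4)(1) kills the resulting product against $(nT)^2b^3$.

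For neighbouring blocks, the covariance is only bounded by moments. The count is then controlled by the requirement that both kernels be active within the same $O(p_T)$ time window together with block volumes $r_{1n}r_{2n}=O((q_n\log n)^2)$. This yields a bound like $n b^{3.5}q_n^8(\log n)^2$, and (T4)(3) together with (T1) then makes it $o((nT)^2b^3)$.

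This fallback bookkeeping, and in particular the neighbour term, is where I expect the real difficulty. I would try the vanishing-kernel argument first and only use the fallback if it fails.
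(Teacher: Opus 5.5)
Your primary argument is the paper's. The paper proves Proposition \ref{prop 13} by citing Propositions \ref{prop 11} and \ref{prop 12}, and the operative ingredient is the kernel-support argument of Proposition \ref{prop 12}: each factor $K_{irtu}$ pairs a time index in $\widetilde{B}^-_{k,j}$ with one in the same time block, so $K_{0tu}=0$ for all large $n$ and the sum vanishes identically. As you flag, this relies (as the paper does) on $v_0-Tb$ eventually exceeding the $O(p_T)$ block span, i.e. on $v_0$ being effectively of order $T$, which Assumption \ref{as regularity} does not literally guarantee.
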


\begin{proof}
  The proof is a direct combination of Propositions \ref{prop 11} and \ref{prop 12}.  
\end{proof}

\subsection{Further auxiliary results}
\begin{lemma} (Theorem A, P201, \cite{serfling2009approximation}) 
 \label{lemma 1}
Suppose $\{X_{i}:i=1,...,n\}$ is a set of independent and identically distributed random variables taking value in measurable space $(\mathcal{X},\mathcal{T})$. Define 
\begin{align}
    U_n=\frac{1}{\left(\begin{array}{c}
          m \\
          n
    \end{array}\right)}\sum_{i_1<...<i_m}h(X_{i_1},...,X_{i_m}),
\end{align}
where $h:\mathcal{X}^m\rightarrow\mathbb{R}$ is a measurable function such that $||h||_{\infty}<b$, $E[h(X_{i_1},...,X_{i_m})]=:\theta$ and $\text{Var}(h(X_{i_1},...,X_{i_m}))=\sigma^2$. Then, for any $t>0$ and $n>m$, we have 
   \begin{align}
       \mathbb{P}(|U_n-\theta|>t)\leq 2\exp\Big(-\frac{[n/m]t^2}{2(\sigma^2+\frac{b-\theta}{3}t)}\Big).
   \end{align}    
\end{lemma}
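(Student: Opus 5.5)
This is Bernstein's inequality for bounded $U$-statistics (Hoeffding's 1963 argument, as presented by Serfling). The plan is to reduce the $U$-statistic to an average of i.i.d. block averages and then apply the classical Chernoff--Bernstein bound. Throughout, $k:=[n/m]$.

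\textbf{Step 1: Hoeffding's representation.} For each permutation $\pi$ of $\{1,\dots,n\}$ I define
\[
W(X_{\pi(1)},\dots,X_{\pi(n)}) := \frac{1}{k}\sum_{j=1}^{k} h\big(X_{\pi((j-1)m+1)},\dots,X_{\pi(jm)}\big).
\]
This is an average of $k$ terms evaluated on disjoint blocks, so for fixed $\pi$ the summands are i.i.d. with mean $\theta$ and variance $\sigma^2$. By symmetry, averaging $W$ over all $n!$ permutations gives back $U_n$. It suffices to treat the upper tail $\mathbb{P}(U_n-\theta>t)$; the lower tail follows by applying the same argument to $-h$, and adding the two bounds produces the factor $2$.

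\textbf{Step 2: Chernoff with convexity.} For $s>0$, Markov's inequality gives $\mathbb{P}(U_n-\theta>t)\le e^{-st}E e^{s(U_n-\theta)}$. Since $U_n$ is an average of the $W$'s over permutations and $x\mapsto e^{sx}$ is convex, Jensen's inequality yields
\[
E e^{s(U_n-\theta)}\le \frac{1}{n!}\sum_\pi E e^{s(W_\pi-\theta)} = \Big(E e^{(s/k)(h-\theta)}\Big)^k,
\]
using independence of the blocks. This reduces everything to the moment generating function of the single bounded variable $Y=h-\theta$.

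\textbf{Step 3: Bernstein's mgf bound.} Here $EY=0$, $EY^2=\sigma^2$, and $Y\le c:=b-\theta$. The classical estimate is $E e^{\lambda Y}\le \exp\big(\lambda^2\sigma^2 g(\lambda c)\big)$ with $g(x)=(e^x-1-x)/x^2$. Using $g(x)\le \frac{1}{2(1-x/3)}$ for $0\le x<3$, this gives
\[
\mathbb{P}(U_n-\theta>t)\le \exp\Big(-st+\frac{s^2\sigma^2}{2k(1-sc/(3k))}\Big).
\]
Choosing $s/k = t/(\sigma^2+ct/3)$ makes the exponent equal to $-kt^2/\big(2(\sigma^2+ct/3)\big)$, which is the stated bound.

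The main obstacle is Step 3: verifying $g(x)\le \frac{1}{2(1-x/3)}$, which is a comparison of power series ($k!\ge 2\cdot 3^{k-2}$), and checking that the chosen $s$ keeps $sc/(3k)<1$. The rest is bookkeeping. Since the lemma is quoted from Serfling, I would in practice cite it, but this plan gives a complete proof.
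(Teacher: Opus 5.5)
Your Steps 1--3 are the standard Hoeffding reduction plus Bernstein's moment-generating-function bound, and they correctly prove the \emph{one-sided} inequality $\mathbb{P}(U_n-\theta>t)\le \exp\bigl(-[n/m]t^2/(2(\sigma^2+\frac{b-\theta}{3}t))\bigr)$. That is the inequality in Serfling's Theorem A; the paper gives nothing beyond the citation. The only thing to add there is the degenerate case $\sigma^2=0$: your choice of $s$ breaks down, but then $U_n=\theta$ a.s.

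The gap is the sentence ``the lower tail follows by applying the same argument to $-h$.'' For $-h$ the mean is $-\theta$ while the upper bound is still $b$. In Step 3 the constant therefore becomes $c=b-(-\theta)=b+\theta$, not $b-\theta$. The bound is increasing in $c$, so this recovers the stated two-sided inequality only when $\theta\le 0$.

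For $\theta>0$ no argument can close the gap, because the two-sided statement is false. Take $m=1$, $h(x)=x$, $n=2$, $b=1$, and let $X_i=0.99$ with probability $0.99$ and $X_i=-0.99$ with probability $0.01$. Then $\theta=0.9702$, $b-\theta=0.0298$ and $\sigma^2\approx 0.0388$. We have $\mathbb{P}(|U_2-\theta|>0.97)\ge \mathbb{P}(U_2\le 0)=0.0199$, whereas the claimed right-hand side is $2\exp(-1.8818/0.0969)\approx 7\cdot 10^{-9}$.

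What your argument actually proves is the two-sided bound with $b-\theta$ replaced by $b+|\theta|=\max\{b-\theta,\,b+\theta\}$. Alternatively, under $a\le h\le b$, you get separate constants $b-\theta$ and $\theta-a$ for the two tails. You should state that corrected version. It is presumably all the paper needs, since the lemma is only invoked as a concentration tool for averages of bounded kernels.
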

\begin{lemma} (Lemma 2, \cite{YOSHIHARA1987143})
    \label{lemma 2}
    Suppose $(\xi_n)_{n\in\mathbb{N}}$ be a group of $\mathbb{R}^p$-valued random vectors. Let $Y$ be a $\mathcal{M}_{1}^k$-measurable $\mathbb{R}^{d_1}$-valued random vector and $Z$ be a $\mathcal{M}_{k+m}^{\infty}$-measurable $\mathbb{R}^{d_2}$-valued random vector, where $\mathcal{M}_{k_1}^{k_2}$ is the sigma algebra generated by $\xi_{k}$, $k_1\leq k\leq k_2$. Given a Borel measurable function $h(y,z)$ on $\mathbb{R}^{d_1+d_2}$, we assume there exists some $\delta>0$ such that 
    $$M=:\max\left\{E|h(Y,Z)|^{1+\delta}, \int_{\mathbb{R}^{d_1}}\int_{\mathbb{R}^{d_2}}|h(y,z)|^{1+\delta}d\mathbb{P}_{Y}d\mathbb{P}_{Z}\right\}$$ 
    is finite, where $\mathbb{P}_X$ denotes the distribution of random vector $X$. Then, we get 
    \begin{align}
        &E|E[h(Y,Z)|\mathcal{M}_{1}^k]-g(Y)|\leq 3M^{\frac{1}{1+\delta}}\beta^{\frac{\delta}{1+\delta}} (m),
    \end{align}
    where $g(y)=\int_{\mathbb{R}^{d_2}}h(y,z)d\mathbb{P}_{Z}=E[h(y,Z)]$ and function $\beta$ is introduced in Assumption \ref{dependence}.
\end{lemma}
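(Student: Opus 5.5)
The proof reduces to a total-variation (coupling) estimate for a bounded integrand, followed by truncation to handle the general integrand under the $(1+\delta)$-moment bound $M$. Throughout, $\beta(m)$ is read as the $\beta$-mixing coefficient between $\mathcal M_1^k$ and $\mathcal M_{k+m}^\infty$, i.e. $E\sup_{B\in\mathcal M_{k+m}^\infty}|\mathbb P(B\mid\mathcal M_1^k)-\mathbb P(B)|\le\beta(m)$, which is how Assumption~\ref{dependence} is used in this paper.

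\emph{Step 1 (disintegration).} Let $\mathbb P_{Z\mid\mathcal M_1^k}(\omega,\cdot)$ be a regular conditional distribution of $Z$ given $\mathcal M_1^k$. It exists because $Z$ takes values in the Polish space $\mathbb R^{d_2}$. Since $Y$ is $\mathcal M_1^k$-measurable, the standard disintegration identity gives
\[
E[h(Y,Z)\mid\mathcal M_1^k]=\int h(Y,z)\,\mathbb P_{Z\mid\mathcal M_1^k}(dz)\quad\text{a.s.},
\]
at least for bounded $h$, and then by monotone convergence for $h$ with $E|h(Y,Z)|<\infty$. On the other hand $g(Y)=\int h(Y,z)\,\mathbb P_Z(dz)$. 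Hence the quantity to control is
\[
E\Big|\int h(Y,z)\,(\mathbb P_{Z\mid\mathcal M_1^k}-\mathbb P_Z)(dz)\Big|.
\]

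\emph{Step 2 (bounded case).} Suppose $|h|\le N$. For a fixed $\omega$, $z\mapsto h(Y(\omega),z)$ is a measurable function bounded by $N$, and its preimages lie in $\sigma(Z)\subset\mathcal M_{k+m}^\infty$. Splitting it into positive and negative parts, or using the Hahn decomposition of the signed measure, gives
\[
\Big|\int h(Y,z)\,d(\mathbb P_{Z\mid\mathcal M_1^k}-\mathbb P_Z)\Big|\le 2N\sup_{B\in\sigma(Z)}|\mathbb P(B\mid\mathcal M_1^k)-\mathbb P(B)|.
\]
Taking expectations yields the bound $2N\beta(m)$. A sharper version, using that the signed measure has total mass zero, gives $N$ times the total-variation norm, which I would aim for when tracking constants.

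\emph{Step 3 (truncation).} Write $h=h_N+\bar h_N$ with $h_N=h\,1\{|h|\le N\}$. For the tail part, use
\[
E\big|E[\bar h_N(Y,Z)\mid\mathcal M_1^k]\big|\le E|h(Y,Z)|1\{|h|>N\}\le M N^{-\delta}.
\]
The same argument gives $E\big|\int\bar h_N(Y,z)\,d\mathbb P_Z\big|\le\iint|h|1\{|h|>N\}\,d\mathbb P_Y d\mathbb P_Z\le MN^{-\delta}$; this is exactly where both components of the maximum defining $M$ are needed. Combining with Step 2 gives $c_1N\beta(m)+2MN^{-\delta}$. Choosing $N=(M/\beta(m))^{1/(1+\delta)}$ balances the two terms and produces a bound of the form $C\,M^{1/(1+\delta)}\beta^{\delta/(1+\delta)}(m)$. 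If $\beta(m)=0$, let $N\to\infty$.

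\emph{Main obstacle.} The structure of the argument is routine. The main obstacle is getting the constant exactly $3$ rather than $4$. The natural split gives $c_1=2$ and hence $C=4$. To reach $3$ I would first try two things: use the total-variation identity with constant $N$ (not $2N$) for the bounded part, and bound the two tail contributions jointly, since they enter with opposite sign after integration against the mixed measure. With $c_1=1$ the bound becomes $N\beta(m)+2MN^{-\delta}$, and at the balancing choice of $N$ this equals $3M^{1/(1+\delta)}\beta^{\delta/(1+\delta)}(m)$, matching the constant in the statement. Two measure-theoretic points also need care: $\omega\mapsto\sup_B|\cdot|$ must be measurable, which holds by restricting to a countable generating algebra, and the conditional expectation may be evaluated with $Y$ frozen.
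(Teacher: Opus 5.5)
The paper does not prove this lemma; it quotes it from Yoshihara. Your Steps 1--3 are correct, but they establish the bound with constant $4$. The step meant to reach $3$ is false.

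With your normalization $\beta(m)=E\sup_{B}|\mathbb P(B\mid\mathcal M_1^k)-\mathbb P(B)|$, a signed measure $\rho$ of total mass zero satisfies only $|\int f\,d\rho|\le \mathrm{osc}(f)\,\sup_B|\rho(B)|$, and $h_N$ can have oscillation $2N$. If ``total variation'' means $|\rho|(\mathbb R^{d_2})$, then $N|\rho|(\mathbb R^{d_2})=2N\sup_B|\rho(B)|$, so again nothing is gained. Hence $c_1=2$ is sharp.

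Here is a counterexample to $c_1=1$. Take $k=1$, let $\xi_1=\xi_{1+m}$ be a fair coin with all other $\xi_j$ constant, and set $Y=\xi_1$, $Z=\xi_{1+m}$, $h(y,z)=2z-1$. Then $g\equiv 0$, the left-hand side equals $1$, and $\beta(m)=1/2$. With $N=1$ there is no tail, so your bounded-part bound $N\beta(m)=1/2$ fails.

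Bounding the two tails jointly does not rescue the argument. Take $Y=Z\sim\mathrm{Bernoulli}(\epsilon)$ and $h(y,z)=a\,1\{z=1\}$ with $a>N$. The tail difference has $L^1$ norm $2a\epsilon(1-\epsilon)$, which exceeds $MN^{-\delta}=a^{1+\delta}\epsilon N^{-\delta}$ when $a\downarrow N$ and $\epsilon<1/2$. In fact, optimizing $N$ in $2N\beta+2MN^{-\delta}$ gives $2(1+\delta)\delta^{-\delta/(1+\delta)}M^{1/(1+\delta)}\beta^{\delta/(1+\delta)}$, which equals $4$ at $\delta=1$. So fixed-level truncation cannot give $3$ uniformly in $\delta$.

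The missing idea is to drop truncation and apply H\"older directly to the Jordan parts. Let $\mu_\omega=\mathbb P_{Z\mid\mathcal M_1^k}(\omega,\cdot)$, $\nu=\mathbb P_Z$, and $\eta(\omega)=\sup_B|\mu_\omega(B)-\nu(B)|$, so $E\eta\le\beta(m)$. Write $\mu_\omega-\nu=\rho^+_\omega-\rho^-_\omega$. Then $\rho^+_\omega\le\mu_\omega$, $\rho^-_\omega\le\nu$, and $\rho^\pm_\omega(\mathbb R^{d_2})=\eta(\omega)$. Since $0\le d\rho^+_\omega/d\mu_\omega\le 1$, H\"older with exponents $1+\delta$ and $(1+\delta)/\delta$ gives
\[
\int|h(Y,z)|\,\rho^+_\omega(dz)\le\Big(\int|h(Y,z)|^{1+\delta}\,\mu_\omega(dz)\Big)^{\frac{1}{1+\delta}}\eta(\omega)^{\frac{\delta}{1+\delta}},
\]
and likewise for $\rho^-_\omega$ with $\nu$ in place of $\mu_\omega$. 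Take expectations and apply H\"older again in $\omega$, using $E\int|h(Y,z)|^{1+\delta}\mu_\omega(dz)=E|h(Y,Z)|^{1+\delta}$ and $E\int|h(Y,z)|^{1+\delta}\nu(dz)=\iint|h|^{1+\delta}\,d\mathbb P_Y\,d\mathbb P_Z$. Each part is then at most $M^{1/(1+\delta)}\beta^{\delta/(1+\delta)}(m)$. This proves the lemma with constant $2$, hence with $3$.

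Berbee's coupling gives the same constant: take a copy $Z^*$ of $Z$ independent of $\mathcal M_1^k$ with $\mathbb P(Z\ne Z^*)\le\beta(m)$, then apply H\"older on $\{Z\ne Z^*\}$.
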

\begin{lemma}
    \label{lemma 3} (Lemma 1, \cite{yoshihara1976limiting}) Suppose $(\xi_n)_{n\in\mathbb{N}}$ be a group of $\mathbb{R}^p$-valued random vectors. Let $i_1<i_2<\dots<i_k$ be arbitrary positive integers. For any $1\leq j\leq k-1$, we denote $\mathbb{P}_{j}^k=\mathbb{P}_{\xi_{i_1},..,\xi_{i_j}}\bigotimes\mathbb{P}_{\xi_{i_j+1},..,\xi_{i_k}}$ and $\mathbb{P}_0^k=\mathbb{P}_{\xi_{i_1},..,\xi_{i_j}}$. Let $h(x_1,...,x_k)$ be any Borel measurable function such that $\int_{\mathbb{R}^{kp}}|h(x_1,...,x_k)|^{1+\delta}d\mathbb{P}^{k}_{j}=:M'$ is finite. Then, 
    \begin{align}
        \left|\int_{\mathbb{R}^{pk}}h(x_1,..,x_k)d\mathbb{P}_{0}^k- \int_{\mathbb{R}^{pk}}h(x_1,..,x_k)d\mathbb{P}_{j}^k\right|\leq 4(M')^{\frac{1}{1+\delta}}\beta^{\frac{\delta}{1+\delta}}(i_{j+1}-i_j).
    \end{align}
\end{lemma}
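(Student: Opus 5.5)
The plan is to prove Lemma \ref{lemma 3} by the classical truncation argument of Yoshihara, with the mixing coefficient controlling a total-variation distance. Write $\mathbb{P}_0^k$ for the joint law of $(\xi_{i_1},\dots,\xi_{i_k})$ and $\mathbb{P}_j^k$ for the product of the law of the first block $(\xi_{i_1},\dots,\xi_{i_j})$ and the law of the second block $(\xi_{i_{j+1}},\dots,\xi_{i_k})$. The two blocks generate the $\sigma$-fields $\sigma(U)$ and $\sigma(V)$, whose index sets are separated by distance $m:=i_{j+1}-i_j$.

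\textbf{Step 1: total variation is controlled by $\beta(m)$.} I first show
\[
\sup_{A\in\mathcal B(\mathbb R^{pk})}\big|\mathbb{P}_0^k(A)-\mathbb{P}_j^k(A)\big|\le \beta(m).
\]
Any Borel $A$ can be approximated (monotone class argument) by finite disjoint unions of rectangles $A_i\times B_l$. For such a union,
\[
\Big|\sum_{i,l}\big[\mathbb P(A_i\cap B_l)-\mathbb P(A_i)\mathbb P(B_l)\big]\Big|
\le E\Big[\sup_{B\in\sigma(V)}\big|\mathbb P(B\mid\sigma(U))-\mathbb P(B)\big|\Big].
\]
To see this, write $\mathbb P(A_i\cap B_l)=E[\mathbf 1_{A_i}\,\mathbb P(B_l\mid\sigma(U))]$, group the $B_l$ attached to each $A_i$ into a single set of $\sigma(V)$, and use that the $A_i$ are disjoint. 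By Assumption \ref{dependence} the right-hand side is at most $\beta(m)$. The factor $\Psi$ is bounded here because $k$ is fixed, so I absorb it into the constant (equivalently, the $\beta$ in the lemma is read as including it). Passing to the limit extends the bound from finite unions of rectangles to all Borel sets.

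\textbf{Step 2: truncation.} Fix $N>0$ and split $h=h\mathbf 1\{|h|\le N\}+h\mathbf 1\{|h|>N\}$. For the bounded part, Step 1 gives
\[
\Big|\int h\mathbf 1\{|h|\le N\}\,d(\mathbb P_0^k-\mathbb P_j^k)\Big|\le 2N\beta(m).
\]
For the tail, on $\{|h|>N\}$ we have $|h|\le |h|^{1+\delta}N^{-\delta}$, so
\[
\int|h|\mathbf 1\{|h|>N\}\,d\mathbb P_0^k+\int|h|\mathbf 1\{|h|>N\}\,d\mathbb P_j^k\le 2M'N^{-\delta}.
\]

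\textbf{Step 3: optimize $N$.} Choosing $N=(M')^{1/(1+\delta)}\beta(m)^{-1/(1+\delta)}$ gives the total bound
\[
2(M')^{\frac1{1+\delta}}\beta^{\frac{\delta}{1+\delta}}(m)+2(M')^{\frac1{1+\delta}}\beta^{\frac{\delta}{1+\delta}}(m)=4(M')^{\frac1{1+\delta}}\beta^{\frac{\delta}{1+\delta}}(m),
\]
which is the claim. If $\beta(m)=0$, the bound in Step 1 shows the two measures coincide, so the difference of integrals is zero.

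\textbf{Main obstacle.} The tail estimate in Step 2 needs $\int|h|^{1+\delta}\,d\mathbb P_0^k\le M'$, i.e.\ a moment bound under the joint law. The lemma as stated only assumes $\int|h|^{1+\delta}d\mathbb P_j^k=M'<\infty$, under the product law. I will therefore take $M'$ to be the maximum of the two $(1+\delta)$-moments, exactly as the quantity $M$ is defined in Lemma \ref{lemma 2}. This is how the lemma is used in the paper, where the moment conditions (M2)–(M3) of Assumption \ref{moment conditions} bound both moments.
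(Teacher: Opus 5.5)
Your proof is correct. It is the classical truncation argument of Yoshihara: the total-variation bound $\sup_A|\mathbb{P}_0^k(A)-\mathbb{P}_j^k(A)|\le\beta(i_{j+1}-i_j)$, followed by splitting $h$ at level $N=(M')^{1/(1+\delta)}\beta^{-1/(1+\delta)}$; the paper does not reproduce this argument and only cites the source. Your strengthening of the hypothesis, requiring the $(1+\delta)$-moment bound under $\mathbb{P}_0^k$ as well, is not just convenient but necessary, and it matches Yoshihara's original assumption and the quantity $M$ in Lemma~\ref{lemma 2}: if $\mathbb{P}_0^k$ charges a $\mathbb{P}_j^k$-null set, then an $h$ supported there has $M'=0$ but a nonzero left-hand side.
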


\begin{lemma}
    \label{lemma 4}
     Suppose $A_{Ts}=\{a_{s1},...,a_{s1}+m_{A}-1\}$ and $B_{Ts}=\{b_{s1},...,b_{s1}+m_B-1\}$ are all subsets of $\{1,2,..,T\}$, where $s\lesssim [T/m_A]$, $\lim_{T}(m_A\land m_B)=\infty$ and $m_A\leq m_B$. Suppose $b$ is the bandwidth of kernel function satisfying $\lim_{T\to\infty}b\searrow0$ and $\lim_{T\to\infty}Tb\nearrow\infty$. Assume $\lim_{T\to\infty}\rho\nearrow\infty$ and $\rho\leq  m_A$. For any given $u_{0}\in (0,1)$ and each $s$, when $\lim_{T}\frac{m_B}{T}=1$, we have 
        \begin{align}
        \label{lemma 4 eq-1}
            &\sum_{t\in A_{Ts}}\sum_{t',t''\in B_{Ts}\atop
            0<|t'-t''|\leq \rho}K\Big(\frac{|t-t'|/T-u_{0}}{b}\Big)K\Big(\frac{(t-t'')/T-u_{0}}{b}\Big)\lesssim m_{A}m_{B}\rho b,\\
            \label{lemma 4 eq-2}
             &\sum_{t\in A_{Ts}}\sum_{t'\in B_{Ts}}K^2\Big(\frac{(t-t')/T-u_{0}}{b}\Big)\lesssim m_{A}m_{B} b.
        \end{align}
\end{lemma}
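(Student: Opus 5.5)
The statement to prove is Lemma \ref{lemma 4}. The plan is a direct counting argument that uses only the compact support and boundedness of $K$; no probability is involved. Throughout, write $\kappa=\|K\|_\infty<\infty$. Since $K$ is supported on $[-1,1]$, the factor $K\big((|t-t'|/T-u_0)/b\big)$ vanishes unless
\begin{align}
u_0T-Tb\leq |t-t'|\leq u_0T+Tb .
\end{align}
Hence, for each fixed $t$, the admissible $t'$ lie in at most two integer intervals (one on each side of $t$), each of length at most $2Tb+1$. So the number of admissible $t'\in B_{Ts}$ is at most $\min\{m_B,\,4Tb+2\}\lesssim Tb$, using $Tb\to\infty$. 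The same bound holds for $t''$ through the second kernel factor.

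For \eqref{lemma 4 eq-2}, bound $K^2\leq \kappa^2$. Summing over $t\in A_{Ts}$ gives at most $m_A\cdot \kappa^2\cdot C\,Tb$. Because $m_B/T\to 1$, we have $T\lesssim m_B$ for large $T$. Therefore $m_A\,Tb\lesssim m_A m_B b$, which is the claim.

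For \eqref{lemma 4 eq-1}, fix $t\in A_{Ts}$. As above, there are at most $C\,Tb$ choices of $t'$ with nonzero first factor. Given $t'$, the constraint $0<|t'-t''|\leq\rho$ leaves at most $2\rho$ choices of $t''$, and we bound the second kernel factor by $\kappa$. This gives the bound $\kappa^2\cdot C\,Tb\cdot 2\rho$ for each $t$. Summing over $m_A$ values of $t$ gives $\lesssim m_A\,Tb\,\rho\lesssim m_A m_B\rho b$, again using $T\lesssim m_B$. (Counting $t''$ through its kernel support instead would only give $Tb$, which is worse than $\rho$ once $\rho\leq Tb$; the $\rho$-window is the sharper count.)

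I do not expect a real obstacle here. The only care needed is:
\begin{itemize}
\item the boundary/rounding count $2(2Tb+1)\lesssim Tb$, which needs $Tb\to\infty$;
\item replacing $T$ by $m_B$, which is where $m_B/T\to1$ is used;
\item noting that the second kernel has argument $(t-t'')/T$ without absolute value. This only removes one of the two support intervals, so it does not affect the bound.
\end{itemize}
The constants are uniform in $s$ and $u_0$, since only $\kappa$ and the support length enter.
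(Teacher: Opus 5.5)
Your argument is correct. It reaches the same intermediate bound as the paper, $m_A\,\rho\,Tb$, converted at the end via $T\lesssim m_B$, but by a different route.

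The paper's proof goes as follows. It re-centres $t$ and $t'$ at block endpoints and writes $t''$ as $t'$ shifted by some $k\le[\rho]$. It then uses $m_B/T\to1$ (through the endpoint $b_{m_B}/T\to1$), together with $\rho=o(m_A)$ and $m_A=o(T)$, to make all shifted kernel arguments eventually positive. Next it uses that $K$ is non-increasing on $[0,1]$ to dominate both factors by $K(t_1/(Tb))$. It finishes with the Riemann-sum bound $\sum_{t_1\le m_B}K^2(t_1/(Tb))\lesssim Tb\int K^2$.

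You use only the compact support and boundedness of $K$. Boundedness is implicit in the standing assumptions, since symmetry and monotonicity on $[0,1]$ give $\sup K=K(0)$. The first factor confines $|t-t'|$ to a window of width $2Tb$ around $u_0T$, so there are $O(Tb)$ admissible $t'$. The constraint $0<|t'-t''|\le\rho$ then leaves at most $2\rho$ choices of $t''$.

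Your version buys economy of hypotheses and transparency. It needs neither monotonicity nor the conditions $\rho=o(m_A)$ and $m_A=o(T)$. The paper's proof invokes both, although neither is among the lemma's hypotheses: the lemma only assumes $\rho\le m_A\le m_B$ with $m_B/T\to1$, which does not give $m_A=o(T)$. Uniformity in $s$ and $u_0$ is also immediate in your argument.

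The paper's approach, in exchange, produces an explicit constant involving $\int K^2$. It also sets up the re-indexing framework that the paper reuses in Lemmas \ref{lemma 5} and \ref{lemma 6}.
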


\begin{proof}
 In the proof, we denote $A_{Ts}$ and $B_{Ts}$ as $A_T$ and $B_T$ since our following arguments hold uniformly over $s$. First, the following basic equality holds for every given $T$ and $b$.
\begin{align}
\mathcal{K}_T:= &\sum_{t\in A_T}\sum_{(t',t'')\in B_T^2
      \atop
      0<|t'-t''|<\rho}K\Big(\frac{(t-t')/T-u_{0}}{b}\Big)K\Big(\frac{(t-t'')/T-u_{0}}{b}\Big)\\
     = &\sum_{t\in A_T}\sum_{(t',t'')\in B_T^2
      \atop
      0<|t'-t''|<\rho}K\Big(\frac{\frac{m_{A}}{T}\frac{t}{m_{A}}-\frac{m_{B}}{T}\frac{t'}{m_{B}}-u_{0}}{b}\Big)K\Big(\frac{\frac{m_{A}}{T}\frac{t}{m_{A}}-\frac{m_{B}}{T}\frac{t''}{m_{B}}-u_{0}}{b}\Big).
\end{align}
By denoting the $l$-th and $j$-th smallest elements of sets $A_{T}$ and $B_{T}$ as $a_{l}$ and $b_{j}$ respectively,
\begin{align}
      &\mathcal{K}_T\lesssim\sum_{t\in A_T}\sum_{t_1\in B_T}\sum_{
     k=1}^{[\rho]}K\Big(\frac{\frac{m_{A}}{T}\frac{t-a_{l}+1}{m_{A}}-\frac{m_{B}}{T}\frac{t_1-b_{j}+1}{m_{B}}-v_{ljT}}{b}\Big)K\Big(\frac{\frac{m_{A}}{T}\frac{t-a_{l}+1}{m_{A}}-\frac{m_{B}}{T}\frac{t_1-b_{j}+1}{m_{B}}+\frac{[\rho]}{T}\frac{k}{[\rho]}-v_{ljT}}{b}\Big)\\
     & =\sum_{t=a_{1}-a_{l}+1}^{a_{1}+m_A-a_{l}}\sum_{t_1=b_{1}-b_{j}+1}^{b_{1}+m_B-b_{j}-[\rho]}K\Big(\frac{\frac{m_{A}}{T}\frac{t}{m_{A}}-\frac{m_{B}}{T}\frac{t_1}{m_{B}}-v_{ljT}}{b}\Big) \sum_{
     k=1}^{[\rho]}K\Big(\frac{\frac{m_{A}}{T}\frac{t}{m_{A}}-\frac{m_{B}}{T}\frac{t_1}{m_{B}}+\frac{[\rho]}{T}\frac{k}{[\rho]}-v_{ljT}}{b}\Big)  \\ & =: \mathcal{K}'_T,
\end{align}
where $v_{ljT}=u_{0}-\frac{a_{l}+b_{j}-2}{T}$ and $[\rho]$ is the integer part of $\rho>0$. When $\lim_{T}\frac{m_{B}}{T}=1$, by naming $j=m_{B}$, we obtain $\frac{b_{m_{B}}}{T}\to 1$ and $\lim_{T}v_{lm_{B}T}\leq u_{0}-1<0$ holds for any $u_{0}\in (0,1)$ and $l=1,2,..,m_{A}$. Then, by letting $l=1$, some simple algebra generates
\begin{align}
    \mathcal{K}'_{T}&=\sum_{t=1}^{m_A}\sum_{t_1=-m_B}^{-[\rho]}K\Big(\frac{\frac{m_{A}}{T}\frac{t}{m_{A}}-\frac{m_{B}}{T}\frac{t_1}{m_{B}}-v_{1m_BT}}{b}\Big) \sum_{
     k=1}^{[\rho]}K\Big(\frac{\frac{m_{A}}{T}\frac{t}{m_{A}}-\frac{m_{B}}{T}\frac{t_1}{m_{B}}+\frac{[\rho]}{T}\frac{k}{[\rho]}-v_{1m_BT}}{b}\Big)\\
     &=\sum_{t=1}^{m_A}\sum_{t_1=1}^{m_B-[\rho]+1}K\Big(\frac{\frac{m_{A}}{T}\frac{t}{m_{A}}+\frac{m_{B}}{T}\frac{t_1}{m_{B}}+\frac{[\rho]-1}{T}-v_{1m_BT}}{b}\Big) \sum_{
     k=1}^{[\rho]}K\Big(\frac{\frac{m_{A}}{T}\frac{t}{m_{A}}+\frac{m_{B}}{T}\frac{t_1}{m_{B}}+\frac{[\rho]}{T}\frac{k}{[\rho]}-v_{1m_BT}}{b}\Big) .
\end{align}
Define $z_{1T}=\frac{m_A}{T}\frac{t}{m_A}+\frac{[\rho]-1}{T}-v_{1m_B T}$ and $z_{2T}=\frac{m_A}{T}\frac{t}{m_A}+\frac{[\rho]}{T}\frac{k}{[\rho]}-v_{1m_B T}$. The assumptions that $\rho=o(m_A)$ and $m_A=o(T)$ assert $\lim_{T}z_{1T}>0$ and $\lim_{T}z_{2T}>0$, which indicates that there exists sufficiently large $T_0$ independent of $T$ such that $\min\{z_{1T},z_{2T}\}>0$ holds for any $T>T_0$. Then, according to the monotonicity of kernel function mentioned in Section \ref{sec23}, we have
\begin{align}
    \mathcal{K}'_{T}\leq m_A \rho \sum_{t_1=1}^{m_B}K^2\Big(\frac{\frac{m_B}{T}\frac{t_1}{m_B}}{b}\Big)\lesssim m_Am_B\rho b \int_{-1}^{1}K^2(u)du.
\end{align}
Thus, we finish the proof of \eqref{lemma 4 eq-1}. As for the proof of \eqref{lemma 4 eq-2}, regarding the it is actually a special case of \eqref{lemma 4 eq-1} and the proof is nearly the same, we omit it here for brevity.
\end{proof}
\begin{lemma}
    \label{lemma 5}
    Based on the $m_A$, $m_B$, $A_{Ts}$, $B_{Ts}$, $b$ and $\rho$ introduced in Lemma \ref{lemma 4}, we assume $\lim_{T}\frac{m_B}{T}=0$ and $B_{Ts}\cap B_{Ts'}=\emptyset$ hold for any $1\leq s\neq s'\leq [T/m_B]+1$. Then, for any given $u_{0}\in (0,1)$, we have the following conclusions hold.
    \begin{itemize}
        \item [1] Suppose $A_{Ts}$ and $B_{Ts}$ have the same smallest element. Provided that $\lim_{T}\frac{m_B}{Tb}=\infty$ and $\lim_{T}\frac{m_A}{m_B}=1$ hold, we have 
         \begin{align}
            \label{lemma 5 eq-1}
            &\sum_{s=1}^{[T/m_B]}\sum_{t\in A_{Ts}}\sum_{t',t''\in B_{Ts}\atop
            0<|t'-t''|\leq \rho}K\Big(\frac{(t-t')/T-u_{0}}{b}\Big)K\Big(\frac{(t-t'')/T-u_{0}}{b}\Big)\lesssim m_Am_B\rho(\log T),\\
            \label{lemma 5 eq-2}
            &\sum_{s=1}^{[T/m_B]}\sum_{t\in A_{Ts}}\sum_{t'\in B_{Ts}}K\Big(\frac{(t-t')/T-u_{0}}{b}\Big)\lesssim m_Am_B(\log T).
        \end{align}
        \item [2] Suppose $A_{Ts}$ and $B_{Ts}$ have the same largest element. Provided that  $\lim_{T}\frac{m_B}{Tb}=\infty$ and $\lim_{T}\frac{m_A}{m_B}=0$ hold, 
        \begin{align}
            \label{lemma 5 eq-3}
            &\sum_{s=1}^{[T/m_B]}\sum_{t\in A_{Ts}}\sum_{t',t''\in B_{Ts}\atop
            0<|t'-t''|\leq \rho}K\Big(\frac{(t-t')/T-u_{0}}{b}\Big)K\Big(\frac{(t-t'')/T-u_{0}}{b}\Big)\lesssim m_Am_B\rho(\log T),\\
            \label{lemma 5 eq-4}
            &\sum_{s=1}^{[T/m_B]}\sum_{t\in A_{Ts}}\sum_{t'\in B_{Ts}}K\Big(\frac{(t-t')/T-u_{0}}{b}\Big)\lesssim m_Am_B(\log T).
        \end{align} 
    \end{itemize}       
\end{lemma}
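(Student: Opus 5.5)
The plan is to reduce both parts of Lemma \ref{lemma 5} to the single-block bounds of Lemma \ref{lemma 4} (or their analogue) by controlling, for each $s$, where the time kernel is nonzero. The key observation is the support of $K$. The factor $K\big(((t-t')/T-u_0)/b\big)$ vanishes unless $|t-t'|\in[T(u_0-b),T(u_0+b)]$. Since $m_B/T\to 0$ while $u_0\in(0,1)$ is fixed, for $t\in A_{Ts}$ and $t'\in B_{Ts}$ with a common endpoint we have $|t-t'|\le m_A\vee m_B=o(T)$. Hence for $T$ larger than some $T_0$, chosen independently of $s$, every summand vanishes identically. This is exactly the mechanism already used for $\mathbf L_{1T}$ in Proposition \ref{prop 10} and in Proposition \ref{prop 12}.

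I would carry this out in order. First, fix $s$ and bound $\sup\{|t-t'|: t\in A_{Ts}, t'\in B_{Ts}\}$. In case 1 the blocks share their smallest element and $m_A\sim m_B$, so this supremum is at most $m_B$. In case 2 they share their largest element and $m_A=o(m_B)$, so it is again at most $m_B$. Second, choose $T_0$ with $m_B<T(u_0-b)$ for $T\ge T_0$, using $m_B/T\to0$ and $b\to0$. Then all kernel factors vanish, and the left sides of \eqref{lemma 5 eq-1}--\eqref{lemma 5 eq-4} are $0$ for $T\ge T_0$. For the finitely many $T<T_0$, the sums are bounded by $\|K\|_\infty^2[T/m_B]\,m_Am_B\rho$. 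That is a constant, absorbed into $\lesssim$.

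There is a risk that the statement is meant with $t-t'$ not restricted to within-block differences, or with the absolute value in the second kernel dropped as written. In that case the vanishing argument fails and a genuine counting bound is needed. As a fallback, I would sum over $s$ directly. For fixed $t$, the number of $t'$ with $K\neq0$ is at most $2Tb+1$. The number of $t''$ within distance $\rho$ is at most $2\rho$, and boundedness of $K$ gives per block $\lesssim m_A\,(Tb\wedge m_B)\,\rho$. Summing over the $[T/m_B]$ blocks and using $m_B/(Tb)\to\infty$ then requires comparing $T\cdot Tb\,\rho/m_B$ with $m_Am_B\rho\log T$. The $\log T$ slack together with the block design $q_T/p_T=1/\log T$ is what I would try to exploit there, via a Riemann-sum bound $\sum_{t'}K(\cdot)\lesssim Tb\int K$ as in the proof of Lemma \ref{lemma 4}.

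I expect the main obstacle to be this fallback comparison: making the summed count over $s$ fit within $m_Am_B\rho\log T$ uniformly, rather than the support argument itself.
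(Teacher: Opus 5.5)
Your main argument is correct and complete, and it takes a different, more direct route than the paper. In both parts a common endpoint together with $m_A\le m_B$ gives $A_{Ts}\subseteq B_{Ts}$. So every summand has $|t-t'|,|t-t''|\le m_B-1=o(T)$, while a nonzero kernel factor requires $t-t'\ge T(u_0-b)$. Hence for all $T\ge T_0$, with $T_0$ depending only on $u_0$, $m_B$ and $b$ (not on $s$), every summand vanishes. The left-hand sides of \eqref{lemma 5 eq-1}--\eqref{lemma 5 eq-4} are then exactly $0$.

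The paper instead reindexes each block as in the proof of Lemma~\ref{lemma 4}, writing the kernel argument with a block-dependent shift $v_{11T}=u_0-(a_{s1}+b_{s1}-2)/T$. It uses that $\mathcal K_{Ts}=0$ whenever this shift is bounded away from $0$. It then counts the $O(\log T)$ blocks near the critical position, each contributing at most $m_Am_B\rho$; this is where $m_B/(Tb)\to\infty$ and $m_A/m_B\to1$ enter.

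Substituting $t=\tilde t+a_{s1}-1$, $t'=\tilde t'+b_{s1}-1$ shows, however, that the correct shift is $u_0-(a_{s1}-b_{s1})/T$. This equals $u_0$ in part~1 and $u_0-(m_B-m_A)/T\to u_0$ in part~2. So the paper's own first step already makes every block inactive, which is exactly your conclusion.

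Your route therefore gives a strictly stronger statement: the sums are eventually zero. It uses only $A_{Ts}\subseteq B_{Ts}$, $m_B/T\to0$ and $b\to0$, so the rate hypotheses on $m_B/(Tb)$ and $m_A/m_B$ are superfluous. It is also the same mechanism the paper uses for $\mathbf L_{1T}$ in Proposition~\ref{prop 10} and in Proposition~\ref{prop 12}. The block-counting approach would only carry content for cross-block sums, where $t-t'$ can be of order $T$, and the lemma does not cover those.

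You can drop the fallback paragraph entirely. The sums in the statement are within a single block by definition. The vanishing also does not depend on whether absolute values appear inside $K$, since $|(t-t')/T-u_0|\ge u_0-|t-t'|/T$ in either case. As written, the fallback also leaves its key comparison open, so it is just as well that it is not needed.
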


\begin{proof}
We first focus on proving \eqref{lemma 5 eq-1}. Based on the conditions introduced in Lemma \ref{lemma 5}, we know $A_{Ts}\subset B_{Ts}$ and $a_{s1}=b_{s1}$. Repeating the arguments used in the proof of Lemma \ref{lemma 4} obtains 
\begin{align}
    &\sum_{s=1}^{[T/m_B]}\sum_{t\in A_{Ts}}\sum_{t',t''\in B_{Ts}\atop
            0<|t'-t''|\leq \rho}K\Big(\frac{(t-t')/T-u_{0}}{b}\Big)K\Big(\frac{(t-t'')/T-u_{0}}{b}\Big)\\
            =&\sum_{s=1}^{[T/m_B]}\sum_{t=a_{s1}-a_{sl}+1}^{a_{s1}+m_A-a_{sl}}\sum_{t_1=b_{s1}-b_{sj}+1}^{b_{s1}+m_B-b_{sj}-[\rho]}K\Big(\frac{\frac{m_{A}}{T}\frac{t}{m_{A}}-\frac{m_{B}}{T}\frac{t_1}{m_{B}}-v_{ljT}}{b}\Big) \sum_{
     k=1}^{[\rho]}K\Big(\frac{\frac{m_{A}}{T}\frac{t}{m_{A}}-\frac{m_{B}}{T}\frac{t_1}{m_{B}}+\frac{[\rho]}{T}\frac{k}{[\rho]}-v_{ljT}}{b}\Big) \\
     =&:\sum_{s=1}^{[T/m_B]}\mathcal{K}_{Ts},
\end{align}
where $a_{sl}$ and $b_{sj}$ denotes the $l$-th and $j$-th smallest elements in set $A_{Ts}$ and $B_{Ts}$ respectively. Setting $l=j=1$ implies $v_{11T}=u_{0}-\frac{a_{s1}+b_{s1}-2}{T}=u_{0}-\frac{2b_{s1}}{T}+\frac{2}{T}$. 

\par First, we show that it suffices to only consider the $B_{Ts}$'s satisfying $b_{s1}\in [\frac{u_{0}}{2}-\epsilon T,\frac{u_{0}}{2}+\epsilon T]$, where $\epsilon$ is allowed to be arbitrary small $\epsilon>0$ independent of $T$ and $s$. Since $|b_{s1}-\frac{u_{0}}{2}|>\epsilon T$ implies $\lim_{T}|v_{11T}|>\epsilon>0$, the $\frac{v_{11T}}{b}$ is the leading term of $\frac{\frac{m_{A}}{T}\frac{t}{m_{A}}-\frac{m_{B}}{T}\frac{t_1}{m_{B}}+\frac{[\rho]}{T}\frac{k}{[\rho]}-v_{11T}}{b}$ and $\frac{\frac{m_{A}}{T}\frac{t}{m_{A}}-\frac{m_{B}}{T}\frac{t_1}{m_{B}}-v_{11T}}{b}$. Then, there exists some $T_1>0$ independent of $T$ and $s$ such that
\begin{align}
\label{proof lemma 5 eq-1}
    \min\left\{\left|\frac{\frac{m_{A}}{T}\frac{t}{m_{A}}-\frac{m_{B}}{T}\frac{t_1}{m_{B}}+\frac{[\rho]}{T}\frac{k}{[\rho]}-v_{11T}}{b}\right|,\left|\frac{\frac{m_{A}}{T}\frac{t}{m_{A}}-\frac{m_{B}}{T}\frac{t_1}{m_{B}}-v_{11T}}{b}\right|\right\}>1
\end{align}
holds for any $s$ and $T>T_1$. Then, when $T>T_1$, $\mathcal{K}_{Ts}=0$ which indicates $\sum_{s=1}^{[T/m_B]}\mathcal{K}_{Ts}=0$.

\par Second, another crucial observation is that assumptions $\lim_{T}\frac{m_B}{m_A}=1$ implies
\begin{align}
 b^{-1}\Big(\frac{m_{A}}{T}\frac{t}{m_{A}}-\frac{m_{B}}{T}\frac{t_1}{m_{B}}\Big)=\left(\frac{m_A}{Tb}\Big(\frac{t}{m_A}-\frac{t_1}{m_B}\Big)-\frac{m_B-m_A}{Tb}\frac{t_1}{m_B}\right)=O(\frac{m_A}{Tb})=O(\frac{m_B}{Tb}).
\end{align}

Together with the assumption $\lim_{T}\frac{m_B}{Tb}=\infty$, we now show that we only need to pay attention to the $B_{Ts}$'s satisfying $b_{1s}\in [(\frac{u_{0}}{2}-m_{B})(1-c_T),(\frac{u_{0}}{2}-m_{B})(1+c_T)]$ or $b_{1s}\in [(\frac{u_{0}}{2}+m_{B})(1-c_T),(\frac{u_{0}}{2}+m_{B})(1+c_T)]$, where $c_T$ is any given diverging positive sequence diverging to $\infty$. For convenience, we let $c_T=\log T$.  

\par If $b_{1s}\in (-\infty,(\frac{u_{0}}{2}-m_{B})(1+\log T))\cup((\frac{u_{0}}{2}-m_{B})(1-\log T),(\frac{u_{0}}{2}+m_{B})(1-\log T))\cup((\frac{u_{0}}{2}+m_{B})(1+\log T),\infty)$, it can be easily shown that the leading terms of $\frac{\frac{m_{A}}{T}\frac{t}{m_{A}}-\frac{m_{B}}{T}\frac{t_1}{m_{B}}+\frac{[\rho]}{T}\frac{k}{[\rho]}-v_{11T}}{b}$ and $\frac{\frac{m_{A}}{T}\frac{t}{m_{A}}-\frac{m_{B}}{T}\frac{t_1}{m_{B}}-v_{11T}}{b}$ are either $\frac{m_B}{Tb}$ or $\frac{v_{11}}{b}$ and both of them diverge as $T\nearrow \infty$. Thus, there exists some $T_2>0$ independent of $T$ and $s$ such that \eqref{proof lemma 5 eq-1} holds for any $T>T_2$. Thus, $\mathcal{K}_{Ts}=0$ holds for any $T>T_2$ and $s=1,2,...,[T/m_B]$, which proves the necessity of focusing solely on the $B_{Ts}$'s such that $b_{1s}\in [(\frac{u_{0}}{2}+m_{B})(1-\log T),(\frac{u_{0}}{2}+m_{B})(1+\log T)]$ or $b_{1s}\in [(\frac{u_{0}}{2}-m_{B})(1-\log T),(\frac{u_{0}}{2}-m_{B})(1+\log T)]$

\par Considering the width of these two intervals is of order $m_B\log T$ and $B_{Ts}$'s are disjoint, the quantity of $B_{Ts}$ ($A_{Ts}$) included is of order $\log T$. Hence, we obtain 
\begin{align}
    \sum_{s=1}^{[T/m_B]}\mathcal{K}_{Ts}\lesssim (\log T) m_Am_B\rho.
\end{align}
Then, we finish the proof of \eqref{lemma 5 eq-1}. To prove \eqref{lemma 5 eq-3}, we can simply regard the $B_{Ts}\backslash A_{Ts}$ defined in the proof of \eqref{lemma 5 eq-1} as the $A_{Ts}$ mentioned in precondition of \eqref{lemma 5 eq-3}. Then, by simply repeating the arguments above, we can prove \eqref{lemma 5 eq-3}. As for \eqref{lemma 5 eq-2} and \eqref{lemma 5 eq-4}, they are actually special cases of \eqref{lemma 5 eq-1} and \eqref{lemma 5 eq-3} and the proof is basically the same. We hence omit them here for brevity.
\end{proof}

\begin{lemma}
    \label{lemma 6}
    Based on the $m_A$, $m_B$, $A_{Ts}$, $B_{Ts}$, $b$ and $\rho$ introduced in Lemma \ref{lemma 4}, we assume $\lim_{T}\frac{m_B}{T}=0$ and $B_{Ts}\cap B_{Ts'}=\emptyset$ hold for any $1\leq s\neq s'\leq [T/m_B]+1$. Then, for any given $u_{0}\in (0,1)$, we have the following conclusions hold.
    \begin{itemize}
        \item [1] Suppose $A_{Ts}$ and $B_{Ts}$ have the same smallest element. Provided that $\lim_{T}\frac{m_B}{Tb} < C$, for a sufficiently large constant $C$, and $\lim_{T}\frac{m_A}{m_B}=1$ hold, we have 
         \begin{align}
            \label{lemma 6 eq-1}
            &\sum_{s=1}^{[T/m_B]}\sum_{t\in A_{Ts}}\sum_{t',t''\in B_{Ts}\atop
            0<|t'-t''|\leq \rho}K\Big(\frac{(t-t')/T-u_{0}}{b}\Big)K\Big(\frac{(t-t'')/T-u_{0}}{b}\Big)\lesssim Tbm_A\rho\log T,\\
            \label{lemma 6 eq-2}
            &\sum_{s=1}^{[T/m_B]}\sum_{t\in A_{Ts}}\sum_{t'\in B_{Ts}}K\Big(\frac{(t-t')/T-u_{0}}{b}\Big)\lesssim Tbm_A\log T.
        \end{align}
        \item [2] Suppose $A_{Ts}$ and $B_{Ts}$ have the same largest element. Provided that  $\lim_{T}\frac{m_B}{Tb}=0$ and $\lim_{T}\frac{m_A}{m_B}=0$ hold, 
        \begin{align}
            \label{lemma 6 eq-3}
            &\sum_{s=1}^{[T/m_B]}\sum_{t\in A_{Ts}}\sum_{t',t''\in B_{Ts}\atop
            0<|t'-t''|\leq \rho}K\Big(\frac{(t-t')/T-u_{0}}{b}\Big)K\Big(\frac{(t-t'')/T-u_{0}}{b}\Big)\lesssim Tbm_A\rho \log T,\\
            \label{lemma 6 eq-4}
            &\sum_{s=1}^{[T/m_B]}\sum_{t\in A_{Ts}}\sum_{t'\in B_{Ts}}K\Big(\frac{(t-t')/T-u_{0}}{b}\Big)\lesssim Tbm_A \log T.
        \end{align} 
    \end{itemize}       
\end{lemma}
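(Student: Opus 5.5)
The plan is to follow the reduction used in the proofs of Lemma \ref{lemma 4} and Lemma \ref{lemma 5}. The new point is that under $\lim_T m_B/(Tb)<C$ (resp. $=0$) the block $B_{Ts}$ is no longer wide compared with the kernel window. So the factor $m_B$ is replaced by the window length $O(Tb)$. I treat \eqref{lemma 6 eq-1} in detail. \eqref{lemma 6 eq-2} is the special case without the inner $t''$-sum and the factor $\rho$. \eqref{lemma 6 eq-3}, \eqref{lemma 6 eq-4} follow as in Lemma \ref{lemma 5}, by regarding $B_{Ts}\setminus A_{Ts}$ as the block $A_{Ts}$ with a common largest element.

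\emph{Step 1 (support localisation).} I first record a direct observation. For $t\in A_{Ts}\subset B_{Ts}$ and $t'\in B_{Ts}$ we have $|t-t'|\le m_B=o(T)$. Hence $\big((t-t')/T-u_0\big)/b\to-\infty$ uniformly in $s,t,t'$, because $u_0>0$ is fixed and $b\to 0$. Since $K$ is supported on $[-1,1]$, there is a $T_0$ not depending on $s$ such that every summand vanishes for $T>T_0$. The bounds then hold trivially, exactly as in the arguments for $\mathbf L_{1T}$ in Proposition \ref{prop 10} and Proposition \ref{prop 12}.

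If instead one reads the indices as in the proofs of Lemmas \ref{lemma 4}--\ref{lemma 5}, the rewriting is shifted by $v_{11T}=u_0-(2b_{s1}-2)/T$. In that case I would carry out the counting argument of Steps 2--3 below, which gives the stated rate.

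\emph{Step 2 (per-block bound).} Fix $s$ and $t\in A_{Ts}$. The first kernel is nonzero only for $t'$ in an interval of length at most $2Tb+1$. Given $t'$, the constraint $0<|t'-t''|\le\rho$ leaves at most $2\rho$ choices of $t''$, and $K\le\|K\|_\infty$. Therefore the $s$-th summand $\mathcal K_{Ts}$ satisfies
\begin{align}
\mathcal K_{Ts}\lesssim m_A\,\min\{m_B,Tb\}\,\rho\lesssim m_A\,Tb\,\rho ,
\end{align}
using $m_B\lesssim Tb$.

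\emph{Step 3 (counting active blocks).} This is the step I expect to be the main obstacle. As in Lemma \ref{lemma 5}, $\mathcal K_{Ts}=0$ unless $b_{s1}$ lies in a window of width $O(Tb\vee m_B)$ around the point determined by $u_0$. Outside this window the leading term $v_{11T}/b$ of the kernel argument pushes it outside $[-1,1]$ for all $T$ larger than some $T_1$ independent of $s$. I would enlarge this window by the factor $c_T=\log T$, as in Lemma \ref{lemma 5}. Its width is then $O(Tb\log T)$, and since the $B_{Ts}$ are disjoint of length $m_B$, it contains $O\big((Tb\log T)/m_B+1\big)$ blocks. Combining this naively with Step 2 loses a factor $Tb/m_B$. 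To avoid the loss, I would sum over $s$ before bounding in $t'$. For a fixed $t$, the union over the active blocks of the admissible $t'$ is still one interval of length $O(Tb)$. Summing over $t$ in the active $A_{Ts}$, whose number is $O(\log T)$ by the choice of $c_T$, then gives the bound $Tb\,m_A\,\rho\log T$. Making this double counting precise, uniformly in $s$, is where the care is needed. Apart from that, the remaining bookkeeping is identical to Lemma \ref{lemma 5}.
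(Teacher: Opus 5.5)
Your Step 1 is a complete proof, and it takes a different, much shorter route than the paper. In both parts $m_A\le m_B$ and the two blocks share an endpoint, so $A_{Ts}\subset B_{Ts}$. Hence $|t-t'|\le m_B-1$ and
$$\frac{(t-t')/T-u_0}{b}\le \frac{m_B/T-u_0}{b}<-1$$
for all $T\ge T_0$. Here $T_0$ depends only on $u_0$ and the rates of $m_B/T$ and $b$, not on $s$. Every summand in all four sums is then zero, so the left-hand sides vanish identically for large $T$. Neither the conditions on $m_B/(Tb)$ and $m_A/m_B$ nor the factor $\log T$ are needed. Part 2 is also covered directly, without the $B_{Ts}\setminus A_{Ts}$ device.

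The paper instead re-indexes each block as in Lemmas 4--5, with the shift $v_{11T}=u_0-(2b_{s1}-2)/T$. It keeps only blocks whose $b_{s1}$ lies in a window of width $O(Tb\log T)$, counts $O(Tb\log T/m_B)$ such blocks, and multiplies by a per-block size $m_Am_B\rho$. That counting would matter only if the kernel's centre were reachable by within-block differences, and here it is not. The correct re-indexing is
$$t-t'=(t-a_{s1}+1)-(t'-b_{s1}+1)+(a_{s1}-b_{s1}).$$
So the shift is $(a_{s1}-b_{s1})/T$, which equals $0$ in part 1 and $(m_B-m_A)/T=o(1)$ in part 2, not $(a_{s1}+b_{s1}-2)/T$. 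Your argument therefore proves a stronger statement and avoids that slip.

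You should drop the hedge and Steps 2--3. There is no reading of the statement under which an $s$-dependent shift appears; it comes only from the algebra slip above. Step 3 is also confused on its own terms:
\begin{itemize}
\item With the per-block bound $m_A\min\{m_B,Tb\}\rho\le m_Am_B\rho$ and $O(Tb\log T/m_B+1)$ active blocks, the product is $m_A\rho\, Tb\log T+m_Am_B\rho\lesssim Tb\,m_A\rho\log T$, using $m_B\lesssim Tb$. So the naive combination loses nothing.
\item Your proposed remedy asserts that only $O(\log T)$ of the $A_{Ts}$ are active. This contradicts the count $O(Tb\log T/m_B+1)$ you had just derived.
\end{itemize}
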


%

%
%

%
%
\begin{proof}
    Similar to the proof of Lemma \ref{lemma 5}, we here only focus on \eqref{lemma 6 eq-1}. More specifically, since the proof of \eqref{lemma 6 eq-1} is very similar to that of \eqref{lemma 5 eq-1}, we here only sketch the difference. Based on $\mathcal{K}_{Ts}$ introduced in the proof of Lemma \ref{lemma 5}, by letting $l=j=1$, we obtain
\begin{align}
    \mathcal{K}_{Ts}=\sum_{t=1}^{m_A}\sum_{t_1=1}^{m_B}K\Big(\frac{\frac{m_{A}}{T}\frac{t}{m_{A}}-\frac{m_{B}}{T}\frac{t_1}{m_{B}}-v_{11T}}{b}\Big) \sum_{
     k=1}^{[\rho]}K\Big(\frac{\frac{m_{A}}{T}\frac{t}{m_{A}}-\frac{m_{B}}{T}\frac{t_1}{m_{B}}+\frac{[\rho]}{T}\frac{k}{[\rho]}-v_{11T}}{b}\Big) ,
\end{align}
where $v_{11T}=u_{0}-\frac{a_{s1}+b_{s1}-2}{T}=u_{0}-\frac{2b_{s1}-2}{T}$.
Since $\lim_{T}\frac{m_{A}}{m_{B}}=1$ and $\frac{m_{B}}{Tb}\leq C$, for a large constant $C$, we have $\frac{m_B-m_A}{Tb}=o(1)$, which further implies
\begin{align}
    \frac{m_{A}}{T}\frac{t}{m_{A}}-\frac{m_{B}}{T}\frac{t_1}{m_{B}}+\frac{[\rho]}{T}\frac{k}{[\rho]}=O(b),\ \frac{m_{A}}{T}\frac{t}{m_{A}}-\frac{m_{B}}{T}\frac{t_1}{m_{B}}=O(b).
\end{align}
This asserts that
\begin{align}
     &b^{-1}\Big(\frac{m_{A}}{T}\frac{t}{m_{A}}-\frac{m_{B}}{T}\frac{t_1}{m_{B}}+\frac{[\rho]}{T}\frac{k}{[\rho]}-v_{11T}\Big)=(C_1+b^{-1}v_{11T})(1+o(1)),\\ &b^{-1}\Big(\frac{m_{A}}{T}\frac{t}{m_{A}}-\frac{m_{B}}{T}\frac{t_1}{m_{B}}-v_{11T}\Big)=(C_2+b^{-1}v_{11T})(1+o(1)),
\end{align}
where $C_1$ and $C_2$ are two universal constants independent of $T$ and $s$. Thus, we only need to focus on the cases where $B_{Ts}$'s satisfying $b_{s1}\in [\frac{u_{0}}{2}- T b(1+c_T),\frac{u_{0}}{2}+ T b(1+c_T)]$, where $c_T$ is arbitrary positive number sequence such that $\lim_T c_T\nearrow\infty$. Without loss of generality, by setting $c_T=\log T-1$, the number of such kind of $B_{Ts}$'s is of order $\frac{Tb\log T}{m_B}$. Then, we obtain 
\begin{align}
    \sum_{s=1}^{[T/m_B]}\mathcal{K}_{Ts}\lesssim Tbm_A\rho\log T.
\end{align}
\end{proof}

\end{appendix}

\end{document}